\documentclass{amsart}
\pdfoutput=1
\usepackage{color}
\usepackage{latexsym}
\usepackage[nointegrals]{wasysym}
\usepackage{amssymb,amsrefs}
\usepackage{amsmath,amsthm,fancyhdr,pifont}
\usepackage{mathrsfs}
\usepackage{graphicx}
\usepackage{verbatim}
\usepackage{pb-diagram,amscd,amsfonts,amsmath}
\usepackage{hyperref}

\usepackage[all]{xy}
\usepackage{setspace} \onehalfspacing

\newtheorem{theorem}{Theorem}[section]

\newtheorem{lemma}[theorem]{Lemma}

\newtheorem{corollary}[theorem]{Corollary}
\newtheorem{proposition}[theorem]{Proposition}

\newtheorem{thm}[theorem]{Theorem}

\newtheorem{prop}[theorem]{Proposition}

\newenvironment{pf*}[1]{\proof[#1]}{\endproof}
\usepackage{euscript}

\theoremstyle{definition}
\newtheorem{definition}{Definition}[section]
\newtheorem{defn}{Definition}[section]

\newcommand{\Forget}{\operatorname{Forget}}
\newcommand{\Push}{\operatorname{Push}}
\newcommand{\Lift}{\operatorname{Lift}}
\newcommand{\PMCG}{{\text{PMCG}}}
\newcommand{\RMCG}{{\text{RMCG}}}
\newcommand{\MCG}{{\text{MCG}}}

\theoremstyle{remark}
\newtheorem{remark}[theorem]{Remark}

\hyphenation{Thurs-ton}
\theoremstyle{remark}
\newtheorem{rem}{Remark}[section]

\renewcommand{\deg}{\operatorname{deg}}
\newcommand{\riem}{\hat{\CC}}

\renewcommand{\mod}{\operatorname{mod}}
\newcommand{\tl}{\tilde}
\newcommand{\wtl}{\widetilde}

\renewcommand\O{\mathcal{O}}

\newcommand\OO{\wasylozenge}

\newcommand\Q{\mathbb{Q}}

\newcommand\N{\mathbb{N}}

\newcommand\Z{\mathbb{Z}}
\newcommand\R{\mathbb{R}}

\numberwithin{equation}{section}

\newcommand{\thmref}[1]{Theorem~\ref{#1}}
\newcommand{\propref}[1]{Proposition~\ref{#1}}

\newcommand{\corref}[1]{Corollary~\ref{#1}}

\newcommand{\cA}{{\mathcal A}}

\newcommand{\cM}{{\mathcal M}}

\newcommand{\cF}{{\mathcal F}}
\newcommand{\cG}{{\mathcal G}}

\newcommand{\cT}{{\mathcal T}}

\newcommand{\cP}{{\mathcal P}}

\newcommand{\cH}{{\mathcal H}}

\newcommand{\CC}{{\mathbb C}}

\newcommand{\RR}{{\mathbb R}}

\newcommand{\TT}{{\mathbb T}}
\newcommand{\ZZ}{{\mathbb Z}}
\newcommand{\NN}{{\mathbb N}}
\newcommand{\DD}{{\mathbb D}}
\newcommand{\HH}{{\mathbb H}}

{\bf}{\it}
{\bf}{\it}
{\bf}{\it}
{\bf}{\it}
\newtheorem*{thurstonthm}{Thurston's Theorem}{\bf}{\it}

\def\Int{\text{int}}

\def\Pt{{\tilde{P}}}

\renewcommand\S{\mathcal{S}}

\newcommand\Sphere{{\mathbb{S}^2}}
\newcommand\PP{Q_f}
\newcommand{\id}{\mbox{\rm id}}
\renewcommand{\mod}{\mbox{\rm mod }}

\newcommand{\fh}{\hat{f}}

\newcommand{\sm}{\setminus}
\newcommand{\eps}{\varepsilon}

{\par\noindent\textit{Proof of (#1)}%
} 



\begin{document}

\title[]{Constructive geometrization of Thurston maps and decidability of Thurston equivalence}
\date{October 5, 2013}

\author{Nikita Selinger and  Michael Yampolsky}
\begin{abstract}
The key result in the present paper is a direct analogue of the celebrated Thurston's Theorem \cite{DH}
for marked Thurston maps with parabolic orbifolds. Combining this result with previously developed techniques,
we prove that every Thurston map can be constructively geometrized in a canonical fashion. As a consequence,
we give a partial resolution of the general problem of decidability of Thurston equivalence of two postcritically
finite branched covers of $S^2$ (cf. \cite{BBY}).
\end{abstract}

\maketitle

\section{Introduction}
\label{section-intro}

A Thurston map is a basic object of study in one-dimensional dynamics: a branched covering $f$ of the $2$-sphere with finite
critical orbits. Such a map can be described in a purely combinatorial language by introducing a suitable triangulation of $S^2$ whose
set of vertices includes the critical orbits of $f$. Different combinatorial descriptions of the map lead to a natural {\it combinatorial}
or {\it Thurston equivalence} relation. A natural question arises whether given two such combinatorial objects, it can be decided if they are equivalent or not in some systematic, i.e. algorithmic, fashion. 

We briefly outline the history of the problem.
 A central theorem in the subject is the result of Thurston \cite{DH} that describes, in a topological
language, which Thurston maps are combinatorially equivalent to rational mappings of $\riem$. 
 In the case when an equivalent rational mapping
exists, it is essentially unique, and the proof of the theorem \cite{DH} supplies an iterative algorithm for approximating its coefficients.
The only obstacle for the existence of a Thurston equivalent rational map is the presence of a {\it Thurston obstruction} which is a 
finite collection of curves in $S^2$ that satisfies a certain combinatorial inequality. 
Equivalence to a rational mapping can thus  be seen as a {\it geometrization} of the branched covering: equipping the topological object with
a canonical geometric description.

In \cite{BBY} it was shown that,  outside of some exceptional cases, the question of Thurston equivalence to a rational mapping is 
{\it algorithmically decidable}. Namely, there exists an algorithm $\cA_1$ which, given a combinatorial description of $f$, outputs 
$1$ if $f$ is equivalent to a rational mapping and $0$ otherwise. Moreover, in the former case, $\cA_1$ identifies the rational mapping.
Since two different rational mappings are easy to distinguish -- for instance, by comparing their coefficients after some normalization -- this implies
that in the case when either $f$ or $g$ has no Thurston obstruction, the statement of the Main Theorem I can be deduced from the existence of $\cA_1$ \cite{BBY}.

Our work  concentrates on the situation when Thurston maps are obstructed. In this case, geometrization may be achieved by {\it decomposition}
into geometrizable components \cite{Pil2}. 
We show:

\medskip
\noindent
{\bf Main Theorem I.} 
{\it Every Thruston map admits a constructive canonical geometrization.}

\medskip
\noindent
The main step in the proof is a direct analogue of Thurston's Theorem for the exceptional cases, Thurston maps with  parabolic orbifolds:

\medskip
\noindent
{\bf Main Theorem II.} {\it A marked Thurston map with parabolic orbifold is geometrizable if and only if it has no degenerate Levy cycles.}

\medskip
\noindent
Detailed versions of both statements will be given below, after some preliminaries. As a consequence we obtain a partial resolution of the 
general question of decidability of Thurston equivalence:

\medskip
\noindent
{\bf Main Theorem III.} {\it 
There exists an algorithm $\cA$ which does the following. Let $f$ and $g$ be marked Thurston maps and assume that every element of the
 canonical geometrization of $f$ has hyperbolic orbifold. The algorithm $\cA$, given the combinatorial descriptions of $f$ and $g$,
outputs $1$ if $f$ and $g$ are Thurston equivalent and $0$ otherwise.
}

\medskip
\noindent

\section{Geometric preliminaries}
\subsection*{Mapping Class Groups}
When we talk about  a 
surface with holes, we will always mean a surface $S$ with boundary,
 which is obtained from a surface without holes by removing a collection of disjoint Jordan disks.
A 
surface $S$  is of  {\it finite topological type} if it is a genus $g$ surface with $m$ holes and $n$ punctures,
where $g,m,n<\infty$. The {\it Mapping Class Group} $\MCG(S)$ is defined as the group of homeomorphisms $S\to S$
which restrict to the identity on $\partial S$, up to isotopy relative $\partial S$.

The elements of $\MCG(S)$ are allowed to interchange the punctures of $S$; if we further restrict to homeomorphisms
which fix each puncture individually, we obtain the {\it pure Mapping Class Group} $\PMCG(S)$. If we denote by $\Sigma_n$ the group of 
permutations of $n$ elements (punctures, in our case), then we have a short exact sequence
$$1\longrightarrow \PMCG(S_{g,r}^n)\longrightarrow \MCG(S_{g,r}^n)\longrightarrow \Sigma_n\longrightarrow 1.$$

We refer the reader to \cite{primer} for a detailed discussion of Mapping Class Groups. 
Replacing homeomorphisms with diffeomorphisms, and/or isotopy with homotopy leads to an equivalent definition of $\MCG(S)$. 

Throughout the article, we denote by $T_\gamma$ the Dehn twist around a curve $\gamma$. We use the following fact:
\begin{prop}
\label{pmod}
The group $\PMCG(S^{2},P)$ is generated by a finite number of explicit Dehn twists.

\end{prop}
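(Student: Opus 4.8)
The plan is to reduce the statement to the well-known generating set for the mapping class group of a punctured sphere by explicit Dehn twists, which is classical (see \cite{primer}). Here $\PMCG(S^2,P)$ means the pure mapping class group of the sphere with the finite set $P$ of marked points treated as punctures; equivalently, it is the subgroup of $\MCG(S^2 \setminus P)$ fixing each puncture individually. The first step is to recall the fundamental fact that $\MCG(S^2 \setminus P)$ is generated by finitely many Dehn twists: the classical presentation, going back to work on the braid group, expresses $\MCG(S^2,|P|=n)$ in terms of the "half-twists" $\sigma_1,\dots,\sigma_{n-1}$ exchanging consecutive punctures, and the subgroup $\PMCG$ is then generated by the squares $\sigma_i^2$ together with conjugates of them, each of which is a Dehn twist around an explicitly described simple closed curve separating a specified pair of punctures from the rest. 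One can make all these curves completely concrete once $P$ is given a fixed cyclic labeling, so the generating set is genuinely explicit.

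Concretely, I would proceed as follows. First, normalize: choose an identification of $S^2$ with $\riem$ and place the points of $P$ on the real circle in a fixed cyclic order $p_1,\dots,p_n$. Second, for each pair $1 \le i < j \le n$ let $\gamma_{i,j}$ be the boundary of a regular neighborhood of a fixed simple arc through $p_i,\dots,p_j$ (so $\gamma_{i,j}$ encircles exactly $p_i,\dots,p_j$); these are finitely many explicitly described curves. Third, invoke the standard fact that the Dehn twists $T_{\gamma_{i,j}}$ — or even a smaller subfamily, e.g.\ those with $j = i+1$ together with $j = i+2$, which already generate by the "lantern relation" and the chain of conjugations — generate $\PMCG(S^2,P)$. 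This is exactly the content of the presentation of the pure sphere braid group $P_n(S^2)$ modulo its center, and $\PMCG(S^2,P)$ is a quotient of $P_n(S^2)$ by the center (the relation killing the full twist), so a generating set for $P_n(S^2)$ descends to one here.

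The only point requiring a little care — and the place I expect to spend the most effort — is bookkeeping the relationship between the several models: the pure braid group $P_n(S^2)$, the mapping class group of the $n$-punctured sphere, and the mapping class group of the sphere with $n$ boundary circles, since the paper's convention allows both punctures and holes and the Dehn twists must be named in a way consistent with that convention. Capping boundary circles with once-punctured disks induces a surjection on mapping class groups with central kernel generated by the boundary twists, so one must check that the chosen curves $\gamma_{i,j}$ lift to twists that survive and still generate; this is routine but is the part where an explicit description (rather than a mere existence statement) has to be pinned down. Once the correspondence is fixed, finiteness and explicitness of the generating set are immediate from the classical presentation, and the proposition follows. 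I would keep the write-up short, citing \cite{primer} for the underlying generation statement and only spelling out the normalization and the list of curves $\gamma_{i,j}$.
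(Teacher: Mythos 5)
Your proposal is correct and matches the paper, which offers no argument of its own beyond citing the classical Dehn--Lickorish generation theorem (with \cite{primer} as a reference); you are simply spelling out the standard braid-group/pure-braid-group argument that underlies that citation. The bookkeeping you flag (punctures vs.\ boundary circles, the central full twist) is exactly the routine part, and nothing in it threatens the conclusion.
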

The finiteness of the number of generating twists is a classical result of Dehn; Lickorish \cite{Lic} has made the construction explicit. 
See, for example, \cite{primer} for an exposition.

\section{Thurston maps}
\label{section1}
In this section we recall the basic setting of Thurston's characterization of rational functions.
\subsection{Branched covering maps}
\label{section1.1}
Let $f:S^{2}\rightarrow S^{2}$ be an orientation-preserving branched covering self-map of the two-sphere. We define the \textit{postcritical set $P_{f}$} by
\[
 P_{f}:=\bigcup_{n>0}f^{\circ n}(\Omega_{f}),
\]
where $\Omega_{f}$ is the set of critical points of $f$. When the postcritical set $P_{f}$ is finite, we say that $f$ is \textit{postcritically finite}.

A {\it (marked) Thurston map} is a pair $(f,Q_f)$ where $f:S^2\to S^2$ is a postcritically finite ramified covering of degree at least 2 and $Q_f$ is a finite collection of marked points
$Q_f\subset S^2$ which contains $P_f$ and is $f$-invariant: $f(Q_f)\subset Q_f$. Thus, all elements of $Q_f$ are pre-periodic for $f$. 
\paragraph*{\bf Thurston equivalence.} 
Two marked Thurston maps $(f,Q_f)$ and $(g,Q_g)$ are \textit{Thurston equivalent} if there are homeomorphisms $\phi_{0}, \phi_{1}:S^{2}\rightarrow S^{2}$ such that
\begin{enumerate}
 \item the maps $\phi_{0}, \phi_{1}$ coincide on $Q_f$, send $Q_{f}$ to $Q_{g}$ and   and are isotopic \text{rel } $Q_f$; 
\item the diagram
\[
\begin{CD}
S^{2} @>\phi_{1}>> S^{2}\\
@VVfV @VVgV\\
S^{2} @>\phi_{0}>> S^{2}
\end{CD}
\]
commutes.
\end{enumerate}
 
\subsection*{Orbifold of a Thurston map} Given a Thurston map $f:S^{2} \rightarrow S^{2}$, we define a function $N_{f}:S^{2} \rightarrow \mathbb{N}\cup {\infty}$ as follows:
\[
N_{f}(x)=\begin{cases}
1& \text{if $x \notin P_{f}$},\\
\infty & \text{if $x$ is in a cycle containing a critical point},\\
\underset{f^{k}(y)=x}{\text{lcm}} \text{deg}_{y}(f^{\circ k}) & \text{ otherwise}.
\end{cases}
\]

The pair $(S^{2},N_{f})$ is called the \textit{orbifold of $f$}. 
The {\it signature} of the orbifold $(S^2,N_f)$ is the set $\{N_f(x)\text{ for }x\text{ such that }1<N_f(x)<\infty\}$. 
The \textit{Euler characteristic} of the orbifold is given by 
\begin{equation}
\label{eq:euler}
 \chi(S^{2},N_{f}):= 2-\sum_{x \in P_{f}}\left(1-\frac{1}{N_{f}(x)}\right).
\end{equation}
One can prove that $\chi(S^{2},N_{f})\leq 0$. In the case where $\chi(S^{2},N_{f})< 0$, we say that the orbifold is \textit{hyperbolic}. Observe that most orbifolds are hyperbolic: indeed, as soon as the cardinality $\vert P_{f} \vert >4$, the orbifold is hyperbolic.

\subsection*{Thurston maps with parabolic orbifolds}

A complete classification of postcritically finite branched covers  with parabolic orbifolds has been given in \cite{DH}. All postcritically finite rational functions with parabolic orbifolds have been extensively described in \cite{milnorlattes}.  In this section, we remind the reader of basic results on Thurston maps with parabolic orbifolds.

Recall that a map $f \colon (S_1,v_1) \to (S_2,v_2)$ is a covering map of orbifolds if $$v_1(x)\deg_x f = v_2(f(x))$$ for any $x \in S_1$. The following proposition is found in \cite{DH}:

\begin{proposition} 
\begin{itemize}
	\item[i.] 
If $f \colon \Sphere \to \Sphere$ is a postcritically finite branched cover, then $\chi(O_f) \le 0$.
\item[ii.] If $\chi(O_f) = 0$, then $f \colon O_f \to O_f$ is a covering map of orbifolds.
\end{itemize}
\end{proposition}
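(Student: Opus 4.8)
The plan is to prove both statements by direct computation with the Euler characteristic $\chi(O_f)$, using the multiplicativity of ramification under branched covers. First I would set up the Riemann–Hurwitz relationship between $\chi(O_f)$ and the Euler characteristic of a suitable orbifold structure pulled back through $f$. Specifically, writing $N=N_f$, I would observe that the definition of $N_f$ is engineered precisely so that for every $x\in S^2$ one has $N_f(f(x))$ divisible by $N_f(x)\cdot\deg_x f$; call this the \emph{divisibility property}. This is the combinatorial heart of the argument and follows by unwinding the $\mathrm{lcm}$ in the third case of the definition of $N_f$, together with the conventions in the first two cases (points outside $P_f$ get value $1$, and points in a cycle through a critical point get value $\infty$, for which divisibility is vacuous). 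I expect verifying this divisibility property cleanly — handling the interaction of the three cases and the fact that $f(Q_f)\subset Q_f$, $f(P_f)\subset P_f$ — to be the main obstacle, since one must check that no point of $P_f$ escapes the bookkeeping.

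Granting the divisibility property, define for each $x$ the integer (or $\infty$) $\nu(x):=N_f(f(x))/(N_f(x)\deg_x f)\ge 1$. Then the orbifold $(S^2,\nu\cdot N_f\circ f)$ — that is, $S^2$ with weight function $x\mapsto \nu(x)N_f(x)$... wait, more precisely with the weight function that makes $f$ into an orbifold covering onto $(S^2,N_f)$ after adjusting — maps to $(S^2,N_f)$ as an orbifold covering of degree $d=\deg f$. Applying the multiplicativity of orbifold Euler characteristic under coverings (Riemann–Hurwitz for orbifolds), I get
\[
\chi\bigl(S^2, x\mapsto N_f(x)\deg_x f\cdot\tfrac{1}{\text{(correction)}}\bigr)=d\cdot\chi(O_f).
\]
Cleaned up: the pulled-back orbifold structure $\tilde N$ defined by $\tilde N(x)=N_f(f(x))/\deg_x f$ satisfies $\chi(S^2,\tilde N)=d\,\chi(O_f)$, and since $\tilde N(x)$ is a multiple of $N_f(x)$ by the divisibility property, we have $\chi(S^2,\tilde N)\ge \chi(O_f)$ (larger weights only increase Euler characteristic toward $2$). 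Combining, $d\,\chi(O_f)\ge\chi(O_f)$, and since $d\ge 2$ this forces $\chi(O_f)\le 0$, proving (i).

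For (ii), suppose $\chi(O_f)=0$. Then the inequality chain above becomes $0=d\cdot 0\ge\chi(S^2,\tilde N)\ge\chi(O_f)=0$, hence $\chi(S^2,\tilde N)=\chi(O_f)=0$ as well, which forces $\tilde N(x)=N_f(x)$ for every $x$ (any strict increase at even one point with finite weight would strictly increase $\chi$, using $\chi<0$ is impossible but equality of the two parabolic orbifolds with $\tilde N\ge N_f$ pointwise does force equality — one checks the finitely many signature possibilities $(2,2,2,2),(2,3,6),(2,4,4),(3,3,3),(2,\infty,\infty),\dots$ and sees that enlarging any weight breaks $\chi=0$). But $\tilde N(x)=N_f(x)$ says exactly $N_f(f(x))=N_f(x)\deg_x f$, which is the defining condition for $f\colon O_f\to O_f$ to be an orbifold covering map. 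I would close by remarking that the parabolic signatures are classical (they are listed in \cite{DH} and \cite{milnorlattes}) so the case-check in the last step is finite and routine. The only genuinely delicate point remains the divisibility property, and if a self-contained verification proves cumbersome I would instead cite the corresponding statement directly from \cite{DH}.
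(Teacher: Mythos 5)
The paper itself offers no proof of this proposition --- it simply cites \cite{DH} --- and your overall strategy (pull back the orbifold structure by $f$, use multiplicativity of the orbifold Euler characteristic under orbifold coverings, and compare the pullback weights with $N_f$ via the divisibility property) is exactly the standard argument. The divisibility property is also less delicate than you fear: if $f^{\circ k}(y)=x$ then $\deg_y f^{\circ(k+1)}=\deg_y f^{\circ k}\cdot\deg_x f$, and since $\operatorname{lcm}(ca,cb)=c\,\operatorname{lcm}(a,b)$, the lcm defining $N_f(f(x))$ is taken over a set containing $\{\deg_x f\cdot\deg_y f^{\circ k}: f^{\circ k}(y)=x\}$, whose lcm is $\deg_x f\cdot N_f(x)$; the cases involving the value $\infty$ are immediate.

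There is, however, a concrete error that invalidates your deduction as written: the monotonicity of $\chi$ in the weights goes the other way. Since $\chi(S^2,N)=2-\sum_x\bigl(1-\tfrac1{N(x)}\bigr)$, enlarging a weight enlarges $1-\tfrac1{N(x)}$ and hence \emph{decreases} $\chi$; so from $\tilde N(x)\ge N_f(x)$ you get $\chi(S^2,\tilde N)\le\chi(O_f)$, not $\ge$. Moreover, even granting your inequality, your conclusion in (i) does not follow: from $d\,\chi(O_f)\ge\chi(O_f)$ and $d\ge 2$ one gets $(d-1)\chi(O_f)\ge 0$, i.e.\ $\chi(O_f)\ge 0$ --- the opposite of what you claim. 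The correct chain is $d\,\chi(O_f)=\chi(S^2,\tilde N)\le\chi(O_f)$, hence $(d-1)\chi(O_f)\le 0$ and $\chi(O_f)\le 0$. The same flip repairs (ii): if $\chi(O_f)=0$ then $\chi(S^2,\tilde N)=d\cdot 0=0$ as well, while any point with $\tilde N(x)>N_f(x)$ (including a point of weight $1$ acquiring weight $>1$, or a finite weight becoming $\infty$) would make $\chi(S^2,\tilde N)$ strictly \emph{smaller} than $\chi(O_f)=0$, a contradiction; hence $\tilde N\equiv N_f$, i.e.\ $N_f(x)\deg_x f=N_f(f(x))$ for all $x$, which is precisely the covering condition. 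This pointwise equality argument also makes your case-check of the finitely many parabolic signatures unnecessary. With these corrections your proof is the standard one from \cite{DH}.
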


Equation~(\ref{eq:euler}) gives six possibilities for $\chi(O_f)=0$. If we record all the values of $v_f$ that are bigger than 1, we get one of the following orbifold signatures.

\begin{enumerate}
	\item $(\infty,\infty),$
	\item $(2,2,\infty),$
	\item $(2,4,4),$
	\item $(2,3,6),$
	\item $(3,3,3),$
	\item $(2,2,2,2).$
\end{enumerate}

\noindent
In cases (1)-(5) the orbifolds have a unique complex structure, and can be realized as a quotient $\CC/ G$ of the complex plane by a discrete
group of automorphisms $ G$ as follows (cf. \cite{DH}):
\begin{enumerate}
\item $ G=<z\mapsto z+1>$,
\item $ G=<z\mapsto z+1,z\mapsto -z>$,
\item $ G=<z\mapsto z+a, z\mapsto iz>$, where $a\in\ZZ[i]$,
\item $ G=<z\mapsto z+a, z\mapsto wz>$, where $w=e^{i\pi/3},\; a\in\ZZ[w]$,
\item $ G=<z\mapsto z+a, z\mapsto w^2z>$, where $w=e^{i\pi/3},\; a\in\ZZ[w]$.
\end{enumerate}

We are mostly interested in the last case. 
We will refer to a Thurston map that has orbifold with signature $(2,2,2,2)$ simply as a \emph{$(2,2,2,2)$-map}. 
An orbifold with signature $(2,2,2,2)$ is a quotient of a torus $T$ by an involution $i$; 
the four fixed points of the involution $i$ correspond to the points with ramification weight 2 on the orbifold. 
The corresponding branched cover $P:T\to\Sphere$ has exactly 4 simple critical points which are the fixed points of $i$. 
It follows that a $(2,2,2,2)$-map $f$ can be lifted to a covering self-map $\fh$ of $T$.

An orbifold with signature $(2,2,2,2)$  has a unique affine structure of the quotient
$\RR^2/ G$ where 
$$ G=<z\mapsto z+1,z\mapsto z+i, z\mapsto -z>.$$
We will denote this quotient by the symbol $\OO$, which graphically represents a ``pillowcase'' -- a sphere
with four corner points.

An important example of a $(2,2,2,2)$-map is a {\it flexible Latt{\'e}s rational map} constructed as follows.
Let
$$\TT\simeq \TT_\Lambda=\CC/\Lambda$$ where the lattice $\Lambda=<1,\tau>$, with $\tau\in\HH$. Set $i(z)=-z$. Then $\TT_\lambda/i\simeq \hat\CC$ and
the branched cover 
$$\wp:\CC\to\CC/\Lambda\to \hat\CC$$
is the Weierstrass elliptic function $\wp$ with periods $1$, $\tau$. 
Consider the parallelogram $P$ with vertices $0$, $1$, $\tau$, and $1+\tau$ which is the fundamental domain of $\Lambda$. The four simple fixed points of
the involution $i$ are the $\wp$-images of $0$, $1/2$, $\tau/2$ and $(1+\tau)/2$. They are the critical points of the brached cover
$\TT_\Lambda\to \hat\CC$.

\begin{figure}[ht]
\includegraphics[width=\textwidth]{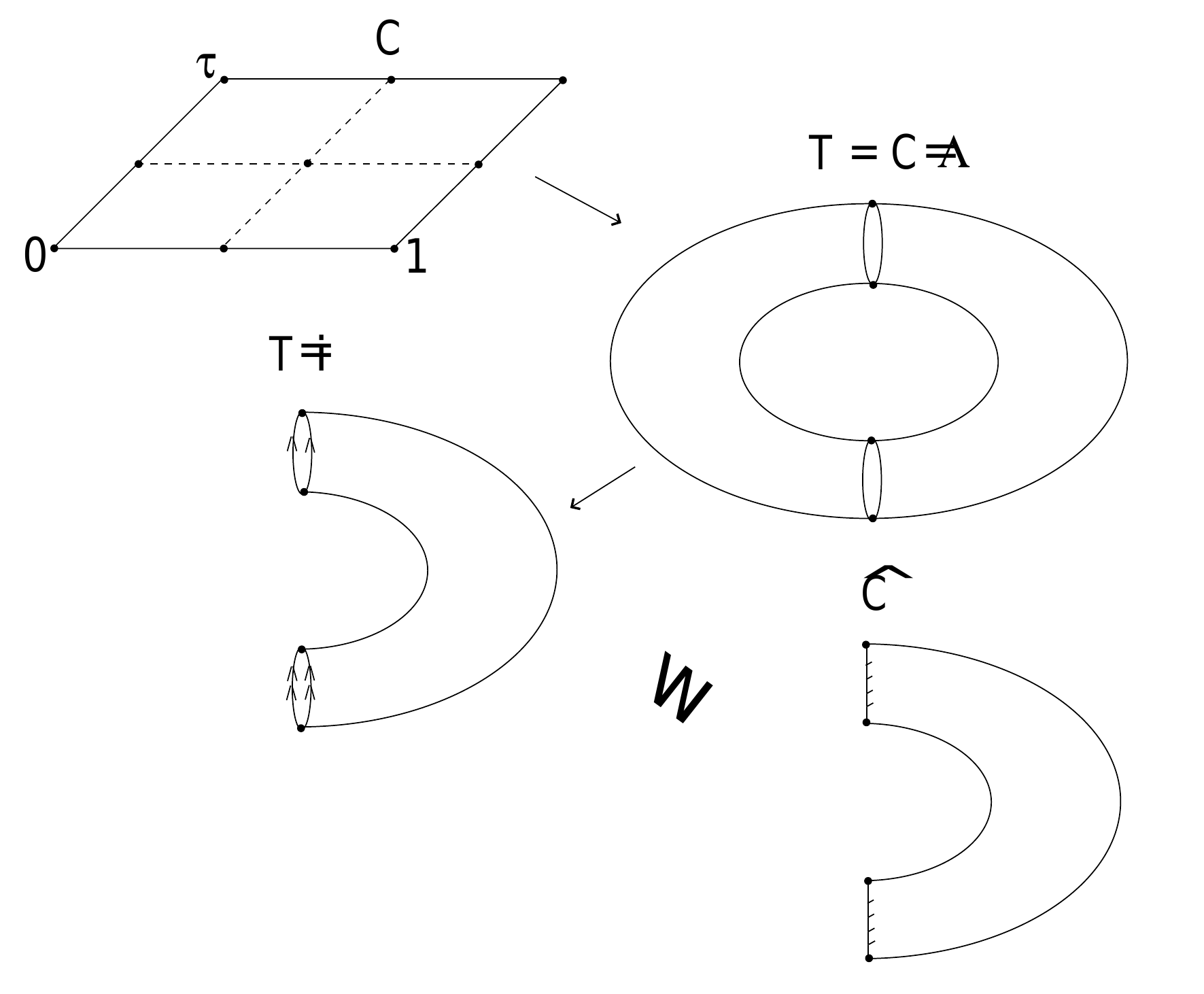}
\caption{\label{fig-weierstrass}Illustration of the branched cover $\wp$. The critical points of $\wp$
are marked in a fundamental parallelogram of the lattice $\Lambda$, as well as their images.}
\end{figure}

Set $$A(z)\equiv az+b,\text{ where }a\in\ZZ\text{ with }|a|>1\text{, and }b=(m+n\tau)/2\in\Lambda/2.$$
The complex-affine map $A$ projects to a well-defined rational map 
$$L:\TT_\Lambda\to\TT_\Lambda$$ of degree $a^2$.
Trivially, all of the postcritical set of $L$ lies in the projection of $\Lambda/2$ in $\hat\CC$ and hence is finite.
Note that as long as the values of $a$, $m$, and $n$ are the same, two different maps $L$ are topologically conjugate 
for all values of $\tau$. In particular, they cannot be distinguished by Thurston equivalence, which shows that the uniqueness part of 
Thurston's Theorem does not generally
hold in the parabolic orbifold case.

As before, let $f$ be a $(2,2,2,2)$-map, and $p:T\to\S^2$.
Take any simple closed  curve $\gamma$ on $\Sphere \setminus \PP$. Then $p^{-1}(\gamma)$ has either one or two components that are simple closed curves. 

The following propositions are straightforward (see, for example, \cite{seljmd}):
\begin{proposition}
 If there are exactly two postcritical points of $f$ in each complementary component of $\gamma$, then the $p$-preimage of $\gamma$ consists of two components that are homotopic in $T$ and non-trivial in $H_1(T,\Z)$. Otherwise, all preimages of $\gamma$ are trivial.
 \label{prop:trivialhom}
\end{proposition}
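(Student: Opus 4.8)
The plan is to reduce everything to the monodromy of the degree-two branched cover $p\colon T\to\Sphere$, which is branched exactly over the four postcritical points of $f$ — these being precisely the weight-$2$ points of the parabolic $(2,2,2,2)$-orbifold, so in particular $|P_f|=4$ and $\gamma$, lying in $\Sphere\setminus\PP$, misses all of them. First I would note that $\gamma$ separates $\Sphere$ into two closed Jordan disks $B'$, $B''$, and that the four branch points are split between $B'$ and $B''$ as $0+4$, $1+3$, or $2+2$. The monodromy homomorphism $\pi_1(\Sphere\setminus P_f)\to\Z/2$ of $p$ sends a loop to the parity of the number of branch points it encircles; hence $\gamma$ has trivial monodromy exactly in the even cases $0+4$ and $2+2$, and nontrivial monodromy in the odd case $1+3$. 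This recovers and refines the dichotomy stated just before the proposition: $p^{-1}(\gamma)$ is a pair of disjoint simple closed curves in the even cases and a single simple closed curve in the odd case.

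Next I would pin down the topology of $p^{-1}(B')$ and $p^{-1}(B'')$ using Riemann--Hurwitz together with surjectivity of the local monodromy (which forces connectedness). In the $0+4$ split, $p^{-1}(B')$ is two disjoint disks; in the $1+3$ split, over the disk containing a single branch point $p^{-1}$ is one disk (the local model $z\mapsto z^2$). In either situation a component of $p^{-1}(\gamma)$ bounds a disk in $T$, hence is null-homotopic, which gives the ``otherwise'' clause. In the $2+2$ split, $p^{-1}(B')$ is the double cover of a disk branched over two points: a connected orientable surface with $\chi=2\cdot 1-2=0$ whose boundary $p^{-1}(\gamma)=\gamma_1\cup\gamma_2$ has two components, i.e.\ an annulus $A'$ with $\partial A'=\gamma_1\cup\gamma_2$; likewise $p^{-1}(B'')=A''$ is an annulus with the same boundary. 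Thus $T$ is obtained by gluing the two annuli $A'$ and $A''$ along $\gamma_1\cup\gamma_2$.

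From this $2+2$ description both remaining assertions are immediate. Since $\gamma_1$ and $\gamma_2$ are the two boundary circles of the annulus $A'$, they are each isotopic to its core, hence homotopic to each other in $T$. And $T\setminus\gamma_1$ is the union of two half-open annuli (namely $A'$ and $A''$ with one boundary circle removed) glued along $\gamma_2$, hence connected; therefore $\gamma_1$ is non-separating in $T$ and $[\gamma_1]\ne 0$ in $H_1(T,\Z)$.

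I expect the only genuinely delicate point to be bookkeeping rather than ideas: checking that the branched covers of $B'$ and $B''$ really have the claimed types (connectedness from surjective local monodromy, combined with the $\chi$- and boundary-count), and keeping orientations straight so that the deck transformation of $p$ — which swaps $\gamma_1$ and $\gamma_2$ and acts as $-\id$ on $H_1(T,\Z)$ — is consistent with $[\gamma_1]=[\gamma_2]$ (the point being that, as the two oriented boundary circles of $A'$, the classes agree only after reversing one orientation, which is exactly what the $-\id$ action demands). Everything else is standard surface and covering-space topology.
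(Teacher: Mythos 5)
Your argument is correct: the monodromy parity count, the Riemann--Hurwitz/Euler-characteristic identification of the preimages of the two complementary disks (two disks, a disk, or an annulus according to the $0{+}4$, $1{+}3$, $2{+}2$ splits), and the non-separating argument for homological nontriviality together give exactly the stated dichotomy, and your remark that the deck involution acts by $-\id$ so the two boundary curves agree only up to sign matches the paper's convention that $\langle\gamma\rangle$ is defined up to sign. The paper itself states this proposition without proof, calling it straightforward and citing \cite{seljmd}; your covering-space proof is the standard argument that is being alluded to there, so there is no substantive divergence to report.
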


Every homotopy class of simple closed curves $\gamma$ on $T$ defines, up to sign, an element $\langle \gamma \rangle$ of $H_1(T,\Z)$. If a simple closed curve $\gamma$ on $\Sphere \setminus \PP$ has two $p$-preimages, then they are homotopic by the previous proposition. Therefore, every  homotopy class of simple closed curves $\gamma$ on $\Sphere \setminus \PP$ also defines, up to sign, an element $\langle \gamma \rangle$ of $H_1(T,\Z)$. It is clear that for any $h \in H_1(T,\Z)$ there exists a homotopy class of simple closed curves $\gamma$ such that $h=n \langle \gamma \rangle$ for some $n \in \Z$.

Since $H_1(T,\Z)\cong \Z^2$, the push-forward operator $\fh_*$ is a linear operator. It is easy to see that the determinant of $\fh_*$ is equal to the degree of $\fh$, which is in turn equal to the degree of $f$. Existence of invariant multicurves for $f$ is related to the action of $\fh_*$ on $H_1(T,\Z)$. 

\begin{proposition} 
\label{prop:homology1} Suppose that a component $\gamma'$ of the $f$-preimage of a simple closed curve $\gamma$ on $\Sphere \setminus \PP$  is homotopic $\gamma$. Take a $p$-preimage $\alpha$ of $\gamma$. Then $\fh_*(\langle\alpha\rangle)=\pm d \langle\alpha\rangle$, where $d$ is the degree of $f$ restricted to  $\gamma'$.
\end{proposition}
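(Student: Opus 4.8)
The plan is to lift the situation from the pillowcase $\Sphere\setminus\PP$ to the torus $T$ and read off the homological statement directly from the covering data. First I would set up the lifted dynamics: since $f$ is a $(2,2,2,2)$-map, Proposition stated above (the lift of $f$) gives a covering self-map $\fh:T\to T$ with $p\circ\fh=f\circ p$. The curve $\gamma'$ is a component of $f^{-1}(\gamma)$ homotopic to $\gamma$ in $\Sphere\setminus\PP$; in particular $\gamma$ and $\gamma'$ each separate the four postcritical points $2+2$, so by Proposition~\ref{prop:trivialhom} each of $\gamma,\gamma'$ has two $p$-preimages, each a simple closed curve nontrivial in $H_1(T,\Z)$, and the two preimages of each are homotopic. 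Fix $\alpha$ a $p$-preimage of $\gamma$; I want to show $\fh_*\langle\alpha\rangle=\pm d\langle\alpha\rangle$ where $d=\deg(f|_{\gamma'})$.

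The key step is to identify the preimages of $\alpha$ under $\fh$ with preimages of $\gamma'$ under $p$. Since $f\circ p=p\circ\fh$, the set $\fh^{-1}(\alpha)$ maps under $p$ into $f^{-1}(p(\alpha))=f^{-1}(\gamma)$, which has $\gamma'$ among its components. Because $\gamma'$ is homotopic to $\gamma$, it too has exactly two $p$-preimages, and I claim these are precisely the components of $\fh^{-1}(\alpha)$ that lie over $\gamma'$. Indeed, a component $\beta$ of $\fh^{-1}(\alpha)$ over $\gamma'$ maps to $\gamma'$ under $p$ via a covering map, and $\fh$ restricted to $\beta$ covers $\gamma$ with the same degree with which $f$ covers $\gamma$ along $\gamma'$, namely $d$; a diagram chase with the multiplicativity of covering degrees (using that $p|_\beta$ and $p|_\alpha$ have the same degree, either $1$ or $2$, by Proposition~\ref{prop:trivialhom}) shows $p|_\beta$ has that same degree and $\fh|_\beta:\beta\to\alpha$ has degree $d$. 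Hence in $H_1(T,\Z)$ we get $\fh_*\langle\beta\rangle=\pm d\langle\alpha\rangle$, where the sign absorbs the orientation ambiguity in the $H_1$-class of an unoriented curve.

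It remains to convert the statement about $\langle\beta\rangle$ into one about $\langle\alpha\rangle$, and this is where the homotopy hypothesis is used essentially: $\gamma'\simeq\gamma$ in $\Sphere\setminus\PP$ lifts (along the covering $p$, which is unbranched away from the corners) to a homotopy between a $p$-preimage of $\gamma'$ and a $p$-preimage of $\gamma$, so $\langle\beta\rangle=\pm\langle\alpha\rangle$ in $H_1(T,\Z)$. Combining, $\fh_*\langle\alpha\rangle=\pm\fh_*\langle\beta\rangle=\pm d\langle\alpha\rangle$, as desired. The main obstacle I anticipate is the bookkeeping in the previous paragraph: one must carefully track which of the two components of $p^{-1}(\gamma)$ (respectively $p^{-1}(\gamma')$) is hit, and verify that the covering degrees multiply correctly in both the $p|_\alpha$ degree $=1$ and degree $=2$ cases, so that the degree of $\fh|_\beta$ is exactly $d$ and not, say, $2d$ or $d/2$. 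This is a finite case analysis driven entirely by Proposition~\ref{prop:trivialhom} and the identity $p\circ\fh=f\circ p$, so no genuinely new input is needed beyond the propositions already established.
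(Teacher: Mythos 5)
The paper offers no proof of this proposition to compare against: it is listed among the statements called ``straightforward'' and deferred to \cite{seljmd}. Judged on its own merits, your argument is essentially the intended one and is correct in substance: lift everything to the torus via $p\circ\fh=f\circ p$, note that a component $\beta$ of $p^{-1}(\gamma')$ is a component of $\fh^{-1}$ of a component of $p^{-1}(\gamma)$, compute $\deg(\fh|_\beta)=d$ from multiplicativity of degrees in the commuting square (since $p|_\alpha$ and $p|_\beta$ both have degree $1$ in the relevant case), and transfer $\langle\beta\rangle=\pm\langle\alpha\rangle$ by lifting the free homotopy $\gamma'\simeq\gamma$ through the unbranched part of the covering, so that $\fh_*\langle\alpha\rangle=\pm\fh_*\langle\beta\rangle=\pm d\langle\alpha\rangle$.

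Two points need tightening. First, your claim that the hypothesis forces $\gamma$ (hence $\gamma'$) to separate the postcritical points two-and-two is not justified and is false in general: a preimage component can be homotopic to $\gamma$ in $\Sphere\setminus\PP$ while $\gamma$ encloses none or one postcritical point (for instance a curve enclosing only extra marked points of $Q_f\setminus P_f$, as in a degenerate Levy cycle). In all such cases, however, Proposition~\ref{prop:trivialhom} gives $\langle\alpha\rangle=0$, so the asserted identity reads $0=0$ and holds vacuously; you should split off this trivial case explicitly rather than infer the $2{+}2$ configuration, and in the remaining case observe that homotopy rel $\PP$ implies homotopy rel $P_f$, which is what guarantees $\deg(p|_\alpha)=\deg(p|_\beta)=1$. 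Second, the $p$-preimages of $\gamma'$ need not lie in $\fh^{-1}(\alpha)$ for your chosen $\alpha$: $\fh(\beta)$ may be the other component of $p^{-1}(\gamma)$. This is harmless, since the two components are homotopic in $T$ by Proposition~\ref{prop:trivialhom} and the discrepancy is absorbed into the sign (you essentially acknowledge this in your closing remark), but the phrase ``precisely the components of $\fh^{-1}(\alpha)$'' should be weakened accordingly. With these adjustments the proof is complete.
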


More generally, we obtain the following.

\begin{proposition} 
\label{prop:homology} Let $\gamma$ be a simple closed curve  on $\Sphere \setminus \PP$ such that there are two points of the postcritical set $\PP$ in each complementary component of $\gamma$. If all components of the  $f$-preimage of $\gamma$ have zero intersection number  with $\gamma$ in $\Sphere \setminus \PP$, then $\fh_*(\langle\gamma\rangle)=\pm d \langle\gamma\rangle$, where $d$ is the degree of $f$ restricted to any preimage of  $\gamma$.
\end{proposition}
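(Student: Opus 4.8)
The plan is to reduce this to Proposition~\ref{prop:homology1} by producing a single component $\gamma'$ of the $f$-preimage of $\gamma$ that is homotopic to $\gamma$ in $\Sphere\setminus\PP$. Since there are exactly two postcritical points in each complementary component of $\gamma$, Proposition~\ref{prop:trivialhom} guarantees that the $p$-preimage $\alpha$ of $\gamma$ consists of two parallel, homologically nontrivial curves on $T$, so $\langle\gamma\rangle=\langle\alpha\rangle$ is a well-defined primitive (up to sign) class, and the whole statement makes sense. The hypothesis is that every component of the $f$-preimage of $\gamma$ has zero intersection number with $\gamma$; I want to upgrade ``zero intersection number'' to ``homotopic to $\gamma$ or peripheral'' and then show at least one component is actually homotopic to $\gamma$ (not peripheral), so that Proposition~\ref{prop:homology1} applies and yields $\fh_*(\langle\alpha\rangle)=\pm d\langle\alpha\rangle$ with $d$ the local degree of $f$ on that component.

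First I would observe that each component $\gamma'$ of $f^{-1}(\gamma)$ is a simple closed curve in $\Sphere\setminus\PP$ which, having zero geometric intersection number with $\gamma$, is either peripheral (bounds a disk containing at most one point of $\PP$) or isotopic to $\gamma$; this is the standard classification of essential simple closed curves disjoint from $\gamma$ on the four-punctured sphere, using that $f^{-1}(\PP)\supset\PP$ so $\gamma'$ misses $\PP$. Next, a counting argument rules out the degenerate case: if every component of $f^{-1}(\gamma)$ were peripheral, then lifting to the torus, $\fh^{-1}(\alpha)$ would consist entirely of null-homotopic curves, contradicting that $\fh_*$ is invertible on $H_1(T,\Z)$ (its determinant equals $\deg f\neq 0$) — indeed $\fh_*$ surjective forces some component of $\fh^{-1}(\alpha)$ to carry a nonzero multiple of the class $\langle\alpha\rangle$, hence to be homologically nontrivial, hence (being an embedded curve on $T$) to project to a nonperipheral curve. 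So some component $\gamma_0'$ of $f^{-1}(\gamma)$ is homotopic to $\gamma$ in $\Sphere\setminus\PP$.

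Now apply Proposition~\ref{prop:homology1} to $\gamma$ and this component $\gamma_0'$: taking $\alpha$ to be a $p$-preimage of $\gamma$, we get $\fh_*(\langle\alpha\rangle)=\pm d_0\langle\alpha\rangle$ where $d_0$ is the degree of $f|_{\gamma_0'}$. Finally I would argue that $d_0$ equals the degree of $f$ restricted to \emph{any} preimage of $\gamma$ that is homotopic to $\gamma$: all such homotopic components get mapped to $\gamma$, they are pairwise disjoint and each separates $\PP$ into the same $2+2$ configuration, and the preimages that are peripheral contribute branched disks, so comparing the two $2$-point pieces of $\Sphere\setminus\gamma$ and their full preimages shows every ``essential'' sheet over $\gamma$ has the same degree. (Alternatively, phrase $d_0$ intrinsically as $\deg f / (\text{number of homotopic-to-}\gamma\text{ components})$ after checking peripheral components cover each side of $\gamma$ equally.) The main obstacle I anticipate is this last bookkeeping step — showing the local degree is the same on all the homotopic components and is the ``correct'' $d$ — which requires carefully tracking how the peripheral preimages and the homotopic preimages jointly cover the two halves of $\Sphere\setminus\gamma$; the homological input from $\fh_*$ is what makes it clean, since $\fh_*(\langle\alpha\rangle)=\pm d\langle\alpha\rangle$ already pins down $|d|$ as a spectral quantity independent of which component we used.
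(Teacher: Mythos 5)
The paper itself offers no proof of this proposition (it appears in a list of statements labelled as straightforward, with a pointer to \cite{seljmd}), so your argument can only be judged on its own terms. Your overall route --- lift to the torus via $p$, find a component of $f^{-1}(\gamma)$ isotopic to $\gamma$, and invoke Proposition~\ref{prop:homology1} --- is the right one, and the first two steps are sound: on the four-punctured sphere a simple closed curve with zero geometric intersection number with $\gamma$ is indeed peripheral or isotopic to $\gamma$, and the transfer identity $\sum_i \fh_*[\beta_i]=(\deg f)[\alpha]\ne 0$ does force at least one non-peripheral component.

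The genuine soft spot is exactly where you anticipated it: the claim that $d$ is the degree of $f$ on \emph{any} preimage component. Your sphere-level bookkeeping (``peripheral preimages contribute branched disks, compare the two halves of $\Sphere\setminus\gamma$ and their full preimages'') is never actually carried out, and you only exclude the case where \emph{all} components are peripheral; if some individual component were peripheral, the statement's ``any preimage'' would not follow from your argument. The clean fix is to observe that peripheral components cannot occur at all: writing $p^{-1}(\gamma)=\alpha\sqcup\alpha'$, one has $p^{-1}(f^{-1}(\gamma))=\fh^{-1}(\alpha\cup\alpha')$, and since $\fh$ is an \emph{unbranched} covering of $T$, every component of this preimage is essential in $T$; by the ``otherwise'' clause of Proposition~\ref{prop:trivialhom}, a component $\gamma'$ of $f^{-1}(\gamma)$ not separating $\PP$ into $2+2$ would have null-homotopic $p$-preimages, a contradiction. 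Hence every component separates $2+2$, hence is isotopic to $\gamma$, and its $p$-preimage has two components on each of which $p$ has degree $1$, so $\deg(f|_{\gamma'})=\deg(\fh|_{\beta})$ for either lift $\beta$. Applying Proposition~\ref{prop:homology1} to each component then gives $\fh_*\langle\alpha\rangle=\pm d_i\langle\alpha\rangle$ with the same eigenvalue for every $i$, so all the $d_i$ coincide --- which is precisely the ``spectral'' remark you make parenthetically at the end and which should be promoted to the actual argument, replacing the covering-of-the-two-halves bookkeeping.
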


\subsection*{Geometrization of a Thurston map with parabolic orbifold}
As seen above, every parabolic orbifold, which is a topological 2-sphere, can be obtained by considering a quotient of  
$\R^2$ by the action of a discrete group $ G$ of Euclidean  isometries that depends only on the signature of the orbifold.
We will call $ G$ the \emph{orbifold group}. Up to equivalence, we may thus assume that a Thurston map 
$f$ with parabolic orbifold is a self-map of the $\O_f=\R^2/ G$.

\begin{theorem} 
\label{t:4PointsCase}  
Let $f$ be a Thurston map with postcritical set $P=P_f$ and no extra marked points ($Q_f=P_f$) with parabolic orbifold. 
Then $f$ is equivalent to a quotient of a real affine map by the action of the orbifold group. 
\end{theorem}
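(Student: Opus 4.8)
The plan is to treat each of the six parabolic signatures in turn, with the bulk of the work reserved for the $(2,2,2,2)$ case, which is the only one where the orbifold admits a nontrivial space of affine self-maps. The cases (1)--(5) of the signature list, i.e. $(\infty,\infty)$, $(2,2,\infty)$, $(2,4,4)$, $(2,3,6)$, $(3,3,3)$, have a \emph{rigid} complex structure (the orbifold is $\CC/G$ for the explicit groups listed above), so by the classical analysis in \cite{DH} the map $f$ is already Thurston equivalent to the rational map induced by a complex-affine self-map $z\mapsto az+b$ of $\CC$ descending through the quotient; here one simply invokes the existing classification, observing that such a complex-affine map is in particular real affine. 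So the theorem reduces to the assertion that a $(2,2,2,2)$-map is Thurston equivalent to the quotient of a real affine self-map of $\RR^2$ by the orbifold group $G=\langle z\mapsto z+1,\ z\mapsto z+i,\ z\mapsto -z\rangle$.

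For the $(2,2,2,2)$ case I would argue via the torus double cover $p\colon T\to\Sphere$ introduced above. First lift $f$ to a covering self-map $\fh\colon T\to T$; by the discussion preceding Proposition~\ref{prop:trivialhom} this lift exists and is unique up to postcomposition with the deck involution $i$. The induced map $\fh_*$ on $H_1(T,\Z)\cong\Z^2$ is an integer matrix $M$ with $\det M=\deg f$. A homotopy self-equivalence of $T$ commuting (up to the right homotopy) with a linear model is the standard fact that a covering map of the torus is homotopic to a linear map: concretely, pick a base point among the preimages of a corner of the pillowcase, and use that $T$ is a $K(\Z^2,1)$ so that $\fh$ is homotopic rel this point to the linear endomorphism $L_M$ of $\RR^2/\Z^2$ realized by the matrix $M$. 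The key point is that this homotopy can be chosen $i$-equivariantly, since both $\fh$ and $L_M$ commute with $-\id$ up to the action of $\Z^2$ (translation by a half-lattice vector) — this is where Proposition~\ref{prop:homology1} and the affine normalization $b\in\Lambda/2$ enter, pinning down the translational part $b$. Descending the $i$-equivariant homotopy through $p$ produces a homotopy on $\Sphere$ between $f$ and the quotient affine map; promoting this homotopy to a genuine Thurston equivalence (a pair $\phi_0,\phi_1$ fitting in the commuting square, isotopic rel $P$) is then a matter of lifting along the covering $f$, exactly as in the standard proof that a topological conjugacy up to homotopy of postcritically finite maps can be upgraded.

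The main obstacle I anticipate is the equivariance bookkeeping: one must ensure simultaneously that (a) the homotopy from $\fh$ to $L_M$ respects the deck involution so that it descends, (b) the descended map's four critical points land on the four corners of the pillowcase in a way compatible with the marking $P=P_f$, and (c) the half-lattice translation vector $b$ is determined by the combinatorics of $f$ rather than being a free parameter — indeed the whole point of the preceding discussion (flexible Lattès maps) is that the \emph{modulus} $\tau$ is free but the affine data $(a,m,n)$ is a Thurston invariant. I would handle (b)--(c) by tracking the postcritical set through $p$: the four points of $P$ pull back to the four $2$-torsion points of $T$, $\fh$ permutes these according to $M$ acting on $\tfrac12\Z^2/\Z^2$, and matching this permutation with the action of $f$ on $P$ forces both $M\bmod 2$ and $b\bmod\Lambda$. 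Once the affine model $A(z)=az+b$ is thus fixed, the equivalence is obtained by the lifting argument. I would also remark that the orbifold covering condition $v_1(x)\deg_xf=v_2(f(x))$ from part (ii) of the Proposition above is what guarantees $f$ preserves the affine structure in the first place, so no curve-surgery (Thurston obstruction) analysis is needed here — the parabolic hypothesis does all the work.
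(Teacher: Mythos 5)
Your reduction to the $(2,2,2,2)$ case and your treatment of the rigid signatures agree with the paper, but for the pillowcase you take a genuinely different route, and the route as described has a gap at its decisive step. The paper never passes through the torus double cover or a $K(\pi,1)$ argument: it lifts $f$ to a \emph{homeomorphism} $F$ of the universal orbifold cover $\R^2$, proves by a short computation with the push-forward action $F_*$ on the orbifold group (Lemma~\ref{l:AffineLift}) that $F$ agrees with an affine map $L(z)=Az+b$ on the entire half-lattice $\Pt$ (not merely at a base point and on homology, which is all your normalization provides), and then observes that $\tilde\phi=L^{-1}\circ F$ is a $G$-equivariant homeomorphism fixing $\Pt$ pointwise, hence descends to a class $\phi\in\PMCG(\OO)$ with $f=l\circ\phi$. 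Triviality of $\phi$ is then Lemma~\ref{l:TrivialLift}: $\PMCG(\OO)$ is free on two Dehn twists and its representation by $\left(\begin{smallmatrix}1&2\\0&1\end{smallmatrix}\right)$ and $\left(\begin{smallmatrix}1&0\\2&1\end{smallmatrix}\right)$ is faithful, so a class admitting a lift that is the identity on $\Pt$ is trivial, and the commuting square with $\phi_0=\id$, $\phi_1=\phi$ is the desired Thurston equivalence.

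The gap in your version sits exactly where that lemma does its work. You end with ``a homotopy on $\Sphere$ between $f$ and the quotient affine map'' and propose to promote it ``by lifting along the covering $f$.'' But $f$ and $l$ are branched covers, not covers, so homotopy lifting is not available across the branch values, and your homotopy (descended from a straight-line or $K(\pi,1)$ homotopy) is a homotopy through continuous maps, not through branched covers or homeomorphisms; such a homotopy does not by itself produce the pair of homeomorphisms $(\phi_0,\phi_1)$ demanded by the definition of Thurston equivalence. What is actually needed is the factorization $f=l\circ\phi$ together with a proof that the correction class $\phi$ is trivial in $\PMCG(\OO)$. Matching $\fh_*$ with $M$ on $H_1(T,\Z)$ only places $\phi$ in the kernel of the homological representation of $\PMCG(\OO)$, and the fact that this kernel is trivial for the pillowcase is a genuine group-theoretic input (the two twist matrices generate a free group), not a formality; for a general surface a Dehn twist can act trivially on homology while being nontrivial in the mapping class group. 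Secondary points: your appeal to Proposition~\ref{prop:homology1} to justify the $i$-equivariance of the homotopy and to pin down $b$ is misplaced (that proposition concerns invariant curves), and the exact agreement of the lift with $L$ on all of $\Pt$ — which is what makes the correction class liftable to a map fixing $\Pt$ — requires the symmetry computation of Lemma~\ref{l:AffineLift} that your outline omits.
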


\begin{proof} Since $\O_f$ is parabolic there are three cases: $\#P$ is either $2$, $3$ or $4$. In the first two cases, the orbifold has a unique 
complex structure and $f$ is equivalent to a quotient of a complex affine map (see \cite{DH}). In the third case, the orbifold $\O_f=\OO$,
so it is the quotient of $\R^2$ by the action of 
$$G=\langle z\mapsto z+1, z\mapsto z+i, z \mapsto -1 \rangle.$$
 Note that the elements of $G$ are either translations by an integer vector or symmetries around a preimage of a marked point. 
We will denote $$S_w\cdot z=2w-z$$ the symmetry around a point  $w\in \R^2$.  
Consider a lift $F \colon \R^2 \to \R^2$ of $f$ and denote $$\Pt=\{1/2(\Z+i\Z)\}$$ the full preimage of $P$ by the projection map. 

\begin{lemma} A lift $F$ of a continuous map $f\colon \OO \to \OO$ is affine on $\Pt$.
\label{l:AffineLift}
\end{lemma}
\begin{proof}
Since $F$ is a lift of $f$, it defines a push-forward map $F_* \colon G \to G$ such that 
$$F(g\cdot z)=F_*g\cdot F(z)\text{ for any }z\in \R^2\text{ and }g\in G.$$ It is clear that $F_*$ is a homeomorphism and it sends
 translations to translations and symmetries to symmetries: $F_*S_z=S_{F(z)}$. We immediately see that 
$$F(z+w)=Aw+F(z)$$ for some integer matrix $A$ and any $w \in \Z+i\Z$. Since $$F(0)=F(S_{1/2}\cdot1)=S_{F(1/2)}\cdot F(1)=S_{F(1/2)}\cdot( A(1,0)^T+F(0))=2F(1/2)-(A(1,0)^T+F(0)),$$ we see that $$F(1/2)=F(0)+A(1/2,0)^T.$$ Similar computations for $F(1/2i)$ and $F(1/2+1/2i)$ conclude the proof of the lemma.
\end{proof}

\noindent
Thus $F(z)$ agrees with an affine map $L(z)=Az+b$ on $\Pt$, where $A$ is an integer matrix and $b \in 1/2(\Z+i\Z)$ and $F_*g=L_*g$ for all $g\in G$. Therefore the map $\tilde{\phi}=L^{-1}\circ F$ is $G$-equivariant and projects to a self-homeomorphism $\phi$ of $\O_f$ which fixes $P$. 

\begin{lemma}
\label{l:TrivialLift} Let $l(z)$ be a quotient of an affine map $L(z)=Az+b$ where $A$ is an integer matrix and $b \in 1/2(\Z+i\Z)$ by the action of $G$, and $\phi$ be an element of $\PMCG(\OO)$. If $l(z)\circ \phi$ has a lift $L'$ to $\R^2$ such that $L'(z)=Az+b$ for all points in $\Pt$, then $\phi$ is trivial.
\end{lemma}

\begin{proof}
If $l(z)\circ \phi$ and $l(z)$ have lifts that agree on $\Pt$, then $\phi$ must have a lift that is identical on $\Pt$.

The pure mapping class group $\PMCG(\OO)$ is a free group generated by Dehn twists $T_\alpha$ and $T_\beta$ around simple closed curves $\alpha$ and $\beta$ that lift to horizontal and vertical straight lines in $\R^2$ respectively. As a representative of $T_\alpha$ and $T_\beta$ we can take unique homeomorphisms on $\OO$ that are quotients of
 $$\left(\begin{array}{c} x \\ y \end{array}\right) \mapsto \left[\begin{array}{cc}
	1  & 2 \\
	0  & 1
\end{array}\right] \left(\begin{array}{c} x \\ y \end{array}\right)  \text{and}
\left(\begin{array}{c} x \\ y \end{array}\right) \mapsto \left[\begin{array}{cc}
	1  & 0 \\
	2 & 1
\end{array}\right] \left(\begin{array}{c} x \\ y \end{array}\right) 
$$ on $\R^2$ by the action of $G$. This representation of $\PMCG(\OO)$ is faithful, and therefore only the trivial element can have a lift which is identical on $\Pt$.
\end{proof}

By the previous lemma, the homeomorphism $\phi$ represents the trivial element of $\PMCG(\OO)$ and, hence, is homotopic to the identity relative to $P$. Define $l$ to be the quotient of $L$ by the action of $G$. Then the commutative diagram 
\[
\begin{diagram}
\node{R^2} \arrow{e,t}{\tilde{\phi}} \arrow{s,l}{F} \node{R^2}
\arrow{s,l}{L}
\\
\node{R^2} \arrow{e,t}{\id} \node{R^2}
\end{diagram} 
$$
\text{projects to the commutative diagram}
$$
\begin{diagram}
\node{\OO} \arrow{e,t}{\phi} \arrow{s,l}{f} \node{\OO}
\arrow{s,l}{l}
\\
\node{\OO} \arrow{e,t}{\id} \node{\OO}
\end{diagram}
\]
which realizes Thurston equivalence between $f$ and $l$.

On the other hand, suppose that $l_1$ and $l_2$ are quotients of two affine maps, which are Thurston equivalent. Then $l_1$ and $l_2$ are conjugate on $P$, hence  lifts thereof are conjugate on $\Pt$ by an affine map (in the case when $\O_f=\OO$ this follows from Lemma~\ref{l:AffineLift}; the other cases are similar) and the uniqueness part of the statement follows.
\end{proof}

\subsection*{Thurston linear transformation.} 
Let $Q$ be a finite collection of points in $S^2$.
We recall that a simple closed curve $\gamma \subset S^{2}-Q$ is \textit{essential} if it does not bound a disk, is \textit{non-peripheral} if it does not bound a punctured disk.

\begin{defn}A  \textit{multicurve} $\Gamma$ on $(S^{2},Q)$ is a set of disjoint, nonhomotopic, essential, nonperipheral simple closed curves on $S^{2}\setminus W$.
Let $(f,Q_f)$ be a Thurston map, and set $Q=Q_f$.
A multicurve $\Gamma$ on $S\setminus Q$ is \textit{f-stable} if for every curve $\gamma \in \Gamma$, each component $\alpha$ of $f^{-1}(\gamma)$ is either trivial (meaning inessential or peripheral) or homotopic rel $Q$ to an element of $\Gamma$. 

\end{defn}

To any multicurve is associated its \textit{Thurston linear transformation} $f_{\Gamma}:\mathbb{R}^{\Gamma}\rightarrow \mathbb{R}^{\Gamma}$, best described by the following transition matrix
\[
 M_{\gamma \delta}=\sum_{\alpha} \frac{1}{\text{deg}(f:\alpha \rightarrow \delta)}
\]
where the sum is taken over all the components $\alpha$ of $f^{-1}(\delta)$ which are isotopic rel $Q$ to $\gamma$.
Since this matrix has nonnegative entries, it has a leading eigenvalue $\lambda(\Gamma)$ that is real and nonnegative (by the Perron-Frobenius theorem).

We can now state Thurston's theorem:

\begin{thurstonthm} Let $f:S^{2} \rightarrow S^{2}$ be a marked Thurston map
with a hyperbolic orbifold.
 Then $f$ is Thurston equivalent to a rational function $g$ with a finite set of marked pre-periodic orbits 
if and only if $\lambda(\Gamma)<1$ for every $f$-stable multicurve $\Gamma$. The rational function $g$ is unique up to conjugation with an automorphism of $\mathbb{P}^{1}$. 
\end{thurstonthm}

The proof of Thurston's Theorem for Thurston maps without additional marked points is given in \cite{DH}, for the proof for marked maps see e.g. \cite{BGL}.

When a multicurve $\Gamma$ has a leading eigenvalue $\lambda(\Gamma)\geq 1$, we call it a \textit{Thurston obstruction} for $f$.
A Thurston obstruction $\Gamma$ is \emph{minimal} if no proper subset of $\Gamma$ is itself an obstruction. We call $\Gamma$ a \emph{simple} obstruction if no permutation of the curves in $\Gamma$ puts $M_\Gamma$ in the block form
$$ M_\Gamma = \left( 
\begin{array}{cc}
	M_{11}  & 0 \\
	M_{21}  & M_{22}
\end{array}
 \right),
$$ 
where  the leading eigenvalue of $M_{11}$ is less than $1$. If such a permutation exists, it follows that $M_{22}$ is a Thurston matrix of the corresponding sub-multicurve with the same leading eigenvalue as $M_\Gamma$. It is thus evident that every obstruction contains a simple one. 

In the original formulation in \cite{DH}, a Thurston obstruction was required to be invariant. Omitting this requirement makes the statement of the theorem weaker in one direction and stronger in the other direction. However, in \cite{sel1} is shown that if there exists a Thurston obstruction for $f$, then there also exists a simple $f$-stable obstruction.

The following is an exercise in linear algebra (c.f. \cite{sel}):

\begin{proposition}
\label{prop:positive}
  A multicurve $\Gamma$ is a simple obstruction if and only if there exists a positive  vector $v$ such that $M_\Gamma v \ge v$.
\end{proposition}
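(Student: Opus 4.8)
The plan is to discard the topology and prove the equivalent purely matrix-theoretic statement for $M := M_\Gamma$ (a nonnegative matrix): $M$ admits a strictly positive vector $v$ with $Mv \ge v$ if and only if no permutation conjugates $M$ into the block form of the definition with $\lambda(M_{11}) < 1$. I will use standard Perron--Frobenius facts without proof: an irreducible nonnegative matrix has a strictly positive eigenvector, necessarily for its leading eigenvalue; the Collatz--Wielandt bound, that $A \ge 0$ and $x \gg 0$ force $\lambda(A) \ge \min_i (Ax)_i/x_i$; and the existence of the Frobenius normal form.

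For the implication ``$v$ exists $\Rightarrow$ simple obstruction'': if $v \gg 0$ and $Mv \ge v$, then Collatz--Wielandt gives $\lambda(\Gamma) = \lambda(M) \ge 1$, so $\Gamma$ is an obstruction. If $\Gamma$ were not simple, a permutation would bring $M$ into the block form of the definition with $\lambda(M_{11}) < 1$; splitting $v = (v', v'')$ with $v' \gg 0$, the vanishing block makes the first block of $Mv \ge v$ read $M_{11}v' \ge v'$, and Collatz--Wielandt applied to $M_{11}$ and $v'$ forces $\lambda(M_{11}) \ge 1$, a contradiction. Hence $\Gamma$ is simple.

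For the converse, suppose $\Gamma$ is a simple obstruction and put $M$ in Frobenius normal form: after a permutation, $M = (M_{ij})_{1 \le i,j \le k}$ is block lower triangular ($M_{ij} = 0$ for $j > i$) with each diagonal block $M_{ii}$ irreducible of size $m_i$. Call block $i$ \emph{initial} if also $M_{ij} = 0$ for every $j < i$, so that then $M_{ij} = 0$ for all $j \ne i$ and, moving block $i$ to the front, $M$ appears in exactly the forbidden block form with upper-left corner $M_{ii}$; simplicity therefore forces $\lambda(M_{ii}) \ge 1$ at every initial block. Now build $v = (v_1, \dots, v_k)$ in block coordinates inductively for $i = 1, \dots, k$; lower-triangularity means the $i$-th block of $Mv$ depends only on $v_1, \dots, v_i$, so once a block's inequality is arranged it stays valid, and moreover every $j$ with $M_{ij} \ne 0$, $j \ne i$, satisfies $j < i$ and has already been chosen. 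Set $w_i := \sum_{j < i} M_{ij} v_j \ge 0$. If block $i$ is initial, then $w_i = 0$, and taking $v_i \gg 0$ a positive eigenvector of $M_{ii}$ gives $(Mv)_i = M_{ii}v_i = \lambda(M_{ii}) v_i \ge v_i$ since $\lambda(M_{ii}) \ge 1$. If block $i$ is not initial, then $w_i \ne 0$ (some $M_{ij} \ne 0$ with $j < i$ and $v_j \gg 0$), and I set $v_i := \tfrac12 \sum_{l=0}^{m_i - 1} M_{ii}^l w_i$; a one-line telescoping identity gives $M_{ii} v_i + w_i - v_i = \tfrac12 M_{ii}^{m_i} w_i + \tfrac12 w_i \ge 0$, so $(Mv)_i = M_{ii}v_i + w_i \ge v_i$, while irreducibility of $M_{ii}$ makes $\sum_{l=0}^{m_i-1} M_{ii}^l$ entrywise positive, whence $v_i \gg 0$. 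Undoing the permutation gives a strictly positive $v$ with $M_\Gamma v \ge v$.

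The single delicate point is to see that simplicity must be invoked \emph{only} at the initial blocks: a non-initial block may carry a diagonal leading eigenvalue well below $1$, but it is fed a strictly positive vector $w_i$, and the truncated Neumann sum $\tfrac12 \sum_{l < m_i} M_{ii}^l w_i$ turns this into an admissible positive $v_i$ irrespective of $\lambda(M_{ii})$. The rest is bookkeeping with the Frobenius normal form and the quoted Perron--Frobenius facts; the proposition is essentially the classical criterion for a nonnegative matrix to possess a strictly positive vector $v$ with $Mv \ge v$, and the write-up will simply organize these ingredients.
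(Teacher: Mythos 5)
Your proof is correct, and there is nothing in the paper to compare it against: the paper dispatches Proposition~\ref{prop:positive} with the remark that it is ``an exercise in linear algebra'' and a pointer to the cited reference, giving no argument of its own, so your write-up supplies a genuinely self-contained proof. The forward direction (Collatz--Wielandt applied to $M_\Gamma$ and then to the putative block $M_{11}$, which receives the inequality $M_{11}v'\ge v'$ because the upper-right block vanishes) is the standard one and is fine. The converse via the Frobenius normal form is also sound: simplicity is invoked exactly at the initial blocks, the telescoping identity $M_{ii}v_i+w_i-v_i=\tfrac12 M_{ii}^{m_i}w_i+\tfrac12 w_i\ge 0$ checks out, and strict positivity of $v_i=\tfrac12\sum_{l<m_i}M_{ii}^l w_i$ follows since $(I+M_{ii})^{m_i-1}>0$ for irreducible $M_{ii}$ forces $\sum_{l<m_i}M_{ii}^l$ to be entrywise positive (the factor $\tfrac12$ is harmless but unnecessary; without it one gets $(Mv)_i-v_i=M_{ii}^{m_i}w_i\ge0$). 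The one point to tighten is the degenerate case in which the Frobenius form consists of a single irreducible block (or, equivalently, an initial block whose complement is empty): there the ``forbidden'' block decomposition cannot be exhibited, so simplicity alone does not force $\lambda(M_{ii})\ge1$; you should instead invoke the hypothesis that $\Gamma$ is an obstruction, i.e.\ $\lambda(M_\Gamma)\ge1$, which you have assumed and which settles this case immediately. With that one-sentence amendment the argument is complete.
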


For what follows, let us make a note of a  particular kind of Thurston obstructions:
\begin{defn}
A {\it Levy cycle} is a multicurve 
$$\Gamma=\{\gamma_0,\gamma_1,\ldots,\gamma_{n-1}\}$$
such that each $\gamma_i$ has a nontrivial preimage $\gamma'_i$, where the  topological degree of $f$ restricted to $\gamma'_i$ is $1$ and $\gamma'_i$ is homotopic to $\gamma_{(i-1)\mod n}$ rel $Q$. A  Levy cycle is \textit{degenerate} if each $\gamma'_i$ bounds a disk $D_i$ such that the restriction of $f$ to $D_i$ is a homeomorphism and $f(D_i)$ is homotopic to $D_{(i+1)\mod n}$ rel $Q$.

\end{defn}
A Thurston map $f$ is called a {\it topological polynomial} if there exists a point
$w$ such that $f^{-1}(w)=\{ w\}$. The following was proved by Levy \cite{Levy}:
\begin{thm}
\label{th:levy}
If $f$ is a topological polynomial and $\Gamma$ is a Thurston obstruction for $f$, then $\Gamma$ contains a degenerate Levy cycle.
\end{thm}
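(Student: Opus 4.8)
The plan is to reduce the statement to a purely combinatorial assertion about the transition matrix $M_\Gamma$, and then to exploit the fact that for a topological polynomial the point $w$ with $f^{-1}(w)=\{w\}$ is totally ramified, which makes preimages of Jordan domains avoiding $w$ again Jordan domains.

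First I would reduce to the case that $\Gamma$ is a minimal Thurston obstruction: by \cite{sel1} we may take $\Gamma$ to be $f$-stable, and passing to a minimal subset (using the block-triangular normal form of a non-negative matrix together with minimality) we may assume $M_\Gamma$ is irreducible, so by the Perron--Frobenius theorem there is a positive vector $v$ with $M_\Gamma v=\lambda(\Gamma)\,v$ and $\lambda(\Gamma)\ge 1$. Note that $w\in P_f\subseteq Q_f$. For $\gamma\in\Gamma$ write $D_\gamma$ for the component of $\Sphere\setminus\gamma$ that does not contain $w$. Since $\deg_w f=\deg f$, the preimage of the complementary disc $\Sphere\setminus\overline{D_\gamma}$ is connected — its component containing $w$ is a branched cover of that disc of degree $\deg_w f=\deg f$, leaving no room for another component — hence $f^{-1}(D_\gamma)$ is a disjoint union of Jordan domains $K_1,\dots,K_r$, with $f|_{K_j}$ a branched cover onto $D_\gamma$ of degree equal to that of $f|_{\partial K_j}$ and $\sum_j\deg(f|_{K_j})=\deg f$. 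Three consequences follow. First, since $\gamma$ is essential and non-peripheral, $Q_f\cap D_\gamma\neq\emptyset$, and since the $K_j$ are disjoint, at most one component of $f^{-1}(\delta)$ is homotopic rel $Q_f$ to a given $\gamma$; hence every entry of $M_\Gamma$ is either $0$ or $1/d_{\gamma\delta}$, where $d_{\gamma\delta}\ge 1$ is the degree of $f$ on the unique such component. Second, that component is homotopic rel $Q_f$ to $D_\gamma$, so $f(Q_f\cap D_\gamma)$ is a nonempty subset of $Q_f\cap D_\delta$; consequently, for fixed $\gamma$ the discs $\{D_\delta:(M_\Gamma)_{\gamma\delta}\neq 0\}$ are pairwise nested, and dually (distinct preimage components over a fixed $\delta$ being disjoint Jordan domains) for fixed $\delta$ the discs $\{D_\gamma:(M_\Gamma)_{\gamma\delta}\neq 0\}$ are pairwise disjoint. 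Third, for each $\delta$ one has $\sum_{\gamma}d_{\gamma\delta}\le\deg f$, summed over all $\gamma$ with $(M_\Gamma)_{\gamma\delta}\neq 0$.

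Call an edge $\gamma\to\delta$ of the transition graph (a pair with $(M_\Gamma)_{\gamma\delta}\neq 0$) \emph{light} if $d_{\gamma\delta}=1$; a directed cycle of light edges is exactly a Levy cycle contained in $\Gamma$. The core of the proof — and the step I expect to be the main obstacle — is to show that $\lambda(\Gamma)\ge 1$ together with the three consequences above forces such a cycle. Suppose not; then the light edges form a directed acyclic graph, so $\Gamma$ can be linearly ordered with all light edges pointing forward, and all diagonal and backward entries of $M_\Gamma$ are at most $1/2$. One would then try to construct a positive vector $v'$ with $M_\Gamma v'<v'$ coordinatewise, which is incompatible with $\lambda(\Gamma)\ge 1$ (pair $v'$ against the positive left eigenvector, or invoke Proposition~\ref{prop:positive}): the light part of $M_\Gamma$, being acyclic, can be made strictly $v'$-decreasing by letting $v'$ decay fast enough along the chosen order, while the chain/antichain structure of the second consequence together with the degree bound of the third must be used to dominate the remaining "heavy" contributions. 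An alternative, closer to Levy's original argument \cite{Levy}, is a descent: start from a curve of $\Gamma$ enclosing the fewest points of $Q_f$, repeatedly pass to a smallest preimage disc, and show the resulting walk must close up using light edges only. Either way, the delicate point is precisely that the disc structure of a topological polynomial rules out an obstruction carried by degree-$\ge 2$ edges alone; this is the substance of Levy's theorem.

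Finally, a Levy cycle for a topological polynomial is automatically degenerate once the discs are chosen canonically. Let $\gamma_0,\dots,\gamma_{n-1}$ be a Levy cycle, with $\gamma'_i$ a degree-one component of $f^{-1}(\gamma_i)$ homotopic rel $Q_f$ to $\gamma_{i-1}$, and let $D'_i$ be the component of $\Sphere\setminus\gamma'_i$ not containing $w$. Because $w$ is totally ramified, $f(D'_i)$ — an open connected set with boundary in $\gamma_i$ and missing $w$ — is the component of $\Sphere\setminus\gamma_i$ not containing $w$, i.e. $f(D'_i)=D_{\gamma_i}$; the degree of $f$ on $D'_i$ equals its degree on $\partial D'_i=\gamma'_i$, which is $1$, so $f|_{D'_i}$ is a homeomorphism; and $f(D'_i)=D_{\gamma_i}$ is homotopic rel $Q_f$ to $D'_{i+1}$, since $D'_{i+1}$ is homotopic to $D_{\gamma_i}$ (as $\gamma'_{i+1}\simeq\gamma_i$). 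Thus the Levy cycle produced above is degenerate, completing the proof.
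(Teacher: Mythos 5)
The paper itself offers no proof of this statement: it is quoted as Levy's theorem with a citation to \cite{Levy}. So the only question is whether your argument stands on its own, and it does not: the step you explicitly defer is the entire content of the theorem. Everything you actually prove is correct setup. The facts that components of $f^{-1}(D_\gamma)$ are Jordan domains when $w$ is totally ramified, that at most one component of $f^{-1}(\delta)$ can be homotopic rel $Q_f$ to a given $\gamma$ (the real reason being that $w\in P_f\subset Q_f$ lies in the region between any two such preimage curves, a point your one-line justification elides), the column bound $\sum_\gamma d_{\gamma\delta}\le \deg f$, and the observation that any Levy cycle of a topological polynomial is automatically degenerate are all fine. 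But the core claim --- that if the degree-one (``light'') edges contain no directed cycle then $\lambda(\Gamma)<1$ --- is only announced (``one would then try to construct a positive vector $v'$ with $M_\Gamma v'<v'$\dots'', ``the delicate point is precisely\dots''), and it does not follow from the numerical consequences you list: with entries $0$ or $1/d_{\gamma\delta}$ and only the column constraint $\sum_\gamma d_{\gamma\delta}\le\deg f$, a $k\times k$ block of entries $\tfrac12$ is permitted as soon as $2k\le\deg f$, and such a block has leading eigenvalue $k/2\ge 1$ with no light edge at all. Hence the nesting/disjointness structure must be exploited in an essential way, and that is exactly the substance of Levy's theorem, which your proposal leaves unproved.

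Two smaller points. Invoking \cite{sel1} to replace $\Gamma$ by an $f$-stable obstruction is both unnecessary and risky for the conclusion ``$\Gamma$ \emph{contains} a degenerate Levy cycle'', since the stable obstruction produced there need not be a sub-multicurve of $\Gamma$; the correct (and sufficient) reduction is the one you also mention, namely passing to a sub-multicurve of $\Gamma$ whose transition matrix is an irreducible principal submatrix realizing the leading eigenvalue. Second, your closing paragraph on degeneracy is sound and matches the paper's definition (with $D_i$ taken on the side of $\gamma_i'$ away from $w$, $f|_{D_i}$ has degree equal to its boundary degree, hence is a homeomorphism, and $f(D_i)$ is isotopic rel $Q_f$ to $D_{i+1}$), so once a Levy cycle inside $\Gamma$ is actually produced, degeneracy is indeed automatic; the missing piece is producing it.
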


\noindent
{\bf Example.} Let us give a simple yet instructive example. Let $f:\CC\to\CC$ be a postcritically finite polynomial of degree $\deg f=d\geq 2$
and let $p$ be a fixed point of $f$ which does not lie in $P_f$. Perform a topological surgery on $\hat\CC\simeq S^2$ inserting a topological disk
$D_x$ at each point $x\in \cup_{j\geq 0}f^{-j}(p)$. Modify the map $f$ accordingly to send $D_x$ to $D_{f(x)}$. Finally, select a new dynamics on $D=D_p$
so that there are at least two fixed points $a,b\in D$. The resulting topological polynomial $g$ has the same degree as $f$. Select the marked
set $$Q_g\equiv P_g\cup\{a,b\}.$$
With this choice of the marked, the Thurston map $(g,Q_g)$ is clearly obstructed -- a simple closed curve $\gamma\subset D$ which separates $a$ and $b$ from $P_g$
is a degenerate Levy cycle.

\subsection*{Thurston iteration on the Teichm{\"u}ller space}
For the basics of the Teichm{\"u}ller Theory see e.g. \cite{IT}. Let $S^2_n$ denote the two-sphere with $n$ marked points.
The moduli space $\cM(S^2_n)$ parametrizes  distinct complex structures on $S^2_n$. For $n\leq 3$ it consists of a single point.
For $n>3$, it is naturally identified with the $n-3$ dimensional complex manifold consisting of all $n$-tuples
$(z_1,\ldots, z_n)$ of points in $\hat\CC$ defined up to a M{\"o}bius transformation.
The Teichm{\"u}ller space $\cT(S^2_n)$ is the universal covering space of $\cM(S^2_n)$. 
We will use the notation $||\cdot ||_T$ for the Teichm{\"u}ller norm on $\cT(S^2_n)$.

The Teichm{\"u}ller space $\cT(S^2_n)$ can be naturally constructed as the space of equivalence classes of almost complex structures on $S^2_n$ with 
$\mu_1\equiv \mu_2$ if $\mu_1=h^*\mu_2$ where $h$ is a quasiconformal mapping of $S^2_n=\hat\CC$ isotopic to the identity relative the marked points.
Another interpretation of $\cT(S^2_n)$ is as the space of equivalence classes of quasiconformal mappings $\phi:S^2_n\to\hat\CC$
with $\phi_1\equiv \phi_2$ if and only if there exists a M{\"o}bius map $h:\hat\CC\to\hat\CC$ such that 
$h\circ \phi_1$ is isotopic to $\phi_2$ relative the marked points. The correspondence between the two viewpoints is 
standard: an almost complex structure $\mu$ on $S^2_n$ is obtained as the pullback of the standard structure $\sigma_0$ on $\hat\CC$
by $\phi$: 
$$\mu=\phi^*(\sigma_0).$$

Let $f:S^2\to S^2$ be a Thurston map of the  $2$-sphere with marked set $Q_f$. We denote $\cM_f$ and $\cT_f$ the moduli space and the Teichm{\"u}ller space respectively
of the sphere $S^2$ with marked points $Q_f$. It is straightforward to verify that the operation
defined on almost complex structures by $[\mu]\mapsto [f^*\mu]$ yields a well-defined analytic mapping
$$\sigma_f:\cT_f\to\cT_f$$
which we call the {\it Thurston pullback mapping}. It is equally easy to see that if $f$ and $g$ are two equivalent Thurston
maps then $\sigma_f$ coincides with $\sigma_g$ up to isomorphism of Teichm{\"u}ller spaces $\cT_f$ and $\cT_g$.

In terms of the description of $\cT_f$ by equivalence classes of homeomorphisms $\phi:S^2_n\to \hat\CC$, the mapping $\sigma_f$ is
defined as follows. We can pull back the almost complex structure $\mu=\phi^*\sigma_0$ by $f$ to 
$$\mu '\equiv f^*\mu=f^*\phi^*\sigma_0.$$
Using Measurable Riemann Mapping Theorem to integrate $\mu'$, we get  a mapping $\phi':S^2_n\to\hat\CC$ satisfying
$$\phi'^*\sigma_0=\mu'.$$
We now set
$$\sigma_f[\phi]=[\phi'].$$
We note that $\sigma_f$ projects to a {\em finite} cover of the moduli space:
\begin{prop}[Lemma 5.2 of \cite{DH}]
\label{finite cover}
Denote $p_f:\cT_f\to \cM_f$ the covering map.
There exists a tower $$\cT_f\overset{\tl p_f}{\longrightarrow}\widetilde \cM_f\overset{\bar p_f}{\longrightarrow}\cM_f$$ of covering spaces,
such that $\bar p_f$ is a finite cover, and a map $\tilde \sigma_f:\widetilde \cM_f\to\cM_f$, such that the diagram below commutes:
\[
\begin{CD}
\cT_f @>{\sigma_f}>> \cT_f    \\
@VV{\tl p_f}V @VV{p_f}V\\
\widetilde\cM_f@>{\tl\sigma_f}>> \cM_f
\end{CD}
\]

\end{prop}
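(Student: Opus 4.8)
The plan is to realize $\widetilde{\cM}_f$ as the intermediate cover of $\cM_f$ associated to a suitable finite-index subgroup of the pure mapping class group, and to deduce that $\sigma_f$ descends to it from the standard equivariance property of the Thurston pullback.

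Since $\cT_f$ is the universal cover of $\cM_f$, the covering $p_f$ is the quotient by the deck action of $G:=\PMCG(S^2,Q_f)$. Put $Y:=f^{-1}(Q_f)$; because all critical values of $f$ lie in $P_f\subseteq Q_f$, the restriction $f\colon S^2\setminus Y\to S^2\setminus Q_f$ is an unbranched covering of degree $\deg f$, and $Q_f\subseteq Y$. Call $g\in G$ \emph{liftable} if a homeomorphism $\psi$ representing $g$ (fixing $Q_f$ pointwise) admits a lift $\tilde\psi$ through $f$, i.e. $f\circ\tilde\psi=\psi\circ f$, which can moreover be normalized — using the freedom of composing with a deck transformation of the covering — to fix $Q_f$ pointwise; a priori such a $\tilde\psi$ only carries $Q_f$ into $Y$, and pinning it down on $Q_f$ is where purity is used. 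The liftable classes form a subgroup $H\le G$, and $g\mapsto[\tilde\psi]$, after forgetting the marked points of $Y\setminus Q_f$, is a well-defined homomorphism $f_\ast\colon H\to G$. From the very definition of $\sigma_f$ via pullback of almost complex structures by $f$ one obtains the equivariance
$$\sigma_f(g\cdot x)=f_\ast(g)\cdot\sigma_f(x)\qquad(g\in H,\ x\in\cT_f),$$
proved by integrating both sides and matching markings; this identifies $f_\ast$ as the virtual endomorphism of $G$ attached to $f$.

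Next, $H$ has finite index in $G$. A class $\psi$ lifts through the covering $f\colon S^2\setminus Y\to S^2\setminus Q_f$ exactly when $\psi_\ast$ fixes the conjugacy class of the image of $\pi_1(S^2\setminus Y)$ in $\pi_1(S^2\setminus Q_f)$; this image is a subgroup of index $\deg f$, a finitely generated group has only finitely many subgroups of a given finite index, so the $G$-stabilizer of that subgroup is of finite index, and the extra normalization on $Q_f$ imposes at most a further finite index. Hence $[G:H]<\infty$.

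Finally, assemble the diagram. Let $\widetilde{\cM}_f:=\cT_f/H$, with $\tilde p_f\colon\cT_f\to\widetilde{\cM}_f$ the quotient and $\bar p_f\colon\widetilde{\cM}_f\to\cM_f=\cT_f/G$ the induced covering, which is finite of degree $[G:H]$. For $g\in H$, $p_f(\sigma_f(g\cdot x))=p_f(f_\ast(g)\cdot\sigma_f(x))=p_f(\sigma_f(x))$ because $f_\ast(g)\in G$, so $p_f\circ\sigma_f$ is $H$-invariant and factors as $\tilde\sigma_f\circ\tilde p_f$ for a unique holomorphic $\tilde\sigma_f\colon\widetilde{\cM}_f\to\cM_f$; this is exactly the asserted commuting square. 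The only genuinely non-formal ingredients — and the main obstacle — are the finite-index estimate $[G:H]<\infty$ and the careful construction of $f_\ast$ together with the equivariance identity, in particular the bookkeeping needed to keep the marked points of $Q_f$ under control when lifting through $f$; the remainder is covering-space formality.
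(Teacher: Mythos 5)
Your argument is correct, and it is essentially the standard proof of this statement: the paper itself gives no proof, quoting it as Lemma 5.2 of \cite{DH}, and your route via the finite-index subgroup $H\le\PMCG(S^2,Q_f)$ of liftable classes, the equivariance $\sigma_f(g\cdot x)=f_*(g)\cdot\sigma_f(x)$, and the quotient $\widetilde\cM_f=\cT_f/H$ is exactly the argument behind the Douady--Hubbard lemma (and its modern ``virtual endomorphism'' formulations). Two small remarks. First, for the descent you do not actually need $f_*$ to be a well-defined homomorphism: all that is used in the last paragraph is that for each $g\in H$ there exists \emph{some} element of $G$ carrying $\sigma_f(x)$ to $\sigma_f(g\cdot x)$ for all $x$, which your pullback computation already provides for any choice of normalized lift; so the ``careful construction of $f_\ast$'' is not an obstacle at all. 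Second, the one step you assert without justification --- that the normalization on $Q_f$ costs only finite index --- deserves a sentence: a liftable class has only finitely many lifts, each lift permutes the finite set $f^{-1}(Q_f)$, and restriction of lifts to $f^{-1}(Q_f)$ is compatible with composition, so the classes admitting a lift fixing $Q_f$ pointwise form a subgroup whose index in the group of liftable classes is bounded by the order of a subgroup of $\operatorname{Sym}(f^{-1}(Q_f))$; together with the finiteness of the number of (conjugacy classes of) index-$\deg f$ subgroups of the free group $\pi_1(S^2\setminus Q_f)$, this gives $[G:H]<\infty$ as you claim. With these points made explicit, your proof is complete.
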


The key starting point of the proof of Thurston Theorem is the following:
\begin{prop}[cf. \cite{DH}, Proposition 3.2.2]
A Thurston map $f$ is equivalent to a rational function if and only if $\sigma_f$ has a fixed point.
\end{prop}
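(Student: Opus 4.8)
The plan is to prove both implications by unwinding the definition of $\sigma_f$ in the homeomorphism model of $\cT_f$, using the elementary fact that an orientation-preserving branched self-covering of $\hat\CC$ which pulls the standard conformal structure $\sigma_0$ back to itself is a rational map (it is conformal away from its finite critical set and hence extends holomorphically across it).

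First I would prove the ``if'' direction. Suppose $[\phi]\in\cT_f$ is fixed by $\sigma_f$, realized by a homeomorphism $\phi\colon S^2\to\hat\CC$ with $\mu=\phi^{*}\sigma_0$, and let $\phi'\colon S^2\to\hat\CC$ be the homeomorphism obtained by integrating $\mu'=f^{*}\mu$ via the Measurable Riemann Mapping Theorem, so that $\sigma_f[\phi]=[\phi']$. The fixed-point hypothesis says that $h\circ\phi'$ is isotopic to $\phi$ rel $Q_f$ for some M\"obius transformation $h$; since $h^{*}\sigma_0=\sigma_0$ we may replace $\phi'$ by $h\circ\phi'$ and thereby assume outright that $\phi'$ agrees with $\phi$ on $Q_f$ and is isotopic to $\phi$ rel $Q_f$. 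Put $g:=\phi\circ f\circ(\phi')^{-1}$, so that $g\circ\phi'=\phi\circ f$, and compute
\[
(\phi')^{*}(g^{*}\sigma_0)=(g\circ\phi')^{*}\sigma_0=(\phi\circ f)^{*}\sigma_0=f^{*}\mu=\mu'=(\phi')^{*}\sigma_0 ;
\]
as $\phi'$ is a homeomorphism this forces $g^{*}\sigma_0=\sigma_0$, so $g$ is rational. With the marked set $Q_g:=\phi(Q_f)=\phi'(Q_f)$, the critical set of $g$ is $\phi'(\Omega_f)$, and from $g\circ\phi'=\phi\circ f$, the fact that $\phi=\phi'$ on $Q_f$, and $f(\Omega_f)\subseteq P_f\subseteq Q_f$, one gets $g^{\circ n}(\phi'(x))=\phi'(f^{\circ n}(x))$ for all $x\in\Omega_f$ by induction; hence $P_g=\phi'(P_f)\subseteq Q_g$ is finite and $g(Q_g)=\phi(f(Q_f))\subseteq Q_g$. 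The square with $\phi_0=\phi$ and $\phi_1=\phi'$ then commutes, and $\phi_0,\phi_1$ coincide on $Q_f$, carry $Q_f$ onto $Q_g$, and are isotopic rel $Q_f$ --- this is a Thurston equivalence between $f$ and the rational map $g$.

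The ``only if'' direction is immediate: if $f$ is Thurston equivalent to a rational map $g$ via homeomorphisms $\phi_0,\phi_1$ with $\phi_0\circ f=g\circ\phi_1$ and $\phi_0,\phi_1$ isotopic rel $Q_f$, set $\mu_0=\phi_0^{*}\sigma_0$; then $[\phi_0]=[\phi_1]$ in $\cT_f$, and since $g^{*}\sigma_0=\sigma_0$,
\[
f^{*}\mu_0=(\phi_0\circ f)^{*}\sigma_0=(g\circ\phi_1)^{*}\sigma_0=\phi_1^{*}\sigma_0 ,
\]
so that $\sigma_f[\phi_0]=[f^{*}\mu_0]=[\phi_1]=[\phi_0]$.

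The analytic ingredients --- the Measurable Riemann Mapping Theorem (already invoked to define $\sigma_f$) and the implication that a branched self-cover of $\hat\CC$ preserving $\sigma_0$ is rational --- are standard and cause no trouble. The one genuinely delicate point, I expect, is the bookkeeping with the M\"obius action and isotopy rel $Q_f$ in the ``if'' direction: the reduction that lets one take the integrating homeomorphism $\phi'$ literally isotopic (rel $Q_f$) to $\phi$ rather than merely equivalent to it in $\cT_f$, together with the ensuing verification that $(g,Q_g)$ satisfies $P_g\subseteq Q_g$ and $g(Q_g)\subseteq Q_g$, so that $g$ is a marked Thurston map equivalent to $f$ in the precise sense of the definition.
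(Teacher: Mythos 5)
Your proof is correct and follows essentially the same route as the paper: both directions rest on the identity $(\phi\circ f)^{*}\sigma_0=(g\circ\phi')^{*}\sigma_0$ and the fact that a branched self-cover of $\hat\CC$ preserving $\sigma_0$ is rational, with the only cosmetic difference being that you absorb the M\"obius ambiguity into $\phi'$ at the outset while the paper composes it with the rational map at the end. Your additional verification that $(g,\phi(Q_f))$ is a genuine marked Thurston map is a welcome bit of care the paper leaves implicit.
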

\begin{proof}
Since the standard almost complex structure $\sigma_0$ on $\hat\CC$ is invariant under the pullback by a rational function,
the ``if'' direction is obvious. For the ``only if'' direction, consider a pair of homeomorphisms $\phi$ and $\phi'$
which describe the same point in the Teichm{\"u}ller space and such that
$\phi'=\sigma_f(\phi)$. The mapping 
$$f_\tau\equiv \phi\circ f\circ (\phi')^{-1}:\hat\CC\to\hat\CC$$
preserves the almost complex structure $\sigma_0$ by construction, and therefore is analytic, and hence rational.
Let $h$ be a M{\"o}bius map such that $\phi'$ is isotopic to $h\circ \phi$ relative $Q_f$. 
Then the rational mapping $f_\tau\circ h$ is Thurston
equivalent by $h$: the diagram
\[
\begin{CD}
(S^{2},Q_f) @>h^{-1}\circ \phi'>> \hat\CC\\
@VVfV @VV{f_\tau\circ h}V\\
(S^{2},Q_f) @>\phi>> \hat\CC
\end{CD}
\]
commutes up to isotopy relative $Q_f$.
\end{proof}

It is straightforward that
$$||d\sigma_f||_T\leq 1.$$
Moreover, when $f$ has a hyperbolic orbifold, there exists $k\in\NN$ such that
$$||d(\sigma_f^k)||_T<1$$
(see e.g. \cite{BGL}). It follows that:
\begin{prop}
\label{unique fixed pt}
Suppose that $f$ has a hyperbolic orbifold, and  $\sigma_f$ has a fixed point in $\cT_f$. Then the fixed point is unique,
and every $\sigma_f$-orbit converges to it.
\end{prop}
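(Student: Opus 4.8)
The plan is to prove \propref{unique fixed pt} as a standard consequence of the metric contraction property of $\sigma_f$ on the Teichm\"uller space. The two facts I may assume are: (a) $\|d\sigma_f\|_T \le 1$ everywhere, so $\sigma_f$ is weakly contracting in the Teichm\"uller (Kobayashi) metric $d_T$; and (b) there is $k \in \NN$ with $\|d(\sigma_f^k)\|_T < 1$, which follows from the hyperbolicity of the orbifold. The delicate point is that a pointwise bound $\|d(\sigma_f^k)\|_T < 1$ on a non-compact complete metric space does \emph{not} by itself give a uniform Lipschitz constant $<1$, so I cannot invoke the Banach fixed-point theorem naively; the argument has to route around this.

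First I would fix notation: write $g = \sigma_f^k$, and let $\tau_0 \in \cT_f$ be the hypothesized fixed point of $\sigma_f$, which is then also a fixed point of $g$. For uniqueness, suppose $\tau_1$ is another fixed point of $\sigma_f$ (hence of $g$). Consider the Teichm\"uller geodesic segment $\eta$ from $\tau_0$ to $\tau_1$; along the compact set $\eta$ the function $x \mapsto \|d g_x\|_T$ is upper semicontinuous and strictly less than $1$ at every point, hence attains a maximum $c < 1$ on $\eta$. Integrating the differential bound along $\eta$ gives $d_T(g(\tau_0), g(\tau_1)) \le c \cdot d_T(\tau_0,\tau_1)$, i.e. $d_T(\tau_0,\tau_1) \le c\, d_T(\tau_0,\tau_1)$, forcing $d_T(\tau_0,\tau_1)=0$ and $\tau_1 = \tau_0$. (One should note $\cT_f$ is complete and its Teichm\"uller metric is a genuine path metric realized by geodesics, so this "integrate along the geodesic" step is legitimate; alternatively one can avoid geodesics and argue directly with the length of any path and the non-compactness issue disappears because the path is compact.)

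For the convergence statement, take an arbitrary $\tau \in \cT_f$ and let $\tau_0$ be the fixed point. Set $R = d_T(\tau, \tau_0)$ and let $\gamma$ be the geodesic segment joining them; since $g(\tau_0) = \tau_0$ and $\|dg\| \le \max_{\gamma}\|dg_x\| =: c(\tau) < 1$ (again a compact-set maximum of an u.s.c. function that is everywhere $<1$), we get $d_T(g(\tau), \tau_0) = d_T(g(\tau), g(\tau_0)) \le c(\tau)\, R < R$. So each application of $g$ strictly decreases the distance to $\tau_0$, and the iterates stay inside the closed ball $\bar B(\tau_0, R)$. On this closed ball — which is the closure of a bounded set but need not be compact — I still only have $d_T(g^m\tau, \tau_0) \le c(\tau)^m R$ if the constant $c$ can be taken uniform over the ball; since the geodesics from points of $\bar B(\tau_0,R)$ to $\tau_0$ all lie in $\bar B(\tau_0,R)$, it suffices to know $\sup_{x \in \bar B(\tau_0,R)} \|dg_x\|_T < 1$. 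This is the one genuine technical point, and it is exactly what is available: in the hyperbolic-orbifold case the bound $\|d(\sigma_f^k)\|_T < 1$ is known to hold \emph{uniformly on compact sets}, and more to the point (see e.g. \cite{BGL}, \cite{DH}) $\sigma_f$ and its iterates are uniformly contracting on any set of bounded diameter that is preserved, because the norm bound degenerates to $1$ only "at infinity" in $\cT_f$. Granting this, $d_T(g^m \tau, \tau_0) \to 0$ geometrically, and since $\sigma_f^{k m + j} \tau$ for $j = 0,\dots,k-1$ are the $\sigma_f^j$-images of $g^m \tau$ and $\sigma_f^j$ is $1$-Lipschitz, the full orbit $\sigma_f^n \tau \to \tau_0$.

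The main obstacle, then, is precisely the passage from the infinitesimal bound $\|d(\sigma_f^k)\|_T < 1$ to a usable contraction estimate on the (non-compact) invariant balls around the fixed point; once the orbits are confined to a bounded region and a uniform contraction constant there is extracted (via compactness of geodesic segments together with the fact, cited from \cite{BGL}/\cite{DH}, that the contraction is uniform away from the ends of Teichm\"uller space), both uniqueness and global convergence fall out immediately. Everything else — well-definedness of $\sigma_f$, analyticity, the $1$-Lipschitz property of each $\sigma_f^j$, existence of Teichm\"uller geodesics — is standard and already recalled in the text.
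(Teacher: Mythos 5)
Your argument is correct and is exactly the standard deduction the paper has in mind: the paper states this proposition without proof, as an immediate consequence of $\|d\sigma_f\|_T\le 1$ and the existence of $k$ with $\|d(\sigma_f^k)\|_T<1$, and your geodesic-segment argument for uniqueness and ball-confinement argument for convergence are the intended route. The one technical point you flag (uniformity of the contraction constant on a closed invariant ball, which you handle by citation) can be dispatched more cleanly: the Teichm\"uller metric on $\cT_f$ is proper (closed balls are compact, being continuous images of closed balls under the Teichm\"uller exponential map), so the supremum of the upper semicontinuous function $x\mapsto\|d(\sigma_f^k)_x\|_T$ over the closed ball $\bar B(\tau_0,R)$ is attained and hence strictly less than $1$.
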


Weak contraction properties of $\sigma_f$ imply that non-existence of a fixed point means that for every compact subset
$K\Subset \cT_f$ and every starting point $[\tau_0]\in\cT_f$ there is a moment $j\in\NN$ when 
$[\sigma_f^j\tau_0]\notin K$. The next section gives a more precise explanation, due to K.~Pilgrim \cite{Pil1}.

\subsection*{Canonical obstructions}
For a general hyperbolic Riemann surface $W$ we denote
$\rho_W$, $d_W$, and $\text{length}_W$ the hyperbolic metric, distance, and length on $W$.
When we want to emphasize the dependence of the hyperbolic metric on the choice of the
complex structure $\tau$ on a surface $S$, we will write
$\rho_\tau$ for  the hyperbolic metric on $S_\tau\equiv (S,\tau)$, 
$\text{length}_\tau$ for the hyperbolic
length, and $d_\tau$ for the hyperbolic distance.
 For a non-trivial homotopy class of  closed curves $[\gamma]$ on $S$ 
we let $\ell_\tau([\gamma])$ denote the length of the unique geodesic representative of $[\gamma]$ in $S_\tau$.

The following is straightforward (see e.g. \cite{Pil1}):
\begin{prop}
Suppose there exists $\tau\in\cT_f$ such that for a non-trivial homotopy class of simple closed curves $[\gamma]$
the lengths
$$\ell_{\sigma^n_f \tau}([\gamma])\underset{n\to\infty}{\longrightarrow} 0.$$
Then the same property holds for any other starting point $\tau'\in\cT_f$. 
\end{prop}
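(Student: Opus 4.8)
\noindent
The plan is to combine the weak contraction of the Thurston pullback with the standard comparison, along a fixed homotopy class, of hyperbolic lengths at nearby points of Teichm\"uller space. First I would record that $\sigma_f$ is $1$-Lipschitz for the Teichm\"uller distance $d_{\cT_f}$: since $||d\sigma_f||_T\le 1$ everywhere, $\sigma_f$ maps any rectifiable path in $\cT_f$ to a path of no greater Teichm\"uller length (equivalently, $\sigma_f$ is analytic, so this also follows from Royden's identification of the Teichm\"uller and Kobayashi metrics). Because $\cT_f$ is connected, $D:=d_{\cT_f}(\tau,\tau')<\infty$, and iterating the Lipschitz bound gives
$$d_{\cT_f}(\sigma_f^n\tau,\sigma_f^n\tau')\le d_{\cT_f}(\tau,\tau')=D\qquad\text{for all }n\ge 0.$$

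\noindent
Next I would invoke Wolpert's inequality: if $\tau_1,\tau_2\in\cT_f$ and the identity is represented by a $K$-quasiconformal homeomorphism $S_{\tau_1}\to S_{\tau_2}$, then for every non-trivial homotopy class of simple closed curves $[\gamma]$ one has $K^{-1}\ell_{\tau_1}([\gamma])\le\ell_{\tau_2}([\gamma])\le K\,\ell_{\tau_1}([\gamma])$. Taking the Teichm\"uller map between $\tau_1$ and $\tau_2$, whose dilatation is $e^{2d_{\cT_f}(\tau_1,\tau_2)}$, yields
$$\ell_{\tau_2}([\gamma])\le e^{2d_{\cT_f}(\tau_1,\tau_2)}\,\ell_{\tau_1}([\gamma]).$$
Applying this with $\tau_1=\sigma_f^n\tau$ and $\tau_2=\sigma_f^n\tau'$ and using the bound from the first step, I get
$$\ell_{\sigma_f^n\tau'}([\gamma])\le e^{2D}\,\ell_{\sigma_f^n\tau}([\gamma])\underset{n\to\infty}{\longrightarrow}0,$$
which is exactly the claim (and, by symmetry, the two sequences of lengths go to $0$ simultaneously).

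\noindent
I do not expect a substantive obstacle here; the only thing requiring attention is bookkeeping of normalizations — the precise relation between the Teichm\"uller distance and the minimal dilatation, and the exact constant in Wolpert's estimate. None of this matters for the conclusion: any comparison of $\ell_{\sigma_f^n\tau}([\gamma])$ and $\ell_{\sigma_f^n\tau'}([\gamma])$ by a multiplicative constant that is uniform in $n$ suffices, and such a constant is furnished by the uniform Teichm\"uller bound $D$.
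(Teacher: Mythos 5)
Your argument is correct and is precisely the standard one the paper alludes to by calling the statement ``straightforward'' and citing Pilgrim: the uniform bound $d_{\cT_f}(\sigma_f^n\tau,\sigma_f^n\tau')\le d_{\cT_f}(\tau,\tau')$ from $\|d\sigma_f\|_T\le 1$, combined with Wolpert's comparison of hyperbolic lengths at points a bounded Teichm\"uller distance apart. No gaps; the normalization issues you flag are immaterial since only a uniform multiplicative constant is needed.
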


\begin{defn}
The {\it canonical obstruction} $\Gamma_f$ of $f$ is the collection of all non-trivial homotopy classes $\gamma$ 
such that 
$$\ell_{\sigma^n_f \tau}([\gamma])\underset{n\to\infty}{\longrightarrow} 0$$
for some (equivalently, for all) $\tau\in\cT_f$.
\end{defn}

Pilgrim proved the following:
\begin{thm}[\cite{Pil1}]
\label{canonical1}
Suppose $f$ is a Thurston map with a hyperbolic orbifold. If the canonical obstruction is empty, then $f$ is 
Thurston equivalent to a rational function. If the canonical obstruction is non-empty, then it is a Thurston obstruction.
\end{thm}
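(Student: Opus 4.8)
I would translate both assertions into the dynamics of the orbit $\tau_n := \sigma_f^n\tau$ of the Thurston pullback map on $\cT_f$, running everything off the hyperbolic-length comparison estimates for $\sigma_f$ (due in essence to Douady--Hubbard). The basic estimate is: if $\gamma'$ is a component of $f^{-1}(\gamma)$, then
\[
\ell_{\tau_{n+1}}([\gamma'])\ \le\ (\deg f)\,\ell_{\tau_n}([\gamma]),
\]
which holds because, read in the complex structure $\tau_{n+1}$ on the source and $\tau_n$ on the target, $f$ is a holomorphic covering onto its image away from the marked points and therefore contracts the relevant hyperbolic metrics. Two consequences are immediate. First, if two classes lie in $\Gamma_f$ their geodesic lengths are simultaneously small far along the orbit, so by the collar lemma their geodesics are eventually disjoint; hence $\Gamma_f$ is a multicurve. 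Second, if $\gamma\in\Gamma_f$ and $\gamma'$ is an essential, non-peripheral component of $f^{-1}(\gamma)$, then $\ell_{\tau_{n+1}}([\gamma'])\to0$, so $\gamma'\in\Gamma_f$; thus $\Gamma_f$ is $f$-stable.

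For the first assertion, assume $\Gamma_f=\emptyset$; I claim $\{\tau_n\}$ is precompact in $\cT_f$, and this suffices. Indeed, the Teichm\"uller distances $\dist(\tau_n,\tau_{n+1})$ are non-increasing (since $\|d\sigma_f\|_T\le1$), hence decrease to some $\ell_0\ge0$; if $\{\tau_n\}$ is precompact and $\ell_0>0$, a subsequential limit $\tau_\infty$ would satisfy $\dist(\sigma_f^k\tau_\infty,\sigma_f^{k+1}\tau_\infty)=\ell_0$ for all $k$, contradicting the pointwise strict contraction of $\sigma_f^k$ in the hyperbolic-orbifold case recorded above; so $\ell_0=0$ and $\tau_\infty$ is a fixed point of $\sigma_f$, whence $f$ is Thurston equivalent to a rational map (\cite{DH}). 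To prove precompactness I proceed in two steps. Step 1: the projections $\{p_f(\tau_n)\}$ stay in a compact subset of $\cM_f$. If not, Mumford's compactness criterion gives a subsequence along which the systole tends to $0$; then the \emph{reverse} length estimate --- a class that is very short in $\tau_{n+1}$ is homotopic to a component of $f^{-1}(\delta)$ for some class $\delta$ short in $\tau_n$, because a wide embedded annulus in the source pushes forward under $f$ to a wide annulus in the target --- together with the bound on the number of components of a short multicurve, produces by a pigeonhole/recurrence argument over the finitely many candidate short multicurves a single class whose length tends to $0$ along the \emph{entire} orbit, i.e.\ an element of $\Gamma_f$: a contradiction. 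Step 2: upgrade this to precompactness in $\cT_f$ using Proposition~\ref{finite cover}: the intermediate cover $\widetilde\cM_f\to\cM_f$ is finite, so the lifted orbit stays in a compact subset of $\widetilde\cM_f$, and the factorization of $\sigma_f$ through $\widetilde\cM_f$ controls the monodromy of the orbit, keeping $\{\tau_n\}$ in a compact subset of $\cT_f$.

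For the second assertion, assume $\Gamma_f\neq\emptyset$ and, for contradiction, $\lambda(\Gamma_f)<1$. Let $m_n(\gamma)$ be the modulus of the widest embedded annulus homotopic to $\gamma$ in the surface with structure $\tau_n$; since $m_n(\gamma)$ is comparable to $1/\ell_{\tau_n}([\gamma])$ when the latter is small, $m_n(\gamma)\to\infty$ for every $\gamma\in\Gamma_f$. The covering estimate, applied to the $f$-stable multicurve $\Gamma_f$, yields an inequality
\[
m_{n+1}(\gamma)\ \le\ \sum_{\delta\in\Gamma_f}M_{\gamma\delta}\,m_n(\delta)\ +\ C\qquad(\gamma\in\Gamma_f),
\]
with $M=M_{\Gamma_f}$ the Thurston transition matrix and $C=C(f)$; here $f$-stability is precisely what guarantees that every nontrivial component of $f^{-1}(\delta)$ is accounted for, so the sum may be restricted to $\Gamma_f$. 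Iterating, and using that $\lambda(M)<1$ forces $M^n\to0$ and $\sum_{j\ge0}M^j=(I-M)^{-1}\ge0$, bounds the vectors $\bigl(m_n(\gamma)\bigr)_{\gamma\in\Gamma_f}$ uniformly in $n$ --- contradicting $m_n(\gamma)\to\infty$. Hence $\lambda(\Gamma_f)\ge1$, and $\Gamma_f$ is a Thurston obstruction.

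The main obstacle is the package of length and modulus comparison estimates for $\sigma_f$, and above all the precompactness step of the first assertion: converting ``no individual curve shrinks along the orbit'' into ``the orbit stays in a compact part of moduli space'' genuinely requires the reverse length estimate and a careful combinatorial recurrence over the short multicurves (with attention to peripheral and inessential preimage components), after which the passage to precompactness in $\cT_f$ itself rests essentially on the finite-cover structure of Proposition~\ref{finite cover}. By comparison, the second assertion is, once $f$-stability of $\Gamma_f$ and the modulus estimate are in hand, a short Perron--Frobenius argument.
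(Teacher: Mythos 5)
First, note that the paper does not prove this statement at all: it is quoted from Pilgrim's work \cite{Pil1} (together with the companion estimate, Theorem~\ref{pil-bound}), so your attempt has to be judged against Pilgrim's actual argument rather than anything in the text. Your overall scaffolding is the right one and matches that argument in outline: the covering/inclusion length estimate giving that $\Gamma_f$ is an $f$-stable multicurve, compactness of the projected orbit in $\cM_f$ plus the finite cover of Proposition~\ref{finite cover} to produce a fixed point of $\sigma_f$, and the Douady--Hubbard modulus inequality plus Perron--Frobenius for the second assertion. But the two places you wave at are exactly the theorem's real content, and the justifications you offer there do not work as stated. In Step~1, the forward estimate loses a factor $\deg f$ per step and the reverse estimate loses a comparable factor per backward step, so ``some curve is very short at some large time'' only propagates shortness a bounded number of steps in either direction; no pigeonhole over the finitely many short multicurves, by itself, converts shortness along a subsequence into a single class whose length tends to $0$ along the entire tail. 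That conversion is precisely the content of Theorem~\ref{pil-bound} (curves outside $\Gamma_f$ stay uniformly non-short along the orbit), whose proof is the technical core of \cite{Pil1} and requires a genuinely new recurrence analysis (bounded per-step distortion of lengths via Wolpert's lemma, the quantitative Douady--Hubbard inequality, and a stabilization argument for the time-dependent sets of short curves), not a routine pigeonhole. The same missing ingredient reappears, more visibly, in your second assertion: the inequality $m_{n+1}(\gamma)\le\sum_{\delta}M_{\gamma\delta}m_n(\delta)+C$ has its sum over \emph{all} curves $\delta$ having a preimage component homotopic to $\gamma$, and $f$-stability of $\Gamma_f$ only controls preimages of curves \emph{in} $\Gamma_f$; it says nothing about a curve $\delta\notin\Gamma_f$ one of whose preimage components happens to be homotopic to $\gamma\in\Gamma_f$. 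To discard those terms you need $m_n(\delta)$ bounded along the orbit for $\delta\notin\Gamma_f$ --- again exactly Theorem~\ref{pil-bound}, which you have not proved and which does not follow from the estimates you state. A priori there could be curves that become short infinitely often without lying in $\Gamma_f$, and ruling this out is the whole difficulty.

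A smaller but still genuine defect is your Step~2: the factorization of Proposition~\ref{finite cover} does \emph{not} ``control the monodromy'' of the orbit; it only determines the $\cM_f$-projection of $\sigma_f(\tau)$ from the $\widetilde\cM_f$-projection of $\tau$, so compactness of the projected orbit does not by itself give precompactness of $\{\tau_n\}$ in $\cT_f$. The standard (and correct) route is different: the norm $\|d\sigma_f^k\|_T$ is invariant under the deck group of $\cT_f\to\widetilde\cM_f$, hence descends to a continuous function on $\widetilde\cM_f$ which is $<1$ on the relevant compact set; this gives geometric decay of $d_T(\tau_n,\tau_{n+1})$, hence the orbit is Cauchy and converges to a fixed point (precompactness in $\cT_f$ is then a consequence, not an input). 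That repair is routine; the gaps in the previous paragraph are not, and as written your argument assumes, rather than proves, the key lower bound that distinguishes Pilgrim's theorem from the Douady--Hubbard machinery.
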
 

\noindent

Pilgrim further showed:
\begin{thm}[\cite{Pil1}]
\label{pil-bound}
Let $\tau_0\in\cT_f$. There exists a constant $E=E(\tau_0)$ such that for every
non-trivial simple closed curve $\gamma\notin \Gamma_f$ we have
$$\inf \ell_{\sigma^n_f\tau_0}([\gamma])>E.$$

\end{thm}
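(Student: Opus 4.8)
The plan is to argue by contradiction. If $\sigma_f$ has a fixed point the assertion is immediate; in the remaining case (equivalently, by Theorem~\ref{canonical1}, when $\Gamma_f\ne\emptyset$) I would use the thick--thin decomposition together with the compactness of the Deligne--Mumford bordification of moduli space to reduce to the case of a \emph{single} nontrivial curve that becomes arbitrarily short along a subsequence of the Thurston orbit, and then invoke the dynamics of $\sigma_f$ near infinity in $\cT_f$ to exclude this.

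Write $\tau_n:=\sigma_f^n\tau_0$. First, a remark making the basepoint irrelevant: since $\|d\sigma_f\|_T\le 1$, the orbits of any two basepoints remain within a fixed Teichm\"uller distance of one another, so by Wolpert's inequality $\inf_n\ell_{\sigma_f^n\tau_0}([\gamma])$ changes by at most a multiplicative constant --- depending on the basepoints but not on $\gamma$ --- when $\tau_0$ is moved. Next, if $\sigma_f$ has a fixed point $\tau_*$, then by Proposition~\ref{unique fixed pt} the orbit $\tau_n$ converges to $\tau_*$ in $\cT_f$; since the length of the shortest closed geodesic is continuous and strictly positive on $\cT_f$, it is bounded below by some $c>0$ on the compact set $\{\tau_n\}_{n\ge0}\cup\{\tau_*\}$, whence $\ell_{\tau_n}([\gamma])\ge c$ for every $n$ and every nontrivial simple closed curve $\gamma$, so any $E<c$ works. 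We henceforth assume $\sigma_f$ has no fixed point.

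Suppose, for contradiction, that no admissible $E$ exists. Then there are nontrivial simple closed curves $\delta_k\notin\Gamma_f$ and times $n_k$ with $\ell_{\tau_{n_k}}([\delta_k])\to0$. If $(n_k)$ did not tend to infinity, some value $m$ would recur infinitely often, forcing the fixed finite-area hyperbolic surface $\tau_m$ to carry arbitrarily short closed geodesics, which is absurd; so $n_k\to\infty$. Fix a Margulis constant $\eps_0$ for surfaces of type $(0,|Q_f|)$: for each $n$ the homotopy classes whose $\tau_n$-length is at most $\eps_0$ form a multicurve of at most $|Q_f|-3$ elements, and this multicurve contains $\delta_k$ for $k$ large. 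Passing to a subsequence, the images of the $\tau_{n_k}$ in the compact space $\overline{\cM}_f$ converge to a stable curve $\hat X$ with pinched multicurve $\Gamma$. In a small enough neighbourhood of $\hat X$, every homotopy class of length below a fixed threshold $\eps_1>0$ is homotopic to a curve of $\Gamma$ --- cutting $\hat X$ along its nodes produces finitely many finite-area hyperbolic pieces whose systoles are bounded below, whereas any class crossing the thin collar of a pinched node is long --- so, via the marking of $\tau_{n_k}$, the curve $\delta_k$ is identified with a curve of $\Gamma$; in particular $\Gamma\ne\emptyset$. Recording enough of the marking by means of the finite cover of Proposition~\ref{finite cover}, one checks that along the orbit only finitely many curves on $(S^2,Q_f)$ arise this way, so a further subsequence makes $\delta_k$ equal to a single curve $\delta$. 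Thus $\delta\notin\Gamma_f$ while $\ell_{\tau_{n_k}}([\delta])\to0$ along some $n_k\to\infty$.

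The remaining task --- and, I expect, the genuine obstacle --- is to rule this out. The key is the dichotomy that for \emph{every} nontrivial simple closed curve $\gamma$ the sequence $\ell_{\tau_n}([\gamma])$ either tends to $0$, in which case $\gamma\in\Gamma_f$, or is bounded away from $0$: ``transient'' shortening cannot occur. Applied to $\delta$ this gives the required contradiction. To establish the dichotomy one combines two ingredients. First, passing to the rational maps $f_n\colon\hat\CC_{\tau_{n+1}}\to\hat\CC_{\tau_n}$ that represent $f$ between consecutive marked structures of the orbit, the Schwarz lemma shows that if $\gamma$ is short in $\tau_{n+1}$ then its $f$-image is at least as short in $\tau_n$, and a modulus estimate shows that a degree-one component of the $f$-preimage of a curve short in $\tau_n$ is at least as short in $\tau_{n+1}$; thus shortness propagates along the orbit. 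Second, one uses that $\Gamma_f$ is $f$-stable and that cutting $f$ along $\Gamma_f$ yields a canonical decomposition with no further Thurston obstruction, so that no short direction transverse to $\Gamma_f$ can persist under iteration. Together these say, in effect, that the accumulation set of any $\sigma_f$-orbit is confined to the stratum of $\Gamma_f$; this is what the continuous extension of $\sigma_f$ to the augmented Teichm\"uller space (\cite{seljmd}) together with Pilgrim's analysis of canonical obstructions (\cite{Pil1}) provide, and it is here --- not in the soft compactness, which only produces a limiting short curve --- that the real work lies.
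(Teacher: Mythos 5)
The paper itself offers no proof of this statement --- it is quoted verbatim from \cite{Pil1} --- so your proposal has to stand on its own, and as written it does not. The preliminary reductions are fine (basepoint independence via the $1$-Lipschitz property and Wolpert's inequality, the fixed-point case via \propref{unique fixed pt}, the observation that $n_k\to\infty$, the Margulis-constant and Deligne--Mumford compactness step). The first genuine gap is the sentence reducing to a single curve: ``recording enough of the marking by means of the finite cover of \propref{finite cover}, one checks that along the orbit only finitely many curves on $(S^2,Q_f)$ arise this way.'' Nothing in \propref{finite cover} gives this: that proposition lives over moduli space and says nothing about which \emph{marked} homotopy classes get short along the orbit in $\cT_f$. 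A priori infinitely many distinct classes could each dip below any threshold at some time without any single class recurring, and excluding that scenario is exactly the uniformity over $\gamma$ that the constant $E$ expresses. (A posteriori the set of curves that ever get shorter than $E$ is $\Gamma_f$, hence finite, but appealing to that is circular.)

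The second gap is the one you flag yourself: the dichotomy that ``transient shortening cannot occur.'' The two propagation estimates are real, though the forward one is weaker than you state --- the image of a curve short at $\tau_{n+1}$ is short at $\tau_n$ only up to a loss factor depending on $\deg f$ and $\#\bigl(f^{-1}(Q_f)\setminus Q_f\bigr)$, not ``at least as short'' --- but they only move shortness one step forward to the image and one step back to degree-one preimages; they do not rule out a curve becoming short, recovering, and becoming short again, nor do they give a bound uniform in $\gamma$. Your second ingredient --- that the accumulation set of the orbit is confined to the $\Gamma_f$-stratum, and that the decomposition along $\Gamma_f$ carries no further obstruction --- is precisely the content of Pilgrim's analysis in \cite{Pil1} and of \thmref{thm:CharacterizationCanonical}, both of which sit downstream of the bound you are asked to prove; invoking them here is circular. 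A complete argument must supply this core directly, either by Pilgrim's original length-propagation/pigeonhole analysis or by actually proving (not quoting) the continuous extension of $\sigma_f$ to the augmented Teichm\"uller space and the convergence of the orbit to the $\Gamma_f$-stratum, from which the uniform lower bound for curves off $\Gamma_f$ then follows.
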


\subsection*{Pilgrim's decompositions of Thurston maps}
What follows is a very brief review; the reader is referred to K.~Pilgrim's book \cite{Pil2} for details.
We adhere to the notation of \cite{Pil2}, for ease of reference.

As a motivation, consider that for the canonical Thurston obstruction $\Gamma_c\ni\gamma$, 
there is a choice of complex structure $\tau$
for which $\ell_\tau([\gamma])$ is arbitrarily small, and remains small under pullbacks by $f$. It is thus natural
to think of the punctured sphere $S^2\setminus P_f$ as pinching along the homotopy classes $[\gamma]\in\Gamma_c$;
instead of a single sphere we then obtain a collection of spheres interchanged by $f$. 

More specifically,
let $f$ be a Thurston map, and $\Gamma=\cup\gamma_j$ an $f$-stable multicurve. Consider also a finite collection of 
disjoint closed annuli
$A_{0,j}$ which are homotopic to the respective $\gamma_j$. For each $A_{0,j}$ consider only non-trivial preimages; these form
a collection of annuli $A_{1,k}$, each of which is homotopic to one of the curves in $\Gamma$.
Pilgrim says that the pair $(f,\Gamma)$
 is in a {\it standard form} (see Figure \ref{fig-decomp})
if there exists a collection of annuli $A_{0,j}$, which we call {\it decomposition annuli}, as above such that the
following properties hold:
\begin{itemize}
\item[(a)] for each curve $\gamma_j$ the annuli $A_{1,k}$ in the same homotopy class are contained inside $A_{0,j}$;
\item[(b)] moreover, the two outermost annuli $A_{1,k}$ as above share their outer boundary curves with $A_{0,j}$.
\end{itemize}
We call the components of the complement of the decomposition annuli the {\it thick parts}.

\begin{figure}[ht]
\includegraphics[width=\textwidth]{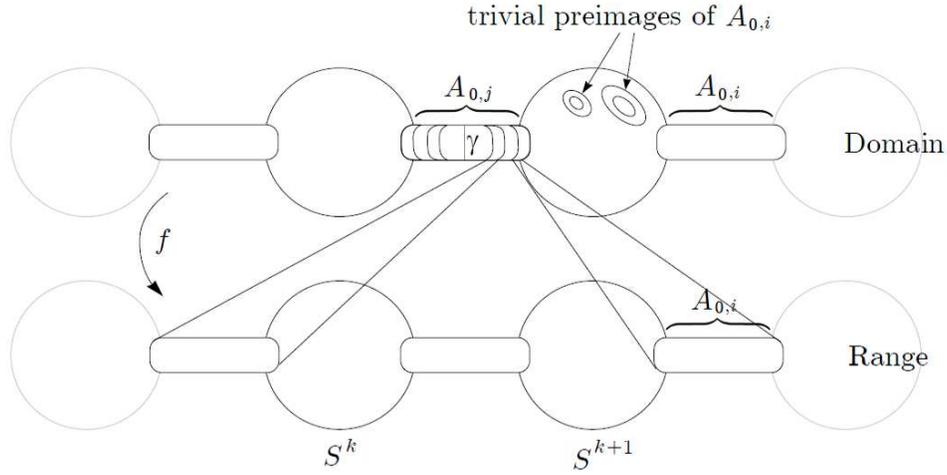}
\caption{\label{fig-decomp}Pilgrim's decomposition of a Thurston map}
\end{figure}

A Thurston map with a multicurve 
in a standard form can be decomposed as follows. First, all annuli $A_{0,j}$ are removed, leaving a collection of spheres with holes, denoted $S_0(j)$. For each $j$, there exists a unique connected component $S_1(j)$ of
$f^{-1}(\cup S_0(j))$ which has the property $\partial S_0(j)\subset \partial S_1(j)$. Any such $S_1(j)$ is a sphere with
holes, with boundary curves being of two types: boundaries of the removed annuli, or boundaries of trivial preimages of the
removed annuli. 

The holes in $S_0(j)\subset S^2$ can be filled as follows. Let $\chi$ be a boundary curve of
a component $D$ of $S^2\setminus S_0(j)$. 
Let $k\in\NN$ be the first iterate $f^k:\chi\to\chi$, if it exists. For each $0\leq i\leq k-1$ the curve 
$\chi_i\equiv f^i(\chi)$ bounds a component $D_i$ of $S^2\setminus S_0(m_i)$ for some $m_i$. Denote $d_i$ the degree
of $f:\chi_i\to \chi_{i+1}$.  Select  homeomorphisms 
$$h_i:\bar D_i\to \bar \DD\text{ so that }h_{i+1}\circ f\circ h_i^{-1}(z)=z^{d_i}.$$
Set $\tl f\equiv f$ on $\cup S_0(j)$.
Define new punctured spheres $\tl S(j)$ by adjoining cups $h_i^{-1}(\bar \DD\setminus \{0\})$
to $S_0(j)$. Extend the map $\tl f$ to each $D_i$ by setting 
$$\tl f(z)=h_{i+1}^{-1}\circ (h_i(z))^d.$$
We have thus replaced every hole with a cap with a single puncture. We call such a procedure {\it patching} a thick component.

By construction, the map 
$$\tl f:\cup \tl S(j)\to\cup\tl S(j)$$
contains a finite number of periodic cycles of  punctured spheres. For every periodic sphere $\tl S(j)$ denote
by $\cF$ the first return map $f^{k_j}:\tl S(j)\to\tl S(j).$ This is again a Thurston map.
The collection of maps $\cF$ and the combinatorial information required to glue the spheres $S_0(j)$ back together 
is what Pilgrim calls a {\it decomposition} of $f$.

Pilgrim shows:
\begin{thm}
\label{th:decomp1}
For every obstructed marked Thurston map $f$ with an obstruction $\Gamma$ there exists an equivalent map $g$ 
such that $(g,\Gamma)$ is in a standard form, and thus can be decomposed.
\end{thm}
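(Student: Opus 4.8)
The plan is to obtain $g$ from $f$ by pre-composing with homeomorphisms isotopic to the identity rel $Q_f$. The mechanism is this: if $\psi$ is such a homeomorphism then $g:=f\circ\psi^{-1}$ is Thurston equivalent to $f$ (take $\phi_0=\id$ and $\phi_1=\psi$ in the definition of equivalence), and $g^{-1}(X)=\psi\bigl(f^{-1}(X)\bigr)$ for every $X\subset S^2$; thus this operation moves the $f$-preimages of a fixed family of annuli by an essentially arbitrary isotopically trivial homeomorphism while leaving the annuli themselves in place. I would first replace $\Gamma$ by a simple $f$-stable obstruction, which exists by the result of \cite{sel1} recalled above (standard form presupposes $f$-stability). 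Fix pairwise disjoint simple closed curves $\gamma_1,\dots,\gamma_m$ representing the classes of $\Gamma$ and pairwise disjoint closed annular neighborhoods $A_{0,j}\ni\gamma_j$ with $\bigcup_jA_{0,j}$ disjoint from $Q_f$ --- hence also from $f(Q_f)\subseteq Q_f$ and from the critical values of $f$, which lie in $P_f\subseteq Q_f$. Then $f^{-1}\bigl(\bigcup_jA_{0,j}\bigr)\to\bigcup_jA_{0,j}$ is an unbranched covering, so each of its components is an annulus disjoint from $Q_f$; let $\{A_{1,k}\}_k$ be those that are essential and non-peripheral. By $f$-stability the core of each $A_{1,k}$ is homotopic rel $Q_f$ to a unique $\gamma_{c(k)}$.

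To arrange property (a): the cores of the $A_{1,k}$ form a family of pairwise disjoint essential non-peripheral simple closed curves, each homotopic rel $Q_f$ to one of the $\gamma_j$, so by the change of coordinates principle (see \cite{primer}) there is a homeomorphism $\psi$, isotopic to the identity rel $Q_f$, carrying each $A_{1,k}$ onto a core-parallel sub-annulus of $A_{0,c(k)}$, the sub-annuli sharing a value of $c(k)$ being pairwise disjoint. Put $g_1:=f\circ\psi^{-1}$. The non-trivial components of $g_1^{-1}\bigl(\bigcup_jA_{0,j}\bigr)$ are exactly the $\psi(A_{1,k})$, and for each $j$ those with $c(k)=j$ are disjoint core-parallel sub-annuli of $A_{0,j}$, hence linearly ordered inside it; this is (a).

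To arrange property (b): for each $j$ let $R_j\subseteq A_{0,j}$ be the smallest core-parallel sub-annulus containing every $\psi(A_{1,k})$ with $c(k)=j$, so the two boundary curves of $R_j$ are the outer boundary curves of the two outermost such annuli. Choose pairwise disjoint annuli $A'_{0,j}\supset A_{0,j}$ disjoint from $Q_f$; identifying $A'_{0,j}\cong S^1\times[0,1]$ so that $R_j$ and $A_{0,j}$ become $S^1\times I_j$ and $S^1\times J_j$ with $I_j\subseteq J_j$ nested closed subintervals, let $\chi_j=\id_{S^1}\times\eta_j$ where $\eta_j$ fixes $\{0,1\}$ and carries $I_j$ onto $J_j$; extend each $\chi_j$ by the identity and set $\chi:=\prod_j\chi_j$. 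Each $\chi_j$ is isotopic to the identity rel the complement of $A'_{0,j}$, so $\chi$ is isotopic to the identity rel $Q_f$ and $g:=g_1\circ\chi^{-1}=f\circ(\chi\psi)^{-1}$ is Thurston equivalent to $f$. The non-trivial components of $g^{-1}\bigl(\bigcup_jA_{0,j}\bigr)$ are the $\chi(\psi(A_{1,k}))$; since $\chi_{c(k)}$ carries $R_{c(k)}$ into $A_{0,c(k)}$, property (a) survives, and since $\chi_j$ carries $\partial R_j$ onto $\partial A_{0,j}$ without interchanging the two boundary curves, the two outermost of these annuli with $c(k)=j$ now share their outer boundary curves with $A_{0,j}$, which is (b). Hence $(g,\Gamma)$ is in standard form with decomposition annuli $A_{0,j}$, and the decomposition procedure described before the theorem applies.

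I expect the real work to be bookkeeping rather than any single difficult idea: one must verify that $\psi$ and $\chi$ can indeed be taken trivial rel $Q_f$ (supports avoiding $Q_f$, no spurious Dehn twist) and that they respect the ``outward'' orientation of the nested annuli, so that outermost annuli go to outermost annuli and (a), (b) hold for one and the same family $\{A_{0,j}\}$. The remaining ingredients are routine: the change of coordinates principle, and the fact that homotopic multicurves in $S^2\setminus Q_f$ are ambient isotopic.
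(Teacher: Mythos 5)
The paper does not prove this theorem at all: it is quoted verbatim from Pilgrim's book \cite{Pil2}, so there is no in-paper argument to compare against. Your reconstruction is correct and follows the standard line of Pilgrim's proof: since precomposition by a homeomorphism $\psi$ isotopic to the identity rel $Q_f$ replaces the preimage configuration $f^{-1}(X)$ by $\psi(f^{-1}(X))$ without changing the equivalence class, one only needs the simultaneous ambient-isotopy statement for componentwise-isotopic multicurves (with parallel copies) to push the essential preimage annuli into the $A_{0,j}$, followed by your radial stretch $\chi$ supported on slightly larger annuli to achieve condition (b); both steps are sound, and your bookkeeping of which components are moved by $\chi_j$ (only those in $A'_{0,j}$, which is disjoint from the other $A_{0,j'}$) is right. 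One small remark: your opening replacement of $\Gamma$ by a simple $f$-stable obstruction is not needed and technically alters the statement; the definition of standard form already presupposes that $\Gamma$ is $f$-stable, which is all your argument uses.
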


\subsection*{Topological characterization of canonical obstructions}

The first author showed in \cite{seljmd}:
\begin{theorem}[Characterization of Canonical Obstructions] 
\label{thm:CharacterizationCanonical}
The canonical obstruction $\Gamma$ 
is a unique minimal obstruction with the following properties. 
  	\begin{itemize}
		\item If the  first-return map $F$ of a cycle of components  in $\S_\Gamma$ is a $(2,2,2,2)$-map, then  every curve of every simple Thurston obstruction for $F$ has two postcritical points of $f$ in each complementary component and the two eigenvalues of $\hat{F}_*$ are equal or non-integer.
		\item If the first-return map $F$ of   a cycle of components  in $\S_\Gamma$ is not a $(2,2,2,2)$-map or a homeomorphism, then there exists no Thurston obstruction of $F$.
	\end{itemize}
\end{theorem}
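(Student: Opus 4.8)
The plan is to realize $\Gamma=\Gamma_f$ as the stratum of a fixed point of the Thurston pullback map on the \emph{augmented} Teichm\"uller space, and then to read off the two bullet properties from the first‑return maps of the associated Pilgrim decomposition. (We may assume throughout that $f$ has a hyperbolic orbifold, as in \thmref{canonical1}.) The first and hardest step is to extend $\sigma_f$ to a continuous self‑map of the augmented Teichm\"uller space $\AT_f\supset\cT_f$ — whose strata are the noded spheres obtained by pinching multicurves on $(S^2,Q_f)$ — and to produce a fixed point $\tau_*$ in the stratum $\AT_f(\Gamma_f)$ of the canonical obstruction. Here I would combine the weak contraction $\|d\sigma_f\|_T\le 1$ with \thmref{pil-bound} and the definition of $\Gamma_f$: the homotopy classes off $\Gamma_f$ keep a definite hyperbolic length along every pullback orbit while those in $\Gamma_f$ shrink to $0$, so any pullback orbit accumulates only on $\AT_f(\Gamma_f)$, and a compactness/weak‑contraction argument on that stratum produces $\tau_*$. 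This continuous extension and the fixed‑point statement is the technical core; I would follow the analysis of the extended pullback map carried out in \cite{seljmd}.

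Next I would put $(f,\Gamma_f)$ in standard form via \thmref{th:decomp1} and decompose. On each thick component the fixed point $\tau_*$ lies on a genuine (un‑noded) sphere, and its position is compatible with the first‑return maps: each $F=f^{k_j}$ of a cycle of components of $\S_{\Gamma_f}$ inherits a point of $\cT_F$, which is fixed by $\sigma_F$ whenever $F$ has a hyperbolic orbifold. Such a component carries at most $|Q_f|$ marked points — the pre‑periodic points of $f$ it contains, together with the punctures created by patching — so its orbifold is either hyperbolic, or parabolic with exactly $3$ or $4$ marked points. If $F$ has hyperbolic orbifold, the fixed point of $\sigma_F$ in $\cT_F$ makes $F$ Thurston equivalent to a rational map (by the criterion that a Thurston map is rational precisely when its pullback has a fixed point), hence by Thurston's Theorem $F$ carries no Thurston obstruction; if $F$ has parabolic orbifold with $3$ marked points, $\cT_F$ is a point and there are no essential, non‑peripheral curves at all; the case $\deg F=1$ is excluded by hypothesis, and the parabolic $4$‑point case is exactly the $(2,2,2,2)$‑map. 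This proves the second bullet.

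For the $(2,2,2,2)$ case, write $\hat F$ for the torus lift of $F$ and $p\colon T\to\OO$ for the degree‑$2$ projection, and let $\Gamma'$ be a simple Thurston obstruction for $F$. Every essential, non‑peripheral curve on a $4$‑marked sphere separates the marked points $2+2$; to obtain the sharper statement about the postcritical set I would argue, via \propref{prop:trivialhom}, that each $\gamma\in\Gamma'$ has a \emph{non‑trivial} $p$‑preimage — were $p^{-1}(\gamma)$ trivial, then (preimages of null‑homotopic curves under the covering $\hat F$ being null‑homotopic) all $F$‑preimages of $\gamma$, and inductively all iterated images of the curves of $\Gamma'$, would have trivial $p$‑preimage, forcing the transition matrix $M_{\Gamma'}$ to be nilpotent, contradicting \propref{prop:positive} — and \propref{prop:trivialhom} then gives exactly two postcritical points in each complementary component (using also that the four marked points of such a component are postcritical for $f$, a routine point to check in the decomposition). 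For the eigenvalue assertion, \propref{prop:homology} shows that $\hat F_*$ permutes the classes $\{\langle\gamma\rangle:\gamma\in\Gamma'\}$ up to sign and positive‑integer scaling, so their span is $\hat F_*$‑invariant and carries a positive integer eigenvalue of some power of $\hat F_*$; combining this with the positive Perron–Frobenius eigenvector of $M_{\Gamma'}$, the relation $\det\hat F_*=\deg F$, and the fact that a simple closed curve on $T$ has primitive homology class (which pins down the degrees of the components of the preimage of a curve lying in an eigendirection), a short computation shows that if $\hat F_*$ had two \emph{distinct integer} eigenvalues then the slow eigendirection would yield a simple obstruction $\gamma_0$ for $F$ of leading eigenvalue $>1$; since the curves of an obstruction are pinched under iteration of the pullback, $\ell_{\sigma_f^n\tau}([\gamma_0])\to 0$, so $[\gamma_0]\in\Gamma_f$ by definition of the canonical obstruction — impossible, as $\gamma_0$ lies inside a thick component of the decomposition of $f$ along $\Gamma_f$. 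Hence the two eigenvalues of $\hat F_*$ are equal or non‑integer, which is the first bullet.

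It remains to establish minimality and uniqueness. By \thmref{canonical1}, $\Gamma_f$ is an obstruction containing no proper sub‑obstruction, and the steps above show it has the two listed properties. If $\Gamma'$ is any obstruction with these properties, it cannot contain a curve $\gamma\notin\Gamma_f$: such a $\gamma$ satisfies $\inf_n\ell_{\sigma_f^n\tau}([\gamma])>E>0$ by \thmref{pil-bound}, whereas examining the decomposition of $f$ along $\Gamma'$ would place $\gamma$ either among the decomposition curves or inside a thick component whose first‑return map then carries a Thurston obstruction of leading eigenvalue $>1$ — violating one of the two properties — a contradiction in either case; so $\Gamma'\subseteq\Gamma_f$, and since $\Gamma_f$ has no proper sub‑obstruction, $\Gamma'=\Gamma_f$. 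I expect the continuous extension of $\sigma_f$ to $\AT_f$ and the location of its fixed point in precisely the stratum $\AT_f(\Gamma_f)$ (already the substance of \cite{seljmd}) to be the main obstacle; the remaining steps are bookkeeping with Thurston's Theorem, Pilgrim's decomposition, and the homological analysis of $(2,2,2,2)$‑maps from \S\ref{section1}.
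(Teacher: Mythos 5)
You should first note that this paper does not prove the statement at all: it is quoted from \cite{seljmd}, and your overall architecture (extending $\sigma_f$ to the augmented Teichm\"uller space, locating a fixed point in the stratum of $\Gamma_f$, decomposing, and doing homology on the torus for $(2,2,2,2)$ components) is indeed the architecture of that reference. However, the sketch has concrete gaps. First, your case analysis for the second bullet conflates the orbifold of a first-return map (determined by its postcritical set alone) with its marked sphere: a component can have parabolic orbifold of signature $(\infty,\infty)$, $(2,2,\infty)$, $(2,4,4)$, $(2,3,6)$ or $(3,3,3)$ and still carry arbitrarily many marked points (patched punctures plus points of $Q_f$), so ``hyperbolic, or parabolic with exactly $3$ or $4$ marked points'' is not exhaustive. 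For such marked parabolic components essential curves and degenerate Levy cycles are a priori possible (this is exactly the Example following \thmref{th:levy}), and having a fixed point of the pullback, i.e.\ being equivalent to a rational map, does \emph{not} exclude obstructions when the orbifold is parabolic --- the equivalence ``rational iff unobstructed'' requires a hyperbolic orbifold. Ruling out obstructions for realized marked parabolic maps is precisely the content of \thmref{t:ExpandingCase} and \corref{c:generalobs} (or their analogues in \cite{seljmd}), and your sketch never engages with it.

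Second, in the $(2,2,2,2)$ analysis you invoke the principle that ``the curves of an obstruction are pinched under iteration of the pullback''; this is false in general (flexible Latt\`es obstructions with $\lambda=1$ pinch nothing, and even a $\lambda>1$ obstruction need not have its own curves in the canonical obstruction), so the conclusion $[\gamma_0]\in\Gamma_f$ needs the nontrivial link between the component pullback dynamics and $\sigma_f$ that you deferred to \cite{seljmd}; similarly, the claimed nilpotency of $M_{\Gamma'}$ when some curve has trivial $p$-preimage does not follow from the block-triangularity you describe --- the ``two postcritical points of $f$ in each complementary component'' assertion rests on the no-Levy-cycle/finite-depth machinery (cf.\ \corref{c:FiniteDepth} and \corref{c:noobs}), which itself requires knowing the component has no degenerate Levy cycle. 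Third, and most seriously, the minimality/uniqueness argument is inverted. \thmref{canonical1} does not say that $\Gamma_f$ contains no proper sub-obstruction (in general it does), and ``unique minimal obstruction with these properties'' means that $\Gamma_f$ has the two properties and is \emph{contained in} every obstruction having them --- this is exactly how the theorem is used in \thmref{th:canonicalgeometrization}, where $\Gamma_c$ is recovered as an intersection. You argue the opposite inclusion $\Gamma'\subseteq\Gamma_f$, and your purported contradiction in the case ``$\gamma$ is a decomposition curve of $\Gamma'$'' is no contradiction: curves of an arbitrary obstruction $\Gamma'$ are under no requirement to pinch, so \thmref{pil-bound} is not violated there. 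The uniqueness half must instead show that every pinching curve of $\Gamma_f$ either lies in $\Gamma'$ or forces a forbidden obstruction (or forbidden eigenvalue pattern) in a component of the $\Gamma'$-decomposition.
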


\subsection{Algorithmic preliminaries}

\subsection*{A piecewise-linear Thurston map}
For the purposes of algorithmic analysis, we will require a finite description of a branched covering $f:S^2\to S^2$.

Since we will work mainly in the piecewise linear category, it is convenient to recall here some definitions.

\paragraph{\bf Simplicial complexes} Following \cite{thu} (chapter 3.2 and 3.9) we call {\it a simplicial complex} any locally finite collection 
$\Sigma$ of simplices  satisfying the following two conditions: 
\begin{itemize}
\item a face of a simplex in $\Sigma$ is also in $\Sigma$, and 
\item the intersection of any two simplices in $\Sigma$ is either empty or a face of both. 
\end{itemize}
The union of all simplices in $\Sigma$ is the {\it polyhedron} of $\Sigma$ (written $\vert \Sigma \vert$). 

\paragraph{\bf Piecewise linear maps} A map $f:M \rightarrow N$ from a subset of an affine space into another affine space is {\it piecewise linear (PL)} if it is the restriction of a simplicial map defined on the polyhedron of some simplicial complex.

We also define {\it piecewise linear (PL) manifolds} as manifolds having an atlas where the transition maps between overlapping charts are piecewise linear homeomorphisms between open subsets of $\mathbb{R}^{n}$. It is well known that any piecewise linear manifold has a triangulation:  there is a simplicial complex $\Sigma$ together with a homeomorphism $\vert \Sigma \vert \rightarrow X$ which is assumed to be a PL map (see \cite{thu}, proof of theorem 3.10.2).

One example of such a manifold is the standard piecewise linear (PL) 2-sphere, which is nicely 
described in \cite{thu} as follows: pick any convex 3-dimensional polyhedron
 $K \subset \mathbb{R}^{3}$, and consider the charts corresponding to all the 
possible orthogonal projections of the boundary (topological) sphere $\partial K$ onto hyperplanes in $\mathbb{R}^{3}$. 
The manifold thus obtained is the {\it standard piecewise linear  2-sphere}. One can prove that another choice of 
polyhedron would lead to an isomorphic object (see exercise 3.9.5 in \cite{thu}).

It is known that in dimension three or lower, every topological manifold has a PL structure, and any two such structures are PL equivalent (in dimension 2, see \cite{Rad}, for the dimension 3 consult \cite{Bin}). 

\paragraph{\bf Piecewise linear branched covers.}
We begin by formulating the following proposition which describes how to lift a triangulation by a PL branched cover (see \cite{Doug},section 6.5.4):

\begin{prop}[{\bf Lifting a triangulation}]
\label{lift cover}
Let $B$ be a compact topological surface, $\pi:X\to B$ a finite ramified cover of $B$. Let $\Delta$ be the set of branch points of $\pi$, 
and let $\cT$ be a triangulation of $B$ such that $\Delta$ is a subset of vertices of $\cT$ ($\Delta\subset K_{0}(\cT)$ in the established notation).
Then there exists a triangulation $\cT'$ of $X$, unique up to  bijective change of indices, 
so that the branched covering map $\pi:X \rightarrow B$ sends vertices to vertices, edges to edges and faces to faces. Moreover, if $X=B$ is a standard PL
2-sphere and $\pi$ is PL, then
$\cT'$ can be produced constructively given a description of $\cT$.
\end{prop}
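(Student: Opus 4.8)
The plan is to prove Proposition~\ref{lift cover} (lifting a triangulation by a PL branched cover) by combining the topological existence/uniqueness of the lifted triangulation with an effective refinement of the covering data in the PL category. The topological part is standard covering-space theory away from the branch locus; the constructive part is where the real work lies and where I expect the main obstacle.

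First I would establish the topological statement. Let $\cT$ be the given triangulation of $B$ with $\Delta \subset K_0(\cT)$. Away from the branch points, $\pi$ restricts to an honest covering map $\pi: X \setminus \pi^{-1}(\Delta) \to B \setminus \Delta$. For each open face (2-simplex) $\sigma$ of $\cT$, the preimage $\pi^{-1}(\sigma)$ is a disjoint union of copies of $\sigma$, each mapped homeomorphically; likewise for each open edge not abutting $\Delta$. For the simplices touching a branch point $v \in \Delta$, one uses the local model of a branched cover: in suitable coordinates $\pi$ looks like $z \mapsto z^d$ near a preimage of $v$ of local degree $d$, so the star of $v$ in $\cT$ (a disk triangulated as a fan of triangles around $v$) lifts to a $d$-fold ``wrapped'' fan around each preimage point. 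Declaring the lifted simplices and their faces to be the simplices of $\cT'$, one checks the two simplicial-complex axioms (faces of simplices are simplices; intersections are common faces) — these follow from the corresponding properties of $\cT$ together with the fact that $\pi$ is a local homeomorphism off $\Delta$ and the explicit local model at $\Delta$. Uniqueness up to bijective reindexing follows because any triangulation of $X$ compatible with $\pi$ on vertices, edges, and faces must have its simplices mapping to simplices of $\cT$, and over each open simplex of $\cT$ the sheets of the cover are canonically determined; a subdivision argument (any two compatible triangulations have a common subdivision which must equal both) pins this down.

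The substantive claim is the last sentence: when $X = B$ is the standard PL $2$-sphere and $\pi$ is PL, the triangulation $\cT'$ can be produced constructively from a description of $\cT$. Here I would argue as follows. Since $\pi$ is PL, by definition it is simplicial with respect to \emph{some} pair of finite simplicial complexes $\Sigma \to \Sigma'$ in the domain and range; refining, we may take a common simplicial subdivision $\Sigma''$ of $\Sigma'$ and of $\cT$ (computable: intersect the two finite families of simplices, which amounts to solving finitely many linear systems over $\Q$, then re-triangulate each cell), and correspondingly subdivide $\Sigma$ so that $\pi$ remains simplicial onto $\Sigma''$. Now $\pi$ maps each simplex of the subdivided domain complex PL-homeomorphically or by a linear fold onto a simplex of the subdivision of $\cT$; hence the preimages of the vertices, edges, and faces of $\cT$ decompose into explicitly listable simplices of the subdivided domain complex, each tagged with the simplex of $\cT$ it covers and the local degree (read off from the finite combinatorial data of $\pi$ on $\Sigma$). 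Finally, one \emph{coarsens}: repeatedly amalgamate adjacent simplices of the domain subdivision that sit over the same simplex of $\cT$ with the same sheet label. This terminates and returns exactly the cells of $\cT'$; the amalgamation is a finite, decidable combinatorial procedure. Thus $\cT'$, together with the combinatorial map $\cT' \to \cT$, is output in finite time.

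The main obstacle I anticipate is not the abstract existence of $\cT'$ but the bookkeeping needed to turn ``$\pi$ is PL'' into explicit finite data one can compute with — in particular, producing a common simplicial subdivision of $\cT$ and of the complex over which $\pi$ is given to be simplicial, and verifying that the coarsening step really recovers $\cT'$ rather than some proper subdivision of it. One must check that after coarsening, the union of simplices over a fixed face of $\cT$ on a fixed sheet is again a single simplex (a disk mapping homeomorphically) and not, say, an annular region — this is where the hypothesis $\Delta \subset K_0(\cT)$ and the local $z \mapsto z^d$ model are essential, since they guarantee that over each \emph{closed} simplex of $\cT$ the preimage sheets are genuine simplices and branching is concentrated at lifted vertices. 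Once this local-to-global matching is in hand, both the topological conclusion and its constructive refinement follow, and the proof is complete.
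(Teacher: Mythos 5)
The paper itself gives no proof of Proposition~\ref{lift cover}: it is quoted from \cite{Doug}, Section 6.5.4, so there is no internal argument to compare yours against. Judged on its own, your proof follows the standard route and is essentially right: away from $\Delta$ the map $\pi$ is an honest covering, each closed simplex of $\cT$ is simply connected and meets $\Delta$ only in vertices, so each closed cell lifts to finitely many cells mapped homeomorphically (with the $z\mapsto z^{d}$ fan picture at branch preimages), and the cells of $\cT'$ are precisely the connected components of preimages of cells of $\cT$. That description also yields uniqueness more directly than your subdivision argument: any triangulation of $X$ whose cells are carried onto cells of $\cT$ must have its open cells equal to these components. Your constructive step (overlay $\cT$ with the range complex of the affine data defining $\pi$, pull the common refinement back by linear algebra over $\Q$, then amalgamate the pieces lying over a fixed cell of $\cT$ on a fixed sheet by a finite adjacency search) is exactly what the rational-vertex PL setting of Remark~\ref{representation-PL} supports, and it terminates, so the algorithmic claim is covered.

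One inaccuracy to fix or qualify: your assertion that the lifted complex satisfies the strict simplicial-complex axioms as defined in this paper (the intersection of two simplices is a single common face) is false in general. Take $\pi(z)=z^{2}$ on the sphere, branched over $0$ and $\infty$, and let $\cT$ be the tetrahedral triangulation with vertices $0,\infty,1,-1$: the edge joining $0$ to $\infty$ has two lifts with the same pair of endpoints, and two lifts of the face $(0,1,\infty)$ intersect exactly in the two vertices $\{0,\infty\}$; neither intersection is a single face. So $\cT'$ is in general a triangulation only in the weaker (Delta-complex) sense, which is how the statement must be read --- or one first passes to a subdivision of $\cT$ (barycentric suffices), which is harmless both topologically and algorithmically. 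This caveat concerns the wording of your verification of the axioms, not the substance of your construction, which is otherwise sound.
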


We consider PL maps $f$ of the standard PL 2-sphere which are postcritically finite topological branched coverings with a finite 
forward-invariant set $Q_f$ of marked points. 
We call such a map a {\it piecewise linear  Thurston map}. 

\begin{rem}
\label{representation-PL}
Note that any such covering may
be realized as a piecewise-linear branched covering map of a triangulation of $\hat \CC$ with rational vertices. An algorithmic description of a PL branched covering
could thus either be given by the combinatorial data describing the simplicial map, or  as a collection of affine maps of triangles in $\hat\CC$ 
with rational vertices. We will alternate between these descriptions as convenient. 
\end{rem}

We note:

\begin{prop}[\cite{BBY}]
\label{ThPL}
Every marked Thurston map $f$ is Thurston equivalent to a PL Thurston map.
\end{prop}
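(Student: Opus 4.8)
The plan is to start from an arbitrary marked Thurston map $(f, Q_f)$ and produce, by a sequence of isotopies, a topologically conjugate (hence Thurston equivalent) model that is piecewise linear with respect to the standard PL structure on $S^2$. First I would fix a triangulation $\cT$ of the target sphere $S^2$ whose vertex set contains $Q_f$; this is always possible since $Q_f$ is finite, and by the results recalled in the algorithmic preliminaries we may take $\cT$ to be a triangulation of the standard PL $2$-sphere with rational vertices, after an ambient homeomorphism. Next I would apply \propref{lift cover} (Lifting a triangulation): since $f$ is a finite ramified cover and the branch locus $\Omega_f$ maps into $P_f \subset Q_f \subset K_0(\cT)$, we may add the points of $\Omega_f$ to the vertex set of $\cT$ (subdividing if necessary, still keeping $Q_f$ among the vertices), and then there is a triangulation $\cT'$ of the source $S^2$ such that $f$ carries vertices to vertices, edges to edges, and faces to faces.

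The heart of the argument is then to replace $f$ by a PL representative of the combinatorial data $(\cT, \cT', f)$. The pair $(\cT', \cT)$ together with the simplicial correspondence induced by $f$ is purely combinatorial; I would realize it geometrically by choosing a PL embedding of the abstract simplicial complex $\cT'$ into the standard PL $2$-sphere (using that any two PL structures on $S^2$ are PL equivalent, as recalled above) and then defining $\fg$ on each closed $2$-simplex of $\cT'$ to be the unique affine map onto the corresponding $2$-simplex of $\cT$ matching the prescribed vertex assignment. These affine pieces agree on shared faces because the vertex correspondence is consistent, so they patch to a global PL map $\fg:S^2\to S^2$; it is a branched cover of the same degree, with the same local degrees at vertices, and with postcritical set again landing in the vertex set, so $Q_f$ (carried over via the vertex labels) is forward invariant and $(\fg, Q_f)$ is a PL Thurston map.

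Finally I would check that $\fg$ is Thurston equivalent to $f$. Both maps carry the same combinatorial/simplicial data on $(\cT', \cT)$, so there are homeomorphisms $\phi_1$ (sending the geometric $\cT'$ to the geometric realization of $\cT'$ used for $\fg$) and $\phi_0$ (doing the same for $\cT$) that are cellular with respect to these triangulations, agree on $Q_f$ and send $Q_f$ to $Q_f$, and make the square $\phi_0\circ f = \fg\circ\phi_1$ commute on the level of the polyhedra; since $\phi_0$ and $\phi_1$ agree on the $0$-skeleton, and in particular on $Q_f$, one can further isotope so that $\phi_0$ and $\phi_1$ coincide on $Q_f$ and are isotopic rel $Q_f$ (the $2$-sphere is simply connected, so two homeomorphisms agreeing on a finite set and inducing the same combinatorics are isotopic rel that set). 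This verifies conditions (1) and (2) in the definition of Thurston equivalence.

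The step I expect to be the main obstacle is the middle one: producing a genuinely PL branched cover from the abstract combinatorial data while controlling that the geometric realizations of $\cT$ and $\cT'$ can simultaneously be taken inside the \emph{standard} PL $2$-sphere (not merely some PL sphere), and that the affine pieces really do assemble into an orientation-preserving branched cover with the correct local degrees at every vertex — in particular at the branch vertices, where several triangles of $\cT'$ wrap around a single triangle corner of $\cT$. This is where one must invoke carefully that PL structures on $S^2$ are unique up to PL equivalence, and where the constructive clause of \propref{lift cover} is used to guarantee the combinatorial gluing is consistent; once the PL model exists, the equivalence in the last paragraph is essentially formal.
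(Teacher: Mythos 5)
The paper itself does not prove this proposition: it is quoted from \cite{BBY}, and the argument there is the same triangulate--pull-back--straighten scheme you propose, so the comparison below is with that argument. The step you flag as the main obstacle (assembling the affine pieces into an orientation-preserving PL branched cover with the correct local degrees) is in fact the unproblematic part and works essentially as you say; your use of \propref{lift cover} is also fine, except that adding $\Omega_f$ to the vertex set of the target triangulation $\cT$ is unnecessary --- what the proposition needs is that the branch points (critical \emph{values}) lie in $K_0(\cT)$, which already holds since $f(\Omega_f)\subset P_f\subset Q_f$, and the critical points then automatically appear among the vertices of the lifted triangulation $\cT'$.

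The genuine gap is in your final paragraph. The parenthetical claim that two homeomorphisms of $S^2$ agreeing on a finite set and ``inducing the same combinatorics'' are isotopic rel that set because $S^2$ is simply connected is exactly what cannot be asserted: taken at face value it would say that $\PMCG(S^2,Q_f)$ is trivial, contradicting \propref{pmod}. The true statement of this kind (the Alexander method) requires both homeomorphisms to be cellular with respect to the \emph{same} pair of triangulations and to induce the same simplicial correspondence; your $\phi_0$ is cellular for $\cT$ and its PL realization while $\phi_1$ is cellular for $\cT'$ and its realization, so it does not apply. Nor can you ``further isotope'' $\phi_1$ at will: once $\phi_0$ and the PL model $g$ are fixed, the exact relation $\phi_0\circ f=g\circ\phi_1$ determines $\phi_1$ up to deck transformations of $g$, hence pins down its isotopy class rel $Q_f$. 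And the conclusion can genuinely fail for an uncontrolled realization: the abstract data $(\cT',\cT,\text{simplicial map})$ is identical for $f$ and for $f\circ\psi$ with $\psi$ a Dehn twist rel $Q_f$, maps which are in general not Thurston equivalent, so an arbitrary realization of $\cT'$ only produces a map Hurwitz equivalent to $f$. The repair --- and this is how the argument of \cite{BBY} is organized --- is to straighten $\cT'$ \emph{in place}: take $\cT$ PL with rational vertices from the start, choose an ambient homeomorphism $h$ of the standard PL sphere, isotopic to the identity rel the vertices of $\cT'$ (in particular rel $Q_f$), with $h(\cT')$ a PL triangulation, and define $g$ affinely on the simplices of $h(\cT')$. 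Then with $\phi_0=\id$ the lift defined simplexwise by $\phi_1|_\sigma=(g|_{h(\sigma)})^{-1}\circ f|_\sigma$ satisfies $g\circ\phi_1=\phi_0\circ f$ exactly, is cellular for the same pair of triangulations as $h$ and agrees with $h$ on all vertices, hence is isotopic to $h$ rel the vertex set by the Alexander method, and therefore isotopic to $\phi_0=\id$ rel $Q_f$, which is the required Thurston equivalence.
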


For ease of reference we state:
\begin{thm}
\label{thurston-decidable}
There exists an algorithm $\cA_1$ which, given a finite description of a marked Thurston map $f$ with  hyperbolic orbifold,
outputs $1$ if there exists a Thurston obstruction for $f$ and $0$ otherwise. In the latter case, $\cA_1$ also outputs a finite
description which uniquely identifies the rational mapping $R$ which is Thurston equivalent to $f$, and the pre-periodic orbits of
$R$ that correspond to pints in $Q_f$.
\end{thm}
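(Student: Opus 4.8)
The statement is essentially a restatement of the main result of \cite{BBY} adapted to the PL/marked setting, so the proof is a matter of assembling the ingredients already in hand rather than proving something new from scratch. The plan is to combine Thurston's Theorem, the characterization via $\sigma_f$ having a fixed point (Proposition on the fixed point / rational map equivalence), and the algorithmic machinery: a search for obstructions on one side, run in parallel with a search for an approximate fixed point of $\sigma_f$ on the other side. First I would fix a finite PL description of $f$ as in Remark~\ref{representation-PL}, with $Q_f$ its marked set; by Proposition~\ref{ThPL} there is no loss in assuming the input is already PL, and by Proposition~\ref{lift cover} one can constructively lift triangulations under $f$, which is what lets every step below be carried out by an actual algorithm operating on finite combinatorial data.

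\textbf{Step 1: enumerate candidate obstructions.} By Proposition~\ref{pmod} the group $\PMCG(S^2,Q_f)$ is generated by finitely many explicit Dehn twists, so all homotopy classes of simple closed curves on $S^2\setminus Q_f$, hence all finite multicurves, can be effectively enumerated. For each finite multicurve $\Gamma$ one computes the $f$-preimages of its curves using the PL lifting of Proposition~\ref{lift cover}, decides which preimage components are trivial and which are homotopic rel $Q_f$ to curves of $\Gamma$ (an effective check, since homotopy of simple closed curves rel a finite set is decidable), thereby testing $f$-stability and, when $\Gamma$ is $f$-stable, writing down the rational matrix $M_\Gamma$ and computing its Perron--Frobenius eigenvalue $\lambda(\Gamma)$ exactly. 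If ever $\lambda(\Gamma)\ge 1$, halt and output $1$. (One may equivalently search for a positive rational vector $v$ with $M_\Gamma v\ge v$, invoking Proposition~\ref{prop:positive}, which is a linear-programming feasibility question over $\QQ$.)

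\textbf{Step 2: run the Thurston pullback in parallel.} Simultaneously, starting from a rational base point $\tau_0\in\cT_f$ (available because $f$ is PL with rational vertices), iterate the Thurston pullback map $\sigma_f$. Here one must work with the finite cover $\tilde\sigma_f\colon\widetilde\cM_f\to\cM_f$ of Proposition~\ref{finite cover} rather than with $\sigma_f$ itself, so that the iteration takes place on the (finite-dimensional, effectively describable) moduli space; each step integrates a PL Beltrami coefficient by the Measurable Riemann Mapping Theorem, which can be done to any prescribed precision algorithmically, or — more cleanly — one tracks the branch point configuration directly as the image point moves in $\cM_f$. Because $f$ has a hyperbolic orbifold, Proposition~\ref{unique fixed pt} says that if a fixed point exists the orbit converges to it and the iteration is eventually contracting; so one watches for the orbit to enter and stay inside a small ball, and when the contraction estimate $\|d(\sigma_f^k)\|_T<1$ together with a sufficiently small diameter of the last few iterates certifies (by a standard quantitative fixed-point argument) that a genuine fixed point lies nearby, one halts, outputs $0$, and reads off the rational map $R$ and the locations of the marked points $Q_f$ as periodic/preperiodic points of $R$ from the limiting configuration, refining the approximation until the coefficients of $R$ are pinned down exactly (they are algebraic, and the fixed point is unique up to M\"obius conjugacy, so a finite amount of refinement isolates them).

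\textbf{Why the two searches jointly terminate, and the main obstacle.} By Thurston's Theorem exactly one of the two outcomes occurs: either there is an $f$-stable Thurston obstruction, in which case by \cite{sel1} there is in fact a simple $f$-stable one, which Step 1 will eventually enumerate and detect; or there is none, in which case $\sigma_f$ has a fixed point (by the fixed-point criterion) to which every orbit converges with eventual contraction, so Step 2 will eventually produce a certificate. Thus the parallel algorithm halts on every input, and its two halting branches are mutually exclusive, giving a correct $\cA_1$. The main obstacle — and the only genuinely delicate point — is making Step 2 \emph{effective and certifiable}: one needs (i) an a priori computable bound on how long to wait before the orbit must leave every compact set if no fixed point exists (otherwise the "no obstruction" branch might never halt while the obstruction search also stalls), which is supplied by Pilgrim's length bound Theorem~\ref{pil-bound} and Theorem~\ref{canonical1} — if there is no obstruction the canonical obstruction is empty, $f$ is rational, and the orbit stays in a compact set — and conversely if there is an obstruction, some curve length tends to $0$, eventually dropping below any fixed threshold, which again Step 1's enumeration will have flagged first; and (ii) turning the analytic convergence into an exact symbolic identification of $R$, which is where one uses that the fixed point is algebraic and isolated. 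All of this is precisely the content of \cite{BBY}, so the proof reduces to citing that construction and checking that the PL, marked, hyperbolic-orbifold hypotheses are exactly the hypotheses under which it applies; I would present it as such rather than reproducing the estimates.
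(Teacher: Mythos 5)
For the statement itself the paper offers no argument beyond a citation: Theorem~\ref{thurston-decidable} is attributed to \cite{BBY} (proved there for $Q_f=P_f$), with the remark that the marked case follows \emph{mutatis mutandis}. Since you also end by reducing everything to \cite{BBY}, your conclusion lands in the same place as the paper's. Your overall skeleton -- two semi-algorithms run in parallel, one enumerating multicurves to certify obstructedness, the other certifying equivalence to a rational map, with Thurston's Theorem guaranteeing exactly one halts -- is indeed the structure of the \cite{BBY} argument, and it is mirrored in this paper's own proof of the parabolic analogue, Theorem~\ref{thm:Parabolics}.

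However, the mechanism you describe for the second branch is not the one in \cite{BBY}, and as a standalone argument it has a genuine gap. In \cite{BBY} (and in the proof of Theorem~\ref{thm:Parabolics} here) the ``unobstructed'' branch is a brute-force enumeration: one lists candidate marked rational maps (algebraic data) and elements of $\PMCG(S^2,Q)$ (\propref{enumerate1}) and \emph{verifies} a proposed equivalence purely combinatorially via the homotopy/isotopy algorithms (\propref{homotopy check}, \propref{identify-isometry}); no certified numerics for $\sigma_f$ are needed. Your Step 2 instead proposes certified iteration of the pullback map, and the certification is exactly where the difficulty lies: $\|d\sigma_f\|_T\le 1$ and $\|d(\sigma_f^k)\|_T<1$ only pointwise, with the contraction constant degenerating as one approaches the boundary of $\cT_f$, so a ``standard quantitative fixed-point argument'' requires computable uniform derivative bounds on an explicit compact set; each iterate requires integrating a Beltrami coefficient with rigorous error control; and passing from an approximate fixed point to the \emph{exact} identification of $R$ and of the preperiodic orbits corresponding to $Q_f$ needs effective degree/height and separation bounds for the algebraic candidates. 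None of this is supplied by the propositions you cite (Proposition~\ref{unique fixed pt} and Theorem~\ref{pil-bound} give convergence and a length bound, not computable certificates), and it is not ``precisely the content of \cite{BBY}'' -- the enumeration-plus-verification design of \cite{BBY} exists largely to avoid these issues. So either carry out that additional certified-numerics machinery, or replace Step 2 by the \cite{BBY}-style enumeration and verification, which is what the paper implicitly relies on.
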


The paper \cite{BBY} contains a proof of the above theorem for the case of an unmarked Thurston map ($Q_f=P_f$), the proof
extends to the general case {\it mutatis mutandis}.

\subsection*{Verifying homotopy}
Let us quote several useful results from \cite{BBY}:
\begin{prop}
\label{homotopy check}
There exists an algorithm $\cA_2$ to check whether two simple closed polygonal curves on a triangulated surface $S$ are homotopic.
\end{prop}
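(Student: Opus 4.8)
\textit{Proof proposal.} The strategy is to translate the topological question into the conjugacy problem in $\pi_1(S)$, which is algorithmically solvable in the cases that occur here.

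\emph{Step 1 (put the curves on a common $1$-skeleton).} The two simple closed polygonal curves $\gamma_1,\gamma_2$ are finite unions of rational line segments lying inside the simplices of $S$. Adjoining their vertices and segments to the given triangulation and retriangulating --- a finite, effective PL subdivision of the kind underlying \propref{lift cover} --- produces a triangulation $\cT'$ of $S$ in which both $\gamma_i$ are embedded edge-cycles. After this step we may assume $\gamma_1,\gamma_2 \subset (\cT')^{(1)}$, and one checks that this passage to the $1$-skeleton does not alter the homotopy class (the curve is pushed across the subdividing $2$-cells, which is a homotopy).

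\emph{Step 2 (read off $\pi_1$-classes).} Fix a vertex $v_0$ not among the marked points as basepoint. From $\cT'$ one writes down, by the standard recipe (a maximal tree in the $1$-skeleton, with generators the remaining edges and relators the boundary words of the $2$-cells, applied to a subcomplex of a subdivision of $\cT'$ carrying the homotopy type of $S$ when $S$ has punctures), a finite presentation of $\pi_1(S,v_0)$. In the application of interest $S=\Sphere\setminus Q$ is a punctured sphere, so this group is free of rank $|Q|-1$, and an explicit free basis $\{\sigma_i\}$ can be produced geometrically --- e.g.\ as the classes of loops each crossing exactly one of a fixed system of pairwise disjoint PL arcs joining the punctures, a system one extracts from $\cT'$ constructively. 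Tracing the edge-cycle $\gamma_i$ and recording the non-tree edges traversed (equivalently, the signed sequence of its crossings with the chosen arcs) yields an explicit word $w_i$ in the basis, well defined up to cyclic permutation, representing the free-homotopy class of $\gamma_i$.

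\emph{Step 3 (decide conjugacy).} Two closed curves in $S$ are freely homotopic if and only if the elements $w_1,w_2\in\pi_1(S)$ are conjugate (and, if the curves are treated as unoriented, one additionally tests $w_1$ against $w_2^{-1}$). For a free group the conjugacy of $w_1$ and $w_2$ is decided by cyclically reducing both words and comparing the results up to cyclic rotation --- a terminating procedure; more generally the conjugacy problem in a surface group is solvable (Dehn), so the same scheme applies on an arbitrary triangulated surface. Returning the answer of this comparison is the algorithm $\cA_2$. I expect the only genuine work to lie in Step 1 --- verifying that an arbitrary PL curve can be absorbed into the triangulation by an effective, terminating subdivision --- while Steps 2 and 3 are routine combinatorial group theory.
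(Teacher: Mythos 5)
Your argument is correct. The paper gives no proof of this proposition --- it is quoted from \cite{BBY} --- and the argument there is essentially the one you give: refine the triangulation so the polygonal curves become edge cycles, read off conjugacy classes in $\pi_1$ (a free group for a punctured sphere, presented effectively from the complex) and decide free homotopy as conjugacy by cyclic reduction, respectively by Dehn's algorithm for a closed surface.
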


\begin{prop}
\label{identify-isometry}
There exists an algorithm $\cA_3$ which does the following. Given a triangulated sphere with a finite number of punctures $S=S^2-Z$ and a triangulated homeomorphism
$h:S\to S$, the algorithm identifies whether $h$ is isotopic to the identity.
\end{prop}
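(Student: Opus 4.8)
The plan is to reduce the question to finitely many applications of the homotopy‑checking algorithm $\cA_2$ of Proposition~\ref{homotopy check}, via the Alexander method for mapping class groups of punctured spheres (see \cite{primer}). Write $Z=\{p_1,\dots,p_n\}$. A homeomorphism of $S=S^2-Z$ extends to one of $S^2$ permuting $Z$, and only orientation‑preserving homeomorphisms fixing every puncture can be isotopic to the identity; so the algorithm first reads off from the simplicial data of $h$ whether $h$ preserves orientation and whether it fixes each $p_i$, outputting $0$ if either fails. Since $\PMCG(S^2_n)=1$ for $n\le 3$, in that range it then outputs $1$, and we may assume $n\ge 4$.

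Fix once and for all an explicit finite collection $\Gamma=\{\gamma_1,\dots,\gamma_m\}$ of essential, non‑peripheral polygonal simple closed curves on $S$ which fills $S$, is in pairwise minimal position, and is such that for any three of its curves at least one of the three pairwise intersections is empty; this is the configuration to which the Alexander method applies, and such a $\Gamma$ is easily written down on the $n$‑punctured sphere. Because $h$ is simplicial, each image $h(\gamma_i)$ is again a polygonal simple closed curve on a common subdivision of the triangulation, and is effectively computable. The algorithm $\cA_3$ then runs $\cA_2$ on each pair $(h(\gamma_i),\gamma_i)$, $i=1,\dots,m$, and outputs $1$ if $h(\gamma_i)$ is homotopic to $\gamma_i$ for every $i$ and $0$ otherwise.

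Correctness of the output $0$ is immediate: if $h$ is isotopic to the identity, then each $h(\gamma_i)$ is isotopic, hence homotopic, to $\gamma_i$. For the output $1$ one invokes the Alexander method: if $h$ is orientation‑preserving, fixes each $p_i$, and fixes the isotopy class of every $\gamma_i$, then $h$ is isotopic to a homeomorphism $h'$ with $h'(\Gamma)=\Gamma$; since $h'$ fixes each curve individually, fixes the punctures, and preserves orientation, it preserves the orientation of each $\gamma_i$ and each complementary region, and so can be further isotoped to fix the $1$‑complex $\Gamma$ pointwise. On each honest complementary disk $h'$ then fixes the boundary, hence is isotopic to the identity rel boundary by Alexander's lemma; on each once‑punctured complementary disk $h'$ restricts to a power of the Dehn twist about its (peripheral) boundary curve, and every such twist is trivial in $\MCG(S)$. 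Assembling the pieces, $h'$, and hence $h$, is trivial in $\MCG(S)$, i.e.\ isotopic to the identity. The step I expect to be the main obstacle is exactly the bookkeeping in this last paragraph: choosing $\Gamma$ so that the Alexander method hypotheses hold and so that fixing the isotopy classes of its curves already forces triviality, and then carrying out the simultaneous isotopy bringing $h$ into the form fixing $\Gamma$ pointwise while tracking orientations and the punctured complementary regions. Once $\Gamma$ is pinned down with enough care this is standard; alternatively, using Proposition~\ref{pmod}, the concluding step may be phrased as the solvability of the word problem in $\PMCG(S^2,Z)$ via its action on $\Gamma$.
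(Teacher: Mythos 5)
The paper itself does not prove this proposition; it is quoted verbatim from \cite{BBY}, so there is no in-paper argument to compare yours against. Judged on its own terms, your reduction is the natural one and is essentially correct: strip off the orientation and puncture-permutation checks (readable from the simplicial data), dispose of $\#Z\le 3$ by triviality of the pure mapping class group, and then test the action of $h$ on an explicit filling family $\Gamma$ with finitely many calls of $\cA_2$, concluding by the Alexander method.

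The one place where you assert more than you prove is the passage from ``$h'$ fixes each $[\gamma_i]$, fixes the punctures, and preserves orientation'' to ``$h'$ preserves the orientation of each $\gamma_i$ and each complementary region, hence can be isotoped to fix $\Gamma$ pointwise.'' Two points are hidden there. The orientation of each curve is indeed automatic, but by an argument worth recording: an orientation-preserving homeomorphism that maps an essential non-peripheral $\gamma_i$ to itself reversing its orientation must exchange the two complementary components of $\gamma_i$, each of which contains at least two punctures, contradicting pureness. More seriously, fixing every $\gamma_i$ setwise (even with orientation) does not by itself rule out that $h'$ induces a nontrivial symmetry of the one-complex $\Gamma$ — a ``rotation'' permuting intersection points and unpunctured complementary disks — and the Alexander method only delivers that $h'$ is isotopic to a homeomorphism realizing such a symmetry of $\Gamma$, not one fixing $\Gamma$ pointwise. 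You flag exactly this as the main obstacle, and it does need an argument; two standard patches close it. Either choose $\Gamma$ so that the punctures pin down every edge and complementary region (then the induced automorphism of the finite graph $\Gamma$ is forced to be trivial and your assembling argument goes through verbatim), or note that the automorphism group of the finite graph $\Gamma$ is finite, so some power $h^k$ is isotopic to the identity, and $\PMCG(S^2,Z)$ is torsion-free for $\#Z\ge 3$ (a nontrivial finite-order mapping class of the sphere is realized by a rotation, which fixes at most two points and so cannot fix all punctures), whence $h$ itself is isotopic to the identity. With either patch, and with the observation that unoriented homotopy as tested by $\cA_2$ suffices because of the orientation argument above, your proof is complete.
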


\subsection*{Enumeration of the multicurves and elements of the Mapping Class Group.}

We again quote \cite{BBY}:
\begin{prop}
\label{enumerate1}
Given a finite set of punctures $W$, there exist algorithms $\cA_5$, $\cA_6$ which enumerate the elements of 
$\MCG(S\setminus W)$ and $\PMCG(S\setminus W)$ respectively. 
\end{prop}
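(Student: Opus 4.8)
The plan is to exploit the fact that both groups are explicitly finitely generated and that algorithm $\cA_3$ of Proposition~\ref{identify-isometry} solves the word problem, so that a standard breadth‑first search over words in the generators, with duplicates filtered out by $\cA_3$, produces the desired enumeration.

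\emph{Explicit finite generation.} By Proposition~\ref{pmod}, $\PMCG(S\setminus W)$ is generated by a finite, explicitly described list of Dehn twists $T_{\gamma_1},\dots,T_{\gamma_m}$ around simple closed curves $\gamma_i$ which we may take to be polygonal with respect to a fixed triangulation of $S$ whose vertex set contains $W$. Each $T_{\gamma_i}^{\pm 1}$ is then realized by a concrete triangulated (PL) homeomorphism supported in a triangulated annular neighborhood of $\gamma_i$, and composition of triangulated homeomorphisms is computable after passing to a common refinement. For the full group we use the short exact sequence
$$1\to \PMCG(S\setminus W)\to \MCG(S\setminus W)\to \Sigma_n\to 1,\qquad n=\#W,$$
and adjoin to the generating set finitely many PL homeomorphisms $s_1,\dots,s_{n-1}$ of $S$ realizing transpositions that generate $\Sigma_n$ (e.g.\ half‑twists exchanging two punctures along a polygonal arc). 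The $T_{\gamma_i}$ alone generate $\PMCG(S\setminus W)$, and the $T_{\gamma_i}$ together with the $s_j$ generate $\MCG(S\setminus W)$.

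\emph{Enumeration with no repetitions.} List all finite words in the chosen generators and their inverses in order of increasing length; evaluating each word as a composition of triangulated homeomorphisms yields a computable sequence $h_1,h_2,\dots$ of PL self‑homeomorphisms of $S$ whose isotopy classes exhaust the group in question. This already gives recursive enumerability. To remove duplicates, maintain a finite list $L$ of isotopy classes produced so far, each stored as a triangulated representative; given the next candidate $h_k$, run $\cA_3$ on the triangulated punctured sphere $S-W$ applied to $h_k\circ g^{-1}$ for each $g\in L$, and append $h_k$ to $L$ precisely when every such test answers ``not isotopic to the identity.'' Since every isotopy class is represented by some word, this outputs each element of $\MCG(S\setminus W)$ (resp.\ $\PMCG(S\setminus W)$, when only the $T_{\gamma_i}$ are used) exactly once, giving $\cA_5$ and $\cA_6$.

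\emph{Main obstacle.} The substantive ingredients are the explicit finite generation —\ Proposition~\ref{pmod} (Dehn–Lickorish) for the pure group, and the construction of concrete PL homeomorphisms realizing generators of $\Sigma_n$ for the full group —\ and the decidability of isotopy to the identity, which is exactly $\cA_3$. Granting these, the enumeration is the routine remark that a finitely generated group with solvable word problem can be listed without repetition by the diagonal search above; the only point requiring care is to keep every homeomorphism in the triangulated/PL category, so that composition, inversion, and the hypotheses of $\cA_3$ remain meaningful, which is ensured by taking common refinements of the relevant triangulations at each step.
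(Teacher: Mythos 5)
Your proof is correct, but note that the paper itself contains no argument for this proposition: it is quoted directly from \cite{BBY}, so there is no internal proof to compare against, and your write-up supplies essentially the standard argument one would expect behind that citation. The substantive inputs you use are the right ones: explicit finite generation of the pure group by Dehn twists (Proposition~\ref{pmod}), lifting generating transpositions of the punctures through the short exact sequence by PL half-twists to generate the full group, PL realizability and computable composition of the generators, and the decidability of isotopy to the identity furnished by $\cA_3$ (Proposition~\ref{identify-isometry}). Two small remarks. First, an ``enumeration'' of the group only requires that every element eventually appear in the output, so listing all words in the generators already suffices; your duplicate-filtering via $\cA_3$ is a harmless strengthening (enumeration without repetition), not a needed step, and indeed the paper's later uses of $\cA_5$, $\cA_6$ only need the weaker form. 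Second, in the non-pure case you should first compare the permutations of $W$ induced by $h_k$ and $g$ (a trivial combinatorial check on the triangulated data) and invoke $\cA_3$ on $h_k\circ g^{-1}$ only when this permutation is trivial: as stated, $\cA_3$ decides isotopy to the identity of a self-homeomorphism of the punctured sphere, and while a map inducing a nontrivial permutation of punctures can never be isotopic to the identity, screening by the permutation first keeps you squarely within the stated hypotheses of $\cA_3$. With these cosmetic caveats, your argument is complete.
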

\begin{prop}
\label{enumerate2}
Given a finite set of punctures $W$, there exists an algorithm $\cA_7$ which enumerates 
 all non-peripheral multicurves on $S^{2}\setminus W$. 
\end{prop}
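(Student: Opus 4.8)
The plan is to combine two ingredients: the classification of multicurves on a punctured sphere up to the action of the pure mapping class group, and the enumeration algorithm $\cA_6$ for $\PMCG(S^2\setminus W)$ supplied by Proposition~\ref{enumerate1}. Write $n=|W|$; if $n\le 3$ there are no essential non-peripheral simple closed curves at all and the algorithm outputs the empty list, so assume $n\ge 4$.

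First I would establish the purely topological fact that there are only \emph{finitely many} $\PMCG(S^2\setminus W)$-orbits of non-peripheral multicurves on $(S^2,W)$, and that an explicit representative of each orbit can be written down. Indeed, any multicurve $\Gamma$ on the $n$-punctured sphere has at most $n-3$ components, and cutting $S^2$ along $\Gamma$ produces a collection of punctured spheres with holes whose adjacency graph is a tree; since no curve of $\Gamma$ is inessential or peripheral and no two are isotopic, each piece carries at least three boundary-or-puncture ends. The isotopy class of $\Gamma$ is determined by the \emph{labelled topological type} of this cut surface, i.e.\ by the combinatorial tree together with the assignment of the $n$ labelled punctures to its pieces; there are finitely many such types, say $\theta_1,\dots,\theta_N$. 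Moreover, by the change of coordinates principle (see \cite{primer}), two non-peripheral multicurves of the same labelled type are carried one to the other by an element of $\PMCG(S^2\setminus W)$. Hence, if for each $\theta_i$ one exhibits a model multicurve $\Gamma_{\theta_i}$, realized as in Remark~\ref{representation-PL} as a union of disjoint polygonal simple closed curves with rational vertices in a fixed triangulation of $S^2\setminus W$, then every non-peripheral multicurve is isotopic to $\psi(\Gamma_{\theta_i})$ for some $i$ and some $\psi\in\PMCG(S^2\setminus W)$.

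The algorithm $\cA_7$ then proceeds as follows. It runs $\cA_6$ to produce the elements $\psi_1,\psi_2,\dots$ of $\PMCG(S^2\setminus W)$, each as a triangulated homeomorphism; for each pair $(i,j)$ with $1\le i\le N$ it computes $\psi_j(\Gamma_{\theta_i})$ (after a common refinement $\psi_j$ is affine on each simplex, so the image is again an explicit union of polygonal simple closed curves) and outputs it. Dovetailing over all $(i,j)$ yields, by the previous paragraph, a list in which every non-peripheral multicurve on $S^2\setminus W$ appears up to isotopy at least once. The list contains repetitions, which is harmless for an enumeration; if a duplicate-free list is wanted one may in addition use $\cA_2$ of Proposition~\ref{homotopy check} to test each new entry, curve by curve, against the entries already output and suppress repeats.

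The step requiring genuine care is the topological input of the second paragraph: checking that the labelled topological type is a complete isotopy invariant of a non-peripheral multicurve on $(S^2,W)$ and that $\PMCG$ acts transitively on each type — the change of coordinates principle in the precise form needed here — together with writing down explicit rational-polygonal models $\Gamma_{\theta_i}$ for the finitely many types. Everything else (generating and enumerating $\PMCG$, computing images of polygonal curves under PL homeomorphisms, testing homotopy) is routine and already available from \cite{BBY} and the preceding propositions.
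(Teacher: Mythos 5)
Your reduction is sound: a non-peripheral multicurve on the $n$-punctured sphere has at most $n-3$ components, its complementary pieces form a tree with at least three boundary-or-puncture ends per piece, there are finitely many labelled types, $\PMCG(S^{2}\setminus W)$ acts transitively on the multicurves of a given labelled type by change of coordinates, and so applying the enumeration $\cA_6$ of $\PMCG(S^{2}\setminus W)$ to finitely many explicit PL models (with optional de-duplication via $\cA_2$) enumerates everything up to isotopy. One sentence must be corrected: you write that ``the isotopy class of $\Gamma$ is determined by the labelled topological type,'' which is false --- already for a single curve on the four-times-punctured sphere one labelled type contains infinitely many isotopy classes. What you need, and what your following sentence correctly states, is that the labelled type determines the $\PMCG$-orbit; since the rest of the argument only uses the orbit statement, this is a slip rather than a gap, but it should be fixed.

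Be aware that this paper contains no proof of Proposition~\ref{enumerate2}: it is quoted verbatim from \cite{BBY}, so your argument cannot be matched line-by-line against a proof in the present text. With the toolkit of \cite{BBY} the enumeration can also be obtained more crudely, without any classification of multicurves: one lists all finite collections of disjoint simple closed polygonal curves carried by successively finer subdivisions of the triangulated sphere (every isotopy class is eventually realized this way) and uses the homotopy-testing algorithm $\cA_2$ to discard inessential, peripheral or mutually homotopic components. That brute-force route needs no change-of-coordinates input but produces massive redundancy; your route is cleaner and puts the real mathematical content exactly where you flag it, namely in the transitivity of the pure mapping class group on each labelled type and in exhibiting rational-polygonal models for the finitely many types, both of which are standard and constructive.
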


We combine Propositions \ref{enumerate2} and \ref{homotopy check} to formulate:

\begin{prop}
\label{enumerate3}
Given a marked PL Thurston map $f$, there exists an algorithm $\cA_8$ which enumerates all $f$-stable
multicurves. 
\end{prop}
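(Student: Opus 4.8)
The plan is to combine the enumeration of non-peripheral multicurves (Proposition~\ref{enumerate2}) with a decidable check for $f$-stability, processing the candidate multicurves one at a time so that the output stream lists exactly the $f$-stable ones. So the real content is: given a marked PL Thurston map $f$ and a specific multicurve $\Gamma=\{\gamma_1,\dots,\gamma_m\}$ presented by polygonal representatives on the triangulated sphere, decide algorithmically whether $\Gamma$ is $f$-stable.

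First I would compute the preimage $f^{-1}(\Gamma)$ as a PL object. By Proposition~\ref{ThPL} and Remark~\ref{representation-PL} we may assume $f$ is a PL branched cover with rational vertices; after possibly subdividing the triangulation of the target so that each $\gamma_j$ is carried by the $1$-skeleton (a routine PL operation on rational data), Proposition~\ref{lift cover} produces constructively a compatible triangulation of the source sphere in which $f^{-1}(\gamma_j)$ is again a polygonal $1$-subcomplex. Its connected components are finitely many simple closed polygonal curves $\alpha_1,\dots,\alpha_k$, each of which can be extracted combinatorially from the lifted triangulation. Next, for each component $\alpha_i$ I would decide whether it is trivial — i.e.\ inessential or peripheral in $S^2\setminus Q_f$. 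Inessentiality means $\alpha_i$ bounds a disk disjoint from $Q_f$, and peripherality means $\alpha_i$ bounds a once-punctured disk; both are checkable because each of the two complementary disks of a simple closed curve on the sphere is a triangulated disk whose marked points we can simply count. If $\alpha_i$ is nontrivial, I would run the homotopy-checking algorithm $\cA_2$ of Proposition~\ref{homotopy check} to test whether $\alpha_i$ is homotopic rel $Q_f$ to one of $\gamma_1,\dots,\gamma_m$; here one must be mildly careful that $\cA_2$ as stated checks homotopy on the triangulated surface $S$, so one works on $S=S^2\setminus Q_f$ (equivalently, $Q_f$ as the puncture set), which is exactly the setting in which the definition of $f$-stability is phrased. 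Then $\Gamma$ is $f$-stable precisely when every nontrivial component $\alpha_i$ passes this test, and this is a finite conjunction of decidable conditions.

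Finally, to get the enumeration algorithm $\cA_8$: run $\cA_7$ from Proposition~\ref{enumerate2} to produce multicurves $\Gamma^{(1)},\Gamma^{(2)},\dots$ on $S^2\setminus Q_f$ one by one, and for each apply the stability test above, outputting $\Gamma^{(n)}$ exactly when it passes. Since $\cA_7$ exhausts all non-peripheral multicurves and (after discarding any curve that is inessential in the ambient sphere, which $\cA_7$ already does) every $f$-stable multicurve appears in its list, $\cA_8$ enumerates precisely the $f$-stable multicurves.

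I expect the main obstacle to be purely bookkeeping rather than conceptual: ensuring that all the PL data stay commensurable — that the triangulation refinements needed to make each $\gamma_j$ simplicial, to apply Proposition~\ref{lift cover}, and to feed the components $\alpha_i$ into $\cA_2$ can all be carried out with rational-vertex triangulations in a uniform way — and that the homotopy test is performed relative to the correct puncture set $Q_f$, not merely $P_f$. Triviality of a component is the only place where one must also worry about inessential-versus-peripheral, but on the sphere this reduces to counting marked points in the two complementary disks, which is immediate from the combinatorics of the lifted triangulation.
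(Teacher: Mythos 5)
Your proposal is correct and follows the same route as the paper, which obtains $\cA_8$ precisely by combining the multicurve enumeration of Proposition~\ref{enumerate2} with the homotopy test of Proposition~\ref{homotopy check}; the paper leaves the stability check implicit, and your use of Proposition~\ref{lift cover} to compute $f^{-1}(\Gamma)$ as a polygonal object and the marked-point count to detect trivial components is exactly the routine filling-in it has in mind.
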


In \cite{BBY}, \propref{identify-isometry} and \ref{enumerate1} are combined in a straightforward fashion to prove:
\begin{prop}
\label{prop-verify1}
There exists and algorithm $\cA_9$ which, given two equivalent marked PL Thurston maps $f$ and $g$ verifies the equivalence, 
by presenting an element of $\MCG(S^2\setminus Q)$ which realizes it.
\end{prop}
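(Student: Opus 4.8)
The plan is a guided search over the mapping class group, driven by the enumeration algorithm $\cA_5$ of \propref{enumerate1}, the triangulation-lifting procedure of \propref{lift cover}, and the isotopy test $\cA_3$ of \propref{identify-isometry}. Recall that a Thurston equivalence between $(f,Q_f)$ and $(g,Q_g)$ is a pair $(\phi_0,\phi_1)$ of homeomorphisms that agree on $Q_f$, carry $Q_f$ onto $Q_g$, are isotopic rel $Q_f$, and satisfy $g\circ\phi_1=\phi_0\circ f$. The relation $g\circ\phi_1=\phi_0\circ f$ exhibits $\phi_1$ as a \emph{lift} of $\phi_0$: given a homeomorphism $\phi_0\colon(S^2,Q_f)\to(S^2,Q_g)$, the map $\phi_0\circ f$ is a branched cover of $S^2$, and a $\phi_1$ with $g\circ\phi_1=\phi_0\circ f$ exists precisely when $\phi_0\circ f$ has the same ramification data and monodromy as $g$; when it does, $\phi_1$ is unique up to post-composition with one of the finitely many deck transformations of $g$. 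Moreover the lifts of $\phi_0$ can be produced effectively: take a triangulation of the target sphere whose vertex set contains the (computable) critical values of both $g$ and $\phi_0\circ f$, lift it through $g$ and through $\phi_0\circ f$ by \propref{lift cover}, and read off a PL lift $\phi_1$ from each simplicial isomorphism between the two lifted triangulations that covers the identity of the base; there are finitely many, and the absence of any --- i.e.\ the nonexistence of a lift --- is detected. Consequently, $f$ and $g$ are Thurston equivalent if and only if there is a homeomorphism $\phi_0\colon(S^2,Q_f)\to(S^2,Q_g)$ admitting a lift $\phi_1$ that agrees with $\phi_0$ on $Q_f$ and is isotopic to $\phi_0$ rel $Q_f$.

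Given this, the algorithm $\cA_9$ proceeds as follows. Fix, once and for all, the finitely many bijections $\beta\colon Q_f\to Q_g$ that conjugate $f|_{Q_f}$ to $g|_{Q_g}$. Using $\cA_5$, enumerate the elements of $\MCG(S^2\sm Q_f)$, and for each element and each admissible $\beta$ produce a PL homeomorphism $\phi_0\colon(S^2,Q_f)\to(S^2,Q_g)$ realizing that mapping class and restricting to $\beta$ on $Q_f$; by Remark~\ref{representation-PL} all maps we build from $\phi_0$ stay in the triangulated category. For each $\phi_0$, list its finitely many PL lifts $\phi_1$ (skip $\phi_0$ if there are none); among those lifts that agree with $\phi_0$ on $Q_f$ --- a finite check --- apply $\cA_3$ to the triangulated self-homeomorphism $\phi_0^{-1}\circ\phi_1$ of $S^2\sm Q_f$ to decide whether it is isotopic to the identity, equivalently whether $\phi_1\simeq\phi_0$ rel $Q_f$. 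The first time a test succeeds, output the corresponding class in $\MCG(S^2\sm Q_f)$ (together with $\beta$) and halt.

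Correctness is immediate from the reduction, and termination follows because $f$ and $g$ are assumed equivalent: a genuine pair $(\phi_0^\ast,\phi_1^\ast)$ exists, $\phi_1^\ast$ is a lift of $\phi_0^\ast$ that agrees with it on $Q_f$ and is isotopic to it rel $Q_f$, so once the enumeration reaches the class of $\phi_0^\ast$ with $\beta=\phi_0^\ast|_{Q_f}$ the test succeeds. The one point that is not purely mechanical --- and the crux of the argument --- is that the \emph{discrete} lift built via \propref{lift cover} sits in the correct isotopy class, i.e.\ that $[\phi_0]\mapsto[\phi_1]$ is well defined on isotopy classes rel $Q_f$, modulo deck transformations of $g$; only then does the lift computed for our chosen representative of a class detect an equivalence whenever the class admits one. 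This is the combinatorial counterpart of the well-definedness of the Thurston pullback on Teichm\"uller space, and is established by lifting an isotopy between two representatives of $[\phi_0]$ through $g$ by a homotopy-lifting argument; one uses that $Q_f\supseteq P_f$ is $f$-invariant to keep the lifted isotopy rel $Q_f$.
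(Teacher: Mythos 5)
Your proposal is correct and follows essentially the same route the paper takes: the paper obtains this proposition by combining the enumeration of mapping classes (Proposition~\ref{enumerate1}) with the isotopy test $\cA_3$ (Proposition~\ref{identify-isometry}), which is exactly your search-over-$\MCG$ scheme, with the triangulation-lifting step via Proposition~\ref{lift cover} and the well-definedness of the lift on isotopy classes rel $Q_f$ supplying the details the paper leaves as "straightforward."
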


We also need to state a constructive version of \thmref{th:decomp1}:
\begin{prop}
\label{prop:decomp2} There exists and algorithm $\cA_{10}$ which, given an obstructed marked PL Thurston map $f$ and an obstruction $\Gamma$,
finds an equivalent PL Thurston map which is in a standard form, and such that the boundary curves of the thick parts are polygons. 
\end{prop}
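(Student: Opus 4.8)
The plan is to make the proof of Theorem~\ref{th:decomp1} effective, reusing the algorithmic building blocks already quoted from \cite{BBY}. Pilgrim's construction proceeds in two stages: first one isotopes $f$ so that the multicurve $\Gamma$ is in standard form (Theorem~\ref{th:decomp1}), which amounts to choosing decomposition annuli $A_{0,j}$ homotopic to the $\gamma_j$ so that conditions (a) and (b) hold; then one patches the thick parts. Since $f$ is PL and $\Gamma$ is a finite $f$-stable multicurve given explicitly, I would begin by invoking Proposition~\ref{lift cover} to obtain a PL triangulation of $S^2$ compatible with $f$ and refined so that each $\gamma_j$ is a subcomplex (a polygonal curve), together with the induced triangulation on the preimage; Proposition~\ref{ThPL} guarantees we may keep everything PL throughout. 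The key observation is that the equivalence $g$ produced by Pilgrim is obtained from $f$ by a mapping class group element, i.e.\ $g = h_0 \circ f \circ h_1^{-1}$ for suitable homeomorphisms, so the problem reduces to \emph{searching} for such an element.

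The central step is therefore: enumerate elements $\psi$ of $\MCG(S^2 \setminus Q)$ using algorithm $\cA_5$ of Proposition~\ref{enumerate1}, realize each as a PL homeomorphism, form the candidate conjugate $g_\psi = \psi \circ f \circ \psi^{-1}$ (or the appropriate one-sided modification), and \emph{test} whether $(g_\psi, \Gamma)$ is in standard form with polygonal thick-part boundaries. The test is decidable: standard form is the conjunction of finitely many homotopy conditions on the non-trivial preimages $A_{1,k}$ of explicitly chosen polygonal annular neighborhoods of the $\gamma_j$ — namely, that they are nested inside the $A_{0,j}$ in the correct cyclic order and that the two outermost ones share boundary with $A_{0,j}$ — and each such condition is a homotopy comparison of polygonal simple closed curves on a triangulated surface, which is checkable by algorithm $\cA_2$ of Proposition~\ref{homotopy check}. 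Theorem~\ref{th:decomp1} guarantees that some $\psi$ in the enumeration works, so the search halts. Once a good $g_\psi$ is found, the patching procedure is itself PL and constructive: the caps $h_i^{-1}(\bar\DD \setminus \{0\})$ can be added as finitely many triangulated disks on which $\tl f$ acts by the explicit power maps $z \mapsto z^{d_i}$, and the boundary curves of the thick parts are exactly the outer boundaries of the $A_{0,j}$, which we chose polygonal. Running Proposition~\ref{lift cover} once more on the patched map records the triangulation data of the first-return maps $\cF$.

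The main obstacle I anticipate is the verification step — certifying algorithmically that a candidate $g_\psi$ really is in standard form. The subtlety is that "standard form" involves a \emph{choice} of decomposition annuli, not just the curves $\gamma_j$, and a priori one must quantify over such choices. I would handle this by noting that, given $g_\psi$, it suffices to take canonical thin polygonal neighborhoods of the $\gamma_j$ and check whether \emph{that} choice works (shrinking an annulus preserves standard form, and Pilgrim's construction can be arranged with annuli as thin as we like); this converts the existential over annuli into a finite decidable check. A secondary, purely bookkeeping obstacle is ensuring the cyclic nesting order of the preimage annuli $A_{1,k}$ inside each $A_{0,j}$ can be read off combinatorially from the triangulation — this follows from Proposition~\ref{lift cover}, since the preimage triangulation exhibits each $A_{1,k}$ explicitly as a subcomplex and the linear order of disjoint homotopic annuli inside an annulus is determined by which component of the complement of one contains another, a finite check.
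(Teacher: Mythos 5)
Your proposal is essentially the paper's own argument: the paper proves this by a brute-force search over candidate equivalent PL maps, verified with the algorithmic subroutines already quoted from \cite{BBY}, with termination guaranteed by Theorem~\ref{th:decomp1}, followed by a local modification of the triangulation near the thick-part boundaries to make them polygonal. The only cosmetic difference is that the paper phrases the verification step via algorithm $\cA_3$ (Proposition~\ref{identify-isometry}) rather than via curve-by-curve homotopy checks with $\cA_2$, and its proof is given only as a sketch at roughly the same level of detail as yours.
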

\begin{proof}[Sketch of proof.] 
We use a brute force search combined with algorithm $\cA_3$ (\propref{identify-isometry}) to find a PL approximation of the map $g$ from
\thmref{th:decomp1}. We then modify the triangulation near the boundary curves of the thick parts to obtain the desired map. We leave it to
the reader to fill in the straightforward details.
\end{proof}
\subsection*{Algorithmic complexity of the Mapping Class Group}
Let us recall that a group $G$ is {\it finitely generated} if it is isomorphic to 
a quotient of the free group $F_S$ on a finite set $S$
by a normal subgroup $N\lhd F_S$. The elements of $S$ are {\it generators} of $G$. A finitely generated group is
{\it finitely presented} if there exists a finite set of words $R\subset F_S$ such that
$N$ is the normal closure of $R$ (the smallest normal subgroup of $F_S$ which contains $R$). The words in $R$ are called
{\it relators}; thus a finitely presented group can be described using a finite set of generators and relators.

The {\it Word Problem} for a finitely presented group $G$ can be stated as follows:

\medskip
\noindent
{\sl Let $S$ and $R$ be given. For a word $w$ in $F_S$ decide whether or not $w$ represents the identity in $G$.
Equivalently, for two words $w_1$, $w_2\in F_S$ decide whether $w_1$ and $w_2$ represent the same element of $G$.}

\medskip
\noindent
The {\it Conjugacy Problem} is stated similarly:

\medskip
\noindent
{\sl Let $S$ and $R$ be given. 
For two words $w_1$, $w_2$ decide whether $w_1$ and $w_2$ are conjugate elements of $G$, that is, whether there exists
$x\in G$ such that $w_1=xw_2x^{-1}$.}

\medskip
\noindent
The Word Problem is a particular case of the conjugacy problem, since being conjugate to the identity element $e\in G$ is
the same as being equal to it.

Both problems were explicitly formulated by Dehn \cite{Dehn1}, who subsequently produced an algorithm deciding the
Conjugacy Problem for a fundamental group of a closed orientable surface \cite{Dehn2}. An example of a 
finitely presented group with an algorithmically unsolvable word problem was first given in 1955 by P. Novikov \cite{novikov},
a different construction was obtained by W. Boone in 1958 \cite{boone}.

We begin by noting the following (cf. \cite{Lic,primer}):

\begin{thm}
\label{finitelypresented}
Let $S$ be an orientable surface of finite topological type. Then there exists an explicit finite presentation
of $\MCG(S)$ and of $\PMCG(S)$. This presentation can be computed from a PL presentation of $S$.
\end{thm}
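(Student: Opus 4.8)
The plan is to reduce the statement to the homeomorphism type of $S$, invoke the classical finite presentability of mapping class groups, and then dispose of the remaining combinatorial bookkeeping by standard group-theoretic algorithms. The first thing to do is to extract the topological type: from the given PL (simplicial) presentation of $S$ one computes, by a finite inspection of the complex, the number $n$ of punctures, the number $m$ of boundary circles (cycles of free edges), and --- since $S$ is orientable and its Euler characteristic is read off by counting simplices --- the genus $g$. Up to isomorphism $\MCG(S)$ and $\PMCG(S)$ depend only on the triple $(g,m,n)$, so it suffices to produce an explicit presentation of the model surface of type $(g,m,n)$, uniformly in these integers. Moreover, since any two PL structures on a surface are PL-equivalent and the equivalence is constructive (\cite{Rad}; cf. the discussion preceding \propref{lift cover}), each generating curve drawn on the model surface can be transported to a concrete polygonal curve on the given triangulation of $S$, so the generators are realized as explicit PL homeomorphisms.

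Next I would quote an explicit closed-surface presentation and propagate it. For a closed orientable surface the mapping class group is finitely presented, with a presentation on Lickorish/Humphries Dehn-twist generators (finiteness and generation go back to Dehn and Lickorish, cf. \propref{pmod} and \cite{Lic}; an explicit finite relator list is classical, see \cite{primer}). These presentations are uniform in the topological type: both the finite generating list of twist curves and the finite relator list are specified by explicit formulas in $g$, $m$, $n$. The passage to surfaces with punctures and with boundary is carried out through the capping homomorphism and the Birman exact sequence, each step of which replaces a presentation by another computable one --- adding point-pushing and boundary-twist generators together with the standard additional relations --- so that from the integers of the first step one writes down a finite presentation of $\MCG(S)$ mechanically.

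Finally I would descend to the pure subgroup. Using the short exact sequence $1\to\PMCG(S)\to\MCG(S)\to\Sigma_n\to 1$ recorded in Section~\ref{section-intro}, $\PMCG(S)$ is a finite-index subgroup of the finitely presented group $\MCG(S)$, hence is itself finitely presented, and the Reidemeister--Schreier rewriting process outputs such a presentation effectively once one supplies a finite presentation of $\MCG(S)$, an explicit Schreier transversal (lift each permutation in $\Sigma_n$ to a product of half-twists about curves separating two punctures), and, for each generator of $\MCG(S)$ and each transversal element, the coset of their product --- which is just the induced permutation of the punctures, computed from how the generating twist curves enclose the marked points. Running Reidemeister--Schreier (and cleaning up with Tietze transformations if desired) yields the required explicit finite presentation of $\PMCG(S)$.

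I expect the main difficulty to lie in the second step in the bordered and punctured cases: turning the \emph{existence} of a finite presentation into an honestly explicit, type-uniform relator list, and checking that the capping and Birman reductions from the closed case are effective rather than merely theoretical. This is handled either by citing a sufficiently explicit source for the bordered case (e.g.\ the presentations of Gervais and of Labru\`ere--Paris, or the treatment in \cite{primer}) or by carrying out the Reidemeister--Schreier and Tietze bookkeeping by hand starting from a uniform closed-surface presentation; once that is granted, the first and third steps are routine.
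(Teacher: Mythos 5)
The paper does not actually prove this statement: it records it as a classical fact, citing Lickorish \cite{Lic} and the primer \cite{primer}, so there is no internal argument to compare against. Your sketch is a correct rendering of the standard argument those sources supply: read off the topological type $(g,m,n)$ from the triangulation, use the explicit twist-generator presentations (Wajnryb/Gervais/Labru\`ere--Paris, as exposed in \cite{primer}) which are uniform in the type, and pass between $\MCG$ and $\PMCG$ using the exact sequence onto $\Sigma_n$; the constructivity claims (transporting model curves through an effective PL equivalence, Reidemeister--Schreier on a finite-index subgroup whose cosets are detected by the induced puncture permutation) are all sound. One small refinement: if you want to reach the full group $\MCG$ of a multiply-punctured surface directly via the Birman exact sequence, the relevant kernel is a surface braid group $\pi_1$ of a configuration space rather than $\pi_1(S)$, which needs its own (known, explicit) presentation; the cleaner bookkeeping is to build $\PMCG$ inductively by the usual Birman sequences and then obtain $\MCG$ as an extension by $\Sigma_n$ --- the reverse of your Reidemeister--Schreier step, and equally effective --- but either ordering closes the argument.
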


As was shown by G. Hemion in 1979 \cite{hemion}:

\begin{thm}
\label{solvable-conjugacy-problem}
Let $S$ be an orientable surface of finite topological type. 
Then the  Conjugacy Problem in $\MCG(S)$ is algorithmically solvable.
\end{thm}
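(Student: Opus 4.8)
The plan is to reduce the conjugacy problem for the mapping class group to a known solvable case, namely Hemion's theorem on Haken manifolds, by realizing $\MCG(S)$ and $\PMCG(S)$ as (subgroups of finite index in) the isotopy classes of homeomorphisms of a Haken $3$-manifold. First I would recall the classical fact that for a surface $S$ of finite topological type --- genus $g$, with $m$ boundary components and $n$ punctures --- the mapping class group $\MCG(S)$ acts as the group of self-homeomorphisms, up to isotopy, either of $S$ itself or of the closed surface $\hat S$ obtained by capping off boundary/filling punctures, with the marked set preserved. Since punctures and boundary components only increase the complexity, in each case one has a surface which is \emph{sufficiently large}: provided $S$ is not a sphere with at most three marked points, a torus, or one of the other finitely many small-complexity exceptions, the surface has essential simple closed curves and an infinite mapping class group that one needs to control. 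The small exceptional cases (sphere with $\le 3$ marked points, etc.) have finite mapping class group, so the conjugacy problem there is trivially decidable by enumeration; this reduces us to the generic case.

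The heart of the argument is to pass to three dimensions. Given $S$ of finite topological type, form the $3$-manifold $M_S$ which is either $S \times [0,1]$ (when $S$ has boundary or punctures, so $M_S$ has nonempty boundary and is Haken) or the unit tangent bundle / an $I$-bundle construction making $\partial M_S \ne \varnothing$; the standard device is that $\MCG(S)$ is isomorphic to the (outer) automorphism group of $\pi_1(S)$, and by work going back to Waldhausen and Hemion, isotopy classes of homeomorphisms of a Haken $3$-manifold are computable and the conjugacy problem among them is solvable. Concretely, a homeomorphism $\phi \colon S \to S$ induces $\phi \times \id \colon M_S \to M_S$, two such are conjugate in $\MCG(S)$ if and only if the induced homeomorphisms of $M_S$ are conjugate in $\MCG(M_S)$ (one must check that conjugacy in the $3$-manifold descends to conjugacy of the surface maps, which uses that the $I$-bundle structure is canonical up to isotopy), and $\MCG(M_S)$ has solvable conjugacy problem by \cite{hemion}. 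Finally, to handle $\PMCG(S)$ rather than $\MCG(S)$, I would use the short exact sequence
\[
1 \longrightarrow \PMCG(S) \longrightarrow \MCG(S) \longrightarrow \Sigma_n \longrightarrow 1
\]
from the excerpt: $\PMCG(S)$ has finite index in $\MCG(S)$, and the conjugacy problem in a finitely presented group with solvable conjugacy problem restricts to a finite-index subgroup in a decidable way --- given $w_1, w_2 \in \PMCG(S)$, they are conjugate in $\PMCG(S)$ iff one of finitely many explicit conjugators (coset representatives of a suitable subgroup) conjugates $w_1$ to $w_2$ while lying in $\PMCG(S)$, and membership in $\PMCG(S)$ is decidable via the permutation homomorphism, which is computable from \thmref{finitelypresented}.

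The main obstacle I expect is the descent step: verifying that conjugacy of the induced homeomorphisms of the Haken $3$-manifold $M_S$ is equivalent to, not merely implied by, conjugacy of the original surface homeomorphisms. This requires knowing that every self-homeomorphism of $M_S$ is isotopic to one respecting the product (or $I$-bundle) structure --- a consequence of the uniqueness of the characteristic $I$-bundle, which holds because $M_S$ is Haken and its Seifert-fibered / $I$-bundle pieces are canonical up to isotopy (Jaco--Shalen--Johannson theory) --- and that the restriction to a fiber $S \times \{0\}$ is well-defined on isotopy classes. One must also be slightly careful about orientation conventions and about the finitely many small cases where $M_S$ fails to be Haken, which are again dispatched by hand since the mapping class group is then finite. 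Granting these structural facts, which are standard in the literature, the effectiveness of every step --- the finite presentation from \thmref{finitelypresented}, Hemion's algorithm \cite{hemion}, and the finite-index bookkeeping for $\PMCG$ --- is routine, so the theorem follows.
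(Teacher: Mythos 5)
There is a genuine gap, and it sits exactly at the step you lean on most heavily. The result you are asked to prove is not derived in the paper at all: it is quoted from Hemion's 1979 paper, whose content is precisely the classification of homeomorphisms of $2$-manifolds up to isotopy and conjugacy, i.e.\ the conjugacy problem in $\MCG(S)$ itself. Historically this surface result is the \emph{input} that completed Haken's program for the homeomorphism problem for Haken $3$-manifolds, not a consequence of it. Your reduction runs in the opposite direction: you embed $\MCG(S)$ into the mapping class group of the Haken manifold $M_S=S\times[0,1]$ via $\phi\mapsto\phi\times\mathrm{id}$ and then assert that ``$\MCG(M_S)$ has solvable conjugacy problem by \cite{hemion}.'' That assertion is not Hemion's theorem. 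What the Haken--Hemion(--Matveev) machinery gives is a decision procedure for whether two Haken manifolds are \emph{homeomorphic}; it does not give an algorithm for deciding conjugacy of two isotopy classes of self-homeomorphisms of a fixed Haken $3$-manifold, which is a strictly harder statement that contains the surface conjugacy problem as a special case (exactly via your $I$-bundle embedding). So as written the argument is circular, or at best rests on an unreferenced $3$-dimensional claim that is not in the literature you cite.

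Note also that the more standard way to bring $3$-manifolds to bear would be through mapping tori rather than $\phi\times\mathrm{id}$ on $S\times[0,1]$ (the latter does not encode the dynamics of $\phi$ in the topology of the manifold, only in the element of $\MCG(M_S)$, which is why you are forced into the conjugacy problem for $3$-manifold mapping class groups). But even the mapping-torus route does not close the gap with a homeomorphism-decision oracle alone: $M_\phi\cong M_\psi$ is weaker than conjugacy of $\phi$ and $\psi$ (inverses give homeomorphic mapping tori, and a $3$-manifold may fiber over the circle in many inequivalent ways), so one would need additional fibration-preserving control that your sketch does not supply. Your peripheral remarks --- the finite exceptional cases, and the passage between $\MCG$ and $\PMCG$ via the finite-index subgroup and the permutation homomorphism --- are fine but do not touch the central difficulty. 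The honest resolution here is simply to cite Hemion's theorem for the surface statement (as the paper does), or to invoke a genuine solution of the surface conjugacy problem such as the Thurston-classification/train-track approach; deducing it from $3$-manifold topology as proposed does not work.
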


It is known that the Conjugacy Problem in $\MCG(S)$ is solvable in exponential time \cite{tao,hamenstadt}.

\subsection*{Hurwitz classification of branched covers}

Let $X$ and $Y$ be two finite type Riemann surfaces. We say that two finite degree branched covers $\phi$ and $\psi$ of
$Y$ by $X$ are {\it equivalent in the sense of Hurwitz} if there exist  homeomorphisms $h_0,h_1:X\to X$ such that
$$h_0\circ \phi=\psi\circ h_1.$$
An equivalence class of branched covers is known as a {\it Hurwitz class}. Enumerating all Hurwitz classes with a given
ramification data is a version of the {\it Hurwitz Problem}. The classical paper of Hurwitz \cite{hur}
gives an elegant and explicit solution of the problem for the case $X=\hat\CC$.

We will need the following narrow consequence  of Hurwitz's work  (for a modern treatment, see \cite{barth}:
\begin{thm}
\label{th:hurwitz}
There exists an algorithm $\cA$ which, given PL branched covers $\phi$ and $\psi$ of PL spheres and a PL 
homeomorphism $h_0$ mapping the
critical values of $\phi$ to those of $\psi$, does the following:
\begin{enumerate}
\item decides whether $\phi$ and $\psi$ belong to the same Hurwitz class or not;
\item if the answer to (1) is affirmative, decides whether there exists a homeomorphism $h_1$ such that $h_0\circ \phi=\psi\circ h_1.$
\end{enumerate}
\end{thm}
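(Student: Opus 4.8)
The plan is to translate the PL data into the classical combinatorial description of branched covers of the sphere by their monodromy, following Hurwitz, and then to observe that every resulting question becomes a finite search. First I would read off the degrees $d_\phi,d_\psi$ and the critical value sets $B_\phi,B_\psi\subset Y$ directly from the PL descriptions; if $d_\phi\neq d_\psi$ or $|B_\phi|\neq|B_\psi|$, then both (1) and (2) have negative answers, so assume $d=d_\phi=d_\psi$ and $n=|B_\phi|=|B_\psi|$. Fixing a base point and a standard system of loops $x_1,\dots,x_n$ around the punctures of $Y\setminus B_\phi$ (with $x_1\cdots x_n=1$ in $\pi_1$), I would compute, from the triangulation of the total space supplied by \propref{lift cover}, the monodromy homomorphism $\pi_1(Y\setminus B_\phi)\to\Sigma_d$, i.e. the tuple $(g_1,\dots,g_n)$ of sheet permutations; it has transitive image (as the total space is connected), satisfies $g_1\cdots g_n=\id$, and the cycle type of $g_j$ records the ramification of $\phi$ over the $j$-th point of $B_\phi$. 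The same computation produces the tuple $(k_1,\dots,k_n)$ of $\psi$. Conversely, by the Riemann existence theorem such a tuple recovers the cover up to isomorphism over $Y$, and two tuples yield $Y$-isomorphic covers exactly when they are conjugate by a single element of $\Sigma_d$ (a relabelling of sheets).

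Next I would settle (2). Since $h_0$ is a PL homeomorphism with $h_0(B_\phi)=B_\psi$, it induces (after choosing a PL arc joining the image of the base point to that of $Y\setminus B_\psi$) an isomorphism $(h_0)_\ast\colon\pi_1(Y\setminus B_\phi)\to\pi_1(Y\setminus B_\psi)$; since the target is free, pushing each $x_j$ forward by $h_0$ and reducing the resulting PL loop to a word in the standard generators of $\pi_1(Y\setminus B_\psi)$ is a finite computation. A homeomorphism $h_1$ with $h_0\circ\phi=\psi\circ h_1$ exists if and only if the branched cover $h_0\circ\phi$ of $Y$ is isomorphic over $Y$ to $\psi$, which, by the previous paragraph, holds if and only if the monodromy tuple of $\phi$ transported along $(h_0)_\ast$ is conjugate in $\Sigma_d$ to the tuple of $\psi$; this is decided by running through the $d!$ elements of $\Sigma_d$.

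For (1) I would argue that any pair $h_0',h_1$ with $h_0'\circ\phi=\psi\circ h_1$ forces $h_0'(B_\phi)=B_\psi$ (the critical values of $\psi\circ h_1$ are those of $\psi$, while those of $h_0'\circ\phi$ are $h_0'(B_\phi)$), so, fixing once and for all a reference PL homeomorphism $(Y,B_\phi)\to(Y,B_\psi)$, every admissible $h_0'$ differs from it, up to isotopy, by a self-homeomorphism of the $n$-punctured sphere; the effect of such self-homeomorphisms on conjugacy classes of monodromy tuples is generated by the half-twists $\sigma_1,\dots,\sigma_{n-1}$, acting by the classical Hurwitz move
\[
\sigma_i\colon(\dots,g_i,g_{i+1},\dots)\longmapsto(\dots,g_{i+1},\,g_{i+1}^{-1}g_i g_{i+1},\dots).
\]
Hence $\phi$ and $\psi$ lie in the same Hurwitz class if and only if the tuple of $\psi$ and the transport of the tuple of $\phi$ by the reference homeomorphism lie in the same orbit of the group generated by the Hurwitz moves together with conjugation by $\Sigma_d$. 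The set of tuples in $\Sigma_d^{\,n}$ with product $\id$, transitive image, and the prescribed cycle types is \emph{finite}, so this orbit is finite and is enumerated by a breadth-first search, which decides (1) (and re-confirms (2) as a sub-question).

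I expect the only genuine work to lie in the first two steps: extracting the monodromy tuple from the triangulation of a PL branched cover, and computing the induced automorphism $(h_0)_\ast$ of the free group $\pi_1(Y\setminus B_\psi)$ by expressing images of PL loops as reduced words — together with the bookkeeping of base points and connecting arcs, which is exactly why conjugation in $\Sigma_d$ (the ambiguity in labelling sheets) and, for (1), the full mapping class group of the punctured sphere enter the comparison. Once this dictionary is in place, everything that remains is a finite search. The degenerate small cases ($n\le 2$, where $\pi_1(Y\setminus B_\phi)$ is trivial or infinite cyclic) reduce to an elementary comparison of degrees and cycle types.
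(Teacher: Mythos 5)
The paper offers no proof of this statement at all---it is imported as a ``narrow consequence of Hurwitz's work,'' with a pointer to Hurwitz's classical paper and a modern treatment---and your argument is exactly the standard proof behind that citation: read off monodromy tuples from the lifted triangulation, decide (2) by checking whether the tuple of $\phi$ transported by $(h_0)_*$ is conjugate in $\Sigma_d$ to the tuple of $\psi$, and decide (1) by a finite orbit search under Hurwitz moves together with simultaneous $\Sigma_d$-conjugation; this is correct (in fact you may take the given $h_0$ itself as the reference homeomorphism in (1)). The only caveat worth recording is that the paper's definition of Hurwitz class does not require $h_0,h_1$ to preserve orientation, so in (1) your breadth-first search should also be seeded with the tuple transported by an orientation-reversing reference homeomorphism (part (2) needs no change, since the covering-space criterion you use is orientation-independent).
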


\section{Classification of marked Thurston maps with parabolic orbifolds}

Let $f$ be a Thurston map with postcritical set $P_f$ and marked set $Q_f\supset P_f$.
In what follows, we will drop the subscript $f$ and will denote these sets simply $P$ and $Q$.
 Let $\Gamma$ be a Thurston obstruction for $f$.
The goal of this section is to prove the following theorem:

\begin{theorem}
\label{th:degenerateLevy}
  Let $f$ be a Thurston map with postcritical set $P$ and marked set $Q\supset P$ such that the associated orbifold is parabolic and the associated matrix is hyperbolic. Then either $f$ is equivalent to a quotient of an affine map or $f$ admits a degenerate Levy cycle.

Furthermore, in the former case the affine map is defined uniquely up to a conjugacy.
\end{theorem}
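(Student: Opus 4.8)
The plan is to separate the argument according to the cardinality of the marked set $Q$. Since the orbifold $O_f$ is parabolic, the postcritical set $P$ has at most four points; but $Q$ may be strictly larger. The first reduction is to handle the case $\#P\leq 3$, where the orbifold has a unique complex structure: here a standard argument (essentially as in \cite{DH}, reproduced in \thmref{t:4PointsCase}) shows $f$ is equivalent to a quotient of a complex affine map when $Q=P$. The extra marked points in $Q\setminus P$ must then be analyzed: if one of them is not eventually mapped onto a genuinely periodic cycle that supports an affine structure, one produces a small curve around the marked-point orbit, and, as in the Example in \S\ref{section1}, this curve (or its iterated images) is a degenerate Levy cycle. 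So the real content is the case $\#P=4$, i.e. the $(2,2,2,2)$-case, which I treat next.

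For a $(2,2,2,2)$-map, I would pass to the torus double cover $p\colon T\to\Sphere$ and the lifted map $\fh\colon T\to T$, and study the induced linear map $\fh_*$ on $H_1(T,\Z)\cong\Z^2$, whose determinant equals $\deg f$. The hypothesis that the "associated matrix is hyperbolic" means $\fh_*$ has eigenvalues that are real and distinct (equivalently, its trace satisfies $|\operatorname{tr}|>2\sqrt{\deg f}$ after accounting for signs, or more precisely it is not conjugate to a finite-order or parabolic element and its eigenvalues are not both roots of unity times a scalar). Under this hypothesis I claim $f$ cannot be geometrizable as an affine quotient \emph{unless} the linear data of $\fh_*$ is compatible with an affine self-map of $\OO$ — but the point of this theorem is the dichotomy, so I argue as follows. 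By \thmref{thm:CharacterizationCanonical} (Characterization of Canonical Obstructions), if $f$ is obstructed the canonical obstruction $\Gamma_f$ is nonempty and, since $F=f$ itself is a $(2,2,2,2)$-map, every curve of every simple obstruction has exactly two postcritical points in each complementary component and the two eigenvalues of $\fh_*$ are equal or non-integer. But under the hyperbolicity assumption the eigenvalues are distinct reals; if moreover they are integers this forces the existence of an invariant homology direction. Using \propref{prop:homology1} and \propref{prop:homology}, a curve $\gamma$ realizing an eigenvector of $\fh_*$ with integer eigenvalue $d=\pm\lambda$ has $\fh_*\langle\gamma\rangle=\pm d\langle\gamma\rangle$, and the corresponding multicurve on $\Sphere\setminus P$ is $f$-stable with Thurston matrix having leading eigenvalue $1/|d|$ times a combinatorial factor; one checks that the degree of $f$ on the preimage equals $|d|$ exactly when the preimage has one component, forcing the eigenvalue of the Thurston transformation to be $\geq 1$ precisely in the degenerate situation, and in that case the corresponding disks map homeomorphically, i.e. we have a degenerate Levy cycle.

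Conversely, if $f$ has no degenerate Levy cycle, then by the above the canonical obstruction must be empty (a nonempty canonical obstruction for a $(2,2,2,2)$-map with hyperbolic $\fh_*$ would, by the homology analysis, have to come from a Levy cycle, and on the torus level the homeomorphic behaviour on the disks is forced by $\det\fh_*=\deg f$ together with the degree-one condition on $\gamma_i'$). An empty canonical obstruction would normally invoke Pilgrim's \thmref{canonical1}, but that requires a hyperbolic orbifold; instead I would run the Thurston pullback $\sigma_f$ directly on $\cT_f$ and show, using the affine model from \thmref{t:4PointsCase} together with \lemref{l:AffineLift} and \lemref{l:TrivialLift}, that $\sigma_f$ has a fixed point — equivalently that after the $L^{-1}\circ F$ normalization the resulting mapping class in $\PMCG(\OO)$ is trivial. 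Here the Lefschetz/linear-algebra input is that $\fh_*$ being hyperbolic (hence having no invariant multicurve escaping to the boundary of moduli space) forces the pullback orbit to stay in a compact part of $\cT_f$, so it converges to a fixed point by the weak-contraction \propref{unique fixed pt}-type argument adapted to this setting.

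The main obstacle I anticipate is precisely this last point: the standard Teichm\"uller-contraction machinery (Propositions on $||d\sigma_f||_T$ and the uniqueness of the fixed point) is stated for \emph{hyperbolic} orbifolds, so one cannot directly quote \thmref{canonical1} or \propref{unique fixed pt}. The assumption that the "associated matrix is hyperbolic" is the substitute: it should let one show, via the explicit description of the torus cover and the fact that $\fh_*$ is an Anosov linear map, that no curve degenerates under iteration of $\sigma_f$ (one bounds $\ell_{\sigma_f^n\tau}([\gamma])$ below using \thmref{pil-bound}-style estimates, or directly from the homological rigidity), and hence that a fixed point exists. Establishing this compactness — equivalently, ruling out a non-Levy escape to the boundary of moduli space — is the technical heart of the theorem; the uniqueness-up-to-conjugacy clause then follows exactly as in the last paragraph of the proof of \thmref{t:4PointsCase}, using \lemref{l:AffineLift} to transport a Thurston equivalence between two affine quotients to a conjugacy of the affine maps on $\Pt$.
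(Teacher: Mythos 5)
The central step---showing that a marked $(2,2,2,2)$-map with no degenerate Levy cycle is equivalent to an affine quotient---is exactly where your proposal breaks down. You plan to show that $\sigma_f$ has a fixed point in $\cT_f$ by proving the pullback orbit stays in a compact set, and you yourself flag this compactness as the unproven technical heart. But that strategy is not merely incomplete, it cannot work: a fixed point of $\sigma_f$ yields an invariant complex structure, i.e.\ an equivalence of $f$ with a \emph{rational} map, whereas in the genuinely new case of the theorem (matrix $A$ with two real irrational eigenvalues) the map $f$ is equivalent only to a quotient of a \emph{real} affine map and to no rational map. Already for $Q=P$ the pullback on $\cT_f\cong\HH$ is induced by the action of $A$ and is a hyperbolic isometry with no fixed point in $\HH$, even though $f$ is geometrizable in the sense of the theorem; adding marked points does not improve this. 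The paper therefore abandons Teichm\"uller iteration entirely here. In the expanding case it builds, by lifting the homotopy repeatedly through the expanding affine model, a semiconjugacy $\phi_\infty$ and extracts either an equivalence (if $\phi_\infty$ is injective on $Q$) or a degenerate Levy cycle (Theorem~\ref{t:ExpandingCase}). In the hyperbolic non-expanding case it introduces Nielsen classes, proves a Diophantine lemma on quadratic irrationals (\lemref{l:da} and \corref{c:da}), shows that degenerate Levy cycles correspond exactly to two marked periodic points sharing a Nielsen class (\propref{p:NielsenLevy}), and then geometrizes by explicit $G$-equivariant homotopies combined with the facts that $\RMCG(\OO,Q)$ is generated by the twists of \thmref{t:rmcg} and that the lift endomorphism on it is eventually trivial when there is no Levy cycle (\propref{p:nilpotent}, \thmref{t:allEquivalent}). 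Your canonical-obstruction/homology discussion does not substitute for any of this: it only sees curves essential in $(\OO,P)$, while the degenerate Levy cycles at issue live around the extra marked points of $Q\setminus P$ and are inessential in $(\OO,P)$.

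Two smaller points. First, ``the associated matrix is hyperbolic'' in the theorem means no eigenvalue equal to $\pm1$; it includes the expanding case with complex or repeated eigenvalues, so your reading ``real and distinct eigenvalues'' both misstates the hypothesis and omits the expanding case, which the paper treats separately by the shadowing argument. Second, the uniqueness clause is not just the last paragraph of \thmref{t:4PointsCase}: one must show that the conjugating affine map $S$ carries the extra marked points $\tilde{Q}_1$ to $\tilde{Q}_2$, and in the paper this requires the Nielsen-index computation of \thmref{t:uniqueness}, where each lift of a periodic marked point is pinned down as the unique solution of $L^n(z)=g\cdot z$ (again using that $A^n\pm I$ is invertible), with preperiodic points handled by pulling back.
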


\begin{remark} We note that in the case when the associated matrix has eigenvalue $\pm1$, the two options are not mutually exclusive. 
\end{remark}


\subsection{The case when  the associated matrix is expanding}
We will first derive \thmref{th:degenerateLevy}  in the case when the matrix of the corresponding affine map is expanding.

\begin{theorem}   Let $(f,Q)$ be a Thurston map with postcritical set $P$ and marked set $Q\supset P$ with parabolic orbifold, such that $(f,P)$ is equivalent to a quotient $l$ of a real affine map $L(z)=Az+b$ by the orbifold group where both eigenvalues of $A$ have absolute value greater than 1.  Then $(f,Q)$ is equivalent to a quotient of a real affine map by the action of the orbifold group if and only if $f$ admits no degenerate Levy cycle.
\label{t:ExpandingCase}
\end{theorem}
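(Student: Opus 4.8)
The plan is to work in the torus double cover and translate the problem about $(f,Q)$ into a question about the linear map $\hat F_* = A$ acting on $H_1(T,\Z)$. The "only if" direction is the easy one: if $(f,Q)$ is equivalent to a quotient $l'$ of an affine map $L'$, then $l'$ is a Thurston map with parabolic orbifold and an affine representative; since $A$ is expanding, one checks directly that $l'$ (and hence $f$) has no Thurston obstruction at all — a geodesic multicurve on $\OO\setminus Q$ would have to be sent by $L'$ to short geodesics, contradicting expansion — so in particular there is no degenerate Levy cycle. (Alternatively: a quotient of an expanding affine map is expanding in the orbifold metric, and expanding maps carry no Levy cycles.)

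For the "if" direction, assume $f$ has no degenerate Levy cycle. Since $(f,P)$ is already equivalent to the affine quotient $l$, I may assume $f = l$ on $P$; the obstruction to pushing this to an equivalence on all of $Q$ is measured by an element of $\PMCG(\OO)$ (or of $\PMCG(S^2\setminus Q)$) as in Lemmas~\ref{l:AffineLift} and~\ref{l:TrivialLift}. First I would analyze the canonical obstruction $\Gamma_f$ via \thmref{thm:CharacterizationCanonical} and \propref{prop:homology}: every curve of a simple obstruction for a $(2,2,2,2)$-return map must split the postcritical set $2+2$ and be sent, up to homotopy, to a multiple of itself by $\hat F_* = A$. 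But $A$ is expanding with both eigenvalues of absolute value $>1$, so $A$ fixes no rational line unless the corresponding eigenvalue is an integer $d$ with $|d|>1$ and $f$ restricted to the preimage curve has degree $d$; in that situation \propref{prop:homology1} forces the preimage to be a single curve covering $\gamma$ with degree $d\ge 2$, which is incompatible with a Levy cycle (degree $1$) — and the hypothesis "no degenerate Levy cycle" together with the topological-polynomial criterion (\thmref{th:levy}) rules out the remaining configurations by passing to the relevant return map. Hence the canonical obstruction is empty.

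With $\Gamma_f = \emptyset$, I next want to conclude that $f$ is equivalent to a rational — but here the orbifold is parabolic, so Pilgrim's \thmref{canonical1} does not apply directly; instead I would run the Thurston pullback $\sigma_f$ on $\cT_f$ and use that emptiness of the canonical obstruction forces the $\sigma_f$-orbit to stay in a compact part of moduli space (\thmref{pil-bound}), so a subsequential limit gives a fixed point of $\sigma_f$, i.e. a complex structure in which $f$ becomes holomorphic — but since the orbifold is parabolic this holomorphic model is a flexible Lattès-type map, i.e. precisely a quotient of a real affine map by the orbifold group. Then Lemma~\ref{l:AffineLift} identifies the matrix of this affine map with $A$ on $P$, and $b$ is pinned down in $\tfrac12(\Z+i\Z)$; the residual $\PMCG(\OO)$-ambiguity is killed by Lemma~\ref{l:TrivialLift}, giving both the equivalence and the uniqueness-up-to-conjugacy statement. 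The main obstacle I anticipate is the middle step: ruling out \emph{every} non-Levy obstruction using only expansion of $A$ and the structure theorem for canonical obstructions — in particular handling return maps that are $(2,2,2,2)$-maps with non-integer or equal eigenvalues, where \thmref{thm:CharacterizationCanonical} allows obstructions a priori and one must use the expanding hypothesis on the \emph{ambient} $A$ to propagate a contradiction down to the return map.
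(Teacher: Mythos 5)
Your sufficiency argument rests on a step that fails. You reduce to ``the canonical obstruction is empty'' and then argue that the $\sigma_f$-orbit stays in a compact part of moduli space, so a subsequential limit gives a fixed point of $\sigma_f$, i.e.\ a holomorphic model, which you then declare to be the desired affine quotient. Three objections: (i) \thmref{canonical1} and \thmref{pil-bound} are stated for hyperbolic orbifolds, and for a parabolic orbifold $\sigma_f$ is not uniformly contracting, so boundedness of the orbit does not yield a fixed point; (ii) more fundamentally, a fixed point of $\sigma_f$ would make $(f,Q)$ equivalent to a \emph{rational} map, whereas the theorem only promises a quotient of a \emph{real} affine map, which in general is not rational: take $A\in \text{M}_2(\ZZ)$ with two distinct irrational real eigenvalues, both of modulus greater than $1$ (e.g.\ trace $5$, determinant $5$); then $A$ fixes no point of $\HH$ under the Teichm\"uller action, $\sigma_f$ has no fixed point, and $f$ is not equivalent to any rational map, yet the theorem applies to it --- so no fixed-point argument in $\cT_f$ can produce the required conclusion; (iii) in your ``only if'' direction, the claim that an expanding affine quotient ``has no Thurston obstruction at all'' is false: for $A=2I$ (a flexible Latt\`es model) the horizontal curve has two preimages homotopic to it, each of degree $2$, so $\lambda(\Gamma)=1$. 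Expansion rules out Levy cycles (degree-one returns), not all obstructions, and even that requires an argument for curves that are inessential in $(\OO,P)$ but essential in $(\OO,Q)$ --- exactly the curves surrounding extra marked points --- which you do not supply; your homological reasoning via $\hat F_*=A$ cannot see such curves at all, since they are trivial in $H_1(T,\Z)$.

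The missing idea, and the one the paper uses, is to exploit expansion \emph{metrically} rather than homologically. Starting from $\phi_0,\phi_1$ realizing the equivalence of $(f,P)$ with $l$, one lifts the homotopy between them through $l$ repeatedly; since $l$ is expanding, $d(\phi_n,\phi_{n+1})$ decays geometrically and $\phi_n$ converges to a semiconjugacy $\phi_\infty$ from $f$ to $l$. The dichotomy is then: if $\phi_\infty$ is injective on $Q$, a small adjustment of $\phi_n$ near $Q\sm P$ produces an equivalence of $(f,Q)$ with $(l,\phi_\infty(Q))$; if $\phi_\infty$ identifies two points of $Q$, their common image is (pre)periodic, and pulling back a small circle around the periodic point through $\phi_n$ and $\phi_{n+m}$ produces a degenerate Levy cycle. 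This semiconjugacy dichotomy is precisely what detects the behaviour of the extra marked points, which is where the whole content of the theorem lies; without it (or an equivalent device such as the Nielsen-class analysis the paper uses in the non-expanding hyperbolic case), your outline does not close.
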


\begin{proof} Let $\phi_0$ and $\phi_1$ realize Thurston equivalence between $(f,P)$ and $l$, i.e. $\phi_0\circ f = l \circ \phi_1$ and $\phi_0$ is homotopic  to $\phi_1$ relative  $P$. The following argument is fairly standard (compare \cite{Shi,BonkMeyer}). We can lift the homotopy between $\phi_0$ and $\phi_1$ by $l$ to obtain a homotopy between $\phi_1$ and the lift $\phi_2$ of $\phi_1$ (see the commutative diagram below).

$$
\begin{diagram}
\node{\ldots}  \arrow{s,l}{f} \node{\ldots}
\arrow{s,l}{l}
\\
\node{\O_f} \arrow{e,t}{\phi_2} \arrow{s,l}{f} \node{\O_f}
\arrow{s,l}{l}
\\
\node{\O_f} \arrow{e,t}{\phi_1} \arrow{s,l}{f} \node{\O_f}
\arrow{s,l}{l}
\\
\node{\O_f} \arrow{e,t}{\phi_0} \node{\O_f}
\end{diagram}
$$

Since both eigenvalues of $A$ have absolute value greater than 1, the map $l$ is expanding with respect to the Euclidean metric on $\O_f$. This implies that the distance between $\phi_n(z)$ and $\phi_{n+1}(z)$ decreases at a uniform geometric rate for all $z \in \O_f$. We see that the sequence $\{\phi_n\}$ converges uniformly to a semi-conjugacy $\phi_\infty$ between $f$ and $l$. One of the following is true then.

\textbf{Case I.} Suppose $\phi_\infty$ is injective on $Q$. Let $n$ be such that $d(\phi_\infty(z),\phi_n(z))<\eps$ for all $z \in \O_f$, where $\eps$ is small. Consider a homotopy, which is nontrivial only in the $\eps$-neighborhood of $Q\sm P$ that transforms $\phi_n$ to $\phi'_n$ such that  $\phi'_n$ agrees with $\phi_\infty$ on $Q$. Then the lift $\phi_{n+1}'$ of $\phi_n'$ is $2\eps$-close to $\phi_\infty$ and, hence, also agrees with $\phi_\infty$ on $Q$ if $\eps$ was chosen small enough.
It is also clear that for $\eps$ small $\phi'_{n}$ and $\phi'_{n+1}$ are homotopic relative $Q$, realizing Thurston equivalence between $(f,Q)$ and $(l,\phi_\infty(Q))$.

\textbf{Case II.} Suppose $\phi_\infty$ is not injective on $Q$. Consider a point $z_0$ which is the image of at least two different points $q_1$ and $q_2$ in $Q$; obviously $z_0$ is either periodic or pre-periodic. First we show that $f(q_1)\neq f(q_2)$. Indeed, if $f(q_1)= f(q_2)$, then the distance between $\phi_n(q_1)$ and $\phi_n(q_2)$ is uniformly bounded from below by the minimum distance between any two points in the same fiber of $l$, which contradicts the fact that $\phi_\infty(q_1)=\phi_\infty(q_2)$. Therefore $l(z_0)$ is also 
the image of at least two different points in $Q$ and so on. Thus, we can assume that $z_0$ is periodic with period, say, $m$.

Consider a small simple closed curve $\gamma$ around $z_0$ (for example, we can take a circle around $z_0$ of radius $\eps$). Since $z_0$ is periodic it is not a critical point of $l$; the $m$-th iterate of $l$ sends $\gamma$ to another simple closed curve $\gamma'$ around $z_0$, which is evidently homotopic to $\gamma$ relative  $\phi_\infty(Q)$, in one-to-one fashion, moreover the disk bounded by $\gamma$  that contains $z_0$ is mapped homeomorphically to the disk bounded by $\gamma'$. This yields that, for $n$ large enough, $\alpha'=\phi_n^{-1}(\gamma')$ and $\alpha=\phi_{n+m}^{-1}(\gamma)$  are homotopic relative $Q$ and $f^m$ homeomorphically maps a disk bounded by $\alpha$ to a disk bounded by $\alpha'$. We see that $\alpha, f(\alpha), \ldots, f^{m-1}(\alpha)$ form a degenerate Levy cycle.



\end{proof}

\begin{rem}
Note that if $P$ has only 3 points, the matrix $A$ is a multiplication by a complex number and both eigenvalues of $A$ have the same absolute value, which is greater than 1. 
\end{rem}

\subsection{When the associated matrix is hyperbolic}
We now want to prove \thmref{th:degenerateLevy}  for any $(2,2,2,2)$-map such that the corresponding linear transformation is hyperbolic but not  expanding.
Throughout this section we assume that $(f,Q)$ is a Thurston $(2,2,2,2)$-map with postcritical set $P$ and marked set $Q\supset P$, such that $(f,P)$ is equivalent to a quotient $l$ of a real affine map $L(z)=Az+b$ by the orbifold group where both eigenvalues of $A$ are not equal to $\pm1$.

\begin{definition} Let $f$ be a  $(2,2,2,2)$-map and let $z$ be an $f$-periodic point with period $n$. Fix a universal cover $F$ of $f$ and take a point  $\tilde{z}$  in the fiber of $z$. If $z \notin P$, we define the \emph{Nielsen index} $\text{ind}_{F,n}(\tilde{z})$ to be the unique element $g$  of the orbifold group $G$ such that $F^n(\tilde{z})=g\cdot \tilde{z}$. If $z \in P$ then the Nielsen index of $z$  is defined up to pre-composition with the symmetry around $z$.
\end{definition}

Below, when we say that a point $z$ has a period $n$ we do not imply that $n$ is the minimal period of $z$.

\begin{definition}  Let $f$ be a  $(2,2,2,2)$-map  and let $z_1, z_2$ be $f$-periodic points with period $n$. We say that $z_1$ and $z_2$ are in the same \emph{Nielsen class of period} $n$ if there exists a universal cover $F_n$ of $f^n$ and points  $\tilde{z}_1,\tilde{z}_2$  in the fibers of $z_1,z_2$ respectively, such that both $\tilde{z}_1$ and $\tilde{z}_2$ are fixed by $F_n$. We say that  $z_1$ and $z_2$ are in the same \emph{Nielsen class} if there exists an integer $n$ such that they are in the same class of period $n$.
\end{definition}

Note that if two points are in the same Nielsen class of period $n$, then they are in the same Nielsen class of period $mn$ for any $m\ge1$. Clearly, being in the same Nielsen class (without specifying a period) is an equivalence relation, which is preserved under Thurston equivalence.

\begin{lemma} Periodic points $z_1$ and $z_2$ of period $n$ are in the same Nielsen class if and only if, for any universal cover $F$ of $f$, there exist points $\tilde{z}_1,\tilde{z}_2$  in the fibers of $z_1,z_2$ respectively such that $\text{ind}_{F,n}(\tilde{z}_1)=\text{ind}_{F,n}(\tilde{z}_2)$.
\label{l:NielsenIndex}
\end{lemma}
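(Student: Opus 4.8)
The plan is to unwind the two definitions and observe that they express the same coincidence condition up to the choice of a deck transformation relating two a priori different universal covers of $f^n$. First I would fix a universal cover $F$ of $f$. Then $F^n$ is a particular universal cover of $f^n$, and for the chosen lifts $\tilde z_i$ one has $F^n(\tilde z_i) = \text{ind}_{F,n}(\tilde z_i)\cdot \tilde z_i$ by definition of the Nielsen index. The point is that \emph{every} universal cover $F_n$ of $f^n$ has the form $F_n = h\circ F^n$ for a unique $h\in G$ (two lifts of the same map differ by a deck transformation), and conversely each such $h\circ F^n$ is again a universal cover of $f^n$.

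For the ``only if'' direction, suppose $z_1,z_2$ are in the same Nielsen class of period $n$, witnessed by some $F_n$ and lifts $\tilde w_i$ with $F_n(\tilde w_i)=\tilde w_i$. Write $F_n = h\circ F^n$. Now I would move the lifts: replace $\tilde w_i$ by $\tilde z_i := g_i\cdot \tilde w_i$ for suitable $g_i\in G$, and compute $F^n(\tilde z_i) = F^n(g_i\cdot \tilde w_i) = (F_*)^n g_i \cdot F^n(\tilde w_i)$. Since $F_n(\tilde w_i)=\tilde w_i$ gives $F^n(\tilde w_i) = h^{-1}\cdot \tilde w_i$, one gets $F^n(\tilde z_i) = (F_*)^n g_i\cdot h^{-1}\cdot g_i^{-1}\cdot \tilde z_i$, so $\text{ind}_{F,n}(\tilde z_i) = (F_*)^n g_i \cdot h^{-1} \cdot g_i^{-1}$. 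If $G$ were abelian this would immediately be $h^{-1}$, independent of $i$, and we would be done; but $G$ is only virtually abelian. The honest statement is that the two indices are conjugate by $g_1 g_2^{-1}$ after accounting for the $(F_*)^n$-twist, and one must in fact choose the $\tilde w_i$ (equivalently the $g_i$) more carefully: by definition of ``same Nielsen class of period $n$'' the lifts $\tilde w_1,\tilde w_2$ are fixed by the \emph{same} $F_n$, which already forces the relation I want. Concretely, from $F_n(\tilde w_i)=\tilde w_i$ one reads off directly that $\text{ind}_{F,n}$ evaluated at the $F$-adjusted lifts agrees, because the obstruction to $F^n$ having these as fixed points is exactly the single element $h$, the same for both. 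So I would phrase the adjustment so that the common $h$ is what appears, giving equal indices.

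For the ``if'' direction, suppose that for some (equivalently, as the statement of the lemma allows, for our fixed) universal cover $F$ there are lifts $\tilde z_i$ of $z_i$ with $\text{ind}_{F,n}(\tilde z_1)=\text{ind}_{F,n}(\tilde z_2)=:g$. Then set $F_n := g^{-1}\circ F^n$. This is a universal cover of $f^n$, and $F_n(\tilde z_i) = g^{-1}\cdot F^n(\tilde z_i) = g^{-1}\cdot g\cdot \tilde z_i = \tilde z_i$, so both $\tilde z_i$ are fixed by $F_n$, which is exactly the condition that $z_1,z_2$ lie in the same Nielsen class of period $n$, hence in the same Nielsen class. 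I would also note the independence-of-$F$ clause: any other universal cover $F'$ of $f$ is $F' = k\circ F$ for $k\in G$, and changing $F$ to $F'$ conjugates all indices of period $n$ by a fixed element depending only on $k$ and $n$ (this uses $F'_* = $ conjugation-corrected version of $F_*$), so the equality $\text{ind}(\tilde z_1)=\text{ind}(\tilde z_2)$ for one choice of lifts transports to an equality for a corresponding choice of lifts for $F'$.

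\textbf{Main obstacle.} The one subtlety, and the step I expect to need the most care, is precisely that $G = \langle z\mapsto z+1, z\mapsto z+i, z\mapsto -z\rangle$ is nonabelian, so ``$\text{ind}_{F,n}(\tilde z)$'' genuinely depends on the choice of lift $\tilde z$ in its fiber, changing by the twisted conjugation $g\mapsto (F_*)^n h\, g\, h^{-1}$ when $\tilde z\mapsto h\cdot\tilde z$. The lemma as stated quantifies ``there exist lifts $\tilde z_1,\tilde z_2$ with equal indices,'' which is exactly the invariant condition that survives this ambiguity, so the proof must be organized around choosing the lifts to realize the common value rather than around a fixed normalization. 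Once that bookkeeping is set up correctly, both directions are immediate from $F_n = h\circ F^n$.
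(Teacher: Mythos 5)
Your argument is correct and is essentially the paper's proof: the ``if'' direction (setting $F_n=g^{-1}\circ F^n$) is identical, and the ``only if'' direction collapses to the paper's one-line computation as soon as you take $g_1=g_2=\mathrm{id}$, i.e.\ keep the very lifts $\tilde w_i$ fixed by $F_n$, since $F^n=h^{-1}\cdot F_n$ then gives $\operatorname{ind}_{F,n}(\tilde w_1)=\operatorname{ind}_{F,n}(\tilde w_2)=h^{-1}$ directly. The non-abelian bookkeeping you single out as the main obstacle therefore never enters; no adjustment of the lifts is needed.
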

\begin{proof}

If $g=\text{ind}_{F,n}(\tilde{z}_1)=\text{ind}_{F,n}(\tilde{z}_2) \in G$ for some universal cover $F$ of $f$ and points $\tilde{z}_1,\tilde{z}_2$ , then $g^{-1}\cdot F^n(\tilde{z}_i)=\tilde{z}_i$ for $i=1,2$ and hence $z_1$ and $z_2$ are in the same Nielsen class.

In the other direction, suppose  $F_n(\tilde{z}_i)=\tilde{z}_i$ for $i=1,2$ and some cover $F_n$ of $f^n$. For any cover $F$ of $f$, its iterate $F^n$ can be written in the form $F^n=g \cdot F_n$ where $g \in G$. Therefore $\text{ind}_{F,n}(\tilde{z}_1)=\text{ind}_{F,n}(\tilde{z}_2)=g$.
\end{proof}

The following statement is obvious.

\begin{lemma}
\label{l:iterates} A Thurston map $f$ admits a (degenerate) Levy cycle if and only if so does its iterate $f^n$. Two points $z_1, z_2$ are in the same Nielsen class for $f$ with period $m$ if and only if they are in the same Nielsen class for $f^n$ with period $m/\gcd(m,n)$.
\end{lemma}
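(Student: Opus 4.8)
\textbf{Proof proposal for Lemma~\ref{l:iterates}.}
Both assertions are elementary, so the plan is simply to unwind the definitions carefully in both directions; the only genuine care needed is in matching up universal covers of $f$ with universal covers of the iterate $f^n$.

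For the first statement, I would argue that a degenerate Levy cycle for $f$ of length $k$ produces one for $f^n$, and vice versa. If $\Gamma=\{\gamma_0,\dots,\gamma_{k-1}\}$ is a Levy cycle for $f$, consider the map $f^n$. Each curve $\gamma_i$ has a preimage $\gamma_i'$ under $f$ which is degree-one onto $\gamma_i$ and homotopic to $\gamma_{(i-1)\bmod k}$; iterating, $f^n$ carries an appropriate preimage homeomorphically, and after passing to the sub-multicurve on the orbit indices $\equiv 0 \pmod{\gcd(k,n)}$ one obtains a Levy cycle for $f^n$ of length $k/\gcd(k,n)$, which is degenerate whenever the original is (composing the homeomorphic disk maps). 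Conversely a Levy cycle for $f^n$ is, by definition, already a Levy cycle situation for the map $f^n = (f)^n$; spreading the single cycle of length $\ell$ out along the $f$-orbit of one of its curves under $f, f^2, \dots$ yields a Levy cycle for $f$ — one just checks that the curves $f^j(\alpha)$ are pairwise non-homotopic (otherwise one would already have a shorter Levy cycle for a smaller iterate, still giving one for $f$) and that the degree and disk conditions are inherited. The degeneracy bookkeeping is routine: a composition of orientation-preserving homeomorphisms of disks is again one.

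For the second statement, I would invoke Lemma~\ref{l:NielsenIndex}. Fix a universal cover $F$ of $f$; then $F^n$ is a universal cover of $f^n$. If $z_1,z_2$ have period $m$ for $f$ and are in the same Nielsen class with that period, choose lifts $\tilde z_i$ with $\mathrm{ind}_{F,m}(\tilde z_1)=\mathrm{ind}_{F,m}(\tilde z_2)=g\in G$, i.e.\ $F^m(\tilde z_i)=g\cdot\tilde z_i$. Set $d=\gcd(m,n)$ and write $m=d m'$, $n=d n'$ with $\gcd(m',n')=1$. Regarding $F^n$ as the base cover for $f^n$, the point $z_i$ has period $m'$ for $f^n$, and $(F^n)^{m'}=(F^m)^{n'}$ sends $\tilde z_i$ to $g^{n'}\cdot\tilde z_i$ (using that $G$ is abelian for the $(2,2,2,2)$-orbifold, so the twisting accumulates additively); hence $\mathrm{ind}_{F^n,m'}(\tilde z_1)=\mathrm{ind}_{F^n,m'}(\tilde z_2)=g^{n'}$, and Lemma~\ref{l:NielsenIndex} gives that $z_1,z_2$ are in the same Nielsen class for $f^n$ of period $m'=m/\gcd(m,n)$. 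The reverse implication runs identically: the same index equality at level $(F^n,m')$ reads off as an index equality at level $(F, m' n)$, and $m'n = m n/\gcd(m,n)$ is a multiple of $m$, so $z_1,z_2$ are in the same Nielsen class for $f$ with period $m'n$, hence in the same Nielsen class for $f$ — and a short argument (again using abelianness of $G$) upgrades this to period exactly $m$.

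The step I expect to require the most attention — though it is still not hard — is the indexing/length bookkeeping when a $k$-cycle for $f$ collapses to a $(k/\gcd(k,n))$-cycle for $f^n$ and, dually, when one spreads a cycle back out: one must confirm that the resulting curves are genuinely pairwise non-homotopic so that the object produced is an honest multicurve, and that no degenerate case (e.g.\ a periodic curve of smaller period) breaks the construction. All of this is "obvious" in the sense stated in the paper, so I would keep the write-up terse, citing Lemma~\ref{l:NielsenIndex} for the Nielsen-class half and noting the abelianness of the orbifold group $G$ of $\OO$ as the one structural fact that makes the index computation transparent.
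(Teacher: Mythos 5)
The paper itself offers no argument for this lemma (it is declared obvious), so the only question is whether your sketch is actually complete; it is not, and the gap sits exactly at the one non-trivial point. In the direction ``a Levy cycle for $f^n$ gives one for $f$'' you spread the cycle out along intermediate curves and then only check that the resulting curves are pairwise non-homotopic. But a Levy cycle in this paper is by definition a \emph{multicurve}: the curves must be realizable pairwise disjointly, and distinct homotopy classes of essential curves need not be disjointly realizable. The natural intermediate curves are $\beta_{i,j}=f^{j}(\alpha_i')$, where $\alpha_i'$ is the degree-one $f^n$-preimage of $\alpha_i$ (note that writing $f^j(\alpha)$ for a cycle curve $\alpha=\alpha_i$ is already problematic, since forward images of a simple closed curve under a branched cover need not be simple; the $\beta_{i,j}$ are simple only because $f^n$ is injective on $\alpha_i'$). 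These $\beta_{i,j}$ are forward images of disjoint curves, and forward images can intersect essentially, so the collection of distinct classes in your chain is not obviously a multicurve; nothing in your sketch addresses this. Your parenthetical justification of non-homotopy (``otherwise one would already have a shorter Levy cycle for a smaller iterate'') is also circular, since producing a Levy cycle for $f$ from one for an iterate is precisely what is being proved; the honest route is to extract a minimal closed subchain, note that triviality (inessential or peripheral) propagates under pullback so all chain curves are essential and non-peripheral, and then still face the disjointness question. That this is the real issue is reflected in the paper itself: statements of the form ``an infinite degree-one pullback chain yields a Levy cycle'' (Proposition~\ref{p:InfiniteLevy}) are proved there only via the Nielsen-class machinery, not by naive extraction.

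In the Nielsen-class half, the structure of your argument (use Lemma~\ref{l:NielsenIndex} with the cover $F^n$ of $f^n$) is right, but the computation is wrong as stated: the orbifold group $G$ of the $(2,2,2,2)$ orbifold is \emph{not} abelian (it is generated by the integer translations together with $z\mapsto -z$, and $S_0 T_v S_0^{-1}=T_{-v}$), and even granting commutativity the index does not accumulate as $g^{n'}$, because $F^m\circ g = F_*^m(g)\circ F^m$; the correct formula is
$$(F^m)^{n'}(\tilde z)=F_*^{(n'-1)m}(g)\cdots F_*^{m}(g)\,g\cdot\tilde z .$$
The conclusion you want survives, since this element depends only on $g$ and $F_*$ and is therefore the same for $\tilde z_1$ and $\tilde z_2$, but the justification must be repaired. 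Finally, your reverse implication only yields that $z_1,z_2$ lie in the same Nielsen class for $f$ with period $nm'=\operatorname{lcm}(m,n)$; the promised ``short argument (again using abelianness of $G$)'' upgrading this to period exactly $m$ is both unsubstantiated and based on the same false premise, so as written that refinement of the statement is not proved. (For the way the lemma is actually used in the paper --- transferring ``same Nielsen class'' without a specified period, and transferring existence of Levy cycles --- the period bookkeeping is inessential, but then you should say so rather than assert the upgrade.)
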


\begin{lemma} Let $A$ be a $2\times2$ integer matrix with determinant greater than 1 and both eigenvalues not equal to $\pm1$. If $v$ is a non-zero integer vector, then $A^{-n} \cdot v$ is non-integer for some $n>0$.
\label{l:da}
\end{lemma}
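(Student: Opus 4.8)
The plan is to argue by contradiction. Suppose $v\neq 0$ is an integer vector with $A^{-n}\cdot v\in\Z^2$ for every $n\ge 0$; I will show that this forces $\pm 1$ to be an eigenvalue of $A$, contradicting the hypothesis. The key object is the subgroup
$\Lambda=\{\,w\in\Z^2:\ A^{-n}\cdot w\in\Z^2\ \text{for all}\ n\ge 0\,\}=\bigcap_{n\ge 0}A^n\Z^2$
of $\Z^2$ (the two descriptions agree because $A$ is invertible over $\Q$). By assumption $v\in\Lambda$, so $\Lambda\neq\{0\}$, and hence, being a subgroup of $\Z^2$, it is free abelian of rank $1$ or $2$.

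Next I would rule out rank $2$. Write $d=\det A$, so $d\ge 2$ by hypothesis. If $\Lambda$ had rank $2$, it would be a finite-index subgroup, say of index $m=[\Z^2:\Lambda]$. Since $\Lambda\subseteq A^n\Z^2$ for every $n$, we get $d^n=[\Z^2:A^n\Z^2]\le[\Z^2:\Lambda]=m$ for all $n\ge 0$, which is absurd for $n$ large. Hence $\Lambda=\Z w$ for some primitive integer vector $w$.

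Finally I would treat the rank $1$ case. Observe that $A^{-1}\Lambda\subseteq\Lambda$: indeed, if $A^{-n}\cdot w\in\Z^2$ for all $n\ge 0$, then also $A^{-n}\cdot(A^{-1}w)=A^{-(n+1)}\cdot w\in\Z^2$ for all $n\ge 0$. Since $\Lambda=\Z w$, this gives $A^{-1}w=\mu w$ with $\mu\in\Z\setminus\{0\}$ --- an integer because $A^{-1}w\in\Z^2$ and $w$ is primitive, and nonzero because $A^{-1}$ is invertible. Thus $w$ is a genuine eigenvector of $A$ with eigenvalue $1/\mu$, so $\mu\cdot(Aw)=w$; since $A$ is an integer matrix this forces $\mu\mid\gcd(w_1,w_2)=1$, i.e.\ $\mu=\pm 1$. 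Then $Aw=\pm w$, so $\pm 1$ is an eigenvalue of $A$, a contradiction. Therefore $\Lambda=\{0\}$, which is exactly the statement of the lemma.

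I do not expect a serious obstacle here; the proof is elementary lattice theory. The only point requiring care is the rank $1$ step: one must use primitivity of $w$ to know that $\mu$ is an honest integer before concluding $\mu=\pm 1$. It is worth noting that the hypothesis $\det A>1$ is used precisely to kill the rank $2$ case (it makes $d^n\to\infty$), the hypothesis that $\pm 1$ is not an eigenvalue is exactly what the rank $1$ case contradicts, and no case analysis on whether the eigenvalues of $A$ are real, complex, or repeated is needed.
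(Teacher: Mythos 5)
Your argument is essentially correct, and it takes a genuinely different route from the paper. The paper proves the lemma analytically: it splits into cases according to the location of the eigenvalues (if both lie outside the unit circle then $A^{-n}v\to 0$ forces $v=0$; otherwise the eigenvalues are quadratic irrationals and the hypothetical integer vectors $A^{-n}v=(p_n,q_n)^T$ would produce rational approximations $p_n/q_n$ of the slope of the contracting eigendirection of quality $o(1/q_n^2)$, contradicting Liouville's theorem on approximation of quadratic algebraic numbers). Your proof replaces all of this by elementary lattice theory: the invariant subgroup $\Lambda=\bigcap_{n\ge0}A^n\Z^2$ cannot have rank $2$ because its index would dominate $(\det A)^n$, and cannot have rank $1$ without producing an eigenvalue $\pm1$. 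This avoids any case analysis on whether the eigenvalues are real, complex or repeated, uses no Diophantine approximation, and works verbatim under the weaker hypothesis $|\det A|>1$ (which is in fact the hypothesis used later in Corollary~\ref{c:da}).

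One step, however, is asserted rather than proved: you claim that in the rank-one case $\Lambda=\Z w$ with $w$ \emph{primitive}, and you yourself identify this as the point where care is needed. Primitivity of the generator is not automatic for a rank-one subgroup of $\Z^2$ (e.g.\ $2\Z\times\{0\}$ has no primitive generator), and for the specific lattice $\Lambda$ it requires an argument. Fortunately the gap closes in one line and the primitivity is not needed at all: from $A^{-1}w=\mu w$ with $\mu\in\Z\setminus\{0\}$ (which follows, as you note, from $A^{-1}\Lambda\subseteq\Lambda=\Z w$), the number $1/\mu$ is an eigenvalue of $A$; being a root of the monic integer characteristic polynomial of $A$ it is an algebraic integer, and being rational it must lie in $\Z$, so $\mu=\pm1$ and $Aw=\pm w$, contradicting the hypothesis on the eigenvalues. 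With that substitution (or with a short proof that the generator of $\Lambda$ is primitive, which amounts to the same computation), your proof is complete.
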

\begin{proof} Suppose, on contrary, that $A^{-n} \cdot v=(p_n,q_n)^T$ where $p_n, q_n \in \Z$ for all $n>0$. If both eigenvalues of $A$ have absolute values greater than 1, then evidently both $p_n$ and $q_n$ tend to $0$. Thus for some $n$, $p_n=q_n=0$ and, multiplying $(p_n,q_n)^T$ by $A^n$, we see that $v$ is also a zero vector. Since by assumption, eigenvalues are not equal to $\pm1$, the only case we need to consider is when $A$ has two distinct real  irrational eigenvalues $|\lambda_1|>1$ and $|\lambda_2|<1$.

In this case, $A$ is diagonalizable; write $v$ as a linear combination $v=v_1+v_2$ of two eigenvectors $v_1=(x_1,y_1)^T$ and $v_2=(x_2,y_2)^T$. Then $A^{-n} \cdot v = \lambda_1^{-n}v_1 + \lambda_2^{-n}v_2$ so $p_n= x_1\lambda_1^{-n}+x_2\lambda_2^{-n}$ and $q_n= y_1\lambda_1^{-n}+y_2\lambda_2^{-n}$.
Note that $q_n \lambda_1^{-n}= y_1\lambda_1^{-2n}+y_2(\lambda_1\lambda_2)^{-n}\to 0$ because $|\lambda_1|>1$ and $\lambda_1\lambda_2=\det A>1$; thus $|\lambda_1|^{-n} =o(1/|q_n|)$.
 Then
$$
\left| \frac{p_n}{q_n} - \frac{x_2}{y_2} \right| = 
\left| \frac{(x_1y_2-x_2y_1)\lambda_1^{-n}}{y_2q_n} \right| = O\left( \left| \frac{\lambda_1^{-n}}{q_n} \right|\right)
=o\left(  \frac{1}{q_n^2} \right).
$$

Since $\lambda_2$ is a quadratic algebraic number, the ratio $x_2/y_2$ must also be quadratic algebraic. No quadratic algebraic number, however, can be approximated by rationals this way and we arrive at a contradiction.
\end{proof}

\begin{corollary}
\label{c:da} Let $L(z)=Az+b$ be a real affine map such that $A$ is an integer matrix with $|\det A|>1$  and $b$ is a vector with entries in $\frac{1}{q}\Z$ for some $q\in\N$, and assume that $A$ has eigenvalues not equal to $\pm 1$. If $L^{-n}(v) \in  \frac{1}{q}\Z$  for all $n\ge0$, then $v$ is equal to the fixed point of $L$.
\end{corollary}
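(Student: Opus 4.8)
The plan is to reduce the statement to \lemref{l:da} by moving the fixed point of $L$ to the origin. Since $1$ is not an eigenvalue of $A$, the matrix $I-A$ is invertible, so $L$ has a unique fixed point $z_0=(I-A)^{-1}b$; moreover from $L(z)-z_0=A(z-z_0)$ one obtains by induction that $L^{-n}(v)-z_0=A^{-n}(v-z_0)$ for every $n\ge0$. Writing $w:=v-z_0$, it therefore suffices to prove that $w=0$.

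The next step is to bound the denominators involved. Because $(I-A)^{-1}=\tfrac1{\det(I-A)}\operatorname{adj}(I-A)$ with $\operatorname{adj}(I-A)$ an integer matrix and $b\in\tfrac1q\Z^2$, the fixed point satisfies $z_0\in\tfrac1D\Z^2$, where $D:=q\lvert\det(I-A)\rvert$. Since $\tfrac1q\Z^2\subseteq\tfrac1D\Z^2$, the hypothesis $L^{-n}(v)\in\tfrac1q\Z^2$ yields $A^{-n}w=L^{-n}(v)-z_0\in\tfrac1D\Z^2$ for all $n\ge0$; in particular $Dw$ is an integer vector, and $A^{-n}(Dw)=D\bigl(L^{-n}(v)-z_0\bigr)\in\Z^2$ for all $n\ge0$.

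It then remains to invoke \lemref{l:da}. That lemma is stated for $\det A>1$, so first I would pass to $A^2$: its determinant is $(\det A)^2>1$, and none of its eigenvalues equals $\pm1$ (an eigenvalue $\lambda$ of $A$ with $\lambda^2=1$ would equal $\pm1$, which is excluded; one with $\lambda^2=-1$ would be $\pm i$, forcing the real matrix $A$ to have $\det A=1$, contradicting $\lvert\det A\rvert>1$). Since replacing $L$ by $L^2$ keeps the same fixed point and preserves all the hypotheses (in particular $L^{-2m}(v)\in\tfrac1q\Z^2$ for all $m\ge0$), we may assume $\det A>1$. Now suppose $w\ne0$; then $Dw$ is a nonzero integer vector, so \lemref{l:da} provides an $n>0$ with $A^{-n}(Dw)\notin\Z^2$, contradicting the previous paragraph. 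Hence $w=0$, that is, $v=z_0$.

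I expect no real obstacle here: the genuine content is entirely contained in \lemref{l:da}, and the only things to watch are the routine denominator bookkeeping through the constant $D$ and the harmless reduction to the case $\det A>1$.
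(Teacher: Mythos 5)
Your proof is correct and takes essentially the same route as the paper's: translate the unique fixed point to the origin, clear denominators, and reduce to \lemref{l:da}. The only differences are that you make the denominator bookkeeping explicit (the constant $D=q\lvert\det(I-A)\rvert$) and that you patch the mismatch between the corollary's hypothesis $\lvert\det A\rvert>1$ and the lemma's $\det A>1$ by passing to $A^2$ --- a small point the paper's one-line proof leaves implicit.
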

\begin{proof}
The case when $b=0$ follows immediately from the previous lemma. If $b \neq 0$, we conjugate $L(z)$ by $t(z)=z-x$, where $x$ is the unique fixed point of $L(z)$, to obtain a real linear map $L'(z)$. Then $L'(z)$ and $t(v)$ also satisfy the assumption of this corollary (possibly with a different $q$) and we conclude that $t(v)=v-x=0$.
\end{proof}

\begin{definition} Suppose that one of the complementary components to a simple closed curve $\gamma$ in $(\OO,Q)$ contains at most one point of $P$ (so that $\gamma$ is trivial in $(\OO,P)$). We call that component $\text{int}(\gamma)$ the \emph{interior} of $\gamma$.
\end{definition}

\begin{proposition}  Let $\{\gamma_n\}$ be a sequence of simple closed curves in $(\OO,Q)$ that are inessential in $(\OO,P)$ such that a $(2,2,2,2)$-map 
$f$ sends $\gamma_{n+1}$ to $\gamma_n$ and $Q'=\text{int}(\gamma_n)\cap Q$ is the same for all $n$. Then there exits $m$ such that all points in $Q'$ are periodic with period $m$ and lie in the same Nielsen class.
\label{p:InfinitePullback}
\end{proposition}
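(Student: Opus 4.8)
The plan is to reduce the statement to the arithmetic of the lifted torus map $\hat f$ and the affine model $L$, and then apply Corollary~\ref{c:da}. First I would pass to the interiors: each $\gamma_n$ is inessential in $(\OO,P)$, so one complementary component $\text{int}(\gamma_n)$ contains at most one point of $P$, but it contains the fixed finite set $Q'\subset Q$ of extra marked points. Since $f$ sends $\gamma_{n+1}$ to $\gamma_n$ and the configuration $Q'$ is the same for all $n$, the map $f$ must map $\text{int}(\gamma_{n+1})$ into (a region homotopic to) $\text{int}(\gamma_n)$, carrying $Q'$ to $Q'$; because $Q'$ is finite and the correspondence is a bijection on a fixed finite set, some iterate $f^m$ fixes $Q'$ pointwise. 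Replacing $f$ by $f^m$ (legitimate by Lemma~\ref{l:iterates}, which preserves both the Levy-cycle and the Nielsen-class conclusions after rescaling the period) we may assume every point of $Q'$ is fixed and that $f(\text{int}(\gamma_{n+1}))$ is homotopic rel $Q$ to $\text{int}(\gamma_n)$.

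Next I would track a single point $q\in Q'$ through the infinite backward orbit of curves. Fix a lift $F\colon\R^2\to\R^2$ of $f$ and a lift $\tilde q\in\Pt$ of $q$. Since $q$ is fixed by $f$, we have $F(\tilde q)=g\cdot\tilde q$ for some $g\in G$, i.e.\ $g^{-1}F$ fixes $\tilde q$; after modifying $F$ by this deck element we may assume $F(\tilde q)=\tilde q$. By Lemma~\ref{l:AffineLift} and the discussion following it, $F$ agrees on $\Pt$ with the affine map $L(z)=Az+b$ (here $b\in\tfrac12(\Z+i\Z)$ and $A$ is the integer matrix with $|\det A|>1$ and eigenvalues $\neq\pm1$, since the associated matrix is hyperbolic but not $\pm1$). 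The key geometric input is that the nested curves $\gamma_n$ force the lifted interiors to shrink in a controlled way: choosing lifts $\tilde\gamma_n$ consistently so that $F$ maps $\tilde\gamma_{n+1}$ onto $\tilde\gamma_n$, the marked point $\tilde q$ lies "inside'' every $\tilde\gamma_n$, and since $F=L$ on $\Pt$ the second marked point of $Q'$ in that interior (or the $P$-point, if present) has $L$-images staying in a bounded region for all $n$ — that is, its backward $L$-orbit stays in $\tfrac12(\Z+i\Z)$. Then Corollary~\ref{c:da} forces that point to coincide with the fixed point of $L$, hence all points of $Q'$ project to the same point, which (after unwinding) shows each has a common Nielsen index, i.e.\ they lie in the same Nielsen class of period $m$ by Lemma~\ref{l:NielsenIndex}.

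The main obstacle I anticipate is the bookkeeping in the middle step: making precise the claim that an infinite backward chain of curves with a \emph{fixed} combinatorial type $Q'$ actually pins down the affine dynamics, as opposed to merely producing homotopic curves. Concretely, one must choose the lifts $\tilde\gamma_n$ and the auxiliary base lifts so that "being in the interior'' is an honest containment of regions in $\R^2$ (not just a homotopy statement), which requires using the expansion of $A$ in the hyperbolic directions together with Proposition~\ref{prop:homology1}/\ref{prop:homology} to control the homology classes of the $\gamma_n$ and rule out the curves drifting off; once the containment is genuine, the boundedness of the backward orbit of the relevant marked point is immediate and Corollary~\ref{c:da} finishes it. A secondary point to handle carefully is the case $|Q'|=2$ versus $|Q'|=1$ (the latter being vacuous) and the ambiguity of the Nielsen index at points of $P$, which is exactly the "defined up to the symmetry around $z$'' clause and causes no real trouble.
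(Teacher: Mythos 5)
Your first step (the interiors map bijectively, so $Q'$ is permuted and some iterate $f^m$ fixes it pointwise) is essentially the paper's, but the second half of your plan has a genuine gap: you apply the arithmetic to the wrong objects. The points of $Q'\setminus P$ do not lie in $\Pt$, and the lift $F$ agrees with the affine map $L$ \emph{only} on $\Pt$ (Lemma~\ref{l:AffineLift}); off that half-integer lattice $F$ is just some homeomorphism. So there is no reason whatsoever for the "backward $L$-orbit" of a marked point of $Q'$ to stay in $\tfrac12(\Z+i\Z)$, and Corollary~\ref{c:da} cannot be fed the positions of marked points. Worse, the conclusion you extract --- that all points of $Q'$ coincide with the fixed point of $L$ --- is false and actually contradicts the statement you are proving: the proposition asserts that \emph{distinct} points of $Q'$ lie in a common Nielsen class (this is precisely the mechanism that later produces a degenerate Levy cycle in Proposition~\ref{p:NielsenLevy}); it does not say they collapse to one point. (Minor slip in the same vein: you write $\tilde q\in\Pt$ for a lift of $q\in Q'$, which need not hold.) Your anticipated fix --- controlling homology classes of the $\gamma_n$ and using expansion of $A$ to get honest containments of lifted interiors --- does not address this, and cannot, since $A$ is only hyperbolic, not expanding.

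The integrality that makes Corollary~\ref{c:da}/Lemma~\ref{l:da} bite comes from the deck group, not from the marked points. The paper's route: choose a lift $F$ of $f^m$ fixing a lift $\tilde r$ of one point $r\in Q'$, join $r$ to any other $s\in Q'$ by a path $\alpha_1\subset\Int(\gamma_1)\sm P$, and use that $f^m$ maps $\Int(\gamma_{m+1})$ homeomorphically onto $\Int(\gamma_1)$ (at most one critical value inside; a postcritical point there is its own unique, non-critical preimage) to see that the $f^{mn}$-pullback of $\alpha_1$ starting at $r$ again ends at $s$. Hence $\tilde s_{n+1}=F^{-n}(\tilde s_1)$ always lies in the fiber of $s$, and the deck elements $g_n$ defined by $F(\tilde s_n)=g_n\cdot\tilde s_n$ satisfy $F_*(g_{n+1})=g_n$. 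Since $F_*=L_*$ on the orbifold group, $g_1=T_v$ forces $g_n=T_{A^{-n+1}v}$ with all $A^{-n}v$ integer vectors, so Lemma~\ref{l:da} gives $v=0$ and $\tilde s_1$ is $F$-fixed; in the symmetry case $g_1=S_v$ one gets $L(v)=v$, the $g_n$ are constant, and $F^2$ fixes both $\tilde r$ and $\tilde s_1$, giving the common Nielsen class with period $2m$. The hyperbolicity hypothesis enters only through these (half-)integral deck-group coordinates; no containment of lifted regions or expansion estimate is needed.
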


\begin{proof} Since all $\gamma_n$ are inessential in $(\OO,P)$, the map $f$ sends $\Int(\gamma_{n+1})$ homeomorphically onto $\Int(\gamma_{n})$. Indeed, $\Int(\gamma_{n})$ contains at most one critical value of $f$, and if it does contain a critical value $p$, then the unique preimage of $p$ in $\Int(\gamma_{n+1})$ must be $p$ itself, which is not a critical point. Therefore $f$ is a bijection on $Q'$ and  every point in $Q'$ is periodic; denote $m$ the least common multiple of the periods of points in $Q'$. It is enough to prove that for $f^m$, the subset $Q'$ of the set of fixed points  lies in a single Nielsen class.

Let $F$ be a universal cover of $f^m$ such that a point $\tilde{r}$ in the fiber of $r \in Q'$ is fixed by $F$. Let $s$ be any other point in $Q'$. Since $Q' \cap P$ contains at most one point, we may assume that $s \notin P$. Connect $r$ and $s$ by a curve $\alpha_1$ in $\Int{(\gamma_1)}\sm P$. The lift $\tilde{\alpha_1}$ of $\alpha_1$ that starts at $\tilde{r}$ will end at some point $\tilde{s}_1$ in the fiber of $s$. Denote $g_1=\text{ind}_{F,1}(\tilde{s}_1)$; in other words, $g$ is a unique transformation in  $G$ such that $F(\tilde{s}_1)=g_1\cdot\tilde{s}_1$. Consider the lift $\alpha_2$ of $\alpha_1$ by $f^m$ that starts at $r$.  Since $r \in \Int(\gamma_{m+1})$ the whole curve $\alpha_2$ lies in $\Int(\gamma_{m+1})$ and, thus, ends in the unique preimage of $s$ within $\Int(\gamma_{m+1})$, which is $s$ itself. Therefore the  lift $\tilde{\alpha_2}=F^{-1}(\alpha_1)$ of $\alpha_2$ that starts at $\tilde{r}$ will end at some point $\tilde{s}_2$ in the fiber of $s$. We conclude by induction that $$F^{-n}(\tilde{s}_1)=\tilde{s}_{n+1}$$ with $\tilde{s}_{n+1}$ in the fiber of $s$ for all $n$. Denote $$g_n=\text{ind}_{F,1}(\tilde{s}_n),$$ which is a unique element of $G$ such that $$\tilde{s}_{n-1}=F(\tilde{s}_n)=g_n\cdot\tilde{s}_{n}\text{ for all }n\ge2.$$ Then $$g_n\cdot\tilde{s}_{n}=\tilde{s}_{n-1} = F(\tilde{s}_n)=F(g_{n+1} \cdot \tilde{s}_{n+1})=F_*(g_{n+1}) \cdot F(\tilde{s}_{n+1})= F_*(g_{n+1}) \cdot \tilde{s}_{n}.$$  Since $\tilde{s_n} \notin \Pt$, this yields $F_*(g_{n+1})=g_n$ for all $n$.

 By Theorem~\ref{t:4PointsCase} $(f^m,P)$ is Thurston equivalent to a quotient of an affine map $L(z)=Az+b$; the push-forward map $F_*$ is easily computed: for a translation $$T_v \cdot z = z+v$$ we get  $$F_*(T_v)=T_{Av}$$ and for a symmetry $$S_v \cdot z=2v-z$$ we get $$F_*(S_v)=S_{Av+b}.$$ It follows that if  $g_1$ is  equal to a translation $T_v$, then   $$g_n=T_{A^{-n+1}v}\text{ for all }n\geq2.$$ In particular, all ${A^{-n+1}v}$ are integer vectors and Corollary~\ref{c:da} yields $v=0$. We see that $$\text{ind}_{F,1}(\tilde{s}_1)=g_1=\id$$ and $\tilde{s}_1$ is fixed by $F$ so $r$ and $s$ are in the same Nielsen class.

Similarly, if $g_1=S_v$, then $g_n=S_{L^{-n+1}(v)}$ and  Corollary~\ref{c:da} also applies, implying that $L(v)=v$ and $g_1=g_n$ for all $n$. But then $$\tilde{s}_1=g_2\cdot g_3 \cdot \tilde{s}_3=S^2_v \tilde{s}_3=\tilde{s}_3$$ and $F^2$ fixes both $\tilde{r}$ and $\tilde{s_1}$.
\end{proof}

\begin{proposition} A map $f$ admits a degenerate Levy cycle if and only if there exist two distinct periodic points in $Q$ in the same Nielsen class.
\label{p:NielsenLevy}
\end{proposition}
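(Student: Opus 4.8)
The plan is to prove both directions directly, using the geometry of the pillowcase $\OO$ and its universal cover $\R^2$, together with Proposition~\ref{p:InfinitePullback} for the harder implication.

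For the forward direction, suppose $f$ admits a degenerate Levy cycle $\Gamma = \{\gamma_0,\dots,\gamma_{n-1}\}$ with preimages $\gamma_i'$ homotopic to $\gamma_{(i-1)\bmod n}$ and disks $D_i$ bounded by $\gamma_i'$ mapped homeomorphically with $f(D_i)$ homotopic to $D_{(i+1)\bmod n}$. Passing to the iterate $f^n$ (which is legitimate by Lemma~\ref{l:iterates}, since $f^n$ has a degenerate Levy cycle iff $f$ does, and Nielsen classes are tracked under iteration) we may assume $n=1$, so that we have a single curve $\gamma$, inessential in $(\OO,P)$, with $f:\Int(\gamma')\to\Int(\gamma)$ a homeomorphism, $\gamma'$ homotopic to $\gamma$ rel $Q$, and $f(\Int(\gamma'))$ homotopic to $\Int(\gamma)$. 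Because $\gamma$ is non-peripheral in $(\OO,Q)$ but bounds a disk-with-at-most-one-$P$-point, its interior $\Int(\gamma)$ must contain at least two points of $Q$; since $Q'=\Int(\gamma)\cap Q$ contains at most one point of $P$, at least one of these is a point of $Q\setminus P$, hence in fact at least two points of $Q$ lie in $Q'$. Iterating the homeomorphism $f$ backwards gives a sequence $\gamma_n$ of curves with $f(\gamma_{n+1})=\gamma_n$ and constant $\Int(\gamma_n)\cap Q = Q'$. Proposition~\ref{p:InfinitePullback} now applies and yields that all points of $Q'$ are periodic of some common period $m$ and lie in a single Nielsen class; since $|Q'|\ge 2$, we obtain two distinct periodic points of $Q$ in the same Nielsen class.

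For the converse, suppose $q_1\ne q_2$ are periodic points in $Q$ lying in the same Nielsen class; say of common period $m$ (replacing each by a common multiple of periods), and by Lemma~\ref{l:iterates} we may replace $f$ by $f^m$ and assume $q_1,q_2$ are fixed and in the same Nielsen class of period $1$. By Lemma~\ref{l:NielsenIndex} there is a universal cover $F$ of $f$ and lifts $\tilde q_1,\tilde q_2$ fixed by $F$. Since $(f,P)$ is Thurston equivalent to the quotient $l$ of the affine map $L(z)=Az+b$ with both eigenvalues of $A$ off the unit circle (and not $\pm 1$), I want to transfer the problem to $l$: the Nielsen class condition is preserved under Thurston equivalence, so $l$ also has two distinct fixed points $z_1,z_2$ — the images of $q_1,q_2$ — in the same Nielsen class of period $1$, meaning their affine lifts $\tilde z_1,\tilde z_2\in\R^2$ are both fixed by the same lift $\tilde L$ of $l$. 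But an affine map of $\R^2$ with no unit eigenvalue has a unique fixed point, so $\tilde z_1 - \tilde z_2$ lies in the relevant lattice, i.e.\ $z_1$ and $z_2$ project to the same point of $\OO$. Hence at least two distinct marked points of $Q$ are identified under the ``projection'' — more precisely, one shows that being in the same Nielsen class of period one forces the two fixed lifts to coincide modulo $G$, so $z_1=z_2$ on the pillowcase, a contradiction unless we reinterpret this correctly: what actually happens is that $f$ itself cannot be affine on all of $Q$, and we extract a Levy cycle exactly as in Case II of the proof of Theorem~\ref{t:ExpandingCase} — choosing a small circle $\gamma$ around the (pre-)periodic point witnessing non-injectivity of a semiconjugacy and pulling it back. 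I expect the technical heart here to be constructing the relevant semiconjugacy (or a suitable analogue of the $\phi_\infty$ of Theorem~\ref{t:ExpandingCase}) when $l$ is only hyperbolic rather than expanding, since then the iteration $\phi_n$ need not converge; one likely works along the contracting eigendirection, or pulls back small disks around the periodic orbit directly, noting that the disk around $z_0$ eventually pulls back (by some iterate of $l$) into itself homeomorphically because $z_0\notin P$ and $l$ acts as a local homeomorphism with a repelling/hyperbolic fixed point there.

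The main obstacle, as indicated, is the converse direction in the genuinely non-expanding case: the clean contraction argument of Theorem~\ref{t:ExpandingCase} fails, so I cannot simply build a uniformly convergent sequence of maps. The remedy I plan is to argue semi-locally: around the periodic point $z_0$ of $l$ that receives two points of $\phi_\infty(Q)$ (or its analogue), $l^k$ is a hyperbolic linear germ, so after passing to a further iterate a round disk $D\ni z_0$ in $\OO$ satisfies $l^k(D)\supset D$ with $l^k|_{l^{-k}(D)\cap D}$ a homeomorphism onto $D$; pulling $\partial D$ back repeatedly produces nested curves, and the two marked points trapped inside give homotopic preimages realizing the Levy cycle. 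The bookkeeping to ensure the pulled-back curves stay simple, non-peripheral, essential in $(\OO,Q)$, and homotopic to each other rel $Q$ — exploiting again that $\gamma$ is inessential in $(\OO,P)$ so its interiors behave like in Proposition~\ref{p:InfinitePullback} — is where the care is needed, but it is essentially a repackaging of the Case II argument already in hand.
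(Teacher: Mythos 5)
Your forward direction is essentially the paper's argument: reduce to a length-one cycle, observe the curve is inessential in $(\OO,P)$, and apply Proposition~\ref{p:InfinitePullback} to the backward orbit of the curve. (One small omission: inessentiality in $(\OO,P)$ is not part of the data of a degenerate Levy cycle; it has to be derived, as the paper does, from the fact that the action on $H_1$ is by $A$, which has no eigenvalue $\pm1$, so no curve essential in $(\OO,P)$ can have a degree-one preimage homotopic to itself.)

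The converse direction, however, has a genuine gap, and it is exactly at the point you flag as "bookkeeping". There is no semiconjugacy $\phi_\infty$ in the hyperbolic non-expanding case -- that is the whole reason this section of the paper exists -- so "Case II of Theorem~\ref{t:ExpandingCase}" cannot be repackaged: without $\phi_\infty$ you have no way to locate the points of $Q\setminus P$ relative to the dynamical plane of $l$ (only $(f,P)$, not $(f,Q)$, is equivalent to $l$, which also makes your transfer "$l$ has two distinct fixed marked points in the same Nielsen class" circular). Your metric mechanism also fails: at a saddle fixed point of $l$ the preimage of a small round disk is stretched along the contracting direction of $l$, so the pullbacks of $\partial D$ neither nest nor stay small, and nothing traps the two marked points inside a shrinking family of curves. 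What the paper actually does is purely homotopical: starting from a lift $F$ of $f$ fixing lifts of both points (Lemma~\ref{l:NielsenIndex}), it first builds, by $G$-equivariant isotopies, a simple closed curve $\gamma_0$, inessential in $(\OO,P)$, whose lift separates the $F$-fixed lifts from all other lifts of $Q$ (Lemma~\ref{l:homotopy}); it then considers $\gamma_n$, the projections of $F^{-n}(\tilde\gamma_0)$, and must show these eventually become pairwise homotopic rel $Q$ so that they form a (necessarily degenerate) Levy cycle. That last step is the crux and is entirely missing from your sketch: the paper handles it with the intersection numbers $a_n=i(\gamma_n,\gamma_{n+1})$, which are non-increasing; if they never vanish, the unions $\gamma_n\cup\gamma_{n+1}$ pull back homeomorphically, every simple closed curve in $\gamma_0\cup\gamma_1$ is inessential in $(\OO,P)$ by hyperbolicity of $A$ (Lemma~\ref{l:inessential}), a combinatorial argument yields a complementary component $M$ with at least three points of $P$, and Proposition~\ref{p:InfinitePullback} applied to the pullbacks of $\partial M$, together with $\Int(\gamma_0)\cap Q=C$, forces all $\gamma_n$ to be homotopic. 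Without an argument of this kind your proposal does not establish the existence of the Levy cycle.
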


\begin{proof} In view of Lemma~\ref{l:iterates}, we can freely replace $f$ by any iterate of $f$ and assume that all periodic points in $Q$ are fixed. Suppose an essential simple closed curve $\gamma$ forms  a Levy cycle of length 1, i.e. $f(\gamma)$ is homotopic to $\gamma$ and the degree of $f$ restricted to $\gamma$ is 1. Recall that $(f,P)$ is equivalent to a quotient of $z \mapsto Az+b$. Essential simple closed curves on $(\OO,P)$ are in one-to-one correspondence with non-zero integer vectors $(p,q)^T$ such that $q\ge0$ and $\gcd(p,q)=1$. The action of $f$ on the first homology group  of $(\OO,P)$ is (in the appropriate basis) the multiplication by $A$ so that if $\gamma_1$ and $\gamma_2$ are simple closed essential curves labeled by $(p_1,q_1)$ and $(p_2,q_2)$ such that $f(\gamma_2)=\gamma_1$, then $$A(p_2,q_2)^T=\pm d(p_1,q_1)^T$$ where $d$ is the degree of $f$ restricted to $\gamma_2$. This yields that $\gamma$ is inessential in $(\OO,P)$ because otherwise $A$ must have an eigenvalue $\pm 1$, which contradicts the assumptions. We are now in the setting of Proposition~\ref{p:InfinitePullback} and we see that all points in $\Int(\gamma)$ are in the same Nielsen class with some period $m$.

Suppose now that there are at least two fixed points of $f$ in the same Nielsen class $C$. Consider all points of $Q$ in this class. Replacing $f$ by an iterate, we may assume that all of them are in the same Nielsen class of period 1, i.e. there exists a universal cover $F$ of $f$ such that for each point $q \in C$, some lift $\tilde{q}$ is fixed by $F$. Note that these will be the only fixed points of $F$. Note that $C$ contains at most 1 point of $P$. 

\begin{lemma}
\label{l:homotopy}
There exists a simple closed curve $\gamma$ on $(\OO,\Q)$, which is inessential in $(\OO,P)$, such that $\Int(\gamma) \cap Q = C$ and some lift $\tilde{\gamma}$ of $\gamma$ separates $F$-fixed lifts of points in $C$ from the rest of the lifts of points in $Q$.
\end{lemma}
\begin{proof} Let $\tilde{s}$ be a fixed lift of  a point $s\in C$ and assume $s \notin P$. Let $\tilde{s}'$ be any other point in $\R^2 \sm \tilde{Q}$. Take a path connecting $\tilde{s}$ and $\tilde{s}'$ in $\R^2 \sm \tilde{Q}$ and construct a $G$-equivariant homotopy $H_t(z)$ that moves $\tilde{s}$ along this path. Since $H$ is $G$-equivariant and the chosen path is disjoint from $\Pt$, the former projects to a homotopy $h$ on $(\OO,P)$. Thus we find a map $f'=h_1 \circ f\circ h_1^{-1}$, which is conjugate to $f$, such that its lift $$F'=H_1 \circ F \circ H_1^{-1}$$ fixes $\tilde{s}'$ instead of $\tilde{s}$. Continuing in this manner, we can move all fixed points of $F$ into a small round disk $D$, which contains no other lifts of points in $Q$. If $C \cap P =\emptyset$, then  $D$ projects homeomorphically to $\OO$; if $p=C\cap P$, then we can take $p$ to be in the center of $D$. In both cases, the boundary of this disk and the projection thereof satisfy the conclusion of the proposition for the modified $f$; the images of these curves by the conjugating maps will do the same for the map $f$ itself.
\end{proof}

Consider a curve $\gamma_0$ as in the lemma above. Since $\tilde{\gamma}_0$ surrounds all fixed points of $F$, so does its preimage $\tilde{\gamma_n}=F^{-n}(\tilde{\gamma}_0)$, which projects to a simple closed curve $\gamma_n$ on $(\OO,Q)$. Let $a_n$ be the intersection number of $\gamma_{n}$ and $\gamma_{n+1}$ (we may always assume that $\gamma_0$ and $\gamma_1$ have only finitely many intersections, all of which are transversal). Clearly $a_n$ is non-increasing. If $a_n=0$ for some $n$, then $\gamma_n$ and $\gamma_{n+1}$ are disjoint and have the same marked points in their interiors, hence they are homotopic and $\gamma_n$ forms a Levy cycle of length 1. Otherwise, by truncating the sequence, we may assume that $a_n=a>0$ for all $n\ge0$. In this case, $\gamma_n \cup \gamma_{n+1}$ is mapped homeomorphically to $\gamma_{n-1}\cup \gamma_n$. Let $\beta_0 \subset \gamma_{0}\cup \gamma_1$ be a simple closed curve and denote $\beta_n$ to be a unique one-to-one $f^n$-preimage of $\beta_0$ that is a subset of $\gamma_n \cup \gamma_{n+1}$.  

\begin{lemma}
\label{l:inessential}
Let $\{\beta_n\}$ be a sequence of simple closed curves in $(\OO,Q)$ such that 
$f$ sends $\beta_{n+1}$ to $\beta_n$ with degree 1. Then all $\beta_n$ are inessential in $(\OO,P)$.
\end{lemma}

\begin{proof}
 Since the degree of $f$ restricted to any $\beta_n$ is 1, the following holds: $$A^{n}(p_n,q_n)^T=\pm(p_0,q_0)^T,$$ where $\beta_n$ corresponds to $\pm(p_n,q_n)^T$ in the first homology group of $(\OO,P)$. By Lemma~\ref{l:da}, we see that $p_1=q_1=0$. 
\end{proof}

Thus, all $\beta_n$ are inessential in $(\OO,P)$. As $\beta_0$ was any simple closed curve in $\gamma_{0}\cup \gamma_1$, we infer that there exists a  connected component $M$ of $\OO \sm (\gamma_{0}\cup \gamma_1)$ that contains at least 3 points of $P$. Indeed, if there exists a component with exactly 2 points of $P$, then the boundary $\beta$ thereof is essential in $(\OO,P)$, which is a contradiction. If there are exactly 4 components, each containing a single point of $P$, one can find a simple closed curve in $\gamma_{0}\cup \gamma_1$ that has exactly 2 points in each complementary component by induction on the number of components. Indeed, it is easy to see that there always exists a pair of adjacent components such that their closures intersect at exactly 1 boundary arc; removing that arc reduces the number of components by 1. From now on we assume that $\beta_0=\partial M$.

Denote $$Q_n=\Int(\beta_n) \cap Q.$$ Since $\# Q_n$ is non-increasing, we may assume, by further truncating the sequence if necessary, that $\# Q_n$ is constant. Recall that we assumed that all points in $Q$ are either fixed or strictly pre-periodic. This implies $Q_n=Q_0$ for all $n$. We are now in the setting of Proposition~\ref{p:InfinitePullback}, which yields all points in  $Q_0$ are in the same Nielsen class.  Recall that $$\Int(\gamma_0) \cap Q =\Int(\gamma_1) \cap Q = C$$ contains at most 1 point of $P$ so M is in the complement of $\Int(\gamma_0)\cup \Int(\gamma_1)$.  We see that all marked points in the complement of $M$ are in $C$. This readily implies that all $\gamma_n$ are homotopic to $\beta$ and $\gamma_0$ forms a Levy cycle of length 1,
which concludes our proof of \propref{p:NielsenLevy}.
\end{proof}

\begin{proposition} Let $\{\gamma_n\}$ be a sequence of essential simple closed curves in $(\OO,Q)$ such that $f$ sends $\gamma_{n+1}$ to $\gamma_n$ with degree 1. Then $f$ admits a degenerate Levy cycle.
\label{p:InfiniteLevy}
\end{proposition}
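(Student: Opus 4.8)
The plan is to pass to a well-chosen iterate of $f$ in order to land exactly in the hypotheses of Proposition~\ref{p:InfinitePullback}, and then to invoke Propositions~\ref{p:InfinitePullback} and~\ref{p:NielsenLevy}.

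First I would record the elementary topological facts. By Lemma~\ref{l:inessential} every $\gamma_n$ is inessential in $(\OO,P)$, so it has a well-defined interior $\Int(\gamma_n)$, the complementary disk containing at most one point of $P$. Since $f$ restricts to a degree-$1$ covering of $\gamma_n$ by $\gamma_{n+1}$, it sends $\Int(\gamma_{n+1})$ homeomorphically onto $\Int(\gamma_n)$, as in the first paragraph of the proof of Proposition~\ref{p:InfinitePullback}. Set $Q_n=\Int(\gamma_n)\cap Q$. Since $\gamma_n$ is essential and non-peripheral in $(\OO,Q)$ we have $\#Q_n\ge 2$; and since $f$ maps $\Int(\gamma_{n+1})$ injectively onto $\Int(\gamma_n)$ while $f(Q)\subset Q$, the numbers $\#Q_n$ are non-increasing in $n$. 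Truncating the sequence I may assume $\#Q_n\equiv k\ge 2$, and then $f$ carries $Q_{n+1}$ bijectively onto $Q_n$ for all $n$.

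The crucial observation is that every $Q_n$ consists of $f$-periodic points. Iterating $f(Q_{m+1})=Q_m$ gives $Q_j=f^n(Q_{j+n})\subset f^n(Q)$ for all $j$ and all $n\ge0$. Since $Q$ is finite with $f(Q)\subset Q$, the nested sets $f^n(Q)$ stabilize to $Q_\infty$, the union of the periodic cycles of $f$ contained in $Q$; hence $Q_j\subset Q_\infty$ for all $j$. Let $M$ be the least common multiple of the lengths of those cycles, so that $f^M$ fixes $Q_\infty$ pointwise; recall that $f^M$ is again a $(2,2,2,2)$-map, with the same postcritical set $P$ and the same marked set $Q$. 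Put $\delta_j=\gamma_{jM}$. Then $f^M$ sends $\delta_{j+1}$ to $\delta_j$ with degree $1$, each $\delta_j$ is inessential in $(\OO,P)$, and $f^M$ carries $\Int(\delta_{j+1})$ homeomorphically onto $\Int(\delta_j)$, hence $Q_{(j+1)M}$ bijectively onto $Q_{jM}$. Both of these sets lie in $Q_\infty$, on which $f^M$ is the identity, so $Q_{(j+1)M}=Q_{jM}$; thus $Q':=\Int(\delta_j)\cap Q$ is independent of $j$, and $\#Q'=k\ge 2$. Now I am precisely in the situation of Proposition~\ref{p:InfinitePullback}, applied to the $(2,2,2,2)$-map $f^M$ and the sequence $\{\delta_j\}$: it produces an integer $m$ for which every point of $Q'$ is periodic under $f^M$ with period $m$ and all of $Q'$ lies in a single Nielsen class of $f^M$. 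Since $\#Q'\ge 2$, this exhibits two distinct periodic points of $Q$ in the same Nielsen class for $f^M$, hence, by Lemma~\ref{l:iterates}, in the same Nielsen class for $f$. Proposition~\ref{p:NielsenLevy} then yields a degenerate Levy cycle for $f$.

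The step I expect to need the most care is the claim that $f$ maps $\Int(\gamma_{n+1})$ homeomorphically onto $\Int(\gamma_n)$: one must rule out that $\Int(\gamma_{n+1})$ is carried homeomorphically onto the opposite disk (the one containing the remaining three points of $P$), and that no ramification of $f$ is hidden inside $\Int(\gamma_{n+1})$. This is exactly the point treated, somewhat tersely, in the proof of Proposition~\ref{p:InfinitePullback}, and it relies on the special structure of $(2,2,2,2)$-maps, namely that the critical values are the four corners and that the local degree of $f$ over a corner is $1$ or $2$. The rest is bookkeeping with the finite sets $Q_n$ together with the two cited propositions.
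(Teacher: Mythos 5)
Your proof is correct and follows essentially the same route as the paper: Lemma~\ref{l:inessential} to get inessentiality in $(\OO,P)$, a reduction making $Q'=\Int(\gamma_n)\cap Q$ constant, then Proposition~\ref{p:InfinitePullback} followed by Proposition~\ref{p:NielsenLevy}. The only difference is cosmetic: the paper achieves constancy of $Q'$ by passing to a subsequence $\{\gamma_{nk+l}\}$, while you make the underlying mechanism explicit by showing the $Q_n$ consist of periodic points and passing to the iterate $f^M$, which is a fine (and slightly more detailed) version of the same step.
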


\begin{proof} By Lemma~\ref{l:inessential} all $\gamma_n$ are inessential in $(\OO,P)$. Replacing $\{\gamma_n\}$ by a subsequence $\{\gamma_{nk+l}\}$, for some integers $k,l$, we can always assume that  $Q'=\Int(\gamma_n) \cap Q$ is the same for all $n$ (see the previous proof). Since $\gamma_n$ are essential in $(\OO,Q)$, the set $Q'$ contains at least two points. By Proposition~\ref{p:InfinitePullback}, these two points are in the same Nielsen class and Proposition~\ref{p:NielsenLevy} implies existence of a Levy cycle.
\end{proof}

\begin{corollary}
\label{c:FiniteDepth}
If $f$ admits no Levy cycle, then for every simple closed curve $\gamma$ in $(\OO,Q)$, which is inessential in $(\OO,P)$, there exists an integer $d$ such that all connected components of $f^{-d}(\gamma)$ are inessential in $(\OO,Q)$.
\end{corollary}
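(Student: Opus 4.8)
The plan is to argue by contradiction: I assume that for \emph{every} $d\ge 0$ the preimage $f^{-d}(\gamma)$ has a component that is essential (and nonperipheral) in $(\OO,Q)$, and from this I will extract a degenerate Levy cycle, contradicting the hypothesis on $f$.

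The first step is a purely local observation about pulling back a curve that is inessential in $(\OO,P)$. If a simple closed curve $\delta\subset\OO\setminus Q$ bounds a disk $D$ with at most one point of $P$, then $D$ contains at most one critical value of $f$ (critical values lie in $P\subseteq Q$), so every component $U$ of $f^{-1}(\Int(D))$ is a branched cover of a disk ramified over at most one point, hence is itself a disk; a Riemann--Hurwitz count then shows that $U$ meets $P$ in at most one point and that $\#(U\cap Q)=\sum_{q\in D\cap Q}\#\bigl(f^{-1}(q)\cap U\bigr)$, the summand being $\deg(f|_U)$ at every $q\notin P$. Since $\partial U$ is a single simple closed curve contained in $f^{-1}(\delta)$, it is exactly a component of $f^{-1}(\delta)$, it bounds the disk $U$, and $U$ is its interior (the other side carries at least three points of $P$). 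Iterating, every component of every $f^{-d}(\gamma)$ is inessential in $(\OO,P)$; moreover a component lying over a curve inessential in $(\OO,Q)$ is again inessential in $(\OO,Q)$.

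This last implication says that the components of the $f^{-d}(\gamma)$ that are essential in $(\OO,Q)$ form a subtree of the tree of all preimage components; it is finitely branching since $\deg f<\infty$, and by our standing assumption it has a vertex at every level. By König's lemma it contains an infinite branch $\gamma=\beta_0,\beta_1,\dots$ with $f(\beta_{n+1})=\beta_n$, each $\beta_n$ essential in $(\OO,Q)$ and inessential in $(\OO,P)$. Put $D_n=\Int(\beta_n)$. Since the complement of $D_n$ contains at least three of the four points of $P$ whereas $D_{n+1}$ contains at most one, $f$ cannot send $D_{n+1}$ onto the complement of $D_n$; hence, by the local observation, $f$ restricts to a proper branched cover $D_{n+1}\to D_n$ of degree $e_{n+1}:=\deg(f|_{\beta_{n+1}})$, ramified over at most the unique $P$-point of $D_n$. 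As $\beta_n$ is essential in $(\OO,Q)$, $D_n$ holds at least two points of $Q$, at most one of which is a critical value; counting preimages as above yields $\#(D_{n+1}\cap Q)\ge\#(D_n\cap Q)+1$ whenever $e_{n+1}\ge2$, and $\#(D_{n+1}\cap Q)=\#(D_n\cap Q)$ when $e_{n+1}=1$.

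Since $\#(D_n\cap Q)\le\#Q$ stays bounded, we must have $e_{n+1}=1$ for all $n$ past some $n_0$. Truncating the branch, $\{\beta_{n_0+k}\}_{k\ge0}$ is a sequence of essential simple closed curves in $(\OO,Q)$ with $f$ carrying $\beta_{n+1}$ to $\beta_n$ with degree one, so Proposition~\ref{p:InfiniteLevy} produces a degenerate Levy cycle for $f$, contradicting the hypothesis. Therefore some $f^{-d}(\gamma)$ has all its components inessential in $(\OO,Q)$ (and, by the first step, so does every larger $d$), which is the assertion. The part I expect to require the most care is the local branched-cover bookkeeping of the second and third paragraphs — that a curve inessential in $(\OO,Q)$ but not in $(\OO,P)$ has its interior mapped \emph{onto} an interior, and that this produces the monotonicity of the enclosed-$Q$-point count which forces the degrees down to one — the remainder being König's lemma together with the quoted propositions.
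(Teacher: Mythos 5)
Your overall route coincides with the paper's: both proofs build an infinite backward chain $\gamma=\beta_0,\beta_1,\dots$ of curves essential in $(\OO,Q)$ with $f(\beta_{n+1})=\beta_n$ (the paper via its ``depth'' function, you via K\"onig's lemma -- and your observation that a component lying over a curve inessential in $(\OO,Q)$ is again inessential, which legitimizes the subtree structure, is exactly the paper's remark that preimages of trivial curves are trivial), and both then invoke Proposition~\ref{p:InfiniteLevy}. The difference is the step where you force the degrees $e_{n+1}=\deg(f|_{\beta_{n+1}})$ down to $1$, and that step as written is wrong. The identity $\#(U\cap Q)=\sum_{q\in D\cap Q}\#\bigl(f^{-1}(q)\cap U\bigr)$ presupposes that preimages of marked points are marked; but $Q$ is only \emph{forward} invariant, $f(Q)\subset Q$, so a preimage of a point of $Q$ need not lie in $Q$ (cf.\ the paper's own example with $Q_g=P_g\cup\{a,b\}$, where the extra preimages of $a$ and $b$ are unmarked). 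Only the inequality $\#(U\cap Q)\le\sum_{q\in D\cap Q}\#\bigl(f^{-1}(q)\cap U\bigr)$ holds, and consequently your monotonicity claim $\#(D_{n+1}\cap Q)\ge\#(D_n\cap Q)+1$ when $e_{n+1}\ge2$ does not follow -- the number of enclosed marked points can perfectly well drop under pullback -- so the boundedness of $\#(D_n\cap Q)$ gives you nothing.

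The conclusion you want (all $e_n=1$ after some $n_0$) is nevertheless true, but for a different, more local reason, which is essentially what the paper's terse assertion ``the degree of $f$ restricted to $\gamma_n$ is $1$'' rests on. If $e_{n+1}=2$, then the unique point of $P$ in $D_n$ pulls back inside $D_{n+1}$ to a single critical point of local degree $2$; for a $(2,2,2,2)$-map the critical points are disjoint from $P$, so $D_{n+1}\cap P=\emptyset$. A disk meeting $P$ in no points contains no critical values, so every further pullback along the branch is a homeomorphism onto its image and keeps the interior disjoint from $P$. Hence a degree-$2$ step can occur at most once along the branch, and after truncating at that step the hypotheses of Proposition~\ref{p:InfiniteLevy} are met. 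With your counting paragraph replaced by this observation (the rest of your branched-cover bookkeeping -- that preimage components of the disk are disks meeting $P$ in at most one point, with boundary a single component of the preimage of the curve -- is fine), the proof is correct and matches the paper's argument.
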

\begin{proof} Define the \emph{depth} of $\gamma$ to be the largest integer $d(\gamma)$ such that $f^{-d(\gamma)}(\gamma)$ has an essential component. The goal is to prove that $d_\gamma$ is finite for all inessential in $(\OO,P)$ curves. Clearly, $$d(\alpha)=1+\max d(\alpha_i)$$ where $\alpha_i$ are the connected components of the preimage of a simple closed curve $\alpha$. Therefore, if $\gamma$ has infinite depth, so does at least one of its preimages $\gamma_1$. We construct thus an infinite sequence of essential in $(\OO,Q)$ curves $\gamma_n$ such that $f$ maps $\gamma_{n+1}$ to $\gamma_n$. Since a preimage of a trivial in $(\OO,P)$ curve is also trivial in $(\OO,P)$, truncating the sequence if necessary, we may assume that all $\gamma_n$ are either all trivial or all non-trivial in $(\OO,P)$. In both cases, the degree of $f$ restricted to $\gamma_n$ is 1 for all $n$ and the previous proposition yields existence of a Levy cycle.
\end{proof}

The above result immediately implies:
\begin{corollary}
\label{c:noobs}
If $f$ admits no Levy cycle, then every curve of every simple Thurston obstruction for $f$ is essential in $(\OO,P)$.
\end{corollary}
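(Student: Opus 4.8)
The plan is a proof by contradiction using the linear-algebraic criterion of Proposition~\ref{prop:positive}. Let $\Gamma$ be a simple Thurston obstruction for $f$ and suppose some $\gamma\in\Gamma$ is inessential in $(\OO,P)$. By Proposition~\ref{prop:positive} there is a strictly positive vector $v$ with $M_\Gamma v\ge v$, and since $M_\Gamma$ has nonnegative entries this iterates to $M_\Gamma^n v\ge v$ for every $n\ge 0$. It therefore suffices to show that the entire $\gamma$-row of $M_\Gamma^n$ vanishes once $n$ is large: then $v_\gamma\le(M_\Gamma^n v)_\gamma=0$, contradicting $v_\gamma>0$.

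To get the row-vanishing I would use the standard (and, via $f$-stability of $\Gamma$, elementary) description of iterated Thurston matrices: $(M_\Gamma^n)_{\gamma\delta}$ is a sum of positive weights $1/\deg(f^n\colon\alpha\to\delta)$ over those components $\alpha$ of $f^{-n}(\delta)$ homotopic rel $Q$ to $\gamma$, so the entry is $0$ unless $f^{-n}(\delta)$ has a component homotopic to $\gamma$. Thus I must rule that out, for every $\delta\in\Gamma$, once $n$ is large; I would split into two cases. If $\delta$ is inessential in $(\OO,P)$, then Corollary~\ref{c:FiniteDepth} supplies $d_\delta$ with every component of $f^{-d_\delta}(\delta)$ inessential in $(\OO,Q)$; since the preimage of a curve bounding a disk disjoint from $Q$ again consists of curves bounding disks disjoint from $Q$ (no critical values of $f$ lie inside such a disk, as they all lie in $P\subset Q$), this persists for all $n\ge d_\delta$, and such a curve cannot be homotopic rel $Q$ to the essential, non-peripheral curve $\gamma\in\Gamma$.

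In the remaining case, $\delta$ essential in $(\OO,P)$, I would invoke the homological dichotomy of Proposition~\ref{prop:trivialhom}: a simple closed curve on $(\OO,Q)$ is essential in $(\OO,P)$ exactly when its class in $H_1(T,\Z)$ is nonzero. Since $\fh_*$ acts invertibly on $H_1(T,\Z)\cong\Z^2$ (its determinant equals $\deg f$), every component of $f^{-1}(\delta)$, and inductively of $f^{-n}(\delta)$, again has nonzero homology class and hence is essential in $(\OO,P)$; being homotopic rel $Q$, a fortiori rel $P$, to the inessential-in-$(\OO,P)$ curve $\gamma$ is then impossible, so $(M_\Gamma^n)_{\gamma\delta}=0$ for all $n\ge 1$. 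Taking $n$ larger than every $d_\delta$ kills the whole $\gamma$-row, giving the contradiction; in fact, since Corollary~\ref{c:FiniteDepth} applies to any curve having a complementary component with at most one point of $P$, the argument shows every curve of $\Gamma$ carries exactly two points of $P$ on each side.

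The step I expect to need the most care is the essential case --- verifying that taking $f$-preimages preserves being essential in $(\OO,P)$ --- together with keeping the two notions of triviality (relative to $P$ versus relative to $Q$) straight; once the $f$-stable iteration formula for $M_\Gamma^n$ is in place, everything else is routine.
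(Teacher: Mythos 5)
Your argument is correct and is essentially the derivation the paper intends, where it is left implicit as an immediate consequence of Corollary~\ref{c:FiniteDepth}: you combine that corollary with the positive-vector criterion of Proposition~\ref{prop:positive} and the fact (via the torus lift and Proposition~\ref{prop:trivialhom}) that preimages of curves essential in $(\OO,P)$ remain essential in $(\OO,P)$. The only imprecision is the parenthetical claim that $(M_\Gamma^n)_{\gamma\delta}$ equals the full weighted sum over components of $f^{-n}(\delta)$ homotopic to $\gamma$ --- in general it is only bounded above by that sum, since chains passing through trivial intermediate curves are discarded by $M_\Gamma^n$ --- but you use only the implication that the entry vanishes when no such component exists, which is the valid direction, so the proof stands.
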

For future reference, let us summarize:

\begin{corollary}
\label{c:generalobs}
\begin{itemize}
\item
  Let $f$ be  marked $(2,2,2,2)$-map such that the corresponding matrix does not have eigenvalues $\pm 1$.
Then $f$ is equivalent to a quotient of an affine map with marked pre-periodic orbits if and only if every curve  
of every simple Thurston obstruction for $f$ has two postcritical points of $f$ in each complimentary component.
\item A marked  Thurston map $f$ with a parabolic orbifold that is not  $(2,2,2,2)$ is equivalent to a quotient of an 
affine map if and only it admits no Thurston obstruction.  
\end{itemize}  
\end{corollary}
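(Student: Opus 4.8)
The plan is to deduce Corollary~\ref{c:generalobs} directly from the preceding results in this section together with Theorem~\ref{t:4PointsCase}, Theorem~\ref{t:ExpandingCase}, Theorem~\ref{th:levy}, and the characterization of simple obstructions in Proposition~\ref{prop:positive}. I will treat the two bullet points separately.

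\begin{proof}[Proof of Corollary~\ref{c:generalobs}]
We first treat the $(2,2,2,2)$ case. By Theorem~\ref{t:4PointsCase} applied to $(f,P)$, the map $f$ is (up to equivalence) a quotient of a real affine map $L(z)=Az+b$ on $P$, and by hypothesis $A$ has no eigenvalue $\pm1$; note $\det A = \deg f > 1$. By Theorem~\ref{th:degenerateLevy} together with Theorem~\ref{t:ExpandingCase} (covering the expanding case) and the results of the present subsection (covering the hyperbolic non-expanding case), $(f,Q)$ is equivalent to a quotient of a real affine map if and only if $f$ admits no degenerate Levy cycle. So it suffices to show: \emph{$f$ has a degenerate Levy cycle if and only if some curve of some simple Thurston obstruction for $f$ has a complementary component with at most one point of $P$ (equivalently, is inessential in $(\OO,P)$).}

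For the forward direction, a degenerate Levy cycle is in particular a Levy cycle, hence a Thurston obstruction, and by the remark following the definition of simple obstructions it contains a simple one; each curve $\gamma_i$ of a Levy cycle is inessential in $(\OO,P)$ because its preimage $\gamma'_i$ has degree $1$ and is homotopic to $\gamma_{i-1}$, so as in Lemma~\ref{l:inessential} the homology argument (using that $A$ has no eigenvalue $\pm1$) forces triviality in $(\OO,P)$; one must check that the simple sub-obstruction extracted still consists of curves inessential in $(\OO,P)$, which holds because it is a \emph{subset} of $\Gamma$. For the reverse direction, suppose some simple Thurston obstruction $\Gamma$ has a curve $\gamma$ inessential in $(\OO,P)$. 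By Proposition~\ref{prop:positive}, there is a positive vector $v$ with $M_\Gamma v\ge v$; iterating, one finds for each $n$ a curve $\gamma_n\in\Gamma$ together with a nontrivial preimage chain realizing the inequality, and since $\Gamma$ is finite, by pigeonhole some $\gamma\in\Gamma$ appears infinitely often as the endpoint of a preimage chain of bounded combinatorial type — this produces an infinite backward orbit $\{\gamma_n\}$ of essential-in-$(\OO,Q)$ curves with $f(\gamma_{n+1})=\gamma_n$; after passing to a subsequence as in Corollary~\ref{c:FiniteDepth}, all these curves are inessential in $(\OO,P)$ (a preimage of an inessential-in-$(\OO,P)$ curve is inessential in $(\OO,P)$ since $A$ has no eigenvalue $\pm1$, starting from our $\gamma$), hence $f$ restricted to each has degree $1$, so Proposition~\ref{p:InfiniteLevy} yields a degenerate Levy cycle. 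Equivalently and more cleanly: if no curve of any simple obstruction were inessential in $(\OO,P)$, then by Corollary~\ref{c:noobs} and its contrapositive combined with Corollary~\ref{c:FiniteDepth} there would be no Levy cycle, hence no degenerate Levy cycle.

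For the second bullet, let $f$ have a parabolic orbifold that is not $(2,2,2,2)$; by the classification its signature is one of $(\infty,\infty)$, $(2,2,\infty)$, $(2,4,4)$, $(2,3,6)$, $(3,3,3)$, so $\#P\in\{2,3\}$ and the orbifold has a unique complex structure. By Theorem~\ref{t:4PointsCase}, $(f,P)$ is equivalent to a quotient of a complex affine map $L(z)=az+b$ with $|a|>1$ (indeed $|a|^2=\deg f\ge 2$), so both eigenvalues of the underlying real matrix have modulus $|a|>1$; by the remark following Theorem~\ref{t:ExpandingCase} this is exactly the expanding case, so Theorem~\ref{t:ExpandingCase} applies and $(f,Q)$ is equivalent to a quotient of an affine map if and only if $f$ admits no degenerate Levy cycle. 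It remains to upgrade ``no degenerate Levy cycle'' to ``no Thurston obstruction at all.'' This follows because an affine map with expanding linear part is, up to finite-index considerations, conjugate to a quotient of $z\mapsto az$; arguing as in Case~II of the proof of Theorem~\ref{t:ExpandingCase}, any Thurston obstruction produces (via the semiconjugacy $\phi_\infty$ failing to be injective, or via a genuinely essential obstructing curve) a curve $\gamma$ with $f^k(\gamma)$ homotopic to $\gamma$ and degree-one return map; then, as in that argument, $\gamma$ bounds a disk mapped homeomorphically, giving a degenerate Levy cycle. Hence no Thurston obstruction exists precisely when $f$ is equivalent to an affine quotient.

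The main obstacle in carrying this out rigorously is the reverse direction of the first bullet: extracting, from the mere existence of a simple obstruction containing a curve inessential in $(\OO,P)$, an honest infinite backward orbit of curves of degree one to feed into Proposition~\ref{p:InfiniteLevy}. The clean route is to avoid constructing the backward orbit by hand and instead argue by contradiction using Corollary~\ref{c:FiniteDepth}: if $f$ had no degenerate Levy cycle then (by Theorem~\ref{th:degenerateLevy} in this case, which is equivalent to having no Levy cycle at all for $(2,2,2,2)$-maps with non-$\pm1$ spectrum, since every Levy cycle here is automatically degenerate by the disk-homeomorphism argument of Proposition~\ref{p:NielsenLevy}) Corollary~\ref{c:noobs} forces every curve of every simple obstruction to be essential in $(\OO,P)$, contradicting our assumption. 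This reduces everything to bookkeeping already done in Corollaries~\ref{c:FiniteDepth}--\ref{c:noobs}.
\end{proof}
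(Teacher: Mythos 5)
Your first bullet is handled essentially as the paper does it: reduce via Theorem~\ref{th:geometrization1} to the presence/absence of a degenerate Levy cycle, observe that in this setting a Levy cycle automatically yields a degenerate one (Proposition~\ref{p:NielsenLevy} via Proposition~\ref{p:InfinitePullback}), and then quote Corollary~\ref{c:noobs} together with the homology argument of Lemma~\ref{l:inessential} for the other implication. Be aware, though, that your first attempt at the reverse direction does not work as written: from $M_\Gamma v\ge v$ with $v>0$ you only get that every curve of $\Gamma$ \emph{is} (homotopic to) a preimage of some curve of $\Gamma$, i.e.\ you can iterate forward inside $\Gamma$, not backward; a simple obstruction may well contain a curve whose own preimages are all trivial or outside $\Gamma$, so the ``pigeonhole'' backward orbit starting at your chosen inessential curve need not exist. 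Your closing ``cleaner route'' (contrapositive of Corollary~\ref{c:noobs}) is the correct one and is exactly the paper's one-line proof, so this bullet is fine once the first sketch is discarded.

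The second bullet has a genuine gap, and it sits precisely where the paper's actual work lies: the implication ``Thurston obstruction $\Rightarrow$ degenerate Levy cycle'' for parabolic non-$(2,2,2,2)$ maps. Your appeal to Case~II of Theorem~\ref{t:ExpandingCase} is circular there: Case~II takes as input the failure of injectivity of $\phi_\infty$ on $Q$, which is equivalent to $f$ \emph{not} being equivalent to the marked affine quotient; it does not take an obstruction as input. In the case where $\phi_\infty$ is injective (so $f$ is equivalent to the marked quotient) you must still show that no obstruction can exist, and your parenthetical ``or via a genuinely essential obstructing curve'' carries no argument: an obstructing curve whose distinguished complementary disk contains a postcritical point can a priori return with degree larger than one (the disk may contain a critical value), so the disk-homeomorphism conclusion does not come for free. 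The paper closes this exactly by a case split you never make: for signatures $(\infty,\infty)$ and $(2,2,\infty)$ the map is a topological polynomial and one invokes Levy's Theorem~\ref{th:levy} (your plan lists it but your argument never uses it — and the direct disk argument genuinely fails there, since the postcritical point inside the disk can be a critical fixed point); for the triangle signatures one passes to an iterate so that a curve of a simple obstruction has a preimage homotopic to itself, uses that periodic postcritical points cannot be critical (finiteness of the orbifold weights) to show the relevant disk contains no critical points, and concludes the return map is a homeomorphism, producing a degenerate Levy cycle. Without this step your ``only if'' direction (equivalent $\Rightarrow$ unobstructed) is unproved. A minor additional slip: $|a|^2=\deg f$ holds only for the triangle-group signatures; for $(\infty,\infty)$ and $(2,2,\infty)$ the degree is $|a|$, though the expansion $|a|>1$ you need still holds.
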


\begin{proof}
  The first statement follows immediately from the previous corollary and Theorem~\ref{th:geometrization1}. The second statement follows from Theorem~\ref{t:ExpandingCase}. Indeed, suppose that a Thurston map $f$ with a parabolic orbifold with signature other than $(2,2,2,2)$ admits a simple obstruction $\Gamma$. If the signature is $(\infty,\infty)$ or $(\infty,2,2)$, then $f$ is an obstructed  topological polynomial and therefore admits a Levy cycle (\thmref{th:levy}) which is necessarily degenerate. In other cases all points in the postcritical set of $f$ are not critical. As before, we pass to an iterate of $f$ such that all marked points are either fixed or pre-fixed (in particular, all postcritical points are fixed in this case) and set the interior  $\Int(\gamma)$ to be  the unique component of the complement to $\gamma$ which contains at most 1 postcritical point.  
  Up to passing to yet another iterate of $f$, we may assume that some $\gamma \in \Gamma$ has a preimage $\gamma'$ homotopic to $\gamma$. If $\Int(\gamma)$ contains no postcritical points, then $\Int(\gamma')$ contains no critical points. If $\Int(\gamma)$ contains a postcritical point $p$, then $p$ is the unique preimage of itself within $\Int(\gamma')$, and again $\Int(\gamma')$ contains no critical points. Therefore in both cases $\{\gamma\}$ is a degenerate Levy cycle.
\end{proof}

\begin{defn}
Denote by $\RMCG(\OO,Q)$ the \emph{relative mapping class group} of $(\OO,Q)$, which is the group of all mapping classes $\phi$ for which there exists a universal cover $\tilde{\phi}$ that is identical on $\tilde{Q}$. We now need the following generalization of Lemma~\ref{l:TrivialLift}.
\end{defn}

\begin{theorem} The group $\RMCG(\OO,Q)$ is generated by Dehn twists around trivial curves in $(\OO,P)$ and by second powers of Dehn twists around non-trivial inessential curves in $(\OO,P)$.
\label{t:rmcg}
\end{theorem}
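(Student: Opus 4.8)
The plan is to prove the two inclusions separately, after first recording how lifts behave. I would begin with the structural observation that $\RMCG(\OO,Q)\subseteq K:=\ker\bigl(\Forget\colon\PMCG(\OO,Q)\to\PMCG(\OO,P)\bigr)$: if $\phi\in\RMCG(\OO,Q)$ and $\tilde\phi$ is a lift which is the identity on $\tilde Q\supseteq\tilde P$, then $\tilde\phi$ is also a lift of $\Forget(\phi)$ which is the identity on $\tilde P$, so $\Forget(\phi)=\id$ by faithfulness of the linear representation of $\PMCG(\OO)$ used in \lemref{l:TrivialLift}. For $\phi\in K$ there is then a unique lift $\tilde\phi$ equal to the identity on $\tilde P$ (uniqueness again by \lemref{l:TrivialLift}), its canonical lift, and a short computation on $\tilde P$ shows that the push-forward $\tilde\phi_*\colon G\to G$ is the identity, i.e. $\tilde\phi$ is $G$-equivariant; hence $\tilde\phi$ carries each fiber of $\pi\colon\R^2\to\OO$ to itself equivariantly, and $\phi\in\RMCG(\OO,Q)$ if and only if $\tilde\phi$ fixes one chosen preimage $\tilde x$ of each $x\in Q\setminus P$.

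For the inclusion that the listed elements lie in $\RMCG(\OO,Q)$ I would exhibit explicit lifts. If $\gamma$ is trivial in $(\OO,P)$, a thin annular neighbourhood $A$ of $\gamma$ is disjoint from $P$ and from $Q$, so $\pi^{-1}(A)$ is a $G$-orbit of disjoint annuli on each of which $\pi$ restricts to a homeomorphism; the $G$-equivariant product of the Dehn twists around their cores is a lift of $T_\gamma$ supported away from $\tilde Q$, hence the identity there, so $T_\gamma\in\RMCG(\OO,Q)$. If $\delta$ is non-trivial and inessential in $(\OO,P)$, i.e. bounds a disk containing exactly one point $p\in P$, then each component of $\pi^{-1}(\delta)$ is a single circle that is a connected double cover of $\delta$, and a thin annulus $A$ around $\delta$ lifts to annuli each double-covering $A$. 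Writing $T_\delta$ in annulus coordinates one sees that any lift of $T_\delta$ itself restricts on such a double-covering annulus to a ``half twist'' (it is not the identity on one boundary circle), so $T_\delta\notin\RMCG(\OO,Q)$; but a lift of $T_\delta^{\,2}$ is the $G$-equivariant product of genuine Dehn twists around the lifted circles, again supported away from $\tilde Q$, so $T_\delta^{\,2}\in\RMCG(\OO,Q)$. This is exactly where squaring is forced.

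For the reverse inclusion I would induct on $n=\#(Q\setminus P)$, the case $n=0$ being \lemref{l:TrivialLift}. Write $Q=Q'\sqcup\{x\}$ with $Q'\supseteq P$ and use the Birman exact sequence $1\to\pi_1(\OO\setminus Q',x)\xrightarrow{\Push}\PMCG(\OO,Q)\xrightarrow{\Forget_x}\PMCG(\OO,Q')\to1$. Tracking canonical lifts through the point-pushing isotopy shows that for $w\in\pi_1(\OO\setminus Q',x)$ the canonical lift of $\Push(x,w)$ is the identity on $\widetilde{Q'}$ and sends $\tilde x\mapsto\theta(w)\cdot\tilde x$, where $\theta\colon\pi_1(\OO\setminus Q',x)\to\pi_1^{\mathrm{orb}}(\OO)=G$ is the holonomy homomorphism obtained by filling in the punctures of $Q'$ (the four corners becoming order-two cone points); hence $\Push(x,w)\in\RMCG(\OO,Q)$ precisely when $w\in\ker\theta$. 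Now $\theta$ sends a small loop $\ell_y$ around $y\in Q'\setminus P$ to $\id$ and a small loop $\ell_p$ around $p\in P$ to the order-two reflection $s_p\in G$, and from the standard presentation $\pi_1^{\mathrm{orb}}(\OO)=\langle s_{p_1},\dots,s_{p_4}\mid s_{p_i}^2=1,\ \prod_i s_{p_i}=1\rangle$ of the $(2,2,2,2)$-orbifold group one identifies $\ker\theta$ with the normal closure in $\pi_1(\OO\setminus Q',x)$ of $\{\ell_y:y\in Q'\setminus P\}\cup\{\ell_p^{\,2}:p\in P\}$. Since $\Push(x,\ell_y)$ is, up to the trivial twist around $y$, a Dehn twist around a curve enclosing $\{x,y\}$ (hence trivial in $(\OO,P)$), and $\Push(x,\ell_p)$ is likewise a Dehn twist around a curve enclosing $\{x,p\}$ (hence non-trivial inessential in $(\OO,P)$), and conjugation carries Dehn twists to Dehn twists enclosing sets of the same type, the kernel $\ker\bigl(\Forget_x|_{\RMCG(\OO,Q)}\bigr)\cong\ker\theta$ is generated by the required elements. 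Finally, surjectivity of $\theta$ (the $s_{p_i}$ generate $G$) lets one realize any element of $\RMCG(\OO,Q')$ as $\Forget_x$ of an element of $\RMCG(\OO,Q)$ — by choosing a representative homeomorphism fixing $x$ and then correcting by a suitable point-push — so $\Forget_x$ maps $\RMCG(\OO,Q)$ onto $\RMCG(\OO,Q')$; combining this with the inductive hypothesis, and noting that a generator for $(\OO,Q')$ of the listed type is again of the listed type for $(\OO,Q)$ and already lies in $\RMCG(\OO,Q)$ by the first inclusion, completes the induction.

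The main obstacle, I expect, is the lift bookkeeping: establishing $G$-equivariance of the canonical lift, the precise statement $\Push(x,w)\in\RMCG(\OO,Q)\iff\theta(w)=\id$, and the annulus-coordinate computation pinning down why a single Dehn twist around a corner curve fails to lift while its square does.
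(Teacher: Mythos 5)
Your argument is correct and follows essentially the same route as the paper's: induction on $\#(Q\setminus P)$ with Lemma~\ref{l:TrivialLift} as the base case, the Birman exact sequence, and identification of the point-pushes lifting to loops in $\R^2\setminus\tilde{Q}$ with the listed twists (your holonomy homomorphism $\theta$ is just the paper's passage to $\pi_1(\R^2\setminus\tilde{Q},\tilde{q})$, phrased as a normal-closure computation). Your explicit check that the proposed generators actually lie in $\RMCG(\OO,Q)$ — in particular the annulus computation showing why the square is forced — is left implicit in the paper, so it is a welcome addition rather than a deviation.
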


\begin{proof} The proof of this theorem is similar to the proof of classical results on generators of $\MCG$ (cf. \cite{primer}). We proceed by induction on the number of points in $Q$. When $Q=P$, the statement reduces to Lemma~\ref{l:TrivialLift}. 

Suppose that the statement is true for the marked set $Q\subset \OO$ and let us prove it for $Q'=Q \cup \{q\}$ where $q \notin Q$. There exists an obvious projection map $$\Forget:\PMCG(\OO,Q')\to\PMCG(\OO,Q),$$ which simply regards a self-homeomorphism of $(\OO,Q')$ as a self-homeomorphism of $(\OO,Q)$, forgetting about the existence of $q$. Take any $\phi \in \RMCG(\OO,Q')$; by inductive assumption, $\Forget(\phi) \in \RMCG(\OO,Q)$ can be represented in $\PMCG(\OO,Q)$ as  a product $\prod  T^{n_i}_{\gamma_i}$ of Dehn twists around trivial curves in $(\OO,P)$ and second powers of Dehn twists around non-trivial inessential curves in $(\OO,P)$. 
We may assume that every $\gamma_i$ does not pass through the point $q$; otherwise we replace $\gamma_i$ by a curve $\gamma_i'$, which is homotopic to $\gamma_i$ relative $Q$, that does not pass through $q$ (note that in this case the homotopy class of $\gamma'_i$ in $(\OO, Q)$  is not uniquely defined). Then $$\Forget(T_{\gamma_i})=T_{\gamma_i}$$ where $T_{\gamma_i}$ is viewed as an element of both  $\PMCG(\OO,Q')$ and  $\PMCG(\OO,Q)$. Thus $$\psi = \phi \circ \left( \prod  T^{n_i}_{\gamma_i} \right)^{-1}$$ is a well defined element of $\PMCG(\OO,Q')$ such that 
$\Forget(\psi)=\id$. It is, hence, sufficient to show that every $\psi \in \RMCG(\OO,Q')$ such that $\Forget(\psi)=\id$ is generated by (squares of) Dehn twists.

Recall the Birman exact sequence (cf. \cite{primer}):
$$  1 \longrightarrow \pi_1(\OO\sm Q,q)  \stackrel{\Push}{\longrightarrow} \PMCG(\OO,Q') \stackrel{\Forget}{\longrightarrow} \PMCG(\OO,Q) \longrightarrow 1,$$
where   $\Push$ is the map that sends a loop based at $q$ to a homeomorphism, which can be obtained at the end of a homotopy relative $Q$ that  pushes the point $q$ along this loop. Since $\psi$ lies in the kernel of $\Forget$, we infer $\psi=\Push(\gamma)$ for some loop $\gamma \in \pi_1(\OO\sm Q,q)$. Since $\psi$ is also an element of $\RMCG(\OO,Q')$, it has a universal cover $\tilde{\psi}$ which is identical on the fiber of $q$. Pick a point $\tilde{q}$ in this fiber; in particular, $\tilde{q}$ is fixed by $\tilde{\psi}$. It is clear that the lift $\tilde{\gamma}$ of $\gamma$ starting at $\tilde{q}$ ends at $\tilde{\psi}(\tilde{q})=\tilde{q}$, i.e. $\tilde{\gamma}$ is a loop based at $\tilde{q}$.  On the other hand, each loop $\tilde{\gamma}$ in $\R^2 \sm \tilde{Q}$ based at $\tilde{q}$ produces a unique homeomorphism $\Push'(\tilde{\gamma})=\Push(\gamma)$ where $\gamma$ is the projection of $\tilde{\gamma}$. We see that $\Push'$ is an isomorphism between $\RMCG(\OO,Q') \cup \ker(\Forget)$ and $\pi_1(\R^2 \sm \tilde{Q}, \tilde{q})$, where the latter is generated by simple loops around a single point in $\tilde{Q}$.

Applying the same approach as in the proof of Lemma~\ref{l:homotopy}, one proves that for every point $a$ in $\Pt$ there exists a simple loop $\tilde{\alpha}$ based at $\tilde{q}$, such that the bounded component of the complement of the loop contains $a$ and no other points from $\tilde{Q}$, which projects two-to-one to a simple loop $\alpha$ based at $q$ in $\OO$. Then $\alpha$ is inessential in $(\OO,P)$ and $$\Push'(\tilde{\alpha})=\Push(\alpha^2)=T_{\alpha}^2.$$ Similarly, for every point $b$ in $\tilde{Q} \sm \Pt$ there exists a simple loop $\tilde{\beta}$ based at $\tilde{q}$, such that the bounded component of the complement of the loop contains $b$ and no other points from $\tilde{Q}$, which projects one-to-one to a simple loop $\beta$ based at ${q}$ in $\OO$. Then $\beta$ is trivial in $(\OO,P)$ and $$\Push'(\tilde{\beta})=\Push(\beta)=T_{\beta}.$$ As $\pi_1(\R^2 \sm \tilde{Q}, \tilde{q})$ is generated by all possible curves $\alpha$ and $\beta$, the statement of the theorem follows.
\end{proof}

\begin{definition} Denote by $\Lift(\phi)$ the virtual endomorphism of $\PMCG(\OO,Q)$ that acts by lifting by $f$, i.e. we write $\Lift(\phi)=\psi$ whenever there exists $\psi \in \PMCG(\OO,Q)$ such that $\phi \circ f = f \circ \psi$.
\end{definition}

\begin{proposition} $\Lift(\phi)\colon \RMCG(\OO,Q) \to \RMCG(\OO,Q)$ is a well-defined endomorphism. If $f$ admits no Levy cycles, then for every $\phi \in \RMCG(\OO,Q)$, there exist an $n$ such that $\Lift^n(\phi)=\id$.
\label{p:nilpotent}
\end{proposition}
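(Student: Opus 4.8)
The plan is to deduce the first assertion from the $\R^2$-lift description of $\RMCG(\OO,Q)$, and then obtain the nilpotence by reducing $\phi$ to the generating Dehn twists of \thmref{t:rmcg} and invoking the finite-depth estimate \corref{c:FiniteDepth}.

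\noindent\textbf{Well-definedness.} First I would record a structural refinement: if $\phi\in\RMCG(\OO,Q)$ and $\tilde\phi\colon\R^2\to\R^2$ is a lift of $\phi$ which is the identity on $\tilde Q$, then $\tilde\phi$ commutes with every element of $G$. Indeed $\tilde\phi$ normalizes $G$ (it descends to $\OO=\R^2/G$) and fixes $\Pt\subseteq\tilde Q$ pointwise, so for $w\in\Pt$ the element $\tilde\phi S_w\tilde\phi^{-1}\in G$ is an orientation‑preserving involution fixing $w$, and the only such involution in $G$ is $S_w$; hence $\tilde\phi S_w\tilde\phi^{-1}=S_w$. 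Since the symmetries $S_w$ $(w\in\Pt)$ generate $G$ — as $S_wS_{w'}$ is translation by $2(w-w')$ — this forces $\tilde\phi g=g\tilde\phi$ for all $g\in G$, and this $G$‑commuting lift is unique because $G$ is centerless. Now fix a lift $F$ of $f$ to $\R^2$. As $f$ is an orbifold covering of $\O_f=\OO$, it lifts to an unramified self‑covering $\hat f$ of the torus $T$, so $F$ is a homeomorphism of $\R^2$ with $Fg=(F_*g)F$ for an endomorphism $F_*$ of $G$. Put $\tilde\psi:=F^{-1}\tilde\phi F$; using that $\tilde\phi$ commutes with $G$ one checks $\tilde\psi g=g\tilde\psi$ for all $g\in G$, so $\tilde\psi$ descends to a homeomorphism $\psi$ of $\OO$. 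Moreover $F(\tilde Q)\subseteq\tilde Q$ (because $f(Q)\subseteq Q$) and $\tilde\phi|_{\tilde Q}=\id$, so $\tilde\psi|_{\tilde Q}=\id$; hence $\psi$ fixes $Q$ and $\psi\in\RMCG(\OO,Q)$. Projecting $F\tilde\psi=\tilde\phi F$ gives $f\psi=\phi f$, so $\Lift(\phi)=\psi$ is defined and lies in $\RMCG(\OO,Q)$; single‑valuedness of $\Lift$ (and hence that it is a homomorphism) is then routine. In particular $\Lift^{n}$ is defined on $\RMCG(\OO,Q)$ for every $n$.

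\noindent\textbf{Nilpotence.} By \thmref{t:rmcg} write $\phi=g_1\cdots g_k$, where each $g_i=T_{\gamma_i}^{e_i}$ with $e_i\in\{1,2\}$ and $\gamma_i$ a simple closed curve trivial in $(\OO,P)$, so that $\Int(\gamma_i)$ contains at most one point of $P$. For any power $T_\gamma^m$ lying in the domain of $\Lift$, a direct annulus‑by‑annulus computation in a thin annular neighbourhood of $\gamma$ and of its $f$‑preimages (all of which avoid the critical values, since these lie in $P$) gives
\[
\Lift\!\left(T_\gamma^m\right)=\prod_{\gamma'}T_{\gamma'}^{\,m/\deg(f|_{\gamma'})},
\]
the product over the components $\gamma'$ of $f^{-1}(\gamma)$; iterating and using multiplicativity,
\[
\Lift^{n}\!\left(T_\gamma^m\right)=\prod_{\gamma^{(n)}}T_{\gamma^{(n)}}^{\,m/\deg(f^{\circ n}|_{\gamma^{(n)}})}
\]
over the components $\gamma^{(n)}$ of $f^{-n}(\gamma)$, every division being automatically integral because the left‑hand sides lie in $\RMCG(\OO,Q)$ by the first part. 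Now invoke \corref{c:FiniteDepth}: since $f$ has no Levy cycle and $\gamma_i$ is trivial in $(\OO,P)$, there is $d_i$ such that every component of $f^{-d_i}(\gamma_i)$ is inessential in $(\OO,Q)$; as a Dehn twist about a curve bounding a disk (or a once‑punctured disk) is isotopic to the identity, with $n=d_i$ every factor in the last display is a power of $\id$, so $\Lift^{d_i}(g_i)=\id$. Finally set $d=\max_i d_i$; since $\Lift$ is a homomorphism,
\[
\Lift^{d}(\phi)=\prod_i\Lift^{d}(g_i)=\prod_i\Lift^{\,d-d_i}\!\left(\Lift^{d_i}(g_i)\right)=\prod_i\Lift^{\,d-d_i}(\id)=\id,
\]
which is the assertion with $n=d$.

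\noindent I expect the main obstacle to be the first part — verifying carefully that $\Lift$ is genuinely total and single‑valued on $\RMCG(\OO,Q)$, i.e. that the $G$‑equivariance bookkeeping goes through and that the remaining freedom in the choice of $F$ (and of $\tilde\psi$) does not change the resulting mapping class — together with the (elementary but fussy) verification that the pull‑back formula for powers of Dehn twists is valid precisely on the automatically sufficiently divisible elements of $\RMCG(\OO,Q)$. Once this structural input is in place, the remainder is the short bookkeeping reduction to \corref{c:FiniteDepth}.
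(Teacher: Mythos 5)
Your argument is correct in substance, and its core coincides with the paper's: the nilpotence is obtained exactly as in the paper's proof, by reducing to the generating set of Theorem~\ref{t:rmcg}, lifting twists via the standard formula $T_\gamma\circ f = f\circ\prod T_{\gamma'}^{1/\deg(f|_{\gamma'})}$ (suitably powered), and killing everything after finitely many pullbacks by Corollary~\ref{c:FiniteDepth}, since twists about curves inessential in $(\OO,Q)$ are trivial. Where you genuinely differ is the well-definedness: the paper only verifies it generator by generator, checking explicitly that the lift of $T_\alpha$ (resp.\ $T_\beta^2$) is again a product of $\RMCG$-generators, whereas you give a direct global argument — any $\phi\in\RMCG(\OO,Q)$ has a lift $\tilde\phi$ fixing $\tilde Q$ which must commute with $G$ (conjugation preserves $G$, and the only involution in $G$ fixing $w\in\Pt$ is $S_w$, the symmetries generating $G$), so $F^{-1}\tilde\phi F$ descends and exhibits $\Lift(\phi)\in\RMCG(\OO,Q)$ for every $\phi$ at once. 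That is a clean and slightly more general route, and it makes totality of $\Lift$ on $\RMCG$ transparent rather than a by-product of the generator computation. Two small caveats. First, your justification that the exponents $m/\deg(f^{\circ n}|_{\gamma^{(n)}})$ are ``automatically integral because the left-hand side lies in $\RMCG$'' is backwards: the twist-lifting formula is only valid when those ratios are integers, so integrality must be checked directly, not inferred from existence of the lift. It is, however, immediate for the generators, which is precisely what the paper does: for $\gamma$ trivial in $(\OO,P)$ the bounded disk contains no critical values, so every component of $f^{-n}(\gamma)$ maps with degree $1$; for $\beta$ encircling one point of $P$ the orbifold covering relation forces $\deg(f^{\circ n})$ on each preimage component to be $1$ or $2$, so $2/\deg$ is an integer. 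Second, both you and the paper use that $\Lift$ is single-valued (hence a homomorphism) on liftable classes without proof; since you match the paper's level of detail there, this is not a gap relative to the source, but it is the one point you dismiss as ``routine'' that actually deserves a sentence if written out in full.
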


\begin{proof} It is enough to prove the statement for a generating set of $\RMCG(\OO,Q)$. By Theorem~\ref{t:rmcg} we only need to consider two cases.

\textbf{Case I.} Suppose $\phi = T_\alpha$ where $\alpha$ is a simple closed curve in $(\OO,Q)$, which is trivial in $(\OO,P)$. All connected components $\alpha_i$ of  $f^{-1}(\alpha)$ are pairwise disjoint simple closed curves that are trivial in $(\OO,P)$ and are mapped by $f$ to $\alpha$ with degree 1. It is straightforward to see that $$T_\alpha \circ f = f \circ \prod T_{\alpha_i}.$$ Thus $$\Lift(T_\alpha)=\prod T_{\alpha_i} \in \RMCG(\OO,Q)$$ is well-defined. Similarly, denote by $\alpha_i^n$ all connected components of $f^{-n}(\alpha)$; then $$T_\alpha \circ f^n = f^n \circ \prod T_{\alpha^n_i}\text{ and }\Lift^n(T_\alpha)=\prod T_{\alpha^n_i}.$$ By Corollary~\ref{c:FiniteDepth}, there exists an integer $n$ such that all $\alpha_i^n$ are inessential in $(\OO,Q)$, implying $$\Lift^n(T_\alpha)=\prod T_{\alpha^n_i}=\id.$$

\textbf{Case II.} Suppose $\phi=T_\beta^2$ where $\beta$ is a simple closed curve in $(\OO,Q)$, which is non-trivial and inessential in $(\OO,P)$. The interior of $\beta$ contains a unique critical value $p$ of $f$. All connected components $\beta_i$ of $f^{-1}(\beta)$ are pairwise disjoint simple closed curves that are inessential in $(\OO,P)$. Each $\Int(\beta_i)$ contains a unique $f$-preimage $p_i$ of $p$. If $p_i \in P$, then it is not a critical point of $f$ and $\beta_i$ is  mapped by $f$ to $\beta$ with degree 1. If $p_i \notin P$, then it is a critical point and $\beta_i$ is trivial in $(\OO,P)$ and is mapped by $f$ to $\beta$ with degree 2. As in the Case I, we see that  $$T_\beta^2 \circ f = f \circ (\prod_{p_i\in P} T_{\beta_i}^2 \circ \prod_{p_i \notin P} T_{\beta_i})$$ and $$\Lift(T_\beta^2)= \prod_{p_i\in P} T_{\beta_i}^2 \circ \prod_{p_i \notin P} T_{\beta_i} \in \RMCG(\OO,Q)$$ is well-defined. We also see that $$\Lift^n(T_\beta^2)= \prod_{p^n_i\in P} T_{\beta^n_i}^2 \circ \prod_{p^n_i \notin P} T_{\beta^n_i}$$ where $\beta^n_i$ are the connected components of $f^{-n}(\beta)$ and $p^n_i$ denote the corresponding $f^n$-preimages of $p$. By Corollary~\ref{c:FiniteDepth}, there exists an integer $n$ such that all $\beta_i^n$ are inessential in $(\OO,Q)$, implying $$\Lift^n(T_\beta^2)= \prod_{p^n_i\in P} T_{\beta^n_i}^2 \circ \prod_{p^n_i \notin P} T_{\beta^n_i}=\id.$$
\end{proof}

\begin{lemma} If $\psi =\Lift(\phi)$ for some $\phi \in \PMCG(\OO,Q)$, then  $f \circ \phi$ is Thurston equivalent to $f \circ \psi$.
\end{lemma}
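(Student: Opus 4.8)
The plan is to exhibit the required pair of homeomorphisms directly from the defining relation of $\Lift$. Recall that $\psi=\Lift(\phi)$ means precisely that $\phi\circ f=f\circ\psi$ (for suitable representatives; if the identity holds only up to isotopy rel $Q$ one argues below exactly as in the proof that a Thurston map is rational iff $\sigma_f$ has a fixed point, absorbing the ambient isotopy into $\phi_0$). First I would record that $(f\circ\phi,Q)$ and $(f\circ\psi,Q)$ are again marked Thurston maps: since $\phi,\psi\in\PMCG(\OO,Q)$ fix $Q$ pointwise, the set $Q$ is forward invariant under both compositions, and because a homeomorphism introduces no new ramification we have $\Omega_{f\circ\phi}=\phi^{-1}(\Omega_f)$, so $(f\circ\phi)(\Omega_{f\circ\phi})=f(\Omega_f)\subset P_f\subset Q$ and the postcritical set stays in $Q$ thereafter; likewise for $f\circ\psi$.

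Then I would simply take $\phi_0=\phi_1=\phi$ in the definition of Thurston equivalence. These two homeomorphisms are equal, hence coincide on $Q$, send $Q$ to $Q$ (indeed fix it pointwise), and are isotopic rel $Q$, so condition (1) is immediate. For condition (2), one computes
$$\phi_0\circ(f\circ\phi)=\phi\circ f\circ\phi=(f\circ\psi)\circ\phi=(f\circ\psi)\circ\phi_1,$$
the middle equality being the relation $\phi\circ f=f\circ\psi$ post-composed with $\phi$ on the right. Thus the square whose horizontal arrows are $\phi$, whose left vertical arrow is $f\circ\phi$ and whose right vertical arrow is $f\circ\psi$ commutes (up to isotopy rel $Q$ if the lift relation does), which is exactly a Thurston equivalence between $(f\circ\phi,Q)$ and $(f\circ\psi,Q)$.

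There is essentially no serious obstacle; the only points requiring care are the level at which the identity $\phi\circ f=f\circ\psi$ is read, and the (routine) remark that the conclusion does not depend on the chosen representatives of $\phi$ and $\psi$, since changing representatives alters $f\circ\phi$ and $f\circ\psi$ only by pre- and post-composition with homeomorphisms isotopic to the identity rel $Q$. The content of the lemma is precisely this bookkeeping identity: combined with \propref{p:nilpotent} it lets one iterate, so that when $f$ has no Levy cycles $f\circ\phi$ is Thurston equivalent to $f$ for every $\phi\in\RMCG(\OO,Q)$.
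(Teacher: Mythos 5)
Your proof is correct and is essentially the paper's argument: the paper records the same fact as the one-line conjugation identity $f\circ\psi=\phi\circ f=\phi\circ(f\circ\phi)\circ\phi^{-1}$, which is exactly your commuting square with $\phi_0=\phi_1=\phi$. The extra remarks about representatives and the invariance of $Q$ are harmless bookkeeping that the paper leaves implicit.
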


\begin{proof}$f \circ \psi = \phi \circ f = \phi \circ (f \circ \phi) \circ \phi^{-1}.$
\end{proof}

We arrive at the following statement.

\begin{theorem}
If $f$ admits no Levy cycle and $\phi \in \RMCG(\OO,Q)$ then $f \circ \phi$ is Thurston equivalent to $f$.
\label{t:allEquivalent}
\end{theorem}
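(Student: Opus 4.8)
The plan is to bootstrap from Proposition~\ref{p:nilpotent} together with the preceding lemma. We are given $\phi \in \RMCG(\OO,Q)$, and we want to show $f \circ \phi$ is Thurston equivalent to $f$ (the case $\phi = \id$). By Proposition~\ref{p:nilpotent}, since $f$ admits no Levy cycle, the iterated lift $\Lift^n(\phi)$ is eventually trivial: there is an $n$ with $\Lift^n(\phi) = \id$. The idea is to chain the preceding lemma down this sequence. Write $\phi_0 = \phi$ and $\phi_{k+1} = \Lift(\phi_k)$; this is well-defined and stays inside $\RMCG(\OO,Q)$ by Proposition~\ref{p:nilpotent}. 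Then the lemma gives that $f \circ \phi_k$ is Thurston equivalent to $f \circ \phi_{k+1}$ for each $k \ge 0$. Since Thurston equivalence is transitive, $f \circ \phi = f \circ \phi_0$ is equivalent to $f \circ \phi_n = f \circ \id = f$, which is exactly the claim.

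Concretely, I would carry it out as follows. First, invoke Proposition~\ref{p:nilpotent} to fix $n$ with $\Lift^n(\phi) = \id$, noting also that each intermediate $\phi_k := \Lift^k(\phi)$ lies in $\RMCG(\OO,Q)$, so that $\Lift$ is legitimately defined at each step. Second, apply the preceding lemma with the pair $(\phi_k, \phi_{k+1})$, using $\psi = \Lift(\phi_k) = \phi_{k+1}$, to obtain Thurston equivalences $f \circ \phi_k \sim f \circ \phi_{k+1}$ for $k = 0, 1, \dots, n-1$. Third, compose these $n$ equivalences: $f \circ \phi = f \circ \phi_0 \sim f \circ \phi_1 \sim \dots \sim f \circ \phi_n = f$. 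It may be worth spelling out the explicit conjugating homeomorphisms — from the lemma's proof, $f \circ \phi_k = \phi_k \circ (f \circ \phi_k) \circ \phi_k^{-1}$ exhibits the equivalence via $\phi_k$ — so that the composite equivalence is realized by the product $\phi_{n-1} \circ \cdots \circ \phi_0$ (or its inverse, depending on orientation conventions), but for the statement as phrased only the existence of the chain is needed.

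I do not expect a serious obstacle here; this theorem is essentially a formal corollary assembling Proposition~\ref{p:nilpotent} and the lemma just above it. The one point requiring a small amount of care is the bookkeeping: one must be sure that $\Lift$ can be iterated, i.e. that each $\phi_k$ genuinely lands in $\RMCG(\OO,Q)$ (not merely in $\PMCG(\OO,Q)$) so that the lemma applies at every stage — but this is precisely the content of the first sentence of Proposition~\ref{p:nilpotent}, that $\Lift$ restricts to an endomorphism of $\RMCG(\OO,Q)$. A secondary minor point is verifying that Thurston equivalence is an equivalence relation so the chain of equivalences composes, which is immediate from the definition. With these observations in place the proof is just the two-line induction described above.
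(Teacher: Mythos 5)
Your proof is correct and is essentially identical to the paper's own one-line argument: apply Proposition~\ref{p:nilpotent} to get $\Lift^n(\phi)=\id$ and chain the preceding lemma transitively to conclude $f\circ\phi\sim f\circ\Lift^n(\phi)=f$. (The only quibble is the parenthetical identity, which should read $f\circ\phi_{k+1}=\phi_k\circ(f\circ\phi_k)\circ\phi_k^{-1}$ rather than $f\circ\phi_k=\phi_k\circ(f\circ\phi_k)\circ\phi_k^{-1}$, but this side remark is not needed for the argument.)
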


\begin{proof}
By proposition~\ref{p:nilpotent} and the previous lemma, there exists $n$ such that $f \circ \phi$ is equivalent to $f \circ \Lift^n(\phi) = f \circ \id = f$.
\end{proof}

We can now prove the first part of the statement of \thmref{th:degenerateLevy}.

\begin{theorem}  
\label{th:geometrization1}
 Let $(f,Q)$ be a Thurston $(2,2,2,2)$-map with postcritical set $P$ and marked set $Q\supset P$, such that $(f,P)$ is equivalent to a quotient $l$ of a real affine map $L(z)=Az+b$ by the orbifold group where both eigenvalues of $A$ are not equal to $\pm1$.  Then $(f,Q)$ is equivalent to a quotient of a real affine map by the action of the orbifold group if and only if $f$ admits no degenerate Levy cycle.
\label{t:hyperbolicCase}
\end{theorem}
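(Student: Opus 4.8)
The plan is to derive \thmref{th:geometrization1} from the apparatus built above: \thmref{t:4PointsCase} geometrizes the underlying $(\OO,P)$--map, \thmref{t:allEquivalent} absorbs the discrepancy created by the extra marked points, and \propref{p:NielsenLevy} handles the converse. A preliminary reduction: in the present situation ``$f$ has no degenerate Levy cycle'' is equivalent to ``$f$ has no Levy cycle at all''. One implication is trivial; for the other, a Levy cycle $\{\gamma_0,\dots,\gamma_{m-1}\}$ unwinds, by following its cyclic preimage data, into an infinite sequence of simple closed curves essential in $(\OO,Q)$ each carried onto the next by $f$ with degree $1$, and \propref{p:InfiniteLevy} produces a degenerate Levy cycle. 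So from now on the two hypotheses are used interchangeably, which also licenses the use of \corref{c:FiniteDepth}, \propref{p:nilpotent}, and \thmref{t:allEquivalent}.

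\emph{The ``only if'' direction.} Suppose $(f,Q)$ is Thurston equivalent to a quotient $l=L/G$ of a real affine map. Since lying in a common Nielsen class is a Thurston--equivalence invariant, it suffices to show $l$ has no two distinct $Q$--periodic points in one Nielsen class; \propref{p:NielsenLevy} then says $l$, hence $f$, has no degenerate Levy cycle. Now every universal cover of $l^{\,n}$ has the form $z\mapsto g\cdot L^{n}(z)$ with $g\in G$, i.e.\ an affine or anti-affine self-map of $\R^{2}$ with linear part $\pm A^{n}$. The matrix $A$ is an integer matrix with $\det A=\deg f\ge 2$ (its conjugacy class, hence spectrum, being determined by $f$ via the uniqueness clause of \thmref{t:4PointsCase}) and has no eigenvalue $\pm1$; an elementary argument — the two eigenvalues multiply to $\det A$, so cannot both have modulus $1$, while a real eigenvalue of modulus $1$ is $\pm1$ — shows $A$ has no eigenvalue on the unit circle, whence $A^{n}\mp I$ is invertible for all $n\ge1$. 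Thus $z\mapsto g\cdot L^{n}(z)$ has a unique fixed point in $\R^{2}$, so two distinct points of $Q$ cannot have lifts simultaneously fixed by one universal cover of $l^{\,n}$, and by \lemref{l:NielsenIndex} they are not Nielsen equivalent.

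\emph{The ``if'' direction.} Assume $f$ has no Levy cycle. By \thmref{t:4PointsCase} applied to $(f,P)$ there is an affine quotient $l=L/G$ with $(f,P)\sim l$, realized by homeomorphisms $\phi_0\simeq\phi_1$ (rel $P$). Using this equivalence I first reduce, up to conjugacy and Thurston equivalence, to a marked map of the form $(l\circ\phi,\,Q_l)$, where $Q_l\supset P_l=P_{l\circ\phi}$ is a finite $l$--preperiodic marked set with $|Q_l|=|Q|$ and $\phi\in\PMCG(\OO,Q_l)$ is trivial relative to $P_l$, i.e.\ $\phi\in\ker\bigl(\Forget\colon\PMCG(\OO,Q_l)\to\PMCG(\OO,P_l)\bigr)$. (The discrepancy between $(f,Q)$ and a naive candidate geometrization lives exactly in this kernel, because the equivalence $(f,P)\sim l$ is only rel $P$; conjugating $f$ by $\phi_0$ and absorbing an isotopy supported away from $P_l$ into the choice of $Q_l$ puts $f$ in this form — a routine bookkeeping step.) The heart of the matter is to show that, modulo Thurston equivalence of $l\circ\phi$, one may take $\phi\in\RMCG(\OO,Q_l)$. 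Here one runs the lift operator: since $\phi$ is trivial rel $P_l$ it lifts through $l$, so $l\circ\Lift(\phi)$ is conjugate, hence Thurston equivalent, to $l\circ\phi$; iterating and using that $f$ has no Levy cycle — so that by \corref{c:FiniteDepth} iterated preimages of curves inessential in $(\OO,P)$ eventually become inessential in $(\OO,Q)$, which kills the ``non--$\RMCG$ part'' of $\phi$, namely its odd twist parities (by \thmref{t:rmcg} the only obstruction to membership in $\RMCG$) — brings $\phi$ into $\RMCG(\OO,Q_l)$; this is the analogue for $\ker(\Forget)$ of the nilpotence statement \propref{p:nilpotent}. Once $\phi\in\RMCG(\OO,Q_l)$, \thmref{t:allEquivalent} applied to $l\circ\phi$ (which has no Levy cycle, being equivalent to $f$) gives $(l\circ\phi)\circ\phi^{-1}=l$ Thurston equivalent to $l\circ\phi$; therefore $(f,Q)\sim(l,Q_l)$, a quotient of a real affine map. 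The uniqueness up to conjugacy (needed for the full \thmref{th:degenerateLevy}) follows by restricting to the marked set $P$ — the uniqueness clause of \thmref{t:4PointsCase} pins the linear part up to conjugacy — and then tracking the equivalence on the finitely many marked points, as in \thmref{t:4PointsCase} and \thmref{t:ExpandingCase}.

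The main obstacle is the ``heart of the matter'' in the ``if'' direction: promoting the point-pushing discrepancy $\phi\in\ker(\Forget)$ into $\RMCG(\OO,Q_l)$ up to Thurston equivalence. This is precisely where the absence of a Levy cycle must be used beyond the already-packaged lemmas, and, unlike \thmref{t:allEquivalent}, it requires controlling arbitrary point-pushing mapping classes — not just the generators of $\RMCG$ — under iterated lifting, tying their odd twist parities to the curves of a would-be Levy cycle; I expect this to be the most delicate step of the proof.
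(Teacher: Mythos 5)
Your ``only if'' direction is essentially the paper's: equivalence invariance of Nielsen classes plus invertibility of $A^n\pm I$ (via \lemref{l:NielsenIndex} and \propref{p:NielsenLevy}) is exactly how the paper argues necessity, and your preliminary reduction ``no degenerate Levy cycle $\Leftrightarrow$ no Levy cycle'' via \propref{p:InfiniteLevy} is consistent with how the paper uses these hypotheses. The problem is the ``if'' direction, where your argument has a genuine gap at the step you dismiss as ``routine bookkeeping.'' Reducing $(f,Q)$ to a pair $(l\circ\phi,\,Q_l)$ in which $Q_l$ is an \emph{$l$-preperiodic} set of the same cardinality, with $\phi\in\ker(\Forget)$ a mapping class of $(\OO,Q_l)$, already presupposes the main content of the theorem. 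Conjugating by $\phi_0$ only yields $f'=l\circ\psi$ with $\psi$ isotopic to the identity rel $P$ and marked set $Q'=\phi_0(Q)$ forward-invariant under $f'$, not under $l$; the extra marked points have, a priori, nothing to do with preperiodic points of $l$, and $\psi$ need not even preserve $Q'$. Deciding \emph{where} each periodic marked point must be sent --- namely to the projection of the unique fixed point of $g^{-1}\circ L^{n}$, where $g$ is its Nielsen index --- and showing that this assignment is injective is precisely where the no-degenerate-Levy-cycle hypothesis enters (via \propref{p:NielsenLevy} and \lemref{l:NielsenIndex}); your sufficiency argument never constructs these target points and never invokes Nielsen classes at all, using the hypothesis only through \corref{c:FiniteDepth}. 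The paper's proof consists exactly of this construction: $G$-equivariant homotopies push the lifts of the periodic (then preperiodic) marked points onto those fixed points, the absence of common Nielsen classes guaranteeing that distinct fibers are never merged, yielding a conjugate map $F_3$ that agrees with $L$ on the whole of $\tilde Q$.

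The step you flag as the ``heart of the matter'' --- promoting an arbitrary $\phi\in\ker(\Forget)$ to $\RMCG(\OO,Q_l)$ up to Thurston equivalence by iterated lifting --- is both unproven in your sketch and doubtful as stated. $\Lift$ is only a virtual endomorphism: an arbitrary point-pushing class need not admit a lift through $l$ at all, and whether its iterated lifts stabilize is governed by the same Nielsen-index data you have set aside (this is exactly where a degenerate Levy cycle would obstruct), so this step cannot be decoupled from the Nielsen analysis; nothing in \thmref{t:rmcg} or \propref{p:nilpotent} (which concerns only elements already in $\RMCG$) supports the claim that ``odd twist parities'' are the only obstruction or that lifting kills them. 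The paper avoids ever needing such a statement: after its geometric normalization the discrepancy $\theta=$ quotient of $L^{-1}\circ F_3$ has a lift that is literally the identity on $\tilde Q$, so $\theta\in\RMCG(\OO,Q)$ by construction, and only then does \thmref{t:allEquivalent} finish the proof. As written, your sufficiency argument therefore assumes the key conclusion in its first reduction and leaves its central mechanism unestablished.
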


\begin{proof}

\textbf{Necessity.} Suppose a quotient  $(l,Q)$ of a real affine map $L(z)=Az+b$ by the orbifold group $G$ admits a degenerate Levy cycle. By Proposition~\ref{p:NielsenLevy}, there exist two distinct points $q_1, q_2 \in Q$ in the same Nielsen class and Lemma~\ref{l:NielsenIndex} implies that there exist points $\tilde{q}_1,\tilde{q}_2$ in the fibers of $q_1,q_2$ respectively such that $$\text{ind}_{L,n}(\tilde{q}_1)=\text{ind}_{L,n}(\tilde{q}_2)=g\in G\text{, i.e. }L^n(\tilde{q}_i)=g(\tilde{q}_i)\text{ for }i=1,2.$$ Since $$L^n(z)=A^n z+b'\text{ and }g(z)=c \pm z$$ for some integer vectors $b'$ and $c$, the equation $$L^n(\tilde{q}_i)=g(\tilde{q}_i)$$ is equivalent to $$(A^n \pm I)z=c-b'$$ where $I$ denotes the identity matrix. By assumption, the eigenvalues of $A$ are not equal to $\pm 1$, hence the matrix $(A^n \pm I)$ is non-degenerate. This yields $\tilde{q}_1=\tilde{q}_2$, which is a contradiction.

\textbf{Sufficiency.} Suppose $f$ admits no Levy cycles and, hence, no two distinct points of $Q$ are in the same Nielsen class by Proposition~\ref{p:NielsenLevy}. Consider a universal cover $F$ of $f$; by Lemma~\ref{l:AffineLift} $$F(z)=L(z)=Az+b\text{ for all }z \in \Pt.$$  Pick a point $\tilde{q}$ in the fiber of a periodic point $q \in Q$ of period $n$. Let $s$ be a unique solution of the equation $$L^n(z)=\text{ind}_{F,n}(\tilde{q}) \cdot z.$$ We can push the point $\tilde{q}$ by  a $G$-equivariant homotopy $\Phi_t(z)\colon \R^2 \to \R^2$ 
along some path $\alpha$ in $\R^2 \sm \tilde{Q}$ that ends at $s$. Since $\Phi$ is $G$-equivariant, it pushes the point $$F^n(\tilde{q})=\text{ind}_{F,n}(\tilde{q}) \cdot \tilde{q}$$ along the path $\text{ind}_{F,n}(\tilde{q}) \cdot \alpha$ to the point $$\text{ind}_{F,n}(\tilde{q}) \cdot s = L^n(s).$$ Therefore,  for $F_1=\Phi_1 \circ F \circ \Phi_1^{-1}$, we have $F_1^n(s)=L^n(s)$. Let $s'=g \cdot s$, where $g\in G$ be any other point in the same fiber as $s$. Then $G$-equivariance of $\Phi$ implies
\begin{eqnarray*}
F_1^n(s')=\Phi_1 \circ F^n \circ \Phi_1^{-1} (g \cdot s)=\Phi_1 \circ F^n (g \cdot \Phi_1^{-1}(s))=\\ \Phi_1 (F_*^n g \cdot F^n \circ \Phi_1^{-1}(s))= 
F^n_*g \cdot \Phi_1 \circ F^n \circ \Phi_1^{-1}(s)=F^n_*g \cdot F^n_1(s) = F^n_*g \cdot L^n(s).
\end{eqnarray*}
Since $F=L$ on $\Pt$, their actions on the orbifold group  are the same: $F_*=L_*$. 
 Thus, 
$$ F_1^n(s')=F^n_*g \cdot L^n(s)=L^n_*g \cdot L^n(s) = L^n(g \cdot s)=L^n(s').
$$

We repeat this procedure for each periodic point in $Q$ to obtain a $G$-equivariant homotopy $\Psi_t(z) \colon \R^2 \to \R^2$ and set $F_2=\Psi_1 \circ F \circ \Psi_1^{-1}$, such that for any point $s=\Psi_1(\tilde{q})$, where $\tilde{q}$ is in the fiber of a periodic point of any period $n$ from $Q$, we have $F_2^n(s)=L^n(s)$. The only possible obstacle can occur when we need to push some point $\tilde{q}$ from the fiber of $q$ into the fiber of some other point $q'$. This would immediately imply that $q$ and $q'$ are in the same Nielsen class, which contradicts our assumptions.

 Note that our construction automatically implies $F_2(s)=L(s)$ for all $s=\Psi_1(\tilde{q})$, where $\tilde{q}$ is in the fiber of a periodic point $q$ of any period $n$. Indeed, if $F^n(z)=g \cdot z$, then $$F^n(F(z))=F(F^n(z))=F(g \cdot z)=F_*g \cdot F(z),$$ hence
$$ \text{ind}_{F,n}(F(z))= F_* \text{ind}_{F,n}(z)=L_* \text{ind}_{F,n}(z).$$ Therefore, if $s=\Psi_1(\tilde{q})$ is a unique solution of the equation $$L^n(z)=\text{ind}_{F,n}(\tilde{q}) \cdot z,$$ then $L(s)$ is a unique solution of the equation $$L^n(z)=\text{ind}_{F,n}(F(\tilde{q})) \cdot z=L_*\text{ind}_{F,n}(\tilde{q}) \cdot z,$$ because $$L^n(L(z))=L(L^n(z))=L(\text{ind}_{F,n}(\tilde{q}) \cdot z)=L_*\text{ind}_{F,n}(\tilde{q}) \cdot L(z).$$ This yields $\Psi_1(F(\tilde{q}))=L(s)$ and $$F_2(s)=F_2 \circ \Psi_1(\tilde{q})=\Psi_1 \circ F(\tilde{q})=L(s).$$

Now we perform an analogous  procedure on all strictly pre-periodic points. Let $q \in Q$ be a strictly pre-periodic point and $\tilde{q}$ be some point in its fiber. Denote by $n$ the pre-period of $q$, i.e. the smallest integer such that $f^n(q)$ is periodic. We find a $G$-equivariant homotopy that pushes $\tilde{q}$ to $L^{-n}(F_2^n(\tilde{q}))$ and leaves all fibers of other points of $Q$ in place. After repeating this process for all pre-periodic points of $f$, we construct a $G$-equivariant homotopy $\Xi_t(z)\colon \R^2 \to \R^2$ such that $F_3= \Xi_1 \circ F \circ \Xi_1^{-1}$ agrees with $L(z)$ on  $\Xi_1(\tilde{Q})$, in particular ${F_3}_*=L_*$. 

Denote by $f_3$ and $\xi$ the quotients of $F_3$ and $\Xi_1$ respectively by the action of $G$. Then $f_3= \xi_1 \circ f \circ \xi_1^{-1}$, and $(f,Q)$ is conjugate (and, hence, Thurston equivalent) to $(f_3, \xi(Q))$. Set $\Theta(z)=L^{-1} \circ F_3(z)$; we see that $$\Theta(g\cdot z)=L^{-1} \circ F_3(g \cdot z)=L^{-1} ({F_3}_* g \cdot F_3(z))= L^{-1} ({L}_* g \cdot F_3(z))=g \cdot L^{-1} \circ F_3(z),$$ i.e. $\Theta$ is $G$-equivariant. Therefore $f_3=l \circ\theta$ where $\theta$ is the quotient of $\Theta$ by the action of $G$. Since $F_3=L$ on $\tilde{Q}$, the universal cover $\Theta$ of $\theta$ is identical on $\tilde{Q}$ so $\theta \in \RMCG(\OO,Q)$. By Theorem~\ref{t:allEquivalent} $(f_3,\xi(Q))$ and $(l,\xi(Q))$ are Thurston equivalent, which concludes our proof.
\end{proof}

\subsection{Uniqueness}
We now prove the uniqueness part of the statement of \thmref{th:degenerateLevy}:
\begin{theorem}
\label{t:uniqueness}
Let $(l_i,Q_i)$ be a Thurston map that is a quotient of an affine map $L_i(z)=A_i z +b$ ($A_i\in \text{M}_2(\ZZ)$) by the action of an orbifold group $G$ for $i=1,2$. Suppose that eigenvalues of $A_i$ are not equal to $\pm1$ for $i=1,2$. If $(l_1,Q_1)$ and $(l_2,Q_2)$ are Thurston equivalent, then $(l_1,Q_1)$ and $(l_2,Q_2)$ are conjugate by a quotient of an affine map. In other words, there exist $g \in G$ and a real affine map $S$ 
with linear part in $\text{SL}_2(\ZZ)$ such that $L_2=g \cdot S \circ L_1\circ S^{-1}$ and $S$ sends $\tilde{Q}_1$ to $\tilde{Q}_2$. 
\end{theorem}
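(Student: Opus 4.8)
The plan is to extract an affine conjugacy directly from the homeomorphisms realizing the Thurston equivalence, following the same circle of ideas as in the proof of \thmref{th:geometrization1}. I will carry it out for the $(2,2,2,2)$-case; the parabolic cases with the other five signatures have a rigid complex structure and the affine maps are complex-affine, so the same argument applies with $G$ the relevant orbifold group (alternatively, those follow from \cite{DH}).

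First I would fix $\phi_0,\phi_1\colon\OO\to\OO$ realizing the equivalence, so that $\phi_0\circ l_1=l_2\circ\phi_1$, $\phi_0|_{Q_1}=\phi_1|_{Q_1}$, $\phi_j(Q_1)=Q_2$, and $\phi_0\simeq\phi_1$ rel $Q_1$. Lift $l_i$ to $L_i$ and $\phi_1$ to an arbitrary $G$-equivariant $\Phi_1\colon\R^2\to\R^2$. Since the isotopy $\phi_0\simeq\phi_1$ is rel $Q_1$ and each fibre $p^{-1}(q)$, $q\in Q_1$, is discrete, lifting this isotopy starting from $\Phi_1$ produces a lift $\Phi_0$ of $\phi_0$ with $\Phi_0=\Phi_1$ on $\tilde Q_1$; the lifted square then reads $\Phi_0\circ L_1=g\cdot(L_2\circ\Phi_1)$ for a unique $g\in G$. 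By \lemref{l:AffineLift} the restriction $\Phi_1|_{\Pt}$ agrees with an affine map $S(z)=Cz+d$; since $\Phi_1$ maps $\Pt=\tfrac12(\Z+i\Z)$ bijectively onto itself and preserves orientation, $C\in\mathrm{SL}_2(\Z)$ and $d\in\tfrac12(\Z+i\Z)$, and $S$ is itself $G$-equivariant with $S_*=(\Phi_1)_*$ (the two push-forwards agree on translations and on symmetries around points of $\Pt$, being determined by $S|_{\Pt}=\Phi_1|_{\Pt}$). Restricting the lifted square to $\Pt$ and using that two affine maps agreeing on $\Pt$ coincide everywhere, I get the honest identity $S\circ L_1=g\cdot L_2\circ S$, i.e.\ $L_2=g^{-1}\cdot S\circ L_1\circ S^{-1}$; with $\tilde g:=g^{-1}$ this is already the asserted form (and reproves the uniqueness part of \thmref{t:4PointsCase} for the $P$-marked maps).

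It remains to upgrade this to $S(\tilde Q_1)=\tilde Q_2$; since $S|_{\Pt}=\Phi_1|_{\Pt}$ and $\Phi_1$ maps $\tilde Q_1$ bijectively onto $\tilde Q_2$, it suffices to prove the \textbf{Claim}: $S=\Phi_1$ on all of $\tilde Q_1$. For a periodic point $q_1\in Q_1$ of period $n$ with a chosen lift $\tilde q_1$, put $g_1=\text{ind}_{L_1,n}(\tilde q_1)$; because $A_1$ has no eigenvalue $\pm1$, the matrices $A_1^{n}\mp I$ are invertible, so $\tilde q_1$ is the \emph{unique} solution of $L_1^{n}(z)=g_1\cdot z$ (the computation in the necessity part of \thmref{th:geometrization1}), and the same uniqueness holds for $L_2$. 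Now $\phi_1|_{Q_1}$ conjugates $l_1|_{Q_1}$ to $l_2|_{Q_2}$, so $q_2:=\phi_1(q_1)$ is $l_2$-periodic of period $n$ and $\tilde q_2:=\Phi_1(\tilde q_1)$ is a lift of it. Iterating the two relations $L_2\circ S=\tilde g\cdot S\circ L_1$ and, on the $L_1$-invariant set $\tilde Q_1$ (using $\Phi_0=\Phi_1$ there), $L_2\circ\Phi_1=\tilde g\cdot\Phi_1\circ L_1$, one computes that both $S(\tilde q_1)$ and $\Phi_1(\tilde q_1)$ satisfy $L_2^{n}(z)=g_2\cdot z$ with the \emph{same} $g_2=\big(\prod_{k=0}^{n-1}(L_2)_*^{k}(\tilde g)\big)\cdot S_*(g_1)$ — the accumulated deck corrections $\prod_k(L_2)_*^{k}(\tilde g)$ depend only on $\tilde g$ and $(L_2)_*$, while the final factor is $S_*(g_1)=(\Phi_1)_*(g_1)$. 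Uniqueness forces $S(\tilde q_1)=\Phi_1(\tilde q_1)$. For a strictly pre-periodic $q_1$ of preperiod $m$, the point $L_1^{m}\tilde q_1$ is a lift of a periodic point, so by the periodic case $S(L_1^{m}\tilde q_1)=\Phi_1(L_1^{m}\tilde q_1)$; iterating the same two relations $m$ times gives $S(L_1^{m}\tilde q_1)=H_m^{-1}\cdot L_2^{m}(S\tilde q_1)$ and $\Phi_1(L_1^{m}\tilde q_1)=H_m^{-1}\cdot L_2^{m}(\Phi_1\tilde q_1)$ with the same $H_m\in G$, whence $L_2^{m}(S\tilde q_1)=L_2^{m}(\Phi_1\tilde q_1)$, and since $L_2$ is an affine \emph{bijection} of $\R^2$ this yields $S\tilde q_1=\Phi_1\tilde q_1$. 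This proves the Claim, hence $S(\tilde Q_1)=\Phi_1(\tilde Q_1)=\tilde Q_2$.

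Finally, the quotient $\bar S$ of $S$ by $G$ is a homeomorphism of $\OO$ with $\bar S(Q_1)=Q_2$, and projecting $L_2\circ S=g\cdot S\circ L_1$ (where $g$ acts trivially on $\OO$) gives $l_2\circ\bar S=\bar S\circ l_1$; thus $\bar S$ conjugates $(l_1,Q_1)$ to $(l_2,Q_2)$, which completes the proof. The main obstacle is the bookkeeping inside the Claim: one must track carefully how the deck-transformation corrections stack up when iterating the two lifted (semi)conjugacies and check that they agree for $S$ and for $\Phi_1$; the identity $S_*=(\Phi_1)_*$ together with the invertibility of $A_i^{n}\mp I$ are precisely what make this go through, and the latter is where the hypothesis ``eigenvalues $\neq\pm1$'' is used a second time — beyond guaranteeing, via \propref{p:NielsenLevy} and the absence of degenerate Levy cycles, that an affine model exists at all.
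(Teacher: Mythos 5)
Your proposal is correct and follows essentially the same route as the paper: lift the equivalence, use Lemma~\ref{l:AffineLift} to identify the lift with an affine $S$ on $\Pt$, obtain the conjugacy relation there, and then match $S$ with the lifted homeomorphism on all of $\tilde Q_1$ via the uniqueness of the solution of $L^n(z)=g\cdot z$ (invertibility of $A^n\mp I$), treating periodic points first and pre-periodic points by pulling back. The only cosmetic difference is that the paper normalizes $L_2'=g^{-1}\cdot L_2$ at the outset so that no deck corrections accumulate under iteration, whereas you carry the product $H_n$ explicitly; both bookkeepings are valid.
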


\begin{proof}

Let $\phi, \psi$ realize the Thurston equivalence of $(l_1,Q_1)$ and $(l_2,Q_2)$:

$$
\begin{diagram}
\node{(\OO,Q_1)} \arrow{e,t}{\psi} \arrow{s,l}{l_1} \node{(\OO,Q_2)}
\arrow{s,l}{l_2}
\\
\node{(\OO,Q_1)} \arrow{e,t}{\phi} \node{(\OO,Q_2)}
\end{diagram}
$$
where $\phi$ and $\psi$ are homotopic relative $Q_1$. Then there exist universal covers  $\tilde{\phi}$ and $\tilde{\psi}$ of $\phi$ and $\psi$ such that the following diagram commutes:

$$
\begin{diagram}
\node{(R^2,\tilde{Q}_1)} \arrow{e,t}{\tilde{\psi}} \arrow{s,l}{L_1} \node{(R^2,\tilde{Q}_2)}
\arrow{s,l}{L_2}
\\
\node{(R^2,\tilde{Q}_1)} \arrow{e,t}{\tilde{\phi}} \node{(R^2,\tilde{Q}_2)}
\end{diagram} 
$$

By Lemma~\ref{l:AffineLift}, both $\tilde{\phi}$ and $\tilde{\psi}$ are affine on $\Pt$. Since $\phi$ and $\psi$ are homotopic relative $P\subset Q_1$, there exists $g\in G$ such that $\tilde{\psi}=S$ and $\tilde{\phi}=g\cdot \tilde{\psi}=g \cdot S$ for all points in $\Pt$, where $S$ is a real affine map. Note that the linear part of $S$ has determinant $\pm1$ because $\phi$ and $\psi$ are one-to-one maps. Therefore $$L_2= \tilde{\phi} \circ L_1 \circ \tilde{\psi}^{-1}=g \cdot  \tilde{\psi} \circ L_1 \circ \tilde{\psi}^{-1}=g \cdot S \circ L_1\circ S^{-1}\text{ on }\Pt.$$ As both sides of the last equation are real affine, the equation holds for all points in $\R^2$. 

Replace $L_2$ by another universal cover $L_2'=g^{-1} \cdot L_2$ of $l_2$ so that $L'_2= S \circ L_1\circ S^{-1}$; set $\tilde{\phi}'=g^{-1} \cdot \tilde{\phi}$. Then both $\tilde{\phi}'$ and $\tilde{\psi}$ agree with $S$ on $\Pt$ and it follows that $\tilde{\phi}'$ and $\tilde{\psi}$ agree on $\tilde{Q}_1$ and  act by $\tilde{\phi}'_*=\tilde{\psi}_*=S_*$ on the first homology group of $\OO$. Consider a lift $\tilde{q}$ of a periodic point $q \in Q_1$ of some period $n$. Recall that $\tilde{q}$ is a unique solution of  $L_1^n(z)=g_1 \cdot z$, where $g_1=\text{ind}_{L_1,n}(\tilde{q})$. Then $$\tilde{\psi}(L_1^n(\tilde{q}))=\tilde{\psi}(g_1 \cdot \tilde{q})=S_*(g_1) \cdot \tilde{\psi}(\tilde{q}).$$ This yields $$\text{ind}_{L'_2,n}(\tilde{\psi}(\tilde{q}))=S_*(g_1)$$ and $\tilde{\psi}(\tilde{q})$ is a unique solution of $${L'_2}^n(z)=S_* (g_1)\cdot z,$$ which is equivalent to $$ S \circ L_1\circ S^{-1}(z)=S (g_1 \cdot S^{-1}(z))$$ or $$L_1 \circ S^{-1}(z)=g_1 \cdot S^{-1}(z).$$ We conclude that $\tilde{\psi}(\tilde{q})=S(\tilde{q})$ for all lifts of periodic points in $Q_1$. 

For a lift $\tilde{p}$ of a pre-periodic point $p \in Q_1$, consider some $k$ such that $L_1^k(\tilde{p})=\tilde{q}$, where $q=l_1^k(p)$ is periodic. Then 
$$ \tilde{\psi}(\tilde{p})={L'_2}^{-k}\circ \tilde{\psi} \circ L_1^k(\tilde{p})={L'_2}^{-k}\circ \tilde{\psi} (\tilde{q})={L'_2}^{-k} \circ S(\tilde{q})=S\circ L_1^k(\tilde{q})=S(\tilde{p}).
$$
We have shown that $S$ sends $\tilde{Q}_1$ to $\tilde{Q}_2$, therefore the quotient of $S$ to $\OO$ not only conjugates $l_1$ and $l_2$, but sends $Q_1$ to $Q_2$.
\end{proof}

\section{Constructive geometrization of Thurston maps with parabolic orbifolds}
\begin{theorem}
\label{thm:Parabolics}
  There exists an algorithm $\cA_9$ which for any marked Thurston map $f$ with a parabolic orbifold whose matrix does not have eigenvalues $\pm1$
finds either a degenerate Levy cycle or an equivalence to a quotient of an affine map with marked pre-periodic orbits.
\end{theorem}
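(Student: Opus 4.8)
The plan is to combine the structural dichotomy of \thmref{th:degenerateLevy} with the algorithmic toolkit assembled in the preceding sections, running two semi-decision procedures in parallel and dovetailing them. As a preliminary step I would pass, via \propref{ThPL}, to an equivalent PL Thurston map and work with it throughout, transporting the final output back along the equivalence of \propref{ThPL} at the end. In the PL setting the components of $f^{-1}(\gamma)$ for any polygonal curve $\gamma$, together with their mapping degrees, are computable by lifting triangulations (\propref{lift cover}), and homotopy of polygonal curves relative to $Q$ is decidable (\propref{homotopy check}). Reading off the local degrees of $f$ at its critical points and the combinatorics of $P_f$, one identifies the orbifold signature; in the $(2,2,2,2)$ case, lifting $f$ to its torus double cover $\fh$ (constructively, again by \propref{lift cover}) and computing the induced automorphism of $H_1(T,\Z)\cong\Z^2$ produces the associated integer matrix $A$ explicitly, while in every other parabolic signature the linear part of the associated affine map is likewise computable and automatically has both eigenvalues of modulus greater than $1$. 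Thus the standing hypothesis that the matrix has no eigenvalue $\pm 1$ can be checked, and $A$ is known.

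The first procedure looks for a degenerate Levy cycle. Using \propref{enumerate2} I would enumerate all multicurves $\Gamma$ on $(S^2,Q)$, and for each $\Gamma$, each cyclic ordering $\gamma_0,\dots,\gamma_{n-1}$ of its curves, and each choice of a side for each curve, test the definition of a degenerate Levy cycle: that for every $i$ there is a component $\gamma_i'$ of $f^{-1}(\gamma_i)$ which is essential and non-peripheral, maps to $\gamma_i$ with degree $1$, is homotopic to $\gamma_{(i-1)\bmod n}$ relative $Q$, bounds a disk $D_i$ (on the chosen side) containing no critical point of $f$, and with $f(D_i)$ homotopic to $D_{(i+1)\bmod n}$ relative $Q$. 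Each of these conditions is a finite computation with polygonal data and the homotopy algorithm of \propref{homotopy check}; hence this procedure halts exactly when $f$ admits a degenerate Levy cycle, and then outputs one.

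The second procedure looks for an affine model. The decisive point is finiteness of the candidate list: $A$ is already pinned down, the translation part ranges over the finite set $\tfrac12(\Z+i\Z)/(\Z+i\Z)$, and on the resulting affine map $l$ the eventually periodic points of period and pre-period at most $\#Q$ form a finite, explicitly computable set (here hyperbolicity of $A$ is used to see that the relevant linear systems are nonsingular), so there are only finitely many candidate markings $Q'\supset P_l$ with $\#Q'=\#Q$ and $l(Q')\subset Q'$. This is exactly what the uniqueness statement \thmref{t:uniqueness} buys us: if $f$ has any affine model at all, it has one on this finite list. For each candidate $(l,Q')$ I would run the verification search underlying \propref{prop-verify1}, that is, enumerate $\MCG(S^2\setminus Q)$ by \propref{enumerate1} and test each mapping class, using \propref{homotopy check} and \propref{identify-isometry}, against the Thurston-equivalence square relating $(f,Q)$ and $(l,Q')$; this halts precisely when the two maps are Thurston equivalent, returning the realizing homeomorphisms. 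Dovetailed over the finitely many candidates, the second procedure halts exactly when $f$ is equivalent to a quotient of an affine map with marked pre-periodic orbits.

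Running the two procedures in parallel is the desired algorithm: by \thmref{th:degenerateLevy} --- and, for parabolic signatures other than $(2,2,2,2)$, alternatively by \corref{c:generalobs} --- at least one of the two alternatives holds (in fact exactly one, since a quotient of an affine map with no eigenvalue $\pm 1$ admits no degenerate Levy cycle), so exactly one of the two searches terminates and returns the required output. I expect the main obstacle to lie not in the dichotomy, which is handed to us by \thmref{th:degenerateLevy}, but in confining the second search to a finite --- hence effectively checkable --- list of affine models: this rests on computing $A$ from the combinatorial data and, crucially, on the uniqueness theorem \thmref{t:uniqueness}. A secondary point requiring care is simply to verify that ``being a degenerate Levy cycle'' and ``being Thurston equivalent to a given affine model'' are recognizable from PL data; as indicated, both reduce to computing preimages of polygonal curves and to the homotopy and isotopy algorithms quoted above.
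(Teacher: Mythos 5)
Your proposal is correct and follows essentially the same route as the paper: dovetail two semi-decision procedures, one enumerating multicurves and testing for a degenerate Levy cycle, the other enumerating mapping classes against the finitely many invariant pre-periodic marked sets of the affine model of $(f,P)$, with Theorem~\ref{th:degenerateLevy} guaranteeing that exactly one search halts. The only cosmetic differences are that the paper obtains the unmarked affine model directly from Theorem~\ref{t:4PointsCase} rather than recomputing $A$ from the homology action and invoking Theorem~\ref{t:uniqueness} for the finiteness of candidates, which amounts to the same finite list of models.
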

\begin{proof}
The proof is completely analogous to the argument given in \cite{BBY}. We begin by identifying the orbifold group $G$ and finding
an affine map $L(x)=Ax+b$ such that $f$ without marked points is equivalent to the quotient $l$ of $L$ by $G$ (\thmref{t:4PointsCase}).

We now execute two sub-programs in parallel:
\\{\bf (I)} we use algorithm $\cA_8$ (\propref{enumerate3}) to enumerate all $f$-stable multicurves $\Gamma_n$. Using algorithm $\cA_2$
(\propref{homotopy check}) we check whether $\Gamma_n$ is a degenerate Levy cycle. If yes, we output {\bf degenerate Levy cycle found} and halt;
\\{\bf (II)} we identify all forward invariant sets $S_k$ of pre-periodic orbits of $l$ of the same cardinality as the set of marked
points of $f$. We use algorithm $\cA_6$ (\propref{enumerate1}) to enumerate the sequence $\psi_n$ of all elements of $\PMCG(S^2, Q)$.
For every $\psi_n$ and each of the finitely many sets $S_k$ we use algorithm $\cA_3$ (\propref{identify-isometry}) to 
check whether $h_k \circ \psi_n$ realizes Thurston equivalence between $f$ and $g$ with marked points $S_k$, where  $h_k \colon (S^2, Q) \to (S^2, S_k)$ is an arbitrary chosen homeomorphism. If yes, we output {\bf Thurston equivalence found}, list the maps $g$, $h_k \circ \psi_n$ and the set $S_k$ and halt.

By \thmref{th:degenerateLevy} either the first or the second sub-program, but not both, will halt and deliver the desired result. 
\end{proof}

\section{Constructive canonical geometrization of a Thurston map}
\begin{theorem}\label{th:canonicalgeometrization}
  There exists an algorithm which for any Thurston map $f$ finds its canonical obstruction $\Gamma_f$.

Furthermore, let $\cF$ denote the collection of the first return maps of the canonical decomposition
of $f$ along $\Gamma_f$. Then  the algorithm outputs the following information:
\begin{itemize}
\item for every first return map with a hyperbolic orbifold, the unique (up to M{\"o}bius conjugacy)
marked rational map equivalent to it;
\item for every first return map of type $(2,2,2,2)$ the unique (up to affine conjugacy) affine map of the form $z \mapsto Az+b$ where $A \in \text{SL}_2(\ZZ)$ and $b\in \frac{1}{2}\Z^2$ with marked points which is equivalent to $f$ after quotient by the orbifold group $G$;
\item for every first return map which has a parabolic orbifold not of type $(2,2,2,2)$ the unique (up to  M{\"o}bius conjugacy) 
marked rational map map equivalent to it, which is a quotient of a complex affine map by the orbifold group.
\end{itemize}
\end{theorem}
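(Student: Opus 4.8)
The plan is to convert the definition of the canonical obstruction, which is asymptotic in nature, into a finite list of algorithmically verifiable conditions, and then to run an exhaustive search over candidate multicurves until the canonical one is certified. The tool that makes this possible is the topological characterization of $\Gamma_f$ in Theorem~\ref{thm:CharacterizationCanonical}, together with the geometrization algorithms already at our disposal: $\cA_1$ (Theorem~\ref{thurston-decidable}) in the hyperbolic case and $\cA_9$ (Theorem~\ref{thm:Parabolics}) in the parabolic case.

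By Proposition~\ref{ThPL} we may assume $f$ is given as a PL Thurston map. We enumerate, by algorithm $\cA_8$ of Proposition~\ref{enumerate3}, all $f$-stable multicurves $\Gamma_0=\emptyset,\Gamma_1,\Gamma_2,\dots$; since the canonical obstruction is $f$-stable (indeed $f$-invariant), it occurs among them. For each candidate $\Gamma_k$ we proceed as follows. Using the rational transition matrix $M_{\Gamma_k}$ we check, by a Perron--Frobenius computation and by testing its finitely many sub-multicurves (Proposition~\ref{prop:positive}), whether $\Gamma_k$ is a minimal Thurston obstruction; if not, and if $k\neq0$, we discard it. Otherwise we apply the constructive decomposition algorithm $\cA_{10}$ of Proposition~\ref{prop:decomp2} to produce an equivalent map in standard form with polygonal thick parts, from which we read off combinatorially the periodic cycles of thick components and, for each cycle, its first-return map $F$, re-presented as a marked PL Thurston map on a sphere (the new marked points being the punctures introduced by patching).

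For each first-return map $F$ we compute its postcritical set and local degrees, hence the signature and Euler characteristic of $O_F$. If $O_F$ is hyperbolic we run $\cA_1$: it decides whether $F$ has a Thurston obstruction and, if not, outputs the rational map equivalent to $F$ with its marked pre-periodic orbits. If $O_F$ is parabolic we use Theorem~\ref{t:4PointsCase} to identify the orbifold group $G$ and an affine model $L(z)=Az+b$ of $(F,P_F)$, computing the linear part $A$ from the action $\hat F_*$ on the homology of the torus double cover (Propositions~\ref{prop:homology1}--\ref{prop:homology}). If $O_F$ has signature $(2,2,2,2)$ we additionally test whether the two eigenvalues of $\hat F_*$ are equal or are non-integers; since in the genuine branched-cover case neither equals $\pm1$, we may then run $\cA_9$, which either exhibits a degenerate Levy cycle for $F$ or returns the affine map whose $G$-quotient is equivalent to $F$ --- and by Corollary~\ref{c:generalobs} the latter outcome occurs precisely when every curve of every simple obstruction of $F$ splits $P_F$ evenly. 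If $O_F$ is parabolic but not $(2,2,2,2)$, then $A$ automatically has no eigenvalue $\pm1$ (else $\deg F=1$), and $\cA_9$ likewise returns either a degenerate Levy cycle or an affine quotient, which in this case is a rational map (a quotient of a complex-affine map by $G$), the latter occurring precisely when $F$ carries no Thurston obstruction. A first-return map that happens to be a homeomorphism is detected from its trivial ramification and needs no further geometrization.

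We certify $\Gamma_k$ as $\Gamma_f$ exactly when it is a minimal Thurston obstruction (or $k=0$) and every first-return map of the resulting decomposition passed the test appropriate to its type: $(2,2,2,2)$-maps must have eigenvalues equal or non-integer and produce an affine quotient rather than a degenerate Levy cycle; hyperbolic-orbifold maps must produce a rational map rather than an obstruction; parabolic maps not of type $(2,2,2,2)$ must produce an affine quotient rather than a degenerate Levy cycle. By Theorem~\ref{thm:CharacterizationCanonical} these conditions hold for $\Gamma_f$ and for no other multicurve, so the search halts with $\Gamma_k=\Gamma_f$, and the models collected for the first-return maps are the desired output; their uniqueness (up to M\"obius, respectively affine, conjugacy) is Thurston's Theorem together with Theorems~\ref{t:4PointsCase} and~\ref{t:uniqueness}. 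The crux of the argument --- and its only real obstacle --- is that $\Gamma_f$ is defined via the limiting behaviour of hyperbolic lengths along the entire $\sigma_f$-orbit and therefore cannot be tested directly; everything rests on invoking Theorem~\ref{thm:CharacterizationCanonical} to trade that definition for the finitely many decidable conditions above, and on checking that each is settled by one of $\cA_1$, $\cA_9$, $\cA_{10}$ together with Corollary~\ref{c:generalobs} and the eigenvalue computation. The remaining, purely bookkeeping, difficulty is assembling Pilgrim's decomposition into honest marked PL Thurston maps on which those algorithms run, and correctly reading off the orbifold type and homological action of each piece; these steps are routine but must be carried out with care.
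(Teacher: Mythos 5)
Your overall architecture (enumerate candidate multicurves, decompose along each, and use $\cA_1$, $\cA_9$, $\cA_{10}$ together with Corollary~\ref{c:generalobs} to test the conditions of Theorem~\ref{thm:CharacterizationCanonical}) is different from the paper's, which never enumerates all $f$-stable multicurves: the paper runs a subprogram that on each piece either geometrizes or outputs an obstruction, decomposes recursively along the obstructions found until none remain, collects the finite union $\Gamma$ of all curves so produced, and then defines $\Gamma_c$ as the \emph{intersection} of all sub-multicurves of $\Gamma$ that are obstructions whose decomposition pieces are geometrizable. That difference is not merely cosmetic: it reflects the way Theorem~\ref{thm:CharacterizationCanonical} must be read, and this is where your proof has a genuine gap. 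You filter candidates by requiring $\Gamma_k$ to be a \emph{minimal} Thurston obstruction in the sense defined earlier in the paper (no proper sub-multicurve is an obstruction, which you propose to test via Proposition~\ref{prop:positive} -- itself a mismatch, since that proposition characterizes \emph{simple}, not minimal, obstructions), and you then certify $\Gamma_k$ on the grounds that ``these conditions hold for $\Gamma_f$ and for no other multicurve.'' But the canonical obstruction need not be minimal in that subset sense: if $f$ has a degenerate Levy cycle $\gamma_1$ which alone does not suffice to unobstruct the pieces (so that $\Gamma_f$ also contains further curves $\gamma_2,\dots$), then $\{\gamma_1\}$ is already an obstruction properly contained in $\Gamma_f$. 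Your algorithm discards $\Gamma_f$ at the minimality check, and (since by the characterization any obstruction with geometrizable pieces contains $\Gamma_f$, hence is itself non-minimal) no other candidate is ever certified, so the search never halts. The minimality in Theorem~\ref{thm:CharacterizationCanonical}, as it is actually used in the paper, is minimality with respect to inclusion among obstructions satisfying the listed decomposition properties ($\Gamma_f$ is contained in every such obstruction), which is exactly why the paper recovers $\Gamma_f$ as an intersection rather than by a minimality-plus-certification test.

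A second, related weakness is the uniqueness claim itself: it is not true that no multicurve other than $\Gamma_f$ can be a (subset-)minimal obstruction with geometrizable pieces. For a flexible Latt\'es map (so $\Gamma_f=\emptyset$), an invariant curve $\gamma$ separating the postcritical set $2{+}2$ has $\lambda(\{\gamma\})=1$, and decomposing along it yields pieces which are maps of thrice-marked spheres carrying no obstructions at all; so $\{\gamma\}$ passes your certification. In this instance you are rescued only by testing $\Gamma_0=\emptyset$ first, but it shows that your appeal to Theorem~\ref{thm:CharacterizationCanonical} proves neither termination nor soundness as stated. To repair the argument you would need to either establish and use the inclusion form of the characterization (every obstruction with geometrizable pieces contains $\Gamma_f$, and $\Gamma_f$ has the properties), replacing your certificate by an intersection over all qualifying candidates drawn from a finite curve set, or simply follow the paper's recursive decomposition scheme, which produces such a finite set and makes the intersection computable. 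The bookkeeping parts of your proposal (passing to PL models, patching, computing orbifold signatures and $\hat{F}_*$, and dispatching to $\cA_1$ or $\cA_9$, with the observation that the eigenvalue condition excludes $\pm1$) are fine and agree with the paper's use of these tools.
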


\begin{proof}
The result of \cite{BBY} together with \thmref{thm:Parabolics} implies the existence of the subprogram $\cP$ which given a marked Thurston map $f$
does the following:
\begin{enumerate}
\item if $f$ has a hyperbolic orbifold and is obstructed, it outputs 
a Thurston obstruction for $f$;
\item if $f$ has a parabolic orbifold not of type $(2,2,2,2)$ and a  degenerate Levy cycle it outputs such a Levy cycle;
\item if $f$ is a $(2,2,2,2)$ map such that the corresponding matrix has two distinct integer eigenvalues
 outputs a Thurston obstruction for $f$;
\item if $f$ is a $(2,2,2,2)$ map with a  degenerate Levy cycle outputs such a Levy cycle;
\item in the remaining cases output a geometrization of $f$ as described in the statement of the theorem.
\end{enumerate}
We apply the subprogram $\cP$ recursively to decompositions of $f$ along the found obstructions until 
no new obstructions are generated (this will eventually occur by Theorem~\ref{thm:CharacterizationCanonical} and \corref{c:generalobs}).

 Denote by $\Gamma$ the union of all obstructions thus generated. Use algorithm $\cA_2$ and sub-program $\cP$ to
find the set $S$ be a set of all subsets $\Gamma'\subset \Gamma$ such that:
\begin{itemize}
\item $\Gamma'$ is a Thurston obstruction for $f$;
\item denote $\cF'$ the union of first return maps obtained by decomposing along $\Gamma'$. Then no $h\in\cF'$ is a $(2,2,2,2)$ map 
whose matrix has distinct integer eigenvalues, and every $h\in\cF'$ which is not a homeomorphism is geometrizable. 
\end{itemize}
Set $$\Gamma_c\equiv \cap_{\Gamma'\in S}\Gamma'.$$
By Theorem~\ref{thm:CharacterizationCanonical} and \corref{c:generalobs}, $\Gamma_c$ is the canonical obstruction of $f$.
\end{proof}

\section{Partial resolution of the problem of decidability of Thurston equivalence}
Denote by $\cH$ the class of Thurston maps $f$ such that every first return map in the canonical decomposition of $f$ has  hyperbolic orbifold.
In this section we prove the following theorem:
\begin{thm}\label{th:deciding}
There exists an algorithm which given a PL Thurston map $f\in\cH$ and any PL Thurston map $g$ decides whether
$f$ and $g$ are equivalent or not.
\end{thm}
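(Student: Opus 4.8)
**The plan is to reduce Theorem~\ref{th:deciding} to the known decidability results for hyperbolic-orbifold maps and for Hurwitz classification, using the canonical decomposition as the organizing tool.** First I would run the algorithm of Theorem~\ref{th:canonicalgeometrization} on the input map $f$: since $f\in\cH$, this produces the canonical obstruction $\Gamma_f$, the canonical decomposition into first-return maps $\cF=\{F_1,\dots,F_r\}$ (all with hyperbolic orbifold), and for each $F_i$ the unique marked rational map $R_i$ equivalent to it, together with the combinatorial gluing data that reassembles $f$ from the $F_i$. In parallel I would run the same algorithm on $g$; here a preliminary remark is needed, namely that the algorithm of Theorem~\ref{th:canonicalgeometrization} terminates on \emph{any} Thurston map, so we do obtain $\Gamma_g$, the first-return maps $\cG$, and their geometrizations (some of which may now be of parabolic or $(2,2,2,2)$-type, or homeomorphisms). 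The point of the canonical decomposition is that it is a Thurston-equivalence invariant: $f$ and $g$ are equivalent if and only if there is an isomorphism of the combinatorial decomposition data carrying $\Gamma_f$ to $\Gamma_g$ and each $F_i$ to the corresponding $G_{j}$ by Thurston equivalence, compatibly with the gluing.

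\textbf{Next I would reduce the comparison to finitely many checks.} The decomposition data is a finite combinatorial object (a finite graph of punctured spheres with first-return maps attached to the periodic cycles, plus the degree and homotopy data of the gluing annuli, cf. Pilgrim's decomposition and \propref{prop:decomp2}); so there are only finitely many candidate isomorphisms between the decomposition of $f$ and that of $g$, and each can be enumerated. For a fixed candidate bijection of components, the required verifications are: (i) that the combinatorial gluing patterns match, which is a finite check; and (ii) that each first-return map $F_i$ of $f$ is Thurston equivalent to the matched first-return map of $g$. Since every $F_i\in\cF$ has hyperbolic orbifold, for (ii) I would use the algorithm $\cA_1$ of \thmref{thurston-decidable}: apply $\cA_1$ to $F_i$; if $F_i$ is unobstructed it returns the unique rational representative $R_i$ (and the marked pre-periodic orbits), so $F_i$ and $G_{j}$ are equivalent iff $G_j$ is also unobstructed and $\cA_1(G_j)$ returns a rational map M\"obius-conjugate to $R_i$ with matching marked orbits — a finite comparison of normalized coefficients and point labels. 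If $F_i$ is itself obstructed, then it is genuinely obstructed even though its orbifold is hyperbolic, and one recurses: but here the canonical decomposition of $F_i$ is trivial by construction (the $F_i$ are already the irreducible pieces), so in fact $F_i$ being a first-return map of the canonical decomposition of $f\in\cH$ means $F_i$ is unobstructed — its canonical obstruction is empty by \thmref{canonical1} combined with the minimality in \thmref{thm:CharacterizationCanonical}. So case (ii) always reduces to comparing rational maps.

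\textbf{The main obstacle is the matching of the gluing/combinatorial data, not the comparison of the irreducible pieces.} Comparing the $F_i$ is clean because each is geometrizable with a unique rational model that can be normalized and compared coefficient-wise. The delicate point is that two a priori different combinatorial decompositions can encode the same map, so one must verify that the reassembly data (how the punctured spheres $S_0(j)$ are glued back along the decomposition annuli, with which twisting and which degrees) determines $f$ up to Thurston equivalence, and that the finitely many candidate identifications between the two decomposition graphs can each be tested algorithmically. I would handle this by appealing to Pilgrim's decomposition theory: the triple (canonical obstruction $\Gamma$, collection of first-return maps with their marked structure, gluing combinatorics) is a complete invariant, and the gluing combinatorics lives in a finite set once the first-return maps are fixed (the ambiguity is a finite amount of twisting data measured mod the relevant Dehn twists, which can be normalized using Proposition~\ref{pmod} and checked via algorithm $\cA_2$ of \propref{homotopy check} and $\cA_3$ of \propref{identify-isometry}). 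Putting these together: enumerate the finitely many combinatorial isomorphisms of decomposition graphs; for each, check the finite gluing compatibility and check equivalence of each matched pair of first-return maps via $\cA_1$; output $1$ if some candidate passes all checks, and $0$ otherwise. Correctness follows because $f\in\cH$ forces every piece to be unobstructed and hence uniquely geometrizable, so the invariant is both computable and complete.
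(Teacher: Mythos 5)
Your overall strategy---decompose both maps canonically, match the pieces using the uniqueness of rational models for hyperbolic-orbifold first-return maps, then compare the gluing data---is the same as the paper's, and your observation that $f\in\cH$ forces each first-return map to be unobstructed (hence uniquely geometrizable, giving \propref{p:uniq}-type rigidity) is correct and is exactly what the paper exploits. However, you dispose of the hardest step in one sentence, and that sentence is wrong as stated: you claim ``the gluing combinatorics lives in a finite set once the first-return maps are fixed (the ambiguity is a finite amount of twisting data measured mod the relevant Dehn twists).'' The ambiguity in reassembling the sphere from the thick parts is the group $\ZZ^\Gamma\simeq\ZZ^n$ generated by the Dehn twists around the decomposition curves (\propref{prop:free abelian}), which is infinite; two candidate equivalences $h_0\circ m_1$ and $h_0\circ m_2$ with $m_1\ne m_2\in\ZZ^\Gamma$ genuinely differ as mapping classes, and one cannot test all of them. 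The paper's essential technical contribution at this point is \propref{p:NormalEquivalence} (any equivalence can be normalized to the form $h_0\circ m$, $m\in\ZZ^\Gamma$) together with \propref{p:finiteN}: there is a computable $N$ such that twisting by a multiple $Nx$ of the decomposition twists lifts through $f$ to the twist $M_\Gamma Nx$, so the infinite search over $\ZZ^\Gamma$ collapses to a finite search over residues $n$ modulo $N$ followed by solving the linear Diophantine equation $Nx+n=M_\Gamma Nx+m$. Without this reduction your algorithm does not terminate, and asserting finiteness by ``appealing to Pilgrim's decomposition theory'' begs the question, since the completeness of the decomposition invariant including the twist data is precisely what has to be proved.

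A second, smaller omission: you only compare the first-return maps of \emph{periodic} thick components. The equivalence must also be constructed on the strictly pre-periodic thick components, which carry no first-return dynamics; the paper does this by lifting the already-constructed equivalences backwards through the branched covers $\tl f$ and $\tl g$, and the obstruction to such a lift existing is controlled by comparing Hurwitz classes of the patched coverings (\thmref{th:hurwitz}) and then checking that the lifted homeomorphisms respect the marked points. These checks (steps corresponding to the Hurwitz comparison and the lift-existence test) are not subsumed by ``the combinatorial gluing patterns match'' and need to appear explicitly in your algorithm for correctness.
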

We will need several preliminary statements.

\begin{prop}
\label{p:uniq}
 If $(f, Q_f)$ and $(g,Q_g)$ are Thurston equivalent marked rational maps with hyperbolic orbifolds,
then the pair  $\phi, \psi$ realizing the equivalence
 $$ \phi \circ f = g \circ \psi $$ 
 is unique up to homotopy relative $Q_f$.
\end{prop}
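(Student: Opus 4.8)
The plan is to recognize that a pair realizing the equivalence determines a fixed point of the Thurston pullback map $\sigma_f$, and that this fixed point is unique because $f$ has a hyperbolic orbifold. Let $(\phi_1,\psi_1)$ and $(\phi_2,\psi_2)$ be two pairs realizing the equivalence, so $\phi_i\circ f=g\circ\psi_i$ and $\phi_i$ is isotopic to $\psi_i$ relative $Q_f$ for $i=1,2$. Viewing $\psi_i$ as a homeomorphism $(S^2,Q_f)\to\hat\CC$ it represents a point of $\cT_f$; since $\phi_i$ and $\psi_i$ are isotopic rel $Q_f$, they represent the same point, which I denote $\tau_i:=[\psi_i]=[\phi_i]\in\cT_f$. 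The goal is to show $\tau_1=\tau_2$ and then that the residual M\"obius ambiguity is trivial.

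First I would verify that $\tau_i$ is fixed by $\sigma_f$. Set $\mu_i=\phi_i^*\sigma_0$; using that $g$ is rational, so $g^*\sigma_0=\sigma_0$, one computes
$$f^*\mu_i=(\phi_i\circ f)^*\sigma_0=(g\circ\psi_i)^*\sigma_0=\psi_i^*\,g^*\sigma_0=\psi_i^*\sigma_0 .$$
Hence $\psi_i$ may be taken as the map $\phi_i'$ integrating $f^*\mu_i$, i.e. $(\phi_i')^*\sigma_0=f^*\mu_i$, and therefore $\sigma_f(\tau_i)=\sigma_f[\phi_i]=[\psi_i]=\tau_i$. Since $f$ has a hyperbolic orbifold and $\sigma_f$ now has a fixed point, \propref{unique fixed pt} shows the fixed point is unique; thus $\tau_1=\tau_2$.

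It remains to remove the M\"obius factor. From $[\psi_1]=[\psi_2]$ in $\cT_f$ there is a M\"obius transformation $h$ with $h\circ\psi_1$ isotopic to $\psi_2$ relative $Q_f$; composing with the isotopies $\phi_i\simeq\psi_i$ rel $Q_f$ yields $h\circ\phi_1\simeq h\circ\psi_1\simeq\psi_2\simeq\phi_2$ rel $Q_f$ as well. Evaluating on $Q_f$, this gives $h(\phi_1(q))=\phi_2(q)$ for all $q\in Q_f$; since both pairs realize the given equivalence we have $\phi_1|_{Q_f}=\phi_2|_{Q_f}$, so $h$ fixes each point of $\phi_1(Q_f)=Q_g$. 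Because the orbifold is hyperbolic, $|P_g|\ge 3$, hence $|Q_g|\ge 3$, and a M\"obius transformation fixing three distinct points of $\hat\CC$ is the identity; thus $h=\id$. Then $\phi_1\simeq\phi_2$ and $\psi_1\simeq\psi_2$ rel $Q_f$, as claimed. The main work (and the only delicate point) is the bookkeeping around the marked points in this last step: checking that the M\"obius transformation genuinely fixes $Q_g$ pointwise, which is where the inequality $|Q_g|\ge 3$ — and hence the hyperbolicity of the orbifold — is used a second time. Everything before that is a direct application of the weak contraction of $\sigma_f$ recorded in \propref{unique fixed pt}.
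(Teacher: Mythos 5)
Your overall route is the same as the paper's: an equivalence to a rational map produces a fixed point of the pullback map $\sigma_f$, and hyperbolicity of the orbifold gives uniqueness of that fixed point (Proposition~\ref{unique fixed pt}). Your computation showing $\sigma_f[\phi_i]=[\psi_i]=[\phi_i]$ is correct, and deducing $\tau_1=\tau_2$ is exactly the paper's mechanism in a slightly different arrangement: the paper composes the two equivalences into a self-equivalence of $f$, notes that the induced mapping class $h$ satisfies $\sigma_f\circ h=h\circ\sigma_f$, and concludes that $h$ fixes the unique fixed point.

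The gap is the sentence ``since both pairs realize the given equivalence we have $\phi_1|_{Q_f}=\phi_2|_{Q_f}$.'' Nothing in the definition of Thurston equivalence gives this: within one pair the maps $\phi_i$ and $\psi_i$ agree on $Q_f$, but two \emph{different} pairs are only required to send $Q_f$ to $Q_g$ compatibly with the dynamics, and a priori they may induce different bijections $Q_f\to Q_g$. Hence your M\"obius map $h$ is only known to preserve the set $\phi_1(Q_f)=Q_g$, and a M\"obius transformation permuting at least three points setwise need not be the identity. This is not a fussy point: if $g$ admits a nontrivial M\"obius automorphism $M$ with $M\circ g=g\circ M$ and $M(Q_g)=Q_g$ (such postcritically finite maps with hyperbolic orbifold exist, e.g.\ odd polynomials whose marked set is symmetric under $z\mapsto -z$), then $(M\circ\phi_1,\,M\circ\psi_1)$ is a second equivalence pair which is not homotopic to $(\phi_1,\psi_1)$ rel $Q_f$, so the pointwise equality you assert genuinely fails in that situation. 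To close the argument you must either fix the induced correspondence $Q_f\to Q_g$ in advance --- then $h$ fixes at least three points of $\hat\CC$ pointwise and your conclusion follows --- or explicitly rule out such symmetries; this is the same issue that the paper's own one-line ending (``$h(\tau)$ is also fixed, yielding a contradiction'') treats only implicitly, since the contradiction is immediate only when $h$ fixes the marked points individually, i.e.\ for pure mapping classes.
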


\begin{proof}
  The statement is equivalent to saying that there are no non-trivial self equivalences of $f$. 
 If $ \phi \circ f = f \circ \psi $, where  $\phi$ and $\psi$ represent the same mapping class $h$, then $\sigma_f \circ h = h \circ \sigma_f$. If $\tau$ is the unique fixed point of $\sigma_f$, then $h(\tau)$ is also fixed, yielding a contradiction.

\end{proof}

For the following see \cite{Pil2}:

\begin{theorem}
Let $f$ and $g$ be two Thurston maps, and $\Gamma_f=\{\alpha_1,\ldots,\alpha_n\}$ and $\Gamma_g=\{\beta_1,\ldots,\beta_n\}$ be the corresponding canonical obstructions.
  Let $A_i, B_i$ be decomposition annuli homotopic to $\alpha_i,\beta_i$ respectively. If $f$ and $g$ are equivalent then there exists an equivalence pair $h_1,h_2$ such that $h_1(A_i)=B_i$ (up to a permutation of indexes) and $h_1$ on $\partial A_i$ is any given orientation-preserving
homeomorphism of the boundary curves.
\end{theorem}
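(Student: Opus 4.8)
The plan is to start from an arbitrary equivalence pair and improve it by post-composition on the target side, using two standard ingredients: the invariance of the canonical obstruction under Thurston equivalence, and the fact that an isotopy of $S^2$ fixing $Q_g$ lifts through $g$.

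Write the given equivalence as $h\circ f=g\circ h'$ with $h\simeq h'$ rel $Q_f$ and $h(Q_f)=h'(Q_f)=Q_g$; here $h$ is the map composed directly with $f$, and it is this map that we shall modify into the desired $h_1$ (with $h_2$ the corresponding modification of $h'$). A Thurston equivalence induces an isomorphism of Teichm\"uller spaces conjugating $\sigma_f$ to $\sigma_g$ (cf.\ the discussion preceding \propref{finite cover}), hence carries the canonical obstruction of $f$ onto that of $g$; in particular $h(\alpha_i)$ is homotopic rel $Q_g$ to $\beta_{\pi(i)}$ for some permutation $\pi$, and the same holds for $h'$ since $h\simeq h'$ rel $Q_f$. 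After reindexing the $\beta_i$ and the annuli $B_i$ we may assume $\pi=\id$. (Alternatively, invariance of $\Gamma_f$ can be read off from \thmref{thm:CharacterizationCanonical}.)

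The main device is this. Let $\eta\colon S^2\to S^2$ be isotopic to the identity rel $Q_g$. Lifting the isotopy through the branched cover $g$ — it fixes the critical values of $g$, so the lift extends over the critical points — produces a homeomorphism $\eta'$, isotopic to the identity rel $g^{-1}(Q_g)$, with $g\circ\eta'=\eta\circ g$. Then $(\eta\circ h)\circ f=\eta\circ g\circ h'=g\circ(\eta'\circ h')$, and, using $h(Q_f)\subset Q_g$ and $h'(Q_f)\subset g^{-1}(Q_g)$, one checks that $\eta\circ h\simeq h\simeq h'\simeq\eta'\circ h'$ rel $Q_f$; thus $(\eta\circ h,\ \eta'\circ h')$ is again an equivalence pair. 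We may therefore post-compose $h$ with any $\eta\simeq\id$ rel $Q_g$. I would then choose $\eta$ in two stages. First, the annuli $h(A_i)$ are pairwise disjoint and their cores are isotopic in $S^2\setminus Q_g$ to the $\beta_i$, so standard surface topology (the change-of-coordinates principle, cf.\ \cite{primer}) gives an ambient isotopy, hence a homeomorphism $\eta_0\simeq\id$ rel $Q_g$, with $\eta_0(h(A_i))=B_i$ for all $i$. Second, the discrepancy between $(\eta_0\circ h)|_{\partial A_i}$ and the prescribed boundary homeomorphism is, on each boundary circle of $B_i$, a rotation; extend it across a thin collar of $B_i$ by a homeomorphism $\rho$ equal to the identity outside the collar. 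Since $\rho$ is required to be the identity on the outer boundary of the collar rather than on $\partial B_i$, the interpolating rotations may be taken with zero total winding, so $\rho\simeq\id$ rel $Q_g$. Setting $\eta=\rho\circ\eta_0$, $h_1=\eta\circ h$, $h_2=\eta'\circ h'$ finishes the construction.

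The only delicate point is this last choice: one must ensure that prescribing the boundary identification does not secretly force a Dehn twist, which would violate $\eta\simeq\id$ rel $Q_g$ and destroy the lifting through $g$. This is exactly why the statement asks only that $h_1$ restrict to a prescribed \emph{homeomorphism} $\partial A_i\to\partial B_i$ — the annuli being allowed to be re-parametrized — rather than fixing $\partial B_i$ pointwise: relative to the fixed outer collar boundary, a rotation of $\partial B_i$ carries no twisting, so the obstruction does not arise.
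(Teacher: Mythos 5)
Your argument is essentially sound, but be aware that the paper itself offers no proof of this statement: it is quoted from Pilgrim's decomposition theory with the citation \cite{Pil2}, so there is no internal argument to compare against. Your reconstruction --- start from any equivalence pair $h\circ f=g\circ h'$, use naturality of the canonical obstruction under Thurston equivalence to match $\{h(\alpha_i)\}$ with $\{\beta_i\}$ up to permutation, and then correct the pair by post-composing with a homeomorphism $\eta\simeq\id$ rel $Q_g$ together with its lift $\eta'$ through $g$ --- is a legitimate self-contained route, and the key verifications are done correctly: the lifted isotopy fixes $g^{-1}(Q_g)$ pointwise, $h'(Q_f)\subset g^{-1}(Q_g)$, hence the modified maps still coincide on $Q_f$ and remain homotopic rel $Q_f$, so the pair is again an equivalence. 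Two points deserve tightening. First, the ``zero total winding'' choice at the end is the right idea but is cleaner stated as follows: any homeomorphism $\rho$ supported in the annuli $B_i$, preserving each $B_i$ and with the prescribed values on $\partial B_i$, is isotopic rel $Q_g$ to a product $\prod T_{\beta_i}^{k_i}$; composing with $\prod T_{\beta_i}^{-k_i}$ realized in the interiors of the annuli changes neither the boundary values nor the condition $\rho(B_i)=B_i$, so a representative isotopic to the identity rel $Q_g$ always exists, and the potential Dehn-twist obstruction you worry about visibly disappears. Second, ``any given orientation-preserving homeomorphism of the boundary curves'' cannot be literally arbitrary: which boundary circle of $A_i$ goes to which boundary circle of $B_i$ is forced by the isotopy class of $h$ (a map isotopic to the identity rel $Q_g$ preserves each complementary component of $\beta_i$, as both contain marked points), so the prescribed boundary data must be compatible with that matching and with the ambient orientation; your proof implicitly assumes this compatible reading of the (loosely stated) theorem, which is the intended one and is what the later applications (Corollary~\ref{c:thick} and the normalization propositions) actually use.
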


\noindent
Recall that the components of the complement of all $A_i$ (resp $B_i$) are called  \emph{thick parts}.

\begin{corollary}
\label{c:thick}
 If $h_1$, $h_2$ are as above then each thick component $C$ is mapped by $h_1$ to a thick component $C'$. 
The components $C$ and $C'$ must have the same period and pre-period. 
When both are periodic, consider the patched components $\tl C$ and $\tl C'$  and consider the 
corresponding first-return maps  $\cF_{\tl C}$ and $\cF_{\tl C'}$. Then these maps are Thurston equivalent. 
\end{corollary}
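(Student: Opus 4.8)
The plan is to push the conjugacy $h_1\circ f=g\circ h_2$ through Pilgrim's decomposition procedure, exploiting the extra fact that $h_1$ respects the decomposition annuli. The first assertion is immediate: since $h_1$ is a homeomorphism of $S^2$ carrying $\bigcup_i A_i$ onto $\bigcup_i B_i$, it carries the complement onto the complement, hence sends each thick component $C$ of $f$ homeomorphically onto a thick component $C'=h_1(C)$ of $g$.

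Next I would check that $h_1$ conjugates the induced dynamics on thick parts. Recall that the decomposition equips the finite set of isotopy classes of thick parts of $f$ with a self-map $\Phi_f$: writing $S_1(j)\subseteq S_0(j)$ for the distinguished preimage subsurface (so $\partial S_0(j)\subset\partial S_1(j)$), one has a branched cover $f\colon S_1(j)\to S_0(m_j)$ and sets $\Phi_f[S_0(j)]=[S_0(m_j)]$; similarly one has $\Phi_g$ for $g$. Applying $h_1$ to $f(S_1(j))=S_0(m_j)$ and using $h_1\circ f=g\circ h_2$ yields $g(h_2(S_1(j)))=h_1(S_0(m_j))$. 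Since $h_2\simeq h_1$ rel $Q$, the subsurface $h_2(S_1(j))$ is isotopic to $h_1(S_1(j))$, hence to $h_1(S_0(j))$, and because $\partial S_0(j)\subset\partial S_1(j)$ its boundary contains, up to isotopy, $\partial h_1(S_0(j))$. So $h_2(S_1(j))$ is the distinguished preimage subsurface of $h_1(S_0(j))$ for $g$, and therefore $\Phi_g\circ h_{1*}=h_{1*}\circ\Phi_f$ on isotopy classes of thick parts. In particular $h_1$ sends periodic cycles to periodic cycles of the same length and preserves the pre-period of every thick component, which is the second assertion.

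For the last assertion, let $C$ be periodic of period $k$ and $C'=h_1(C)$. Starting from $h_1\circ f=g\circ h_2$ with $h_1\simeq h_2$ rel $Q$, lift the homotopy through $g$ repeatedly, exactly as in the proof of \thmref{t:ExpandingCase}, to obtain homeomorphisms $h_1,h_2,\dots,h_{k+1}$ with $h_i\circ f=g\circ h_{i+1}$ and $h_i\simeq h_{i+1}$ rel $Q$; composing these gives $h_1\circ f^{k}=g^{k}\circ h_{k+1}$ with $h_{k+1}\simeq h_1$ rel $Q$. Both $h_1$ and, up to isotopy, $h_{k+1}$ carry the periodic cycle of thick parts of $f$ through $C$ onto the periodic cycle of $g$ through $C'$. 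Now restrict to the thick parts making up these two cycles and perform the patching, choosing the caps for $g$ to be the $h_1$-images of the caps for $f$ (a legitimate choice, since the resulting first-return maps depend on the caps only up to Thurston equivalence). Then the global identity $h_1\circ f^{k}=g^{k}\circ h_{k+1}$ descends to an identity $\tilde h_1\circ\cF_{\tilde C}=\cF_{\tilde C'}\circ\tilde h_{k+1}$, where $\tilde h_1,\tilde h_{k+1}\colon\tilde C\to\tilde C'$ are homeomorphisms carrying the marked points of $\cF_{\tilde C}$ (the points of $Q$ in $C$ together with the patching punctures) onto those of $\cF_{\tilde C'}$, and $\tilde h_1\simeq\tilde h_{k+1}$ rel these marked points because $h_1\simeq h_{k+1}$ rel $Q$ and both maps respect the decomposition curves. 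Hence $(\tilde h_1,\tilde h_{k+1})$ realizes a Thurston equivalence between $\cF_{\tilde C}$ and $\cF_{\tilde C'}$.

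I expect the only real work to lie in the bookkeeping of the last paragraph: verifying that the globally defined homeomorphisms restrict and patch to well-defined homeomorphisms of the patched periodic spheres compatibly with the choices made in the patching construction, and that marked points are sent to marked points. This is exactly the statement that Pilgrim's decomposition is natural with respect to equivalences that respect the obstruction, and is routine; by contrast, the first two assertions are essentially formal.
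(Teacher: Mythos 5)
The paper itself gives no proof of Corollary~\ref{c:thick}: it is stated without argument, as an immediate consequence of the decomposition theorem quoted from \cite{Pil2}, so there is no proof of record to compare yours against. Your route is the natural one and is essentially correct: the first assertion is indeed formal, the conjugacy of the induced dynamics on (isotopy classes of) thick parts gives the statement about periods and pre-periods, and lifting the homotopy $k$ times and patching gives the equivalence of first-return maps.

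Two places need more care than your sketch provides. First, in your second paragraph, $S_1(j)$ is in general \emph{not} isotopic to $S_0(j)$ rel $Q$ (the trivial preimage annuli that get removed may enclose marked points), so the step ``hence to $h_1(S_0(j))$'' is not literally correct; what you actually need, and what does hold, is that $\partial h_2(S_1(j))$ contains $h_2(\partial S_0(j))$, which is isotopic rel $Q$ to $h_1(\partial S_0(j))=\partial S_0'(\sigma(j))$, and this already identifies $h_2(S_1(j))$, up to isotopy, with the distinguished $g$-preimage component of that thick part, giving $\Phi_g\circ h_{1*}=h_{1*}\circ\Phi_f$. Second, in the last paragraph, $\cF_{\tilde C}$ is not literally $f^{k}|_{C}$ (the patched map is altered over the caps and over the trivial-preimage regions), and $h_{k+1}$ is only homotopic rel $Q$ to a decomposition-respecting map, so the ``global identity'' descends only after isotopy adjustments and a compatible choice of cap coordinates, yielding an equivalence rather than an equality on the nose (which suffices, since a homotopy-commuting square can be corrected to a strictly commuting one by lifting the homotopy). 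Moreover, the claim $\tilde h_1\simeq\tilde h_{k+1}$ rel the marked points of $\tilde C$ does not follow merely from $h_1\simeq h_{k+1}$ rel $Q$: the restrictions to a thick part of two globally homotopic decomposition-respecting homeomorphisms can differ, rel boundary and punctures, by Dehn twists about the boundary curves (the same phenomenon the paper handles later when normalizing $(h_1,h_2)$). The missing observation is that after patching each boundary curve becomes peripheral about its cap puncture, so these boundary twists die in $\PMCG$ of the patched sphere. With these routine repairs your argument is complete and is exactly the naturality of Pilgrim's decomposition that the paper takes for granted.
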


\begin{proposition} If $f$ and $g$ are equivalent Thurston maps in standard form then there exists an equivalence pair $(h_1,h_2)$ such that $h_1$ and $h_2$ restrict to the identity map on all $\partial A_i$ and $h_1$ is homotopic to $h_2$ on each thick component relative $\partial A_i$ and $Q_f$. 
\end{proposition}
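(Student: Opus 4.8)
\emph{Step 1: normalizing the equivalence pair along the decomposition annuli.} Since $f$ and $g$ are equivalent and in standard form, the preceding realization theorem provides an equivalence pair $(h_1,h_2)$ with $h_1(A_i)=B_i$ after a permutation of indices, and with $h_1|_{\partial A_i}$ equal to any prescribed orientation-preserving homeomorphism $\partial A_i\to\partial B_i$. Identifying each $\partial B_i$ with $\partial A_i$ via this chosen homeomorphism (equivalently, replacing $g$ by the equivalent map whose decomposition annuli are literally the $A_i$), we may take $h_1$ to be the identity on $\bigcup_i\partial A_i$. Next I would make $h_2$ the identity there as well: the relation $h_1\circ f=g\circ h_2$ together with $h_1\simeq h_2$ rel $Q_f$ forces $f$ and $g$ to act in the same way on the homotopy classes of the curves $\partial A_i$, and forces $h_2(\partial A_i)$ to be homotopic rel $Q_f$ to $\partial A_i$ for each $i$; a homotopy of $h_2$ rel $Q_f$ supported in a neighbourhood of $\bigcup_i\partial A_i$ that avoids the critical points of $f$, followed by the matching correction of $h_1$ obtained by lifting that homotopy through $f$, yields $h_2|_{\partial A_i}=\id$. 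This last lifting is routine but fiddly.

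\emph{Step 2: reduction to the thick parts.} Once $h_1=h_2=\id$ pointwise on $\bigcup_i\partial A_i$, each of $h_1,h_2$ preserves every thick component and every annulus $A_i$, because an orientation-preserving homeomorphism of $S^2$ fixing a simple closed curve pointwise cannot interchange its two sides. Put $\delta=h_2^{-1}\circ h_1$; on a thick component $C$ the restriction $\delta_C=\delta|_C$ fixes $\partial C$ pointwise and, since $h_1=h_2$ on $Q_f$, fixes $Q_f\cap C$ pointwise, while $\delta\simeq\id$ rel $Q_f$ globally. The conclusion of the proposition is exactly that $(h_1,h_2)$ can be chosen so that each $\delta_C$ is trivial in the mapping class group of $C$ rel $\partial C$ and $Q_f\cap C$; and, as the $\gamma_i$ are disjoint, nonhomotopic, essential and nonperipheral, their Dehn twists generate a free abelian subgroup of $\PMCG(S^2,Q_f)$, so once all $\delta_C$ are trivial the residual annulus twists of $\delta$ vanish automatically and the proposition follows from the already-available global homotopy $h_1\simeq h_2$ rel $Q_f$.

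\emph{Step 3: killing the residual twisting via the dynamics.} The thick components carry a partial self-map induced by $f$ with finitely many periodic cycles, and every non-periodic thick component maps into a cycle after finitely many steps. I would normalize $\delta_C$ cycle by cycle, then propagate outward along the finitely many non-periodic tails. On a periodic cycle $C_0\to\cdots\to C_{k-1}\to C_0$, the first-return map $\cF_{\tl C_0}$ of the patched component is Thurston equivalent to the corresponding first-return map of the decomposition of $g$ (\corref{c:thick}); since $f\in\cH$ this first-return map is a hyperbolic-orbifold (hence, by the characterization of canonical obstructions and Thurston's theorem, geometrizable) Thurston map, so by \propref{p:uniq} it admits no nontrivial self-equivalence and the equivalence of the two first-return maps is unique up to homotopy rel the marked set. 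This rigidity pins down $\delta_{C_0}$: any ambiguity in it would be a self-equivalence of $\cF_{\tl C_0}$, hence trivial, so $\delta_{C_0}$ can be made trivial by absorbing one explicit element of $\PMCG$ into $(h_1,h_2)$ (via self-equivalences of $f$ and of $g$). The finitely many tail components feeding into the cycle are then handled one at a time by the same kind of argument, using only finiteness of the decomposition; at every stage one keeps an honest equivalence pair, and the outcome is the required $(h_1,h_2)$.

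I expect Step 3, and inside it the periodic cycles, to be the crux. A priori the cyclic structure leaves the twist $\delta_{C_0}$ undetermined --- one may always twist $h_1$ near $C_0$ and untwist the corresponding $f^{k}$-lift --- and it is precisely the uniqueness of the equivalence of first-return maps for hyperbolic orbifolds (\propref{p:uniq}) that rules this out. The accompanying difficulty is bookkeeping: each modification of $(h_1,h_2)$ by a twist supported near some $\gamma_i$ affects both thick components adjacent to $A_i$, and one must arrange all the modifications to be simultaneously consistent while remaining equivalence pairs, which requires tracking how lifting by $f$ (resp.\ $g$) acts on these twists.
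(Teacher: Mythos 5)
Your Steps 1--2 run parallel to the paper: normalize the pair so that both $h_1$ and $h_2$ are the identity on every $\partial A_i$, and reduce the claim to making the per-component discrepancy $\delta_C=h_2^{-1}\circ h_1|_C$ trivial rel $\partial C$ and $Q_f\cap C$ (modulo a small directional slip in Step 1: since $h_2$ is the lift, corrections of $h_2$ are obtained by lifting corrections of $h_1$ through $g$, not through $f$). The genuine gap is Step 3. First, the proposition carries no hypothesis that the first-return maps have hyperbolic orbifold --- nothing like $f\in\cH$ appears in its statement --- so an argument resting on \corref{c:thick} and \propref{p:uniq} could at best prove a special case of a statement the paper proves by purely topological means. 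Second, and more fundamentally, the rigidity of \propref{p:uniq} cannot remove the ambiguity you need to remove. Because $h_1\simeq h_2$ rel $Q_f$ globally and they agree on all the curves $\partial A_i$, the class of $\delta_C$ rel $\partial C$ and the marked points is a composition of powers of Dehn twists about the boundary curves of $C$; after patching, each such boundary twist becomes a twist about a peripheral curve of $\tl C$, i.e.\ the \emph{trivial} self-equivalence of the first-return map $\cF_{\tl C}$. So \propref{p:uniq} literally does not see $\delta_C$, let alone force it to vanish. Your own parenthetical worry (``one may always twist $h_1$ near $C_0$ and untwist the corresponding $f^k$-lift'') is exactly the phenomenon that survives rigidity; in the paper it is not killed here but dealt with later, as the residual $\ZZ^\Gamma$-ambiguity of \propref{p:NormalEquivalence} and \propref{p:finiteN} and the loop over $\ZZ^\Gamma$ modulo $N$ in the decision algorithm.

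The paper's proof of this proposition is softer and uses no dynamics of the first-return maps: (i) the standard mapping-class-group fact that a homeomorphism of a thick component which is the identity on its boundary and becomes trivial rel $Q_f$ upon inclusion into the sphere is, rel $\partial C$ and $Q_f\cap C$, a product of powers of Dehn twists about the boundary curves; and (ii) these boundary twists are then pushed into the adjacent annuli $A_i$, i.e.\ the pair $(h_1,h_2)$ is modified by homeomorphisms supported near the curves $\partial A_i$ (a collar twist inside the thick part cancelled against the opposite twist realized inside $A_i$), which fix $\partial A_i$, are isotopic to the identity rel the marked points, and leave the thick-part restrictions unchanged except for removing the boundary twisting, the exact relation $h_1\circ f=g\circ h_2$ being restored by lifting the correction. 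Your write-up is missing ingredient (i) and replaces ingredient (ii) by a rigidity argument that cannot work, so the core of the proposition is not established.
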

\begin{proof}
As was shown above, there exists an equivalence pair $(h_1,h_2)$ of  $f$ and $g$ that descends to 
an equivalence of respective canonical decompositions.  More precisely, there is a correspondence between thick 
components of $f$ and thick components of $g$ which conjugates the component-wise action of $f$ to the action of $g$ such 
that the first return maps of  corresponding periodic components are Thurston equivalent.    
We fix coordinates on $A_i$ and $B_i$ and can chose $h_1$ and $h_2$ such that both restrict to
 the identity map on all $\partial A_i$. Since  $h_1$ and $h_2$ are homotopic relative $Q_f$ and coincide on  
$\partial A_i$, restricted to each thick component of $f$ the two homeomorphisms can differ 
(up to homotopy relative the boundary of the component and the marked set) 
only by a composition of some powers of Dehn twists around the boundary components. 
Pushing this Dehn twists inside the annuli $A_i$, we can further normalize the pair $(h_1,h_2)$ so 
that $h_1$ is homotopic to $h_2$ on each thick component. 
\end{proof}


The following is standard (see e.g. \cite{primer}):
\begin{prop}
\label{prop:free abelian}
For every Thurston obstruction $\Gamma=\{\alpha_1,\ldots,\alpha_n\}$, the Dehn twists
$T_{\alpha_j},\; j=1\ldots n$ generate a free Abelian subgroup of $\PMCG(S\setminus Q_f)$.
\end{prop}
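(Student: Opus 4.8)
The plan is to split the statement into two assertions: that the subgroup $H:=\langle T_{\alpha_1},\dots,T_{\alpha_n}\rangle$ is abelian, and that it is \emph{free} abelian of rank exactly $n$. The first assertion is immediate from the definition of a multicurve. The curves $\alpha_1,\dots,\alpha_n$ are pairwise disjoint, so we may fix pairwise disjoint annular neighbourhoods and represent each $T_{\alpha_j}$ by a homeomorphism supported in the $j$-th of them; homeomorphisms with disjoint supports commute, hence so do the $T_{\alpha_j}$, and $H$ is abelian. Consequently the map
\[
\Theta\colon \ZZ^n\longrightarrow \PMCG(S\setminus Q_f),\qquad \Theta(k_1,\dots,k_n)=T_{\alpha_1}^{k_1}\cdots T_{\alpha_n}^{k_n},
\]
is a well-defined homomorphism with image $H$, so the proposition reduces to showing that $\Theta$ is injective.

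For injectivity I would use the action of $\PMCG(S\setminus Q_f)$ on isotopy classes of simple closed curves together with the standard Dehn-twist intersection-number formula $i\bigl(T_a^k(b),b\bigr)=|k|\,i(a,b)^2$ (see \cite{primer}). The central step is to produce, for each index $i$, an essential, non-peripheral simple closed curve $\gamma_i$ in $S\setminus Q_f$ with $i(\gamma_i,\alpha_i)>0$ and $i(\gamma_i,\alpha_j)=0$ for every $j\neq i$. Such a $\gamma_i$ is obtained by a change-of-coordinates argument: let $S'$ be the (possibly disconnected) surface obtained from $S\setminus Q_f$ by removing open annular neighbourhoods of the curves $\alpha_j$ with $j\neq i$. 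Then $\alpha_i$ remains essential and non-peripheral in $S'$, since a curve parallel in $S'$ to one of the new boundary components would be isotopic in $S$ to some $\alpha_j$, contradicting that the components of $\Gamma$ are pairwise non-isotopic, while a disk in $S'$ bounded by $\alpha_i$ with at most one puncture would persist as such a disk in $S\setminus Q_f$, contradicting essentiality of $\alpha_i$. Hence the component of $S'$ containing $\alpha_i$ admits a simple closed curve $\gamma_i$ crossing $\alpha_i$ essentially, and $\gamma_i$, being contained in $S'$, is disjoint from every $\alpha_j$ with $j\neq i$.

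With these dual curves in hand, suppose $\Theta(k_1,\dots,k_n)=\id$ and fix an index $i$. Applying this mapping class to $\gamma_i$ and using that $\gamma_i$ is disjoint from $\alpha_j$ for $j\neq i$, we find that $T_{\alpha_i}^{k_i}(\gamma_i)$ is isotopic to $\gamma_i$, so $i\bigl(T_{\alpha_i}^{k_i}(\gamma_i),\gamma_i\bigr)=0$. On the other hand the intersection-number formula gives $i\bigl(T_{\alpha_i}^{k_i}(\gamma_i),\gamma_i\bigr)=|k_i|\,i(\alpha_i,\gamma_i)^2$ with $i(\alpha_i,\gamma_i)>0$, which forces $k_i=0$. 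Since $i$ was arbitrary, $\Theta$ is injective, so $H\cong\ZZ^n$ is free abelian of rank $n$.

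The only step requiring genuine care is the construction of the dual curves $\gamma_i$ — that is, verifying that each $\alpha_i$ stays essential and non-peripheral after the remaining components of $\Gamma$ are deleted, and that a transverse curve then exists in the appropriate component. This is routine change-of-coordinates bookkeeping of the type carried out in \cite{primer}, so I expect no real difficulty, and the proposition follows.
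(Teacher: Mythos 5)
Your argument is correct, and it is essentially the same proof the paper has in mind: the paper gives no proof of its own but cites \cite{primer}, where the standard argument is exactly yours — the twists commute since they have disjoint annular supports, and injectivity of $\ZZ^n\to\PMCG(S\setminus Q_f)$ follows by exhibiting, via change of coordinates, a dual curve meeting only $\alpha_i$ and applying the formula $i\bigl(T_{\alpha_i}^{k}(\gamma_i),\gamma_i\bigr)=|k|\,i(\alpha_i,\gamma_i)^2$. The multicurve properties (disjoint, non-homotopic, essential, non-peripheral) are all that is used, so your reduction to that setting is exactly right.
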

We write $\ZZ^\Gamma\simeq \ZZ^n$ to denote the subgroup generated by $T_{\alpha_j}$.
 
\begin{proposition} If $f$ and $g$ are equivalent Thurston maps, then there exists an equivalence pair $(h_1,h_2)$ such that $h_1=h_0 \circ m$ where $h_0$ is constructed as above and $m \in \ZZ^\Gamma$.
\label{p:NormalEquivalence}
\end{proposition}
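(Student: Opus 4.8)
The plan is to start from an arbitrary equivalence pair, put it into the normal form supplied by the preceding proposition, compare it with $h_0$ piece by piece over the decomposition of $S^2\setminus Q_f$ along $\Gamma$, and read off the difference as an element of $\ZZ^\Gamma$. By \propref{prop:decomp2} I may assume $f$ and $g$ are in standard form, with decomposition annuli $A_i\simeq\alpha_i$ and $B_i\simeq\beta_i$. Let $(h_0,h_0')$ be an equivalence pair obtained as in the preceding proposition, so that $h_0$ and $h_0'$ are the identity on every $\partial A_i$ (in particular $h_0(A_i)=B_i$) and $h_0\simeq h_0'$ on each thick component rel $\partial A_i$ and $Q_f$; let $(h_1,h_2)$ be any equivalence pair of $f$ and $g$, which I normalise in exactly the same way. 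The goal is to show that $h_1$ is homotopic rel $Q_f$ to $h_0\circ m$ for some $m\in\ZZ^\Gamma$: once this is known, replacing $h_2$ by the map obtained by lifting the homotopy from $h_1$ to $h_0\circ m$ through $g$ (possible since the homotopy is rel $Q_g$, hence rel the branch values of $g$) produces the asserted pair.

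First I would compare $h_1$ and $h_0$ on the periodic thick components. By \corref{c:thick} both carry each thick component $C$ of $f$ onto the same thick component $C'$ of $g$, with the same period and preperiod, and for periodic $C$ the patched first-return maps $\cF_{\tl C}$ and $\cF_{\tl C'}$ are Thurston equivalent. Since $\Gamma$ is the canonical obstruction, the first-return maps with hyperbolic orbifold carry no Thurston obstruction, hence are equivalent to rational maps, and \propref{p:uniq} gives that the equivalence between $\cF_{\tl C}$ and $\cF_{\tl C'}$ is unique up to homotopy rel the marked points. (In the setting of \thmref{th:deciding}, where $f\in\cH$, every periodic first-return map is of this kind, so this covers all periodic thick components.) Consequently $h_1\vert_C$ and $h_0\vert_C$ are homotopic rel $Q_f\cap C$; since both restrict to the identity on $\partial C$, the composite $(h_0\vert_C)^{-1}\circ h_1\vert_C$ is homotopic, rel $\partial C$ and $Q_f\cap C$, to a product of Dehn twists about curves parallel to the components of $\partial C$, i.e. parallel to the $\alpha_i$.

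Next I would propagate this to the strictly pre-periodic thick components. If $C$ is pre-periodic, some iterate $f^{k}$ restricts to a branched covering $C\to C_0$ onto a periodic thick component, branched only over points of $Q_f$; the relation $h_1\circ f=g\circ h_2$, iterated and combined with $h_1\simeq h_2$ on thick parts, intertwines $f^{k}\vert_C$ with the corresponding branched covering for $g$. Pulling the homotopy already obtained over $C_0$ back through $f^{k}$ — homotopies rel the marked set lift along branched covers branched inside that set — shows once more that $h_1\vert_C$ and $h_0\vert_C$ are homotopic rel $Q_f\cap C$, and differ, rel $\partial C$ and $Q_f\cap C$, by a product of Dehn twists about curves parallel to the $\alpha_i$.

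Finally I would assemble the pieces: the mapping class $m:=h_0^{-1}\circ h_1\in\PMCG(S^2\setminus Q_f)$ is the identity on every $\partial A_i$ and, on each thick component, is isotopic rel its boundary and $Q_f$ to a product of Dehn twists about boundary-parallel curves, so by the standard description of the pure mapping class group in terms of the decomposition along $\Gamma$ (see \cite{primer}) it must lie in the free abelian subgroup $\ZZ^\Gamma$ generated by the $T_{\alpha_i}$ (\propref{prop:free abelian}); hence $h_1=h_0\circ m$ up to homotopy rel $Q_f$. The hard part is precisely the transition, in the previous two paragraphs, from ``homotopic rel the marked points'' to ``differing by a product of twists about the $\alpha_i$ rel the boundary'': one must keep track of the twisting number along each $\alpha_i$ as the homotopies are transported between thick components and across the annuli, using the normalisation (identity on $\partial A_i$, $h_1\simeq h_2$ on thick parts) to verify that the two sides of each $\alpha_i$ contribute consistently.
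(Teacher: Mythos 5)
Your proposal is correct and follows essentially the same route as the paper: normalize the given equivalence as in the preceding proposition, use the uniqueness of equivalences of the patched first-return maps (Proposition~\ref{p:uniq}, valid on all periodic thick components under the hypotheses of Theorem~\ref{th:deciding}) to identify $h_1$ with $h_0$ up to homotopy on periodic thick components, propagate this by pulling back through $f$ to the pre-periodic components, and read off $m=h_0^{-1}\circ h_1$ as an element of $\ZZ^\Gamma$ supported on the annuli. The bookkeeping you flag as the ``hard part'' (converting homotopy rel marked points into boundary-parallel twists pushed into the $A_i$) is handled in the paper exactly as you suggest, via the normalization of the previous proposition.
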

\begin{proof}
Consider an equivalence normalized as in the previous proposition. For every periodic thick component of $f$ with first return map that has hyperbolic orbifold, the restriction of $h_1$  to that component (after patching) will represent the unique (by Proposition~\ref{p:uniq}) mapping class that realizes Thurston equivalence to the corresponding periodic thick component of $g$. Under the assumptions of Theorem~\ref{th:deciding}, the normalized equivalence can be defined in this manner on all periodic thick components. 
This in turn defines $h_2$, and thus $h_1$, uniquely up to homotopy  relative $\partial A_i$ and $Q_f$ on every thick component that is  a preimage of a periodic thick component by pulling back $h_1$ by $f$. Repeating the pullback procedure we can recover $h_1$ on all thick components in the decomposition of $f$. Therefore using the decomposition data we can construct a mapping class $h_0$ which is homotopic to $h_1$ on all thick components and defined arbitrarily on $A_i$.  The restriction of $m=h_0^{-1}\circ h_1$ to every thick component is homotopic to the identity and the restriction of $m$  to every annulus $A_i$ is some power of the corresponding Dehn twist $T_{\alpha_i}$, i.e. $m \in \ZZ^\Gamma$. 

\end{proof}

Notice that by construction if $h_1 \circ f = g\circ h_2$ where $h_1= h_0 \circ m$ for some $m_1 \in \ZZ^\Gamma$, then $h_2$ is homotopic to $h_0 \circ m_2$ for some other $m_2 \in \ZZ^\Gamma$. Since we cannot check all elements of $\ZZ^\Gamma$ we will require the following proposition.

\begin{proposition}
\label{p:finiteN} There exists explicitly computable $N \in \N$ such that if $n\in \ZZ^\Gamma$ where all coordinates of $n$ are divisible by $N$, then $(h_0\circ (m_1+n)) \circ f = g\circ h_1$, with $h_1$ homotopic to $h_0\circ( m_2+M_\Gamma n)
$ rel $Q_f$, whenever $(h_0\circ m_1) \circ f = g\circ h_2$, with $h_2$ homotopic to $h_0\circ m_2$ rel $Q_f$.
\end{proposition}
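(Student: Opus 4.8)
The plan is to take $N$ to be the least common multiple of the numbers $\deg(f\colon\beta\to\alpha_i)$, where $\alpha_i$ runs over the curves of $\Gamma=\Gamma_f$ and $\beta$ over the connected components of $f^{-1}(\alpha_i)$; this integer is read off from a PL description of $f$ together with the already‑located obstruction $\Gamma$, so it is explicitly computable. The substance of the proposition is a description of the virtual endomorphism $\Lift$ of $\PMCG(S^2\setminus Q_f)$ (lifting by $f$, defined as above) on the free abelian subgroup $\ZZ^\Gamma$ of \propref{prop:free abelian}: namely, that $\Lift$ is genuinely defined on those elements of $\ZZ^\Gamma$ all of whose coordinates are divisible by $N$, that it carries such an element back into $\ZZ^\Gamma$, and that under the identification $\ZZ^\Gamma\simeq\ZZ^n$ its effect there is exactly multiplication by the Thurston matrix $M_\Gamma$.

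First I would establish the local lifting identity for twists along $\Gamma$. Since $\Gamma$ is $f$‑stable (being the canonical obstruction), every component $\beta$ of $f^{-1}(\alpha_i)$ is either trivial or homotopic rel $Q_f$ to some $\alpha_j$, and working inside the decomposition annuli one gets
$$
T_{\alpha_i}^{N}\circ f \;=\; f\circ\prod_{\beta}T_{\beta}^{\,N/\deg(f\colon\beta\to\alpha_i)},
$$
with every exponent an integer by the choice of $N$. Discarding the trivial twists and regrouping the remaining factors by homotopy class turns the right‑hand product into $\prod_j T_{\alpha_j}^{\,N M_{\alpha_j\alpha_i}}$, again with integer exponents since $N$ clears every denominator occurring in $M_\Gamma$. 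Consequently, if $n\in\ZZ^\Gamma$ has all coordinates divisible by $N$, then $\Lift(n)$ exists as a genuine element of $\PMCG(S^2\setminus Q_f)$, lies in $\ZZ^\Gamma$, and corresponds to the integer vector $M_\Gamma n$.

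With this in hand the proposition follows by a formal computation. Assume $(h_0\circ m_1)\circ f=g\circ h_2$ with $h_2\simeq h_0\circ m_2$ rel $Q_f$. The Dehn twists $T_{\alpha_i}$ commute pairwise, so $h_0\circ(m_1+n)=(h_0\circ m_1)\circ n$ as homeomorphisms, and therefore
$$
\bigl(h_0\circ(m_1+n)\bigr)\circ f=(h_0\circ m_1)\circ n\circ f=(h_0\circ m_1)\circ f\circ\Lift(n)=g\circ\bigl(h_2\circ\Lift(n)\bigr).
$$
Thus one may take the new lift to be $h_1=h_2\circ\Lift(n)$. Because $\Lift(n)$ is the product of Dehn twists in $\Gamma$ representing the vector $M_\Gamma n$, composing the homotopy $h_2\simeq h_0\circ m_2$ on the right with $\Lift(n)$ (which is supported away from $Q_f$) gives $h_1\simeq(h_0\circ m_2)\circ\Lift(n)=h_0\circ(m_2+M_\Gamma n)$ rel $Q_f$, as desired.

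The part that will require care is the middle step: one must check that $\Lift$ is honestly defined on the $N$‑divisible elements of $\ZZ^\Gamma$ (a priori it is only a partially defined virtual endomorphism on $\PMCG$), that inessential and peripheral preimage components really contribute only trivial mapping classes and may be omitted, and that the surviving contributions reassemble into $M_\Gamma$ rather than its transpose. All of this rests on the $f$‑stability of $\Gamma$ and on the choice of $N$; the remaining manipulations are routine identities in $\PMCG(S^2\setminus Q_f)$ modulo homotopy rel $Q_f$.
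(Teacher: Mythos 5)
Your proposal is correct and follows essentially the same route as the paper: the same choice of $N$ (the lcm of the degrees of $f$ on the preimage components of the curves/annuli of $\Gamma$), the same key fact that an $N$-divisible multi-twist $n\in\ZZ^\Gamma$ lifts through $f$ to the multi-twist $M_\Gamma n$, and the same diagram-pasting to conclude. The paper's proof is just a two-line version of this; your write-up supplies the verification of the lifting identity (including the disposal of trivial preimage components and the integrality of the exponents) that the paper leaves implicit.
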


\begin{proof}
We can take $N$ to be the least common multiple of all degrees of $f$ restricted to preimages of the annuli $A_i$. Then $n$ lifts through $f$ to $M_\Gamma n$ and we have the following commutative diagram:

\[
\begin{CD}
S^{2} @>M_\Gamma n>> S^{2} @>h_1>> S^{2}\\
@VVfV @VVfV @VVgV\\
S^{2} @>n>> S^{2} @>h_0\circ m_1>> S^{2}
\end{CD}
\]
\end{proof}

We can now present the proof of \thmref{th:deciding}:
\begin{proof}
The following  algorithm solves the problem. 

\begin{enumerate}
\item Find the canonical obstructions $\Gamma_f=\{\alpha_1,\ldots,\alpha_n\}$ and $\Gamma_g=\{\beta_1,\ldots,\beta_n\}$ 
 (\thmref{th:canonicalgeometrization}). 
\item Check whether the cardinality of the canonical obstructions $\Gamma_f=\{\alpha_1,\ldots,\alpha_n\}$ and $\Gamma_g=\{\beta_1,\ldots,\beta_n\}$  is the same, and whether Thurston matrices coincide. If not,
output {\bf maps are not equivalent} and halt. 
\item Construct decomposition annuli $A_i$ and $B_i$ as above. Geometrize the first return maps of patched thick parts 
(\thmref{th:canonicalgeometrization}). 
\item {\bf for all }$\sigma\in S_n$ {\bf do}
\item Is there a homeomorphism $h_\sigma$ of $S^2$ sending $A_i\to B_{\sigma(i)}$? If not, {\bf continue.}  Check that the component-wise dynamics of $f$ and $g$ are conjugated by $h_\sigma$. If not, {\bf continue.}

\item Construct equivalences between first return maps $\cF_i$ and $\cG_i$ of periodic thick components corresponding by $h_\sigma$. If the maps of some pair are not equivalent, {\bf continue.}

\item For all thick components $C_j^f$ check whether the Hurwitz classes of the patched coverings $$\tl f:\wtl{C_j^f}\to \wtl{f(C_j^f)}\text{ and }\tl g:\wtl{h_\sigma(C_j^f)}\to \wtl{g(h_\sigma(C_j^f))}$$ are the same (\thmref{th:hurwitz}).
 If not, {\bf continue.}

\item Can the equivalences between first return maps $\cF_i$ and $\cG_i$ constructed at step (6) be lifted 
via branched covers $\tl f$ and $\tl g$ to every sphere in the cycle (\thmref{th:hurwitz})? If not, {\bf continue.}

\item Check if the lifted equivalences preserve the set of marked points. If not, {\bf continue.}

\item Lift the equivalences, to obtain 
a homeomorphism  $h_0$ defined on all thick parts. 

\item Pick some initial homemorphisms $a_i:A_i \to B_i$ so that the boundary values agree with already defined boundary values of $h_0$. This defines $h_0$ on the whole sphere. 

\item {\bf for all } $n \in \ZZ^\Gamma$ with coordinates between 0 and $N-1$, where $N$ is as in Proposition~\ref{p:finiteN} {\bf do}

\begin{enumerate}
\item Try to lift $h_0 \circ n$ through $f$ and $g$ so that $(h_0 \circ n) \circ f =g \circ h_2$.  If this does not work, {\bf continue.}
\item By the discussion above $h_2=h_0 \circ m$ with $m \in \ZZ^\Gamma$. Compute $m$.

\item Find a solution in $\ZZ^\Gamma$ of the equation $Nx+n=M_\Gamma Nx +m$. If there is no integer solution,  {\bf continue.}
\item Output {\bf maps are equivalent} and $h_0 \circ (Nx+n)$ halt.
\end{enumerate}
\item {\bf end do}
\item {\bf end do}
\item output {\bf maps are not equivalent} and halt.
\end{enumerate}

If the algorithm outputs $h_0 \circ (Nx+n)$ at step 9(c), then by Proposition~\ref{p:finiteN} $h_0 \circ (Nx+n)$ lifts through $f$ and $g$ to a map which is homotopic to  $h_0 \circ (M_\Gamma Nx +m)= h_0 \circ (Nx+n)$ producing an equivalence between $f$ and $g$. If the algorithm fails to find an equivalence pair in this way, then Proposition~\ref{p:NormalEquivalence} implies that $f$ and $g$ are not equivalent.

\end{proof}

\section{Concluding remarks}
In this paper the problem of algorithmic decidability of Thurston equivalence of two Thurston maps $f$ and $g$
is resolved partially, when the decomposition of $f$ (or $g$) does not contain any parabolic elements or homeomorphisms.
Note that if the first return map $\cF$ of a periodic component $\tl S$ of the canonical decomposition of $f$ is a homeomorphism,
then the problem of equivalence restricted to $\tl S$ is the congugacy problem in $\MCG(\tl S)$. By \thmref{solvable-conjugacy-problem}, it can be resolved algorithmically. 

By \thmref{t:uniqueness}, in the case when $\cF$ is parabolic, Thurston equivalence
problem restricted to $\tl S$ reduces to a classical conjugacy problem of integer matrices:

\medskip
\noindent
{\sl Are two matrices in $\text{M}_2(\ZZ)$ conjugate by an element of $\text{SL}_2(\ZZ)$?}

\medskip
\noindent
This problem is solvable algorithmically as well (see e.g. \cite{Grun}).

Thus in both exceptional cases, we can constructively determine whether the first return maps of the thick parts in the
decompositions of $f$ and $g$ are Thurston equivalent or not. 
However, in contrast with \propref{p:uniq}, in this  case the homeomorphism realizing  equivalence  is not unique. 
This poses an obvious difficulty with 
checking whether $f$ is equivalent to $g$, as we have to check not one, but all possible
equivalences of parabolic and homeomorphic components of the decomposition. In other words, the homeomorphism $h_0$ constructed in the proof of Theorem~\thmref{th:deciding} is no longer unique; instead we get a certain subgroup of the Mapping Class Group of possible candidates. Extending our proof of decidability of Thurston equivalence to this case is an interesting problem, 
which may require, in particular, an algorithm for computing this subgroup.

\bibliographystyle{alpha}
\bibliography{biblio}

\end{document}